\documentclass[12pt]{article}
\usepackage{amsmath}
\usepackage{amssymb}
\usepackage{amsthm}
\title{Kolmogorov's Theorem for Low-Dimensional Invariant Tori of Hamiltonian Systems}
\author{Plotnikov P.I.\footnote{plotnikov@hydro.nsc.ru}, Kuznetsov I.V.}

\date{}
\newtheorem{theorem}{Theorem}[section]
\newtheorem{lemma}[theorem]{Lemma}
\newtheorem{proposition}[theorem]{Proposition}
\newtheorem{definition}[theorem]{Definition}

\numberwithin{equation}{section}
\newtheorem{corollary}[theorem]{Corollary}

\begin{document}
\maketitle

 \noindent Lavrentyev Institute of Hydrodynamics
Siberian Branch of RAS
\\
Address: Lavrentyev pr.15 Novosibirsk  630090,  Russia,
\\
Novosibirsk State University
\\
Address:  Pirogova st. 2, Novosibirsk 630090,  Russia

\begin{center}
Abstract
\end{center}

 In this paper the problem of persistence of
invariant tori under small perturbations of integrable Hamiltonian
systems is considered. The existence of one-to-one correspondence
between hyperbolic invariant tori and critical points of the
function $\Psi$ of two variables defined on semi-cylinder is
established. It is proved that if unperturbed Hamiltonian has a
saddle point, then under arbitrary perturbations there persists at
least one hyperbolic torus.

{\bf Key words:} invariant tori; KAM theory; Percival's variational
principle.

{\bf MSC2010} numbers: 37J40, 70H08, 70H15, 70H30, 70K43

 \tableofcontents

\section{Notation}

First, we introduce some notation which is used troughout of the paper.
Every  vector $\mathbf v$  in $m$-dimensional Euclidean space is
assumed
 to be a column vector. Row  vectors are denoted
by $\mathbf{v}^\top$. Hence $\mathbf v=(v_1,\dots,v_m)^\top$. For a
 matrix $\mathbf{A}=(A_{ij})$, $i,j=1,\dots,n$,
with entries $A_{ij}$, $i$ is the row index and $j$ the column
index, and $\mathbf{A}^\top=(A_{ji})$ stands for the transposed
matrix. The product of a matrix $\mathbf A$ by a column vector
$\mathbf v$ is denoted by $\mathbf{A} \mathbf{v}$. It is the  column
vector with the components $(\mathbf A\mathbf v)_i=A_{ij}v_j$. The
product of a row vector $\mathbf v^\top$ and a matrix $\mathbf A$ is
the row vector with the components $(\mathbf v^\top\mathbf A)_j=
v_iA_{ij}$. We thus get
$\mathbf{v}^\top\mathbf{A}=(\mathbf{A}^\top\mathbf{v})^\top$. The
product of two matrices $\mathbf{C}=\mathbf{A}\mathbf{B}$ is a
 matrix with the entries $C_{ij}=A_{ik}B_{kj}$ with the summation
 convention over the repeated indices.
The scalar product
 of two vectors is $\mathbf{v}^\top\cdot \mathbf{u}=v_i u_i$.
By abuse of notation, we also denote by $\mathbf u\cdot \mathbf v=u_iv_i$ the scalar product of two column vectors.

The tensor product
of two vectors $\mathbf u, \mathbf v\in \mathbb R^n$ is the matrix
$A:=\mathbf u\otimes\mathbf v$ with the entries $A_{ij}:=u_iv_j$,
$i,j=1,\dots, n$. For the product of this matrix with a vector we
have
\[
(\mathbf u\otimes\mathbf v)\mathbf w= (\mathbf v^\top\cdot\mathbf
w)\mathbf u\quad\text{and}\quad \mathbf w^\top (\mathbf
u\otimes\mathbf v)=(\mathbf w^\top\cdot\mathbf u)\mathbf v.
\]
 There is a difference between
the Jacobian $\mathbf v'(x)$ of a vector function
 $\mathbf v:\mathbb R^n\to\mathbb R^m$ and its gradient: the Jacobian
\index{Jacobian}%
is denoted by
$$\mathbf{v}':=(\partial_{x_j} v_i)=
\left[\displaystyle{\dfrac{\partial\mathbf{v}}{\partial x_1}},
 \displaystyle{\dfrac{\partial\mathbf{v}}{\partial x_2}},\dots,
  \displaystyle{\dfrac{\partial\mathbf{v}}{\partial x_n}}\right],$$
and the gradient
\index{gradient}%
 is its transpose
$$\nabla \mathbf{v}={\mathbf{v}'}^\top=(\partial_{x_i} v_j)=
[\nabla v_1, \nabla v_2, \nabla v_3]
$$
 If a function $\mathbf v: \mathbb R^{n-k}\times\mathbb R^k$
 depends on two variables $y\in \mathbb R^{n-k}$ and $z\in \mathbb R^k$, then
$$
\mathbf v_y'(y,z)=(\partial_{y_j}v_i)_{ij}=
\left[\displaystyle{\dfrac{\partial\mathbf{v}}{\partial y_1}},
\displaystyle{\dfrac{\partial\mathbf{v}}{\partial y_2}},\dots,
\displaystyle{\dfrac{\partial\mathbf{v}}{\partial y_{n-k}}}\right],
$$
and $\nabla_y\mathbf v=(\mathbf v_y')^\top$. For the
 derivatives of a scalar function $H$ we will use the notation
\begin{equation}\label{notation1}
\frac{\partial H}{\partial y}(y,z)=H_y'(y,z) \text{~and~}
\frac{\partial^2 H}{\partial z\partial y}(y,z) =\nabla_z(H_y')(y,z).
\end{equation}
In other words, $\partial^2 H/\partial z\partial y$ is the matrix
with the entries $\partial^2 H/\partial z_i\partial y_j$ and
$$
\frac{\partial^2 H}{\partial z\partial y}= \Big(\frac{\partial^2
H}{\partial y\partial z}\Big)^\top.
$$
Let $\varrho>0$. Denote by $\Sigma_\varrho$ the complex
neighborhood
\begin{equation}\label{notation2}\Re_\varrho=\{{\boldsymbol\xi} \in \mathbb
T^{n-1}+i\mathbb{R}^{n-1}:\, \mbox{Re}\, \xi\in \mathbb T^{n-1},\, |\mbox{Im}\,
{\xi} _j|\le \varrho,~~1\le j\le n-1\}
\end{equation}
of the (n-1)-dimensional torus $\mathbb T^{n-1}$. We will identify $
\Re_\varrho$ with the strip
$$
\{{\boldsymbol\xi} :\, \text{Re}\, \xi\in \mathbb R^{n-1},\,
|\text{Im}\, {\xi} _j|\le \varrho,~~1\le j\le n-1\}\subset \mathbb
C^{n-1}.
$$
We denote by  $B^n_\varrho$ the
complex neighborhood
\begin{equation}\label{notation3}
B^n_\varrho=\{\boldsymbol p \in \mathbb{C}^n: \, |\text{\rm Re}
\boldsymbol p\,|\leq \varrho\in D,\,\, |\mbox{Im}\, {p} _j|\le
\varrho,~~1\leq j\leq n\}\subset \mathbb C^n.
\end{equation}.

\section{Problem formulation. Results}
\renewcommand{\theequation}{\arabic{section}.\arabic{equation}}
\setcounter{equation}{0}
\paragraph{Invariant tori.}

In this paper we consider the problem of  persistence of
quasi-periodic motions spanning lower dimensional tori in a
nearly-integrable  Hamiltonian system
\begin{equation}\label{ee1.01bis}
\dot{\boldsymbol p}=-\partial_{\boldsymbol q}\, H(\boldsymbol
p,\boldsymbol q),\quad \dot{\boldsymbol q}=\partial_{\boldsymbol
p}\, H(\boldsymbol p,\boldsymbol q)
\end{equation}
with the Hamiltonian
\begin{equation}
H(\boldsymbol{p},\boldsymbol{q})=H_0({\boldsymbol{p}})+\varepsilon
H_1(\boldsymbol{p},\boldsymbol{q}). \label{ee1.01}
\end{equation}
Here the functions  $H_0$ and $H_1$ are analytic on the set $D\times
\mathbb{T}^n$, where $D\subset \mathbb R^n$ is a neighborhood of
zero and $\mathbb{T}^n=\mathbb{R}^n/2\pi\mathbb{Z}^n$ is
$n$-dimensional torus. In particular,  $H_1$ is an analytic
$2\pi$-periodic function of the angle variable  $\boldsymbol
q=(q_1,\dots, q_n)$. For $\varepsilon =0$ the system is integrable
and the  phase space is foliated by invariant  $n$-dimensional
invariant tori $\{\mathbf q=\overline{\boldsymbol \omega}\,
t+\text{const.}, t\in \mathbb R\}$ with the frequency vectors
$\overline{\boldsymbol\omega}=\nabla H_0(\boldsymbol{p})$,
$\boldsymbol{p}\in D$. If  all components of
$\overline{\boldsymbol\omega}$ are rationally independent, then KAM
theory, see \cite{Kolmogorov} and  \cite{Arnol}, shows that, under
suitable non-degeneracy assumptions, such invariant tori persist
under small analytic perturbations.

An invariant $n$-dimensional invariant torus of the unperturbed
system is said to be resonance if the number of rationally
independent components of $\overline{\boldsymbol\omega}$ is $m<n$. A
resonance invariant torus  of the unperturbed system is foliated by
invariant $m$-dimensional tori. A resonance torus breaks-up under
small perturbation and only  a few of its constituent invariant
$m$-dimensional tori survive a perturbation.

The mathematical study of low-dimensional tori in Hamiltonian systems dates back to the late 1960s. The intensive treatment of the problem starts with pioneering papers
\cite{Melnikov} and \cite{Moser}. We refer reader to the monographs
\cite{Arnold-Kozlov-Neisch}, \cite{Broer}, \cite{Kuksin},  \cite{Nirenberg}, \cite{Poschel}
and the papers \cite{Eliasson}, \cite{Graff}, \cite{Treschev}, and \cite{Zender} for the state of art in the domain.

In this paper we deal with
the problem of persistence of (n-1)- dimensional tori in setting
close to the original Kolmogorov theorem.
Our goal is to find
conditions on the unperturbed Hamiltonian $H_0$ under which system
\eqref{ee1.01bis} has an invariant torus of dimension $n-1$ for
\emph{every analytic perturbation and all sufficiently small
$\varepsilon$}. This problem is still poor investigated. We refer to
papers \cite{Cheng96} and \cite{Corsi} for  results and discussion.

We focus on the problem on persistence of hyperbolic invariant tori
for  arbitrary analytic perturbation of a Hamiltonian function.
Throughout of the paper we assume  that the frequency vector
$\overline{\boldsymbol\omega}=\nabla_p H(0,0)$ has $n-1$ rationally
independent components and the Hessian $H_0''(0,0)$ is
nondegenerate. Then there exists, see \cite{Treschev}, an affine
symplectic transformation with rational coefficients such that in
new variables
$$
\boldsymbol{p}=(\boldsymbol{y},z_2)\in\mathbb{R}^{n-1}\times\mathbb{R},
\quad\boldsymbol{q}=(\boldsymbol{x},z_1)\in\mathbb{T}^{n-1}\times
\mathbb{T}, \quad \boldsymbol{z}=(z_1,z_2),
$$
the Hamiltonian $H(\boldsymbol{x}, \boldsymbol{y},
\boldsymbol{z})=H_0(\boldsymbol y, z_1)+H_1(\boldsymbol{x},
\boldsymbol{y}, \boldsymbol{z})$ satisfies the following condition.
\begin{itemize}
\item[{\bf H.1}] Recall \eqref{notation2} and \eqref{notation3}.
There is $\varrho>0$ with the properties. The function
$H_0(\boldsymbol y, z_2)$ is analytic in the complex ball $(\mathbf
y, z_2)\in B^n_{3\varrho}$; the perturbation  $H_1(\boldsymbol{x}, \boldsymbol{y},
\boldsymbol{z})$ is analytic on the cartesian product of the complex
strip $(\mathbf x, z_1)\in \Re_{3\varrho}$
 and the ball $(\mathbf y, z_2)\in
B^n_{3\varrho}$. In particular, $H_1$ is $2\pi$-periodic in
$\boldsymbol x$ and $z_1$. Moreover, there is $c>0$ such that
\begin{equation}
\sup\limits_{(\boldsymbol{y},z_2)\in B^n_{3\rho}}|
H_0(\boldsymbol{y},z_2)|+\sup\limits_{(\boldsymbol{y},z_2)\in
B^n_{3\rho},~(\boldsymbol{x},z_1)\in \Re_{3\rho }}|
H_1(\boldsymbol{x},\boldsymbol{y},\boldsymbol{z})|\le
c,\label{ee1.07}
\end{equation}

\item[{\bf H.2}] The frequency vector
$\overline{\boldsymbol\omega}=(\boldsymbol\omega, 0)$ and
Hamiltonian $H_0$ satisfy
\begin{equation}
\nabla_{\boldsymbol{y}} H_0(0,0)=\boldsymbol\omega\in
\mathbb{R}^{n-1},\quad \partial_{z_2}H_{0}(0,0)=0. \label{ee1.03}
\end{equation}
The components of $\boldsymbol\omega$ are rationally independent.
 and satisfy the diophantine condition
\begin{equation}\label{diophantine}
    \big|\,(\boldsymbol \omega^\top \cdot \boldsymbol s)^{-1}\,\big|\leq
    c_0 |\boldsymbol s|^{-n}\text{~~for all ~~}\boldsymbol s\in
    \mathbb Z^{n-1}\setminus \{0\}.
\end{equation}
\end{itemize}
In the  variables $(\mathbf x,\mathbf y,\mathbf z)$ system \eqref{ee1.01bis} reads
\begin{equation}
\dot{\boldsymbol{x}}= \nabla_{\boldsymbol{y}}H,\quad
\dot{\boldsymbol{y}}=- \nabla_{\boldsymbol{x}}H, \quad
\dot{\boldsymbol{z}}=\mathbf{J}\nabla_{\boldsymbol{z}}H,\text{~~where~~}
\mathbf{J}=\left(
\begin{array}{cc}
0&1\\
-1&0
\end{array}
\right).\label{ee1.02}
\end{equation}

\medskip
\begin{definition}
\label{def1.1} Hamiltonian system \eqref{ee1.02} has an analytic
$(n-1)$-dimensional invariant torus with the frequency vector
$\boldsymbol\omega$ if there exists an analytic canonical
transform
$\boldsymbol\vartheta:(\boldsymbol\xi,\boldsymbol\eta,\boldsymbol\zeta)
\mapsto (\boldsymbol{x},\boldsymbol{y},\boldsymbol{z})$,
\begin{equation}\begin{split}\label{canonical0}
\boldsymbol{x}=\boldsymbol\xi+\boldsymbol{u}(\boldsymbol\xi),\quad
\boldsymbol{y}=\boldsymbol{v}(\boldsymbol\xi)+
O(|\boldsymbol\eta|,|\boldsymbol\zeta|),\quad
\boldsymbol{z}=\boldsymbol{w}(\boldsymbol\xi)+O(|\boldsymbol\zeta|),\\
 \boldsymbol\xi\in\mathbb{T}^{n-1},\quad
\boldsymbol\eta\in\mathbb{R}^{n-1},\quad
\boldsymbol\zeta\in\mathbb{R}^2,
\end{split}\end{equation}
such that $\boldsymbol\vartheta$   puts $H$ into the normal form, i.e.,
\begin{equation}\label{normalform0}
N(\boldsymbol\xi,\boldsymbol\eta,\boldsymbol\zeta)\equiv
H\circ\boldsymbol\vartheta=\boldsymbol\omega\cdot{\boldsymbol\eta}+
\frac12\boldsymbol\Omega\, \boldsymbol\zeta\cdot\boldsymbol\zeta+
o(|{\boldsymbol\eta} |,|{\boldsymbol\zeta} |^2).
\end{equation}
Here $\boldsymbol \Omega$ is a constant symmetric matrix. Without
loss of generality we may assume that
\begin{equation}\label{zabyli}
\boldsymbol\Omega={\rm diag}(-k,1),\quad k\in\mathbb R,
\end{equation}
and
$$
\overline{\mathbf {u}}:={(2\pi)^{1-n}}\int_{{\mathbb
T}^{n-1}}\boldsymbol{u}({\boldsymbol\xi})\,d{\boldsymbol\xi}=0,\quad \overline{\boldsymbol w}=(\alpha,0),\quad \alpha\in
\mathbb R.
$$
The invariant torus is weakly-hyperbolic if $k\geq 0$. In the normal
coordinates  system \eqref{ee1.02}   reads
$$ \dot {\boldsymbol\xi}
=\boldsymbol\omega  +O(|{\boldsymbol\eta} |,|{\boldsymbol\zeta}
|),\quad \dot {\boldsymbol\eta} = O(|{\boldsymbol\eta}
|,|{\boldsymbol\zeta} |),\quad\dot {\boldsymbol\zeta}
=\mathbf{J}\boldsymbol\Omega {\boldsymbol\zeta}
+O(|{\boldsymbol\eta} |,|{\boldsymbol\zeta} |^2).
$$
It  has  the  solution ${\boldsymbol\xi} =\boldsymbol\omega t+const$,
${\boldsymbol\eta}=0$, ${\boldsymbol\zeta}=0$, which defines an
invariant torus.
\end{definition}
\medskip
The invariant tori which meet all requirements of Definition \ref{def1.1}
are known as reducible tori. It was proved in \cite{Bol-Tresh} that every analytic hyperbolic invariant torus is reducible.

If  $\varepsilon=0$,  then the canonical transformation
 $\boldsymbol\vartheta$ is trivial
 $$
\boldsymbol{x}=\boldsymbol\xi,\quad \boldsymbol{y}=
\boldsymbol\eta,\quad \boldsymbol{z}=(\alpha,0)+\boldsymbol{\zeta},
$$
 and the normal form is given by
\begin{multline}
N(\boldsymbol\eta, \zeta_2)\equiv
H_0(\boldsymbol\eta,\zeta_2)\\=\boldsymbol\omega\cdot\boldsymbol\eta+
\frac12\zeta_2^2+\boldsymbol{t}_0\cdot\boldsymbol\eta\zeta_2+
\frac12\mathbf{S}_0\boldsymbol\eta\cdot\boldsymbol\eta+
o(|{\boldsymbol\eta} |^2,|{\boldsymbol\eta} ||\zeta_2 |,|\zeta_2
|^2). \label{ee1.04}
\end{multline}
Here  $\boldsymbol t_0\in \mathbb R^{n-1}$ and the Hessian $\mathbf
S_0$ are defined by
\begin{equation}\label{sandt}
\boldsymbol{t}_0=\nabla_y\partial_{z_2}H(0,0),\quad
\mathbf{S}_0=\partial_y^2H_{0}(0,0,0).
\end{equation}
If $\varepsilon =0$, then
$\mathbf \Omega=\text{~diag~}\{0,1\}$ is degenerate.  In other
words, the problem of persistence of lower dimensional tori is
degenerate in the original Kolmogorov- Melnikov formulation.
Introduce the constant matrix
$$\mathbf{K}_0\,=\,\mathbf{S}_0-\boldsymbol{t}_0\otimes\boldsymbol{t}_{0},$$
where the vector $\boldsymbol t_0$ and the matrix $\mathbf S_0$ are
defined
 by equalities \eqref{sandt}.
\begin{theorem}
\label{theo1.4} Let Conditions $(\mathbf H.\boldsymbol{1})$ - $(\mathbf H.\boldsymbol{1})$   be
satisfied.
 Furthermore,  assume that
\begin{equation}
\mathbf{K}_0\boldsymbol\eta\cdot\boldsymbol\eta<0\quad\text{~~for
all~~}\boldsymbol\eta\in \mathbb
R^{n-1}\setminus\{0\}.\label{ee1.06}
\end{equation}
Then there is $\varepsilon_0>0$ such that for all $\varepsilon\in
(-\varepsilon_0,\varepsilon_0)$, Hamiltonian system \eqref{ee1.02}
has an invariant $(n-1)$-dimensional weakly hyperbolic  torus which meets all
requirements of Definition \ref{def1.1}.
\end{theorem}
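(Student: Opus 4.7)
The plan is to use Percival's variational principle to reformulate the problem, then reduce it to finding critical points of a scalar function $\Psi$ on the semi-cylinder $\mathbb{T}\times[0,\infty)$, as announced in the abstract. Parametrize a candidate $(n-1)$-dimensional invariant torus by the canonical map $\boldsymbol\vartheta$ of Definition \ref{def1.1}, so that it is encoded by the three vector fields $\boldsymbol u,\boldsymbol v$ on $\mathbb T^{n-1}$ and $\boldsymbol w=(w_1,w_2)$ together with the scalars $\alpha=\overline w_1$ and $k$ (the hyperbolicity exponent in $\boldsymbol\Omega=\mathrm{diag}(-k,1)$), with the normalizations $\overline{\boldsymbol u}=0$, $\overline{w}_2=0$. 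Translating $\dot{\boldsymbol\xi}=\boldsymbol\omega$ through $\boldsymbol\vartheta$, the invariance conditions become a system of conjugacy equations on $\mathbb T^{n-1}$ whose mean-zero parts are the Euler--Lagrange equations of a Percival-type action functional $\mathcal A_\varepsilon[\boldsymbol u,\boldsymbol v,\boldsymbol w;\alpha,k]$, while the mean parts supply two scalar solvability conditions that depend only on $(\alpha,k)$ and serve as the two partial derivatives of $\Psi$.

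Next, for each fixed $(\alpha,k)$ in a small neighbourhood of $(\overline w_1^{(0)},0)$ I would construct the minimizer/critical vector fields $(\boldsymbol u,\boldsymbol v,\boldsymbol w)$ of $\mathcal A_\varepsilon$ by a Kolmogorov--Newton iteration in the spaces of functions analytic on $\Re_\varrho$, using the Diophantine condition \eqref{diophantine} to invert the transport operator $\boldsymbol\omega\cdot\nabla_\xi$ on the mean-zero sector. At $\varepsilon=0$ the solution is the trivial one $\boldsymbol u=\boldsymbol v=0$, $\boldsymbol w=(\alpha,0)$, $k=0$; the iterative step is super-convergent because the degenerate direction, i.e.\ the $(z_1,z_2)$--plane and the zero frequency along $z_1$, has been absorbed into the two explicit parameters $(\alpha,k)$. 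Substituting the constructed $(\boldsymbol u,\boldsymbol v,\boldsymbol w)$ back into $\mathcal A_\varepsilon$ defines $\Psi(\alpha,k;\varepsilon)$ on the semi-cylinder, and by construction its critical points correspond bijectively to weakly hyperbolic invariant tori meeting all the requirements of Definition \ref{def1.1}.

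With $\Psi$ in hand I would analyse its structure at $\varepsilon=0$. A short Taylor computation using \eqref{ee1.04}--\eqref{sandt} yields
\[
\Psi(\alpha,k;0)=\mathrm{const},\qquad \partial_\varepsilon\Psi(\alpha,k;0)=\overline{H_1}(\alpha,k)+O(k^2),
\]
where $\overline{H_1}(\alpha,k)$ is a scalar function on $\mathbb T\times[0,\infty)$ obtained by averaging $H_1$ along the unperturbed $\boldsymbol\xi$--torus and the fast $\zeta_2$--oscillation. Computing the Hessian of $\Psi/\varepsilon$ at an interior critical point, the $\alpha\alpha$--entry is controlled by an average of $\partial^2_{z_1}H_1$, while the $kk$--entry is, to leading order, proportional to $\mathbf K_0^{-1}$ times second moments of $H_1$. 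Since $H_1$ is $2\pi$-periodic in $z_1$ the function $\alpha\mapsto\overline{H_1}(\alpha,0)$ must attain its maximum on $\mathbb T$, giving a critical $\alpha_*$, and assumption \eqref{ee1.06} (negative definiteness of $\mathbf K_0$) guarantees that in the $k$--direction the effective potential is a downward parabola, placing the critical point at some $k_*\ge 0$. A Morse-theoretic/implicit-function argument then extracts a persistent non-degenerate critical point of $\Psi(\cdot,\cdot;\varepsilon)$ on the closed semi-cylinder for all small $\varepsilon$.

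The main obstacle I foresee lies in the second step: to show that the KAM iteration produces $\Psi$ as a $C^2$ (ideally analytic) function of $(\alpha,k)$ uniformly down to $k=0$, where the normal linear part $\mathbf J\boldsymbol\Omega$ is nilpotent and the usual Melnikov non-resonance conditions fail. Overcoming this requires splitting the normal dynamics into the two scalar parameters $(\alpha,k)$ and a truly transverse remainder, and verifying that the linearized Newton equations at each step are solvable with tame estimates despite the parabolic degeneracy; this is precisely where the special structure of $\mathbf K_0$ enters, since negative definiteness prevents a second kernel direction from appearing and thus keeps the reduction to two parameters consistent throughout the iteration.
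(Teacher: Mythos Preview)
Your overall architecture---Percival functional, reduction to a function $\Psi$ of two scalar parameters, and a critical-point argument---matches the paper's. But two concrete points diverge from what actually makes the proof work.

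First, the paper does not run a KAM iteration directly on the torus equations for fixed $(\alpha,k)$ and ``record the mean parts'' as residuals. Instead it follows Moser's Lyapunov--Schmidt device: one adds a modification term $m z_1+\tfrac12 M z_1^2$ to the Hamiltonian and, for each $(\alpha,k)\in\mathbb T\times[0,1]$, solves by Nash--Moser for a canonical transformation putting $H+m z_1+\tfrac12 M z_1^2$ into normal form, with $(m,M)$ determined as part of the solution. This is what lets the iteration close uniformly down to $k=0$: at every Newton step the two obstructions are absorbed into the \emph{output} parameters $(m,M)$, not into the \emph{input} parameters $(\alpha,k)$. Your sentence ``the degenerate direction \dots\ has been absorbed into the two explicit parameters $(\alpha,k)$'' conflates the two pairs; both are needed, and the bifurcation equations are $m(\alpha,k)=M(\alpha,k)=0$. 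Without the modification the linearised step at $k=0$ does not have an approximate inverse and your iteration will not converge.

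Second, your identification of where $\mathbf K_0<0$ enters is reversed. In the paper one computes exactly (not to first order in $\varepsilon$)
\[
\partial_\alpha\Psi=(2\pi)^{n-1}(m+\alpha M)+\varsigma_1 M,\qquad
\partial_k\Psi=M\,\partial_k M\, L_{22},
\]
\[
\partial_\alpha^2\Psi=(2\pi)^{n-1}M+L_{11}+2L_{12}\,\partial_\alpha M+L_{22}(\partial_\alpha M)^2+\varsigma_2 M,
\]
with $|\varsigma_i|\le c\varepsilon^2$. Here $L_{22}\ge c^{-1}|w_1^*|_{-1}^2$ involves no $\mathbf K_0$ at all, while $L_{11}=(2\pi)^{n-1}\mathbf K_0\boldsymbol\mu_0\cdot\boldsymbol\mu_0+O(\varepsilon|\boldsymbol\mu_0|^2)$ with $\boldsymbol\mu_0=\mathbf K_0^{-1}\overline{\mathbf t_1}$. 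Thus $\mathbf K_0<0$ controls the $\alpha\alpha$-entry, not the $k$-direction. The argument then takes the \emph{global minimum} of $\Psi$ on $\mathbb T\times[0,1]$: if $k_0\in(0,1)$ one reads off $M=0$ from $\partial_k\Psi=0$ (since $L_{22}>0$ a.e.\ and $\partial_k M<0$); if $k_0=0$ one uses $\partial_\alpha^2\Psi\ge 0$ together with $L_{11}<0$ (this is where $\mathbf K_0<0$ is used) and a third-derivative estimate to force $M(\alpha_0,0)\ge 0$, whence $M$ has a zero on $[0,1]$. Your first-order expansion $\partial_\varepsilon\Psi=\overline{H_1}+O(k^2)$ and Morse/implicit-function scheme would not survive a degenerate $\overline{H_1}$; the minimisation plus the exact derivative formulas avoid any non-degeneracy hypothesis on $H_1$.
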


Now we can characterize the contents of the paper.
In this paper we apply to low-dimensional tori problem  a modified version
of the variational principle proposed in \cite{Percival1979} and developed in \cite{Plotnikov}.
In Section \ref{anna} we introduce   the special group of canonical transforms
which put the Hamiltonina $H$ in the normal form. We investigate in many details the structure of this group and its tangent space.

In the  Section \ref{luna} we employ the version
of the Lyapunov-Schmidt method developed in \cite{Moser} in order to reduce the problem to the finite system of functional equations named bifurcation equations. To this end
 we add the modification term $m z_1+M z_1^2/2$ to the
original  Hamiltonian. We obtain  the  modified Hamiltonian $H_m$,
\begin{equation*}
    H_m(\mathbf x, \mathbf y, \mathbf z)=H_0(\mathbf  y, z_2)+\varepsilon
    H_1(\mathbf x, \mathbf y, \mathbf z)+m z_1
+\frac12{M}z_1^2
\end{equation*}
Thus we come to the following modified problem.
For given $\alpha\in \mathbb
R^1$ and $k\in[0,1]$ to find  parameters $m,M$, and  a canonical
mapping
\begin{gather*}
\boldsymbol\theta:(\boldsymbol\xi,
\boldsymbol \eta, \boldsymbol
 \zeta)\to (\mathbf x, \mathbf y, \mathbf z),\\
\nonumber \boldsymbol x=\boldsymbol\xi+\mathbf
u(\boldsymbol\xi),\quad \mathbf y=\mathbf v+\mathbf
V(\boldsymbol\xi)\boldsymbol\eta+ \boldsymbol\Lambda(\boldsymbol\xi)
\boldsymbol\zeta+\frac{1}{2}\boldsymbol\zeta^\top \mathbf
R(\boldsymbol\xi)\boldsymbol\zeta,\\ \mathbf z=\mathbf
w(\boldsymbol\xi)+\mathbf W(\boldsymbol\xi)\boldsymbol\zeta.
\end{gather*}
 which  puts
 the modified hamiltonian in the normal form ,i.e.,
\begin{equation}\label{luna7}
 H_m\circ \boldsymbol\theta= \boldsymbol\omega^\top \cdot
 \boldsymbol\eta+\frac{1}{2}\boldsymbol\zeta^\top \boldsymbol\Omega
  \boldsymbol\zeta+ o(|\boldsymbol \eta|, |\boldsymbol \eta||\boldsymbol\zeta|,
 |\boldsymbol \zeta|^2),
 \end{equation}
In this framework $\alpha$ and $k$ are given, while
$m$, $M$, and $e$  are unknown and should be defined along with a
solution.  The solvability of
the modified problem  can be established  by using the Nash-Moser
implicit function theorem. The obtained  solution
to  the modified   problem is
 a function of $\alpha$ and $k$. In particular, we have $m=m(\alpha,k)$
 and $M=M(\alpha,k)$. Obviously, if $m=M=0$, then  the canonical mapping
  $\boldsymbol\theta$ puts the original  hamiltonian  $H(\mathbf x, \mathbf y, \mathbf z)$
  in the normal form.   Thus  we  reduce
  the original  problem to the system of two scalar equations
  \begin{equation*}
    m(\alpha,k)=M(\alpha,k)=0
  \end{equation*}
  named the bifurcation equations.

In Sections  \ref{lita}-\ref{ira} we prove that the modified problem has an analytic solution for all sufficiently small $\varepsilon$. In Sections \ref{kira}-\ref{zara} we investigate in details the dependence of solutions to the modified problem on the parameters $\alpha$ and $k$. We define the Jacobi vector fields associated with the derivatives of these solutions with respect to $\alpha$ and $k$.
We also investigate the properties of the quadratic form of the differential of the action functional.
Finally, in Section \ref{sveta} we prove the existence of the critical point of the action functional
and complete the proof of Theorem \ref{theo1.4}.

\section{Basic group of  of canonical transformations}\label{anna}
\medskip
\setcounter{equation}{0} \indent Recall that our task is to find  the canonical transformation
\eqref{canonical0}
 which puts $H$ into the
normal form \eqref{normalform0}.  An essential tool in our approach
is a special group of canonical transformations. In this section we
define such a group as a manifold in the space of analytic mappings
and investigate a structure of this manifold.

\paragraph{Canonical transformations} Let us consider the totality  of all analytic mappings
$\boldsymbol\vartheta: \mathbb T^{n-1}\times \mathbb
R^{n-1}\times\mathbb R^2\to \mathbb T^{n-1}\times \mathbb
R^{n-1}\times\mathbb R^2$ of the form
\begin{gather}\nonumber
\boldsymbol \vartheta : (\boldsymbol \xi,
\boldsymbol\eta,\boldsymbol\zeta) \to (\boldsymbol x, \boldsymbol
y,\boldsymbol z),\\\nonumber \boldsymbol x=\boldsymbol\xi+\mathbf
u(\boldsymbol\xi),\quad \mathbf y=\mathbf v+\mathbf
V(\boldsymbol\xi)\boldsymbol\eta+\boldsymbol\Lambda(\boldsymbol\xi)
\boldsymbol\zeta+\frac{1}{2}\boldsymbol\zeta^\top \mathbf
R(\boldsymbol\xi)\boldsymbol\zeta,
\\\label{anna1}
\mathbf z=\mathbf w(\boldsymbol\xi)+\mathbf
W(\boldsymbol\xi)\boldsymbol\zeta.
\end{gather}
 Here
$(n-1)\times (n-1)$- matrix  valued function $\mathbf V$,
$(n-1)\times 2$- matrix valued function $\boldsymbol \Lambda$,
$2\times 2$- matrix values function $\mathbf W$,
  and vector valued functions $\mathbf u, \mathbf v :\mathbb T^{n-1}\to \mathbb R^{n-1}$,
  $\mathbf v:\mathbb T^{n-1}\to \mathbb R^{n-1}$,
$\mathbf w:\mathbb T^{n-1}\to \mathbb R^{2}$,
 are analytic and $2\pi$ periodic in $\boldsymbol\xi$, $\mathbf R$ is a
  vector valued quadratic form given by
\begin{equation}\label{anna2}
 \boldsymbol\zeta^\top \mathbf R\boldsymbol\zeta=\big(\,
 \boldsymbol\zeta^\top \mathbf R_1\boldsymbol\zeta,\dots,\boldsymbol\zeta^\top
  \mathbf R_{n-1}\boldsymbol\zeta\big)^\top, \quad \mathbf R_i=\mathbf R_i^\top.
\end{equation}
where $2\times 2$ matrix valued functions $\mathbf R_i(\boldsymbol
\xi)$ are analytic and $2\pi$ periodic.

Recall that  $\boldsymbol\vartheta:\mathbb
T^{n-1} \times \mathbb R^{n-1}\times\mathbb R^2\to \mathbb
T^{n-1}\times \mathbb R^{n-1}\times\mathbb R^2$ is a
 canonical if $\boldsymbol \vartheta^{-1}$ takes solutions of  hamiltonian
 system with a hamiltonian $H$ to solutions of the hamiltonian system
with the hamiltonian $H\circ\boldsymbol\vartheta$. The
 mapping $\boldsymbol \vartheta$ is a canonical if and only if its Jacobi
 matrix is symplectic, i.e.,
\begin{equation}
(\boldsymbol\vartheta')^\top\,
\mathbf{J}_{2n}\,\boldsymbol\vartheta'\,=\,\mathbf{J}_{2n},
\label{anna3}
\end{equation}
where
$$\mathbf{J}_{2n}=\left(
\begin{array}{ccc}
0&\mathbf{I}&0\\
-\mathbf{I}_{n-1}&0&0
\\0&0&\mathbf{J}
\end{array}
\right), \quad \mathbf J=\left(
\begin{array}{cc}
0&1\\
-1&0
\end{array}
\right),$$
 $\mathbf I$ is the identity matrix.
 The following First Structure  Theorem gives the complete description
 of the set of canonical mappings $\boldsymbol\vartheta\in \mathcal A$.
\begin{theorem}\label{anna4} {\bf First Structure Theorem.} Let $\boldsymbol \vartheta \in \mathcal A$
is given by \eqref{anna1}. Furthermore assume that the there are
analytic inverses $(\text{Id}+\mathbf u)^{-1}$,
 $\mathbf V^{-1}$, and $\mathbf W^{-1}$. Then $\boldsymbol \vartheta $ is canonical
  if and only if
\begin{subequations}\label{anna5}
\begin{gather}\label{anna6}
\mathbf V=(\mathbf I_{n-1}+\mathbf u')^{-\top},\\\label{anna7}
\text{\rm det~} \mathbf W=1 \Leftrightarrow \mathbf W^\top \,
\mathbf J\, \mathbf W= \mathbf J,\\\label{anna8} \boldsymbol\Lambda=
-\mathbf V\, (\mathbf w')^\top\,\mathbf J\, \mathbf W\equiv -\mathbf
V\nabla\mathbf w\, \mathbf J\,\mathbf W,\\\label{anna9} \mathbf R_i=
-V_{ik}\, \frac{\partial}{\partial \xi_k}\Big(\mathbf W^\top\Big)\,
\mathbf J\,\mathbf W,\\\label{anna10} d(\xi_k+u_k)\wedge dv_k+d
w_1\wedge d w_2=0.
\end{gather}
 Moreover, there exist  $\beta\in \mathbb R^{n-1}$ and  $2\pi$ -periodic scalar
 function $\varphi_0(\boldsymbol\xi)$ such that
\begin{equation}\label{anna11}
    \mathbf v=\beta+\mathbf V\,\big( \,\nabla\varphi_0-w_2\nabla w_1\,\big).
\end{equation}
\end{subequations}

\end{theorem}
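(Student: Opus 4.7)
My plan is to exploit the standard characterization of canonical maps in terms of Liouville 1-forms. The symplectic condition \eqref{anna3} is equivalent to $\boldsymbol\vartheta^*\omega=\omega_{\mathrm{new}}$, where $\omega=\sum_i dx_i\wedge dy_i+dz_1\wedge dz_2$ and $\omega_{\mathrm{new}}$ has the analogous form in the new coordinates. Writing both symplectic forms as exterior derivatives of Liouville 1-forms, canonicity of $\boldsymbol\vartheta$ reduces to closedness of
\[\alpha:=\boldsymbol\vartheta^*\big(\mathbf y^\top d\mathbf x+z_2\,dz_1\big)-\boldsymbol\eta^\top d\boldsymbol\xi-\zeta_2\,d\zeta_1\]
on $\mathbb T^{n-1}\times\mathbb R^{n-1}\times\mathbb R^2$. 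By the structure \eqref{anna1}--\eqref{anna2}, $\alpha$ is polynomial in $(\boldsymbol\eta,\boldsymbol\zeta)$, affine in $\boldsymbol\eta$ and quadratic in $\boldsymbol\zeta$, and I would decompose it into homogeneous pieces $\alpha=\alpha_{00}+\alpha_{10}+\alpha_{01}+\alpha_{02}$ indexed by the respective degrees. Since $d$ preserves polynomial degree in $(\boldsymbol\eta,\boldsymbol\zeta)$, the condition $d\alpha=0$ splits into four independent closedness conditions, one for each homogeneous component.

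Next I would read off \eqref{anna6}--\eqref{anna9} by matching coefficients in each $d\alpha_{ab}$. The coefficient of $d\boldsymbol\eta\wedge d\boldsymbol\xi$ in $d\alpha_{10}$ yields $\mathbf V^\top(\mathbf I+\mathbf u')=\mathbf I$, giving \eqref{anna6}. The constant-in-$\boldsymbol\zeta$ part of the $d\zeta_1\wedge d\zeta_2$ coefficient of $d\alpha_{02}$ yields $\det\mathbf W=1$; the stated equivalence with $\mathbf W^\top\mathbf J\mathbf W=\mathbf J$ follows from the $2\times 2$ identity $\mathbf W^\top\mathbf J\mathbf W=(\det\mathbf W)\mathbf J$. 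The coefficient of $d\boldsymbol\xi\wedge d\boldsymbol\zeta$ in $d\alpha_{01}$ yields $(\mathbf I+\mathbf u')^\top\boldsymbol\Lambda=-(\mathbf w')^\top\mathbf J\mathbf W$, which upon inversion via \eqref{anna6} becomes \eqref{anna8}. The $\boldsymbol\zeta$-linear part of the $d\boldsymbol\xi\wedge d\boldsymbol\zeta$ coefficient of $d\alpha_{02}$ yields a linear system for the matrices $\mathbf R_i$; inverting $(\mathbf I+\mathbf u')^\top$ gives \eqref{anna9}. The symmetry $\mathbf R_i=\mathbf R_i^\top$ is automatically consistent, because differentiating $\mathbf W^\top\mathbf J\mathbf W=\mathbf J$ shows that $\partial_{\xi_k}\mathbf W^\top\cdot\mathbf J\cdot\mathbf W$ is symmetric.

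The closedness of the purely $\boldsymbol\xi$-dependent piece $\alpha_{00}=\mathbf v^\top d\mathbf x+w_2\,dw_1$ on $\mathbb T^{n-1}$ is precisely the 2-form identity \eqref{anna10}. To obtain \eqref{anna11}, I would invoke the cohomology of the torus: any closed 1-form on $\mathbb T^{n-1}$ differs from an exact form by a constant-coefficient 1-form, and since $d\mathbf x=(\mathbf I+\mathbf u')d\boldsymbol\xi$ differs from $d\boldsymbol\xi$ by the differential of the periodic function $\mathbf u$, it represents the same cohomology class. Hence $\alpha_{00}=\beta^\top d\mathbf x+d\varphi_0$ for some $\beta\in\mathbb R^{n-1}$ and $2\pi$-periodic $\varphi_0$; applying $\mathbf V=(\mathbf I+\mathbf u')^{-\top}$ to both sides and using \eqref{anna6} yields \eqref{anna11}. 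The converse implication is a direct verification that the system \eqref{anna6}--\eqref{anna10} forces $d\alpha=0$. I expect the main obstacle to be careful bookkeeping of indices when extracting each homogeneous piece of $d\alpha$, and the slightly delicate cohomological step that places $\beta$ outside $\mathbf V$ in \eqref{anna11} rather than inside it.
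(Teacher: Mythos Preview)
Your approach is correct and genuinely different from the paper's. The paper works entirely at the level of the Jacobi matrix: it computes $\boldsymbol\vartheta'$ as a $3\times 3$ block matrix, substitutes into $(\boldsymbol\vartheta')^\top\mathbf J_{2n}\boldsymbol\vartheta'=\mathbf J_{2n}$, and obtains four nontrivial block equations. Two of these immediately give \eqref{anna6}--\eqref{anna9}; the remaining block equation, when expanded in powers of $(\boldsymbol\eta,\boldsymbol\zeta)$, produces four further identities, three of which the paper then verifies by direct calculation to be \emph{consequences} of \eqref{anna6}--\eqref{anna9}, leaving only \eqref{anna10} as an independent condition. Your Liouville 1-form decomposition $\alpha=\alpha_{00}+\alpha_{10}+\alpha_{01}+\alpha_{02}$ packages the same information more conceptually: your ``primary'' conditions (the $d\boldsymbol\eta\wedge d\boldsymbol\xi$, $d\boldsymbol\zeta\wedge d\boldsymbol\xi$, and $d\boldsymbol\zeta\wedge d\boldsymbol\zeta$ coefficients) yield \eqref{anna6}--\eqref{anna9}, while the residual $d\boldsymbol\xi\wedge d\boldsymbol\xi$ coefficients in $d\alpha_{10}$, $d\alpha_{01}$, $d\alpha_{02}$ are exactly the paper's three redundant identities. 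You will still need to check that redundancy explicitly for the converse direction---it is not quite as trivial as ``direct verification'' suggests, and is where the paper invests most of its effort---but your framework makes the structure clearer. Your cohomological derivation of \eqref{anna11} is cleaner than the paper's, which reaches the same conclusion by passing to the $\mathbf x$-variable via $\nabla_x=\mathbf V\nabla_\xi$ and integrating by hand.
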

\begin{proof}The proof is given in Appendix \ref{lena}.
\end{proof}
 It is clear that the totality of the canonical mappings $\boldsymbol\vartheta$  contains the identity mapping and it is
closed
 with respect to the composition. In other words, it  can be regarded
 as a subgroup of the group of analytical diffeomorphisms.
Every  mapping $\boldsymbol\vartheta$ is completely characterized by the
 vector $\boldsymbol\Theta$ of its coefficients,
\begin{equation}\label{anna14}
    \boldsymbol\Theta=\big(\mathbf u, \mathbf v, \mathbf w, \mathbf V,
    \boldsymbol \Lambda, \mathbf W,
    \mathbf R_i\big).
\end{equation}
 The group structure on the set of the  canonical mappings $\boldsymbol \vartheta$ induces the group structure on the set of the corresponding vectors $\boldsymbol\Theta$. Hence the totality of coefficient vectors $\Theta$
 corresponding to the canonical transforms $\boldsymbol\vartheta$ can be regarded as a nonlinear manifold
 in the linear space of all vector-valued analytic $2\pi$-periodic vector-valued functions $\boldsymbol\Theta$. We denote this manifold by $\mathcal G$.
 Now our task is to  supply $\mathcal G$  with a local chart.

In view of Theorem \ref{anna4} the  vector $\boldsymbol \Theta$ are completely defined by  a constant
 vector $\beta$, a scalar function $\varphi_0$, vector functions $\mathbf u$, $\mathbf w$,
 and by three elements of a symplectic matrix $\mathbf W$. We can consider these
  quantities as local coordinates for the manifold $\mathcal G$.
More precisely, introduce the vector function
\begin{equation}\label{anna15}
    \boldsymbol\varphi=(\beta,\varphi_0, \mathbf u, \mathbf w, W_{11}, W_{12}, W_{21})
\end{equation}
such that the functions $W_{11}, W_{12}, W_{21}, \varphi_0: \mathbb
T^{n-1}\to\mathbb R$,
 the vector functions $\mathbf u: \mathbb T^{n-1}\to \mathbb R^{n-1}$,
  $\mathbf w: \mathbb T^{n-1}\to \mathbb R^2$ are analytic and
$$
\overline{\mathbf u}\equiv (2\pi)^{1-n}\int_{\mathbb T^{n-1}}\mathbf
u\, d\xi=0, \quad \overline{\psi_0}\equiv (2\pi)^{1-n}\int_{\mathbb
T^{n-1}}\psi_0\, d\xi=0,
$$
Next, introduce the mapping $ \boldsymbol \Theta(\boldsymbol\varphi)$
defined by the formulae
\begin{equation}\label{anna16}
    \boldsymbol \Theta:\boldsymbol\varphi\mapsto\boldsymbol\Theta(\boldsymbol\varphi)=
    \big(\mathbf u, \mathbf v, \mathbf w, \mathbf V,\boldsymbol \Lambda, \mathbf W,
    \mathbf R_i\big),
\end{equation}
where
\begin{equation}\label{anna17}\begin{split}
 \mathbf V=(\mathbf I_{n-1}+\mathbf u')^{-\top}, \quad \mathbf v=\beta +
 \mathbf V(\nabla\varphi_0-w_2\nabla w_1),\\
 \mathbf W=\left(
\begin{array}{cc}
W_{11}&W_{12}\\
W_{21}& \frac{1}{W_{11}}(1+W_{12} W_{21})
\end{array}
\right),\\
\boldsymbol\Lambda =-\mathbf V(\mathbf w')^\top\mathbf J\mathbf W,
\quad \mathbf R_i=-V_{ik} \Big (\frac{\partial}{\partial \xi_k}
\mathbf W^{\top}\Big)\, \mathbf J\,\mathbf W.
\end{split}\end{equation}
In view of Theorem \ref{anna4} the mapping
$
    \boldsymbol\varphi\, \mapsto\, \boldsymbol \Theta(\boldsymbol\varphi)$
takes the vectors $\boldsymbol \varphi$ to the elements of the
 manifold $\mathcal G$. Hence, this mapping defines a local chart on $\mathcal G$.
 This chart is local since the mapping \eqref{anna16} develops singularities at
 the points where $W_{11}=0$ and $\text{\rm det~}(\mathbf I_{n-1}+ \mathbf u')=0$.

\paragraph{Tangent space. }
Denote by $\mathcal L$ the linear space of all coordinate vectors $\boldsymbol \varphi$.
Introduce that Gateaux differential
\begin{equation}\label{galaadd1}
    D\boldsymbol \Theta(\boldsymbol\varphi)[\delta\boldsymbol\varphi]
    =\lim\limits_{t\to 0} t^{-1}\big(\,\boldsymbol \Theta(\boldsymbol\varphi+t\delta\boldsymbol\varphi)-\boldsymbol \Theta(\boldsymbol\varphi)\,\big).
\end{equation}
The totality of all vectors
\begin{equation}\label{galaadd2}
    \delta\boldsymbol \Theta= D\boldsymbol \Theta(\boldsymbol\varphi)[\delta\boldsymbol\varphi],
    \quad \delta\boldsymbol\varphi\in \mathcal L,
\end{equation}
can be regarded as the tangential space $\text{Tan}_\Theta\mathcal G$ to the manifold $\mathcal G$ at the point $\boldsymbol\Theta(\boldsymbol\varphi)$. Direct calculations show that for every
\begin{equation}\label{gala1}
\delta\boldsymbol\varphi=(\delta\mathbf \beta, \delta\mathbf
\varphi_0,
 \delta\mathbf u, \delta\mathbf
w, \delta W_{11},\delta W_{12},\delta W_{21}).
\end{equation}
the vector
\begin{equation}\label{gala3}
    \delta\boldsymbol\Theta=\big(\, \delta\mathbf u, \delta\mathbf v,
    \delta\mathbf w, \delta\mathbf V, \delta\mathbf W, \delta\boldsymbol\Lambda,
    \delta\mathbf R_i\,\big).
\end{equation}
is defined by the following formulae
\begin{gather}\nonumber
\delta\mathbf V=-\mathbf V\,\delta \nabla_\xi\delta\mathbf
u\,\mathbf V, \quad \delta W_{22}=\frac{1}{W_{11}}\big(W_{12}\delta
W_{21}+W_{21}\delta W_{12}-
W_{22}\delta W_{11}\big),\\
\label{gala2}\delta \mathbf W=\left(
\begin{array}{cc}
\delta W_{11}&\delta W_{12}\\
\delta W_{21}&\delta W_{22}
\end{array}
\right),\\\nonumber \delta\mathbf v=\delta\beta+\mathbf
V(\nabla\delta\varphi_0-w_2\nabla\delta w_1- \delta w_2 \nabla w_1)+
\delta \mathbf V\,(\nabla \varphi_0-w_2\nabla w_1),\\\nonumber
\delta\boldsymbol\Lambda=\delta \mathbf V \, \nabla w\, \mathbf
J\mathbf W- \mathbf V\, \nabla(\delta\mathbf w)\mathbf J\mathbf
W-\mathbf W\, \nabla\mathbf w\,
 \mathbf J\, \delta\mathbf W,\\\nonumber
\delta\mathbf R_i=-\delta V_{ik}
\frac{\partial}{\partial\xi_k}(\mathbf W^{\top}) \mathbf J\mathbf W-
 V_{ik} \frac{\partial}{\partial\xi_k}(\delta\mathbf W^{\top})\mathbf J\mathbf W-
 V_{ik} \frac{\partial}{\partial\xi_k}(\mathbf W^{\top})\mathbf J\, \delta\mathbf W.
\end{gather}
The right hand sides of \eqref{gala2} are linear differential
operators acting on $\delta\boldsymbol\varphi$. They can be regarded
as the G\^{a}teaux derivatives of the components of
$\boldsymbol\Theta(\boldsymbol\varphi)$ at the point $\boldsymbol\varphi$ in the direction
$\delta\boldsymbol\varphi$.
 These relations can be  simplified in the particular case when
 $$
\boldsymbol\varphi=\boldsymbol\varphi_0(\alpha)\equiv
(0,0,0,\alpha\mathbf e_1,1,0,0).
$$
Notice that for $\boldsymbol\varphi=\boldsymbol\varphi_0$ we have
$$
\mathbf V=\mathbf I_{n-1},\quad \mathbf W=\mathbf I,
\quad\boldsymbol\Lambda=0,
 \quad\mathbf R=0.
$$
We consider this very special case in many details. For
$\boldsymbol\varphi=\boldsymbol\varphi_0$  we will use the special
notation for the components of the vector
$\delta\boldsymbol\varphi$:
\begin{equation}\label{sima4}
    \boldsymbol\Upsilon\equiv \delta\boldsymbol\varphi=
    (\boldsymbol\nu,  \psi_0,  \boldsymbol\chi,\boldsymbol\lambda, \Gamma_{11},
    \Gamma_{12}, \Gamma_{21}).
\end{equation}
This means that
\begin{equation}\label{sima3}\begin{split}
 \boldsymbol\nu=\delta\boldsymbol \beta,  \quad \psi_0=
 \delta\varphi_0, \quad   \boldsymbol\chi =\delta \boldsymbol u,
 \quad \boldsymbol\lambda =\delta\mathbf w, \\ \Gamma_{11}=\delta W_{11}, \quad
    \Gamma_{12}=\delta W_{12}, \quad \Gamma_{21}=\delta W_{21},
\end{split}\end{equation}
It is easily seen that in this case
\begin{equation}\label{sima5}
    \mathfrak Z:=\delta\boldsymbol\Theta\equiv D_\varphi
    \boldsymbol\Theta(\boldsymbol\varphi_0)[\boldsymbol\Upsilon]
\end{equation}
is defined by the equalities
\begin{equation}\label{sima6}
\mathfrak Z\equiv \Big(\boldsymbol\chi,\, \boldsymbol\mu,\, \boldsymbol
\lambda,\,
 -\nabla\boldsymbol\chi,\, \boldsymbol\Gamma,\, \nabla (\mathbf J\boldsymbol\lambda),\, \partial_{\xi_i}
 (\mathbf J\, \boldsymbol\Gamma)\, \Big),\end{equation}
where
\begin{equation}\label{sima8}
\boldsymbol\mu=\boldsymbol\nu+\nabla\psi_0, \quad \text{Tr~}\boldsymbol\Gamma=0.
\end{equation}
This means that in this case
$$
\delta\mathbf V=-\nabla_\xi\boldsymbol\chi, \quad
\delta\boldsymbol\Lambda= -\nabla\boldsymbol\lambda\, \mathbf
J\equiv \nabla(\mathbf J\boldsymbol\lambda),\quad
 \mathbf R_i=-\frac{\partial}{\partial\xi_i}\boldsymbol\Gamma^\top\,
 \mathbf J\equiv \frac{\partial}{\partial\xi_i}(\mathbf J\boldsymbol\Gamma).
$$
$$
\delta\mathbf W=\boldsymbol\Gamma=\left(
\begin{array}{cc}
\Gamma_{11}&\Gamma_{12}\\
\Gamma_{21}&\Gamma_{22}
\end{array}
\right),\text{~~where~~} \Gamma_{22}=-\Gamma_{11}.
$$
In other words, we have the equality $\text{Tan~}_{\Theta(\boldsymbol \varphi_0)}\mathcal G=\{\mathfrak Z\}.$ In particular, all tangent spaces to the manifold $\mathcal G$ at points $\boldsymbol\Theta(\boldsymbol\varphi_0)$ coincide.
It is a remarkable fact of the theory that  for every $\boldsymbol\varphi$ there is a
canonical algebraic isomorphism
 between the tangent space to the manifold $\mathcal G$ at point $\boldsymbol\Theta(\boldsymbol\varphi)$ and the tangent space to $\mathcal G$ at $\boldsymbol\Theta(\boldsymbol\varphi_0)$.
This result is given by the following theorem

\begin{theorem}\label{sima12} {\bf Second Structure Theorem.}
$(\mathbf i)$ Let $\boldsymbol\varphi\in \mathcal L$.
Let   $\boldsymbol\Upsilon$ and    $\mathfrak Z$
      be given by
      \eqref{sima4} and  \eqref{sima6}. Then there exists a vector field
    $$\delta\boldsymbol\varphi=(\delta\mathbf \beta, \delta\mathbf \varphi_0,
     \delta\mathbf u, \delta\mathbf
w, \delta W_{11},\delta W_{12},\delta W_{21})\in X_{\sigma,d-1}$$
such that the corresponding vector of coefficients
 $$\delta \boldsymbol \Theta(\boldsymbol\varphi)=
\big(\, \delta\mathbf u, \delta\mathbf v, \delta\mathbf w,
\delta\mathbf V, \delta\mathbf W, \delta\boldsymbol\Lambda,
\delta\mathbf R_i\,\big)$$
  given by \eqref{gala3}-\eqref{gala2}
 are connected with the  vector fields  $\boldsymbol\Upsilon$  and $\mathfrak Z$
  by the relations
\begin{subequations}\label{sima13}
\begin{gather}
\label{sima13a} \delta\mathbf u\,=\, \chi_i\frac{\partial}{\partial
\xi_i} \,(\boldsymbol\xi +
\mathbf u),\\
\label{sima13b} \delta\mathbf v\,=\, \mathbf V
\boldsymbol\mu+\boldsymbol\Lambda \boldsymbol\lambda+ \chi_i\,
\frac{\partial}{\partial \xi_i} \mathbf v,\\\label{sima13c}
\delta\mathbf w\,=\, \mathbf W \boldsymbol \lambda+\chi_i\,
\frac{\partial}{\partial \xi_i} \mathbf w,\\\label{sima13d}
\delta\mathbf W\,=\, \mathbf W \boldsymbol \Gamma+\chi_i\,
\frac{\partial}{\partial \xi_i} \mathbf W,\\\label{sima13e}
\delta\mathbf V\,=\, -\mathbf V\nabla_\xi\boldsymbol \chi+\chi_i\,
\frac{\partial}{\partial \xi_i} \mathbf V,\\\label{sima13f}
\delta\boldsymbol\Lambda\,=\, \mathbf V\nabla_\xi(\mathbf
J\boldsymbol \lambda)+ \chi_i\, \frac{\partial}{\partial \xi_i}
\boldsymbol \Lambda+\boldsymbol\lambda^\top\mathbf R+
\boldsymbol\Lambda\boldsymbol\Gamma,\\\label{sima13g} \delta\mathbf
R_i=\frac{\partial}{\partial \xi_i}(\mathbf J\boldsymbol \Gamma)+
\mathbf R_i \boldsymbol\Gamma+(\mathbf R_i
\boldsymbol\Gamma)^\top+\chi_i\,
\frac{\partial}{\partial \xi_i} \mathbf R_i,\\
\label{sima13h} \delta\boldsymbol \beta=\boldsymbol \nu, \quad
\delta \varphi_0=\psi_0 +w_2\delta w_1+\chi_i\,
\frac{\partial}{\partial \xi_i} \varphi_0-w_2\chi_i\,
\frac{\partial}{\partial \xi_i} w_1-\boldsymbol \nu \cdot \mathbf u.
\end{gather}
\end{subequations}
Here $\mathbf u$, $\varphi_0$, $\mathbf W$, $\mathbf V$,
$\boldsymbol \Lambda$, and $\mathbf R_i$ are the components of the
vector fields $\boldsymbol\varphi$ and
 $\boldsymbol\Theta(\boldsymbol\varphi)$.

\noindent $(\mathbf i\mathbf i)$ Conversely, let
 $\delta\boldsymbol\varphi$
and  $\delta\boldsymbol\Theta(\boldsymbol \varphi)$ be given by the relations \eqref{gala1}-\eqref{gala2} . Then there exist vector fields
$$
\boldsymbol\Upsilon=(\boldsymbol\nu,  \psi_0,
\boldsymbol\chi,\boldsymbol\lambda, \Gamma_{11},
    \Gamma_{12}, \Gamma_{21})\in X_{\sigma,d-1}$$
and
\begin{equation*}
\mathfrak Z\equiv \Big(\boldsymbol\chi,\, \boldsymbol\mu,\, \boldsymbol
\lambda,\,
 -\nabla\boldsymbol\chi,\, \boldsymbol\Gamma,\, \nabla (\mathbf J\boldsymbol\lambda),\, \partial_{\xi_i}
 (\mathbf J\, \boldsymbol\Gamma)\, \Big),\end{equation*}
which are   connected with the components of the
vector fields $\delta\boldsymbol \varphi$ and  $\delta\boldsymbol
\Theta$    by the recurrent relations
\begin{subequations}\label{sima15}
\begin{gather}\label{sima15a}
\boldsymbol\chi\,= \,\mathbf V^\top \,\delta\mathbf
u\\\label{sima15b} \boldsymbol\lambda\,=\, \mathbf
W^{-1}\delta\mathbf w-\chi_i\,\mathbf W^{-1}
\frac{\partial}{\partial \xi_i} \mathbf w\\\label{sima15c}
\boldsymbol\Gamma\,=\, \mathbf W^{-1}\delta\mathbf W-\chi_i\,\mathbf
W^{-1} \frac{\partial}{\partial \xi_i} \mathbf W,\\\label{sima15d}
\boldsymbol\mu\,=\, \mathbf V^{-1}\Big(\delta\mathbf W+\chi_i\,
\frac{\partial}{\partial \xi_i} \mathbf v-
\boldsymbol\Lambda\boldsymbol\lambda\Big)\\\label{sima15e}
\nabla\psi_0=\boldsymbol\mu-\boldsymbol\nu,
 \quad \boldsymbol\nu=\frac{1}{(2\pi)^{n-1}}\int_{\mathbb T^{n-1}}
  \boldsymbol\mu\, d\boldsymbol\xi=\delta\boldsymbol\beta.
\end{gather}
\end{subequations}
\end{theorem}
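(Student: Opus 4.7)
The plan is to view the theorem as the explicit description of two mutually inverse linear bijections between the tangent spaces $\text{Tan}_{\boldsymbol\Theta(\boldsymbol\varphi_0)}\mathcal G$ and $\text{Tan}_{\boldsymbol\Theta(\boldsymbol\varphi)}\mathcal G$. Conceptually, these bijections arise because $\mathcal G$ is closed under composition: the transformation parametrised by $\boldsymbol\varphi_0+t\boldsymbol\Upsilon$ is, to first order in $t$, an infinitesimal canonical map near the identity, and its composition with $\boldsymbol\vartheta(\boldsymbol\varphi)$ traces a curve in $\mathcal G$ through $\boldsymbol\vartheta(\boldsymbol\varphi)$ whose derivative at $t=0$ is the corresponding $\delta\boldsymbol\Theta$. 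This group-theoretic picture explains the appearance in \eqref{sima13} of a Lie-derivative-like term $\chi_i\,\partial_{\xi_i}$ (coming from the reparametrisation of the $\boldsymbol\xi$ coordinate by $\delta\mathbf u$) combined with algebraic multiplication terms $\mathbf V\boldsymbol\mu$, $\mathbf W\boldsymbol\lambda$, $\mathbf W\boldsymbol\Gamma$, $\boldsymbol\lambda^\top\mathbf R$ and so on. The rigorous verification is then by direct computation from the parametric formulas \eqref{anna17} and their Gateaux derivatives \eqref{gala2}, using the First Structure Theorem \ref{anna4} to handle the canonicity constraints.

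For Part (ii), I would first compute $\delta\boldsymbol\Theta(\boldsymbol\varphi)$ from $\delta\boldsymbol\varphi$ via \eqref{gala2}, and then define $\boldsymbol\chi, \boldsymbol\lambda, \boldsymbol\Gamma, \boldsymbol\mu, \boldsymbol\nu, \psi_0$ directly by the recurrent formulas \eqref{sima15a}--\eqref{sima15e}. Three consistency checks must be discharged. First, $\text{Tr}\,\boldsymbol\Gamma=0$: this follows by differentiating $\det\mathbf W=1$ from \eqref{anna7} and noting that $\chi_i\,\text{Tr}(\mathbf W^{-1}\partial_{\xi_i}\mathbf W)=\chi_i\,\partial_{\xi_i}\ln\det\mathbf W=0$. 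Second, the equation \eqref{sima15e} for $\psi_0$ is solvable on the torus: the right-hand side $\boldsymbol\mu-\boldsymbol\nu$ has zero mean by construction, and $\boldsymbol\nu=\delta\boldsymbol\beta$ is the mean of $\boldsymbol\mu$. Third, the vector $\mathfrak Z$ must have the form \eqref{sima6}; i.e.\ its last three slots equal $-\nabla\boldsymbol\chi$, $\nabla(\mathbf J\boldsymbol\lambda)$, $\partial_{\xi_i}(\mathbf J\boldsymbol\Gamma)$. These identities follow by linearising the canonicity relations \eqref{anna6}, \eqref{anna8}, \eqref{anna9}.

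For Part (i), the construction is hierarchical: \eqref{sima13a} yields $\delta\mathbf u$; \eqref{sima13c} yields $\delta\mathbf w$; \eqref{sima13d} with $\text{Tr}\,\boldsymbol\Gamma=0$ provides an admissible $\delta\mathbf W$ whose three independent coordinates give $\delta W_{11},\delta W_{12},\delta W_{21}$, with the fourth entry $\delta W_{22}$ forced by the linearised symplecticity; finally \eqref{sima13h} and \eqref{sima13b} give $\delta\boldsymbol\beta$ and $\delta\varphi_0$. The remaining relations \eqref{sima13e}, \eqref{sima13f}, \eqref{sima13g} must then be verified as automatic consequences: their left-hand sides are Gateaux derivatives, via \eqref{gala2}, of the algebraic formulas \eqref{anna17} for $\mathbf V,\boldsymbol\Lambda,\mathbf R_i$, and their right-hand sides must be recovered by expanding and reorganising. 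The main obstacle is the bookkeeping in \eqref{sima13f} and \eqref{sima13g}, where the coupling terms $\boldsymbol\lambda^\top\mathbf R$ and $(\mathbf R_i\boldsymbol\Gamma)^\top$ reflect the quadratic dependence of the parametrisation \eqref{anna1} on $\boldsymbol\zeta$ and the symmetry $\mathbf R_i=\mathbf R_i^\top$; here careful use of the structural identities \eqref{anna8}, \eqref{anna9} relating $\mathbf R_i, \boldsymbol\Lambda$ and the derivatives of $\mathbf W$ is essential to match terms precisely.
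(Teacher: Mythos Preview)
Your approach to Part~(i) is essentially the same as the paper's: define $\delta\mathbf u,\delta\mathbf w,\delta\mathbf W,\delta\varphi_0,\delta\boldsymbol\beta$ via \eqref{sima13a}, \eqref{sima13c}, \eqref{sima13d}, \eqref{sima13h}, and then verify that the remaining relations \eqref{sima13e}--\eqref{sima13g} (equivalently, the compatibility constraints \eqref{gala2} for $\delta\mathbf V,\delta\boldsymbol\Lambda,\delta\mathbf R_i$) hold automatically. The paper carries this out in five computational steps, one per constraint, using repeatedly the identity $V_{ik}\partial_{\xi_k}=\partial_{x_i}$ and the structural relations \eqref{anna7}--\eqref{anna9}; your description of the bookkeeping matches this.

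For Part~(ii), however, there is a genuine gap. You write that ``the equation \eqref{sima15e} for $\psi_0$ is solvable on the torus: the right-hand side $\boldsymbol\mu-\boldsymbol\nu$ has zero mean by construction.'' But the equation is $\nabla\psi_0=\boldsymbol\mu-\boldsymbol\nu$, and on $\mathbb T^{n-1}$ a vector field is a gradient iff it is curl-free \emph{and} has zero mean; zero mean alone is not enough. Nothing in your definition of $\boldsymbol\mu$ via \eqref{sima15d} makes it obviously curl-free, and this is in fact the heart of Part~(ii). The paper proves it by a different, more structural route: it writes the associated mapping $\delta\boldsymbol\vartheta$ (with coefficient vector $\delta\boldsymbol\Theta$) and observes that \eqref{sima13} is equivalent to $\delta\boldsymbol\vartheta=\boldsymbol\vartheta'\,\boldsymbol\pi$, where $\boldsymbol\pi$ is the mapping with coefficient vector $\mathfrak Z$. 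Since $\boldsymbol\vartheta'$ is symplectic, one solves $\boldsymbol\pi=-\mathbf J_{2n}{\boldsymbol\vartheta'}^\top\mathbf J_{2n}\,\delta\boldsymbol\vartheta$. Differentiating the symplectic condition $(\boldsymbol\vartheta')^\top\mathbf J_{2n}\boldsymbol\vartheta'=\mathbf J_{2n}$ along $\delta\boldsymbol\vartheta$ yields ${\boldsymbol\pi'}^\top\mathbf J_{2n}+\mathbf J_{2n}\boldsymbol\pi'=0$; reading off the blocks of this matrix identity gives at once $\mathbf A=-\nabla\boldsymbol\chi$, $\mathbf B=\nabla(\mathbf J\boldsymbol\lambda)$, $\mathbf C_i=\partial_{\xi_i}(\mathbf J\boldsymbol\Gamma)$, $\text{Tr}\,\boldsymbol\Gamma=0$, and crucially $\boldsymbol\mu'_\xi=(\boldsymbol\mu'_\xi)^\top$, i.e.\ $\boldsymbol\mu$ is potential. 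Your direct-verification route can be repaired by deriving the curl-free condition from the linearisation of \eqref{anna10} (or equivalently \eqref{anna11}), but you must recognise that this step is required and is not a triviality.
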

\begin{proof} The proof is in Appendix \ref{proofsima12}.
\end{proof}

\section{A modified problem and bifurcation equations}\label{luna}
 Recall that the  main problem is to prove  that  the Hamiltonian
 $H(\mathbf x, \mathbf y, \mathbf z)= H_0 (\mathbf y, z_2)+
 \varepsilon H_1(\mathbf x, \mathbf y, \mathbf z)$ has  $(n-1)$-dimensional
  weakly-hyperbolic invariant torus for all sufficiently small $\varepsilon$.
  In view of Definition \ref{def1.1} it is necessary to find
 a canonical mapping
 $\boldsymbol\theta:(\boldsymbol\xi, \boldsymbol\eta, \boldsymbol\zeta)\to
 (\mathbf x, \mathbf y, \mathbf z)$ which puts $H$ in the normal form
 \begin{equation}\label{luna1}
 H\circ \boldsymbol\theta= e+\boldsymbol\omega^\top \cdot \boldsymbol\eta+
 \frac{1}{2}\boldsymbol\zeta^\top \boldsymbol\Omega \boldsymbol\zeta+ o(|\boldsymbol \eta|, |\boldsymbol \eta||\boldsymbol\zeta|,
 |\boldsymbol \zeta|^2),
 \end{equation}
 where $\boldsymbol\Omega =\text{diag~}(-k,1)$, $k\in [0,1]$.
and $e=\text{const~}$.  For $\varepsilon=0$ the problem  has a family of solutions  given by
$$
\mathbf x=\boldsymbol\xi, \quad \mathbf y=\boldsymbol\eta, \quad
\mathbf z=\boldsymbol\alpha+\boldsymbol\zeta, \quad
\boldsymbol\alpha= (\alpha,0)^\top,  \quad e=k=0, \quad \alpha\in \mathbb R^1.
$$
It is easily seen that  $(n-1)$- dimensional manifold $\mathbb
T_\alpha=\{\mathbf x=\boldsymbol \xi, \mathbf y=0, \mathbf z=\boldsymbol\alpha\}$ is an invariant
torus of the unperturbed system.
 The totality of these tori forms a foliation of resonance $n$-dimensional
invariant torus, and  $\alpha$ can be considered as a label of a
leave of this foliation. Notice that $\alpha$ is nothing else but
the mean value of $z_1$ over the invariant torus $\mathbb T_\alpha$.
Perturbations destroy  $n$-dimensional resonance torus, and only a
few $(n-1)$-dimensional invariant tori survive for $\varepsilon\neq
0$. The label $\alpha$ of surviving torus  is unknown and should be defined along with
a solution to the problem. Therefore, the range of unknowns $(\alpha,k)$ is the
whole strip $\mathbb R\times [0,1]$. This means that  the problem of
finding of
   $\alpha$ and $k$ is not local and can not be solved by  using an  iteration process.
In order to cope with this difficulty we apply the version of the
Lyapunov-Schmidt method proposed in \cite{Moser}.  Following
\cite{Moser} we add the modification term $m z_1+M z_1^2/2$ to the
original  Hamiltonian. Thus we get the  modified Hamiltonian $H$
\begin{equation}\label{luna5}
    H_m(\mathbf x, \mathbf y, \mathbf z)=H_0(\mathbf  y, z_2)+\varepsilon
    H_1(\mathbf x, \mathbf y, \mathbf z)+\mathbf {m}^\top\cdot
{\mathbf {z}}+\frac12\mathbf z^\top\mathbf{M}\mathbf{z}
\end{equation}
where
\begin{equation}\label{luna6}
    \mathbf m=(m,0)^\top, \quad \mathbf M=\text{diag~}(M,0)
\end{equation}
Consider the following

\noindent {\sl Modified problem.} {\it For given $\alpha\in \mathbb
R^1$ and $k\in[0,1]$ to find  parameters $m,M,e$, and  a canonical
mapping
\begin{gather}
\boldsymbol\theta:(\boldsymbol\xi,
\boldsymbol \eta, \boldsymbol
 \zeta)\to (\mathbf x, \mathbf y, \mathbf z),\\
\nonumber \boldsymbol x=\boldsymbol\xi+\mathbf
u(\boldsymbol\xi),\quad \mathbf y=\mathbf v+\mathbf
V(\boldsymbol\xi)\boldsymbol\eta+ \boldsymbol\Lambda(\boldsymbol\xi)
\boldsymbol\zeta+\frac{1}{2}\boldsymbol\zeta^\top \mathbf
R(\boldsymbol\xi)\boldsymbol\zeta,\\\label{luna3} \mathbf z=\mathbf
w(\boldsymbol\xi)+\mathbf W(\boldsymbol\xi)\boldsymbol\zeta.
\end{gather}
 with the following properties. The mapping $\boldsymbol\theta$
 satisfies the  condition
 $$
 \frac{1}{(2\pi)^{n-1}}\int_{\mathbb T^{n-1}} w_1(\xi)\, d\xi=\alpha,
 $$
 and
 puts
 the modified hamiltonian in the normal form ,i.e.,}
\begin{equation}\label{luna7}
 H_m\circ \boldsymbol\theta= e+\boldsymbol\omega^\top \cdot
 \boldsymbol\eta+\frac{1}{2}\boldsymbol\zeta^\top \boldsymbol\Omega
  \boldsymbol\zeta+ o(|\boldsymbol \eta|, |\boldsymbol \eta||\boldsymbol\zeta|,
 |\boldsymbol \zeta|^2),
 \end{equation}
We stress that in this framework $\alpha$ and $k$ are given, while
$m$, $M$, and $e$  are unknown and should be defined along with a
solution. The advantage of this approach is that the solvability of
the modified problem  can be established  by using the Nash-Moser
implicit function theorem. The obtained  solution $ \boldsymbol\theta, e, m,M$
to  the modified   problem is
 a function of $\alpha$ and $k$. In particular, we have $m=m(\alpha,k)$
 and $M=M(\alpha,k)$. Obviously, if $m=M=0$, then  the canonical mapping
  $\boldsymbol\theta$ puts the original  hamiltonian  $H(\mathbf x, \mathbf y, \mathbf z)$
  in the normal form.   Thus  we  reduce
  the original  problem to the system of two scalar equations
  \begin{equation}\label{lunabifurcation}
    m(\alpha,k)=M(\alpha,k)=0
  \end{equation}
  named the bifurcation equations.
Hence our first task is to prove the local existence and uniqueness
of solutions to  the modified problem for all $(\alpha,k)$ and for all sufficiently small $\varepsilon$.

 We reduce the modified problem  to the system of nonlinear partial differential equations.
Notice that in view of Theorem \ref{anna4} the coefficient vectors
$\boldsymbol\Theta=\big(\mathbf u, \mathbf v, \mathbf w, \mathbf
V,\boldsymbol \Lambda,
 \mathbf W,
    \mathbf R_i\big)$
is  a function of  the coordinate vector
\begin{equation}\label{luna4}
    \boldsymbol\varphi=(\beta,\varphi_0, \mathbf u, \mathbf w, W_{11}, W_{12}, W_{21}).
\end{equation}
Moreover, the mapping $\boldsymbol\varphi\mapsto
\boldsymbol\Theta(\boldsymbol\varphi)$ is given  by explicit
formulae \eqref{anna5}. Thus relation \eqref{luna7} can be regarded
as a system of nonlinear  equations for $\boldsymbol\varphi$.
Substituting representation \eqref{luna3} with the coefficients
$\boldsymbol\Theta= \boldsymbol\Theta(\boldsymbol\varphi)$ into
\eqref{luna7} we arrive at the following system of equations for the
 vector $\boldsymbol\varphi$ given by \eqref{luna4}.
\begin{subequations}\label{luna8}
\begin{gather}\label{luna8a}
\Phi_1(\boldsymbol\varphi, e, m,M)\equiv H_m(\text{id}+\mathbf u,
\mathbf v, \mathbf w)-e=0,
\\
\label{luna8b} \Phi_2(\boldsymbol\varphi, m,M)\equiv
\big\{\frac{\partial H_m}{\partial \mathbf y} (\text{id}+\mathbf u,
\mathbf v, \mathbf w)\boldsymbol \Lambda+ \frac{\partial
H_m}{\partial \mathbf z} (\text{id}+\mathbf u, \mathbf v, \mathbf
w)\mathbf W \big\}^\top=0,
\\
\label{luna8c} \Phi_3(\boldsymbol\varphi, m,M)\equiv
\big\{\frac{\partial H_m}{\partial \mathbf y} (\text{id}+\mathbf u,
\mathbf v, \mathbf w)\mathbf V\big\}^\top-\boldsymbol\omega=0
\end{gather}
\begin{equation}
\label{luna8d}\begin{split} \Phi_4(\boldsymbol\varphi, m,M)\equiv
\frac{\partial H_m}{\partial y_i} (\text{id}+\mathbf u, \mathbf v,
\mathbf w)\mathbf R_i+ \boldsymbol\Lambda^\top \frac{\partial^2
H_m}{\partial \mathbf y^2}
(\text{id}+\mathbf u, \mathbf v, \mathbf w)\boldsymbol\Lambda+\\
\mathbf W^\top \frac{\partial^2 H_m}{\partial\mathbf z\partial
\mathbf y} (\text{id}+\mathbf u, \mathbf v, \mathbf
w)\boldsymbol\Lambda +\Big(\mathbf W^\top \frac{\partial^2
H_m}{\partial\mathbf z\partial \mathbf y} (\text{id}+\mathbf u,
\mathbf v, \mathbf w)\boldsymbol\Lambda\Big)^\top+
\\\mathbf W^\top \frac{\partial^2 H_m}{\partial\mathbf z^2}
(\text{id}+\mathbf u, \mathbf v, \mathbf w)\mathbf
W-\boldsymbol\Omega=0
\end{split}\end{equation}
\begin{equation}\label{luna8e}
    \Phi_5(\boldsymbol\varphi)\equiv\frac{1}{(2\pi)^{n-1}}\int_{\mathbb T^{n-1}} w_1(\xi)\, d\xi-\alpha=0.
\end{equation}

Here matrices $\partial^2 H_m/(\partial \mathbf y)^2$, $\partial^2
H_m/(\partial\mathbf z\partial \mathbf y)$, and
  $\partial^2 H_m/(\partial \mathbf z)^2$ have the entries
 \begin{equation*}
    \Big(\frac{\partial^2 H_m}{\partial \mathbf y^2}\Big)_{ij}=
    \frac{\partial^2 H_m}{\partial  y_i\partial y_j}, \quad
     \Big(\frac{\partial H_m}{\partial\mathbf z\partial \mathbf y}\Big)_{pj}=
    \frac{\partial^2 H_m}{\partial  z_p\partial y_j},\quad
     \Big(\frac{\partial H_m}{\partial \mathbf z^2}\Big)_{pq}=
    \frac{\partial H_m}{\partial z_p\partial z_q},
 \end{equation*}
 $$
 1\leq i,j\leq n-1, \quad 1\leq p,q\leq 2,
 $$
 the row vectors $\partial H_m/\partial \mathbf y$,
$\partial H_m/\partial\mathbf z$ are given by \eqref{notation1},
i.e.,
 $$
\frac{\partial H_m}{\partial \mathbf y}=(\nabla_y H_m)^\top, \quad
\frac{\partial H_m}{\partial \mathbf z}=(\nabla_z H_m)^\top
$$
The matrices $\boldsymbol \Lambda$, $\mathbf V$, $\mathbf W$ and the
vector $\mathbf v$ are expressed
  in terms  of $\boldsymbol\varphi$ by equalities \eqref{anna17}:
 \begin{equation}\label{luna9}\begin{split}
 \mathbf V=(\mathbf I_{n-1}+\mathbf u')^{-\top}, \quad \mathbf v=\beta +
 \mathbf V(\nabla\varphi_0-w_2\nabla w_1),\\
 \mathbf W=\left(
\begin{array}{cc}
W_{11}&W_{12}\\
W_{21}& \frac{1}{W_{11}}(1+W_{12} W_{21})
\end{array}
\right),\\
\boldsymbol\Lambda =-\mathbf V(\mathbf w')^\top\mathbf J\mathbf W,
\quad \mathbf R_i=-V_{ik} \Big (\frac{\partial}{\partial \xi_k}
\mathbf W^{\top}\Big)\, \mathbf J\,\mathbf W.
\end{split}\end{equation}
 Notice that the matrices $\mathbf R_i$ are symmetric and the operator $\Phi_4$ is a
  symmetric matrix valued function.
  Hence $\Phi_1$ is a scalar, $\Phi_2$ is a column vector of dimension $2$, $\Phi_3$
  is a column vector of dimension $n-1$, $\Phi_4$ is $2\times 2$ symmetric matrix.
  Therefore, equations \eqref{luna8a}-\eqref{luna8d} form a system of $(n+5)$
  nonlinear differential equations for $n+5$ functional components of the
  vector- valued function $\boldsymbol\varphi$.

Notice that a solution to the  modified problem is not unique. It is
easily seen that if a canonical mapping $ (\boldsymbol\xi,
\boldsymbol\eta, \boldsymbol\zeta)\to (\mathbf x, \mathbf y, \mathbf
z)$ puts the hamiltonian $H_m$ to the normal form, then the mapping
$$
(\boldsymbol\xi', \boldsymbol\eta', \boldsymbol\zeta')\to
(\boldsymbol\xi, \boldsymbol\eta, \boldsymbol\zeta)\to (\mathbf x,
\mathbf y, \mathbf z),
$$
$$
\boldsymbol\xi =\boldsymbol\xi'+c, \quad
\boldsymbol\eta=\boldsymbol\eta', \quad \boldsymbol\zeta= \left(
\begin{array}{cc}
(1+kb^2)^{1/2}&b\\
kb&(1+kb^2)^{1/2}
\end{array}
\right)\boldsymbol\zeta',
$$
 $c\in \mathbb R^{n-1}$, $b\in \mathbb R$, also puts $H_m$ in the normal form.
 Hence the modified problem has $n$-parametric family of solutions. In order
 to eliminate this arbitrariness, we impose the orthogonality conditions
\begin{equation}\label{luna8f}
 \int_{\mathbb T^{n-1}}\mathbf u(\boldsymbol\xi)\, d\xi=0,\quad
 \int_{\mathbb T^{n-1}}W_{12}(\boldsymbol\xi)\, d\xi=0.
\end{equation}
Recall that by definition of the local coordinate on the manifold
$\mathcal G$, the function  $\varphi_0$ in \eqref{luna4} has zero
mean. Thus we have to add the orthogonality condition
\begin{equation}\label{luna8g}
   \int_{\mathbb T^{n-1}}\varphi_0(\boldsymbol\xi)\, d\xi=0.
\end{equation}
\end{subequations}

 \emph{Equations \eqref{luna8a}-\eqref{luna8e} and conditions \eqref{luna8f}-\eqref{luna8g}
 form a closed system of equations for $\boldsymbol\varphi$ and parameters $m,M$, $e$.}

  Throughout of the paper we will use the following consequence of this equations.
 Introduce the important differential operator
 \begin{equation}\label{luna10}
 \boldsymbol\partial:= \boldsymbol\omega^\top\cdot \nabla_\xi\equiv \omega_i
  \frac{\partial}{\partial\xi_i}.
 \end{equation}
 \begin{lemma}\label{luna11}
 If $\boldsymbol\varphi$ is an analytic solution to equations \eqref{luna8a}-
 \eqref{luna8d} and $\mathbf v$ is given by \eqref{luna9},
 then
 \begin{equation}\label{luna12}\begin{split}
    \boldsymbol\omega +\boldsymbol\partial \mathbf u=\nabla_y H_m(\boldsymbol\xi+
    \mathbf u, \mathbf v, \mathbf w), \\ \boldsymbol\partial \mathbf v=-
    \nabla_x H_m(\boldsymbol\xi+\mathbf u, \mathbf v, \mathbf w),\quad
    \boldsymbol \partial\mathbf w=\mathbf J\nabla_z H_m(\boldsymbol\xi+\mathbf u,
    \mathbf v, \mathbf w).
\end{split} \end{equation}
  \end{lemma}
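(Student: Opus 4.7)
The plan is to extract the three PDE identities \eqref{luna12} from the algebraic equations $\Phi_1=\Phi_2=\Phi_3=0$ combined with the symplectic structure of the canonical map $\boldsymbol\theta$, which is encoded in the parametrization \eqref{luna9}. Heuristically, equations $\Phi_3$ and $\Phi_2$ fix the linear coefficients of the normal-form Taylor expansion of $H_m\circ\boldsymbol\theta$ in $\boldsymbol\eta$ and $\boldsymbol\zeta$, so they recover $\nabla_y H_m$ and $\nabla_z H_m$ on the torus; equation $\Phi_1$ (that $H_m$ is constant on the torus) together with the symplectic-form pullback identity \eqref{anna10} then recovers $\nabla_x H_m$.

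First I would derive the identity $\nabla_y H_m=\boldsymbol\omega+\boldsymbol\partial\mathbf u$ from \eqref{luna8c}. That equation reads $\mathbf V^{\top}\nabla_y H_m=\boldsymbol\omega$; since $\mathbf V=(\mathbf I+\mathbf u')^{-\top}$, multiplying by $\mathbf I+\mathbf u'$ and noting $\mathbf u'\boldsymbol\omega=\omega_i\partial_{\xi_i}\mathbf u=\boldsymbol\partial\mathbf u$ gives the result. Next I would handle the $\mathbf z$-identity by plugging $\boldsymbol\Lambda=-\mathbf V(\mathbf w')^{\top}\mathbf J\mathbf W$ (from Theorem \ref{anna4}) into $\Phi_2=0$ and using $(\nabla_y H_m)^{\top}\mathbf V=\boldsymbol\omega^{\top}$ from the previous step; the first term becomes $-(\boldsymbol\partial\mathbf w)^{\top}\mathbf J\mathbf W$. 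Invertibility of $\mathbf W$ (recall $\det\mathbf W=1$) then gives $(\nabla_z H_m)^{\top}=(\boldsymbol\partial\mathbf w)^{\top}\mathbf J$, and transposition together with $\mathbf J^{\top}=-\mathbf J$, $\mathbf J^{2}=-\mathbf I$ yields $\mathbf J\nabla_z H_m=\boldsymbol\partial\mathbf w$.

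The hard part is the $\mathbf v$-identity $\boldsymbol\partial\mathbf v=-\nabla_x H_m$, which cannot be read off a single scalar equation and genuinely requires the canonical structure. I would differentiate $\Phi_1$, i.e., $H_m(\boldsymbol\xi+\mathbf u,\mathbf v,\mathbf w)=e$, in $\boldsymbol\xi$ to obtain the row-vector relation
\begin{equation*}
(\mathbf I+\mathbf u')^{\top}\nabla_x H_m+(\mathbf v')^{\top}\nabla_y H_m+(\mathbf w')^{\top}\nabla_z H_m=0. \tag{$\ast$}
\end{equation*}
Because $\mathbf v$ has the explicit form \eqref{luna9} (equivalently \eqref{anna11}), Theorem \ref{anna4} ensures that the closed-form identity \eqref{anna10} holds; expanding $d(\xi_k+u_k)\wedge dv_k+dw_1\wedge dw_2$ in the basis $d\xi_i\wedge d\xi_j$ recasts it as the skew-symmetric matrix identity
\begin{equation*}
(\mathbf I+\mathbf u')^{\top}\mathbf v'-(\mathbf v')^{\top}(\mathbf I+\mathbf u')+(\mathbf w')^{\top}\mathbf J\mathbf w'=0. \tag{$\ast\ast$}
\end{equation*}
Multiplying $(\ast\ast)$ from the right by $\boldsymbol\omega$ and substituting the already-established identities (in particular $\mathbf J\boldsymbol\partial\mathbf w=\mathbf J^{2}\nabla_z H_m=-\nabla_z H_m$) produces
$(\mathbf I+\mathbf u')^{\top}\boldsymbol\partial\mathbf v=(\mathbf v')^{\top}\nabla_y H_m+(\mathbf w')^{\top}\nabla_z H_m$, which by $(\ast)$ equals $-(\mathbf I+\mathbf u')^{\top}\nabla_x H_m$. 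Cancelling the invertible matrix $(\mathbf I+\mathbf u')^{\top}$ gives the claim.

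The main obstacle is thus clear: the chain rule applied to $\Phi_1=0$ alone yields only a scalar identity once contracted with $\boldsymbol\omega$, which is insufficient for the vectorial statement $\boldsymbol\partial\mathbf v=-\nabla_x H_m$. What bridges the gap is the matrix identity $(\ast\ast)$, i.e., the geometric content of the fact that the embedded torus is Lagrangian (after twisting by $\mathbf w^{\top}\mathbf J\mathbf w'$), which in turn is built into the parametrization \eqref{luna9} by Theorem \ref{anna4}. The remaining work is bookkeeping, and the only real source of potential sign errors is the paper's row/column conventions, in particular the identification $\partial H_m/\partial\mathbf y=(\nabla_y H_m)^{\top}$ and $\mathbf V=(\mathbf I+\mathbf u')^{-\top}$.
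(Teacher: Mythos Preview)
Your proposal is correct and follows essentially the same route as the paper: the first and third identities in \eqref{luna12} are obtained exactly as you describe (from $\Phi_3=0$ and $\Phi_2=0$ respectively, using $\mathbf V=(\mathbf I+\mathbf u')^{-\top}$ and the formula for $\boldsymbol\Lambda$), and for the $\mathbf v$-identity both you and the paper differentiate $\Phi_1=e$ and invoke the canonical structure from Theorem~\ref{anna4}.

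The one genuine difference is \emph{which} piece of Theorem~\ref{anna4} is used for the $\mathbf v$-identity. You use the closed-form matrix identity \eqref{anna10}, equivalently $(\ast\ast)$ (which is exactly equation (\ref{lena14}) in the appendix), contract it with $\boldsymbol\omega$, and match against $(\ast)$. The paper instead works in the $x$-coordinates ($\partial/\partial x_i=V_{ik}\partial/\partial\xi_k$), inserts the explicit potential representation \eqref{anna11} for $\mathbf v$, and exploits the symmetry of the mixed partials $\partial^2\varphi_0/\partial x_n\partial x_i$ and $\partial^2 w_1/\partial x_n\partial x_i$ to swap the contraction $\partial_{y_n}H_m\,\partial_{x_i}v_n$ into $\boldsymbol\partial v_i$ plus $w$-correction terms that cancel via the already-proved $\mathbf z$-identity. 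Your argument is the cleaner, more conceptual packaging (it makes the Lagrangian-torus mechanism transparent and avoids introducing $\varphi_0$); the paper's computation is a direct unwinding of the same identity through the generating function. Both are short and neither offers a real advantage beyond taste.
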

The proof is in Appendix \ref{proofluna11}.

\paragraph{Operator equation.} Now we reduce  system of differential equations
\eqref{luna8} to  the operator equation in Banach
spaces of analytic functions. Notice that unknowns are the vector-valued function
$\boldsymbol \varphi$ and the constants $e$, $m$, and $M$. Equations
\eqref{luna8} also depend on the parameters $\alpha$ and $k$. It is
convenient to introduce the vectors
\begin{equation}\label{ira1bis}
   {\mathfrak f}=(\alpha, k), \quad{\mathfrak u}=
    (\boldsymbol\varphi, e, m,M)
\end{equation}
where $\boldsymbol\varphi=(\boldsymbol\beta,\varphi_0, \mathbf u,
\mathbf w, W_{11}, W_{12}, W_{22})$.

Now we introduce some  notation which is used throughout the rest of the paper.
\begin{definition}\label{anna20xx} Let $\varrho>0$ be given by
Condition $(\mathbf H.\mathbf 1)$. Denote by $\Sigma_\varrho$ the slab
\begin{equation}\label{anna20xxx}
    \Sigma_\varrho=\big\{(\alpha,k):\,\, \text{\rm Re~} \alpha\in \mathbb R, |\text{\rm Im~}\alpha|\leq \varrho,\,\,\text{\rm Re~} k\in [0,1], |\text{\rm Im~} k|\leq \varrho\big\}
    \end{equation}

\end{definition}

\begin{definition}\label{anna20} Let $\varrho>0$ bi given by
Condition $(\mathbf H.\mathbf 1)$. For every $\sigma\in [0,1]$ and any integer
$d\geq 0$
 denote by $\mathcal A_{\sigma,d}$ the Banach space of all functions
$$
\mathbf u: \Re_{\sigma\varrho}\to\mathbb C, \quad
\Re_{\sigma\varrho}=\big\{\,\boldsymbol \xi:\,\, \text{\rm
Re~}\boldsymbol \xi\in \mathbb T^{n-1}, \,\,\, |\text{\rm Im~}
 \boldsymbol\xi|\leq \sigma\varrho\,\big\},
$$
with the finite norm
\begin{equation}\label{anna20}
    \|u\|_{\sigma,d}=\max\limits_{0\leq |k|\leq d} \,\,\,
    \sup\limits_{\boldsymbol\xi\in \Re_{\sigma\varrho}}\,\,
    |\partial^k u(\boldsymbol\xi)|.
\end{equation}
\end{definition}

Notice that  system
\eqref{luna8} consists of differential equations
\eqref{luna8a}-\eqref{luna8d} and orthogonality conditions
\eqref{luna8e}-\eqref{luna8f}. It is convenient to incorporate the
orthogonality conditions in the definition of the Banach spaces.
Thus we arrive at  the following
\begin{definition}\label{luna15} For every $\sigma\in [0,1]$ and any integer $d\geq 0$
denote by $\mathcal E_{\sigma,d}$ the subspace of the Banach space
$\mathbb C^{n-1} \times \mathcal A_{\sigma,d}\times \mathcal
A_{\sigma,d}^{n-1} \times \mathcal A_{\sigma,d}^{2}\times \mathcal
A_{\sigma, d}^3\times \mathbb C^3$ which consists of all vectors
\begin{equation*}
\mathfrak u=(\boldsymbol\varphi, e, m,M)  \text{~~with~~}
\boldsymbol\varphi= (\boldsymbol\beta,\varphi_0, \mathbf u, \mathbf
w, W_{11}, W_{12}, W_{21}),
\end{equation*}
satisfying the orthogonality conditions
\eqref{luna8f}-\eqref{luna8g}.
\end{definition}

\begin{definition}\label{luna16} For every $\sigma\in [0,1]$ and any
integer $d\geq 0$ denote by $\mathcal F_{\sigma,d}$  the subspace of
$\mathcal A_{\sigma,d}\times \mathcal A_{\sigma,d}^{n-1} \times
\mathcal A_{\sigma,d}^2\times \mathcal A_{\sigma,d}^4\times \mathbb
C$ which consists of all vectors
\begin{equation*}
 \boldsymbol F=(F_1, F_2, F_3, F_4, f_5)
\end{equation*}
such that $F_4$ is a symmetric $2\times 2$-matrix valued function..
\end{definition}
Introduce the nonlinear operator
\begin{equation}\label{luna17}\begin{array}{cc}
    \mathbf \Phi(\mathfrak u, \mathfrak f)=&\\\big(\Phi_1(\boldsymbol\varphi, e, m,M),
    &\Phi_2(\boldsymbol\varphi, m,M),
    \Phi_3(\boldsymbol\varphi, m,M),\Phi_4(\boldsymbol\varphi, m,M;k),
    \Phi_5(\mathbf w; \alpha)\big),
\end{array}
\end{equation}
where $\Phi_i$ are given by \eqref{luna8}. It is worthy noting that
$\boldsymbol\Phi$ is a linear function of the scalars $e, m, M$, and
$k$, $\alpha$. Hence the modified problem \eqref{luna8} can be
written in the form of the operator equation
\begin{equation}\label{luna8x}
   \mathbf \Phi(\mathfrak f, \mathfrak u)  =0.
\end{equation}
{\bf Remark} For $\varepsilon=0$ and $\mathfrak f\in \mathbb C^2$,
equation \eqref{luna8x} has the unique solution
\begin{equation}\label{luna8xxx}
    \mathfrak v(\mathfrak f)=(\boldsymbol\varphi_0(\alpha), -k\alpha-k\alpha^2,
    -k\alpha, -k),
\end{equation}
where
$$\boldsymbol\varphi_0(\alpha)=(0,0, \alpha\mathbf e_1, 1, 0,0)
$$

Notice that $\Phi$ is a nonlinear analytic differential operator.
 The general theory of the nonlinear differential operators in
spaces of analytic functions $\mathcal A_{\sigma,d} $ is considered in the monograph \cite{Nirenberg}
and \cite{Poschel}. We use the following   proposition,
see \cite{Nirenberg} Ch.6,  which  constitute the continuity and differentiability
of the operator $\Phi$.

\begin{proposition}\label{luna18}
Let $\varrho>0$ be given by Condition ({\bf H.1}). For every
$\sigma\in [0,1]$ and $d\geq 1$, there are $r>0$ and $c>0$ with the
properties. If $\mathfrak f\in \Sigma_{\varrho}$ and
 $\mathfrak u\in \mathcal E_{\sigma,d}$
satisfy
$$
\|\boldsymbol\varphi-\boldsymbol\varphi_0(\alpha)\|_{\sigma, d}\leq r,
\quad \boldsymbol\varphi_0= (0,0,0,\alpha\boldsymbol e_1,1,0,0),
$$
then $\boldsymbol\Phi(\mathfrak f, \mathfrak u)\in \mathcal
F_{\sigma,0}$ and
$$
\|\boldsymbol\Phi(\mathfrak f, \mathfrak u)\|_{\sigma,0}\leq c+c
(|e|+|m|+|M|).
$$
If in addition
$$
\|\boldsymbol\varphi+\delta \boldsymbol\varphi-\boldsymbol\varphi_0(\alpha)\|_{\sigma, d}\leq r
$$
then
\begin{multline*}
\Phi_i(\boldsymbol\varphi+\delta\boldsymbol\varphi, e,m, M)-
\Phi_i(\boldsymbol\varphi, e,m, M)=\\D_\varphi
\Phi_i(\boldsymbol\varphi, e, m,M)
[\delta\boldsymbol\varphi]+Q_{\Phi_i}(\boldsymbol\varphi,
\delta\boldsymbol\varphi, e,m, M).
\end{multline*}
The linear operator $D_\varphi \Phi_i$ and the remainder
$Q_{\Phi_i}$ admit the estimates
\begin{equation}\label{luna23}
  \| D_\varphi \Phi_i(\boldsymbol\varphi, e, m,M)[\delta\boldsymbol\varphi]\|_{\sigma,d-1}
  \leq c(1+|e|+|m|+|M| )\,\,\|\delta\boldsymbol\varphi\|_{\sigma,d},
  \end{equation}
 \begin{equation}\label{luna24}
 \| Q_{\Phi_i}(\boldsymbol\varphi,\delta\boldsymbol\varphi, e,m, M)\|_{\sigma,d-1}
  \leq c(1+|e|+|m|+|M| )\,\,\|\delta\boldsymbol\varphi\|_{\sigma,d}^2.
 \end{equation}

\end{proposition}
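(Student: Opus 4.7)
My plan is to reduce the proposition to standard composition/product estimates for analytic Nemytskii-type operators on the scale $\mathcal A_{\sigma,d}$, following the general framework of \cite{Nirenberg} Ch.~6 and \cite{Poschel}. First I would unpack the structure of $\boldsymbol\Phi$: inspecting \eqref{luna8a}--\eqref{luna8e} together with \eqref{luna9}, each component $\Phi_i$ is obtained by substituting the arguments $(\boldsymbol\xi+\mathbf u(\boldsymbol\xi),\,\mathbf v(\boldsymbol\xi),\,\mathbf w(\boldsymbol\xi))$ into $H_m$ or one of its first/second partial derivatives, and then multiplying by polynomial-rational expressions in $\boldsymbol\varphi$ and its first $\boldsymbol\xi$-derivatives, namely the matrices $\mathbf V=(I_{n-1}+\mathbf u')^{-\top}$, $\mathbf W$, $\boldsymbol\Lambda=-\mathbf V(\mathbf w')^{\top}\mathbf J\mathbf W$, and $\mathbf R_i$. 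The parameters $e,m,M$ enter only affinely (through the explicit $-e$ in $\Phi_1$ and through $mz_1+Mz_1^2/2$ in $H_m$). Thus $\boldsymbol\Phi$ is of order one as a differential operator in $\boldsymbol\varphi$, which gives the loss $\mathcal E_{\sigma,d}\to\mathcal F_{\sigma,d-1}$; taking $d\geq 1$ yields the target $\mathcal F_{\sigma,0}$.

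Next I would fix the radius $r>0$ so that, whenever $\|\boldsymbol\varphi-\boldsymbol\varphi_0(\alpha)\|_{\sigma,d}\leq r$, the following hold uniformly on $\Re_{\sigma\varrho}$: (a) $\|\mathbf u'\|$ is so small (Cauchy estimate from $\|\mathbf u\|_{\sigma,d}$, $d\geq 1$) that $I_{n-1}+\mathbf u'$ is invertible with bounded inverse; (b) $W_{11}$ stays bounded away from zero, so $\mathbf W$ in \eqref{anna17} is well defined; and (c) the substitution map $\boldsymbol\xi\mapsto(\boldsymbol\xi+\mathbf u,\mathbf v,\mathbf w)$ sends $\Re_{\sigma\varrho}$ into $\Re_{2\varrho}\times B^n_{2\varrho}$. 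Point (c) uses $|\alpha|\leq\varrho$ coming from $\mathfrak f\in\Sigma_\varrho$, together with the closeness of $\mathbf u$, $\mathbf v$, and $\mathbf w-\alpha\mathbf e_1$ to zero. Under these conditions, Condition $(\mathbf H.\mathbf 1)$ makes $H_m$ and its first and second derivatives analytic and uniformly bounded on the image.

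The bound $\|\boldsymbol\Phi(\mathfrak f,\mathfrak u)\|_{\sigma,0}\leq c+c(|e|+|m|+|M|)$ then follows by combining: the uniform sup bound on $H_m$ and its derivatives after composition, the Banach-algebra property of $\mathcal A_{\sigma,d'}$ for pointwise products, and the affine dependence of $\boldsymbol\Phi$ in $(e,m,M)$. For the Taylor expansion, I would simply write
\[
\Phi_i(\boldsymbol\varphi+\delta\boldsymbol\varphi,e,m,M)-\Phi_i(\boldsymbol\varphi,e,m,M)=D_\varphi\Phi_i(\boldsymbol\varphi,e,m,M)[\delta\boldsymbol\varphi]+Q_{\Phi_i},
\]
with
\[
Q_{\Phi_i}=\int_0^1(1-t)\,D_\varphi^2\Phi_i(\boldsymbol\varphi+t\delta\boldsymbol\varphi,e,m,M)[\delta\boldsymbol\varphi,\delta\boldsymbol\varphi]\,dt.
\]
Each $\boldsymbol\varphi$-differentiation of $\Phi_i$ produces another first-order analytic-composition operator of the same type (first and second derivatives of $H_m$ composed with the same substitution, multiplied by linear expressions in $\delta\boldsymbol\varphi$ and its first derivative), so the estimates \eqref{luna23} and \eqref{luna24} follow from exactly the same composition and algebra estimates applied one and two times, respectively, yielding the factor $\|\delta\boldsymbol\varphi\|_{\sigma,d}$ and $\|\delta\boldsymbol\varphi\|_{\sigma,d}^2$.

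The only delicate point, and the one I expect to spend most effort on, is the bookkeeping of domain shrinkages: the single derivative lost by $\mathbf u'$, $\mathbf w'$, $\mathbf W'$ must be absorbed by going from $\Re_{\sigma\varrho}$ to the same strip (not a smaller one) on the output side, which requires applying Cauchy's estimate on a slightly enlarged intermediate strip and then using that $(\boldsymbol\xi+\mathbf u,\mathbf v,\mathbf w)$ maps into the interior of the domain of analyticity of $H_m$ granted by $(\mathbf H.\mathbf 1)$. Once this is set up, invertibility of $I_{n-1}+\mathbf u'$ and $W_{11}\neq 0$ hold uniformly, and the rest is routine composition and multiplication in Banach algebras of analytic functions.
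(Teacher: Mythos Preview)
Your overall plan is sound and is exactly what the paper does: it does not prove Proposition~\ref{luna18} at all but simply cites \cite{Nirenberg}, Ch.~6 (and \cite{Poschel}), treating it as a standard instance of the composition/product calculus for analytic Nemytskii-type operators on the scale $\mathcal A_{\sigma,d}$. Your sketch spells out precisely the ingredients such a reference would supply.

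There is one concrete slip in your step (c). You claim $|\alpha|\leq\varrho$ follows from $\mathfrak f\in\Sigma_\varrho$, but by Definition~\ref{anna20xx} the set $\Sigma_\varrho$ only constrains $|\mathrm{Im}\,\alpha|\leq\varrho$; the real part $\mathrm{Re}\,\alpha$ is unbounded. The correct way to get (c) is to use that $H_1$ is $2\pi$-periodic in $z_1$ (and in $\mathbf x$): the composition $H_1(\boldsymbol\xi+\mathbf u,\mathbf v,\mathbf w)$ therefore only needs $|\mathrm{Im}\,w_1|$ to stay within the strip width $3\varrho$, not $|w_1|$ itself. Since $w_1-\alpha=O(r)$ and $|\mathrm{Im}\,\alpha|\leq\varrho$, this holds for small $r$. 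The unperturbed part $H_0$ does not depend on $z_1$ at all, and the modification term $mz_1+\tfrac12Mz_1^2$ is entire in $z_1$, so no domain restriction is needed there either. A side effect is that this polynomial term, evaluated at $w_1\approx\alpha$, contributes roughly $m\alpha+\tfrac12M\alpha^2$ to $\Phi_1$; hence the constant $c$ in the bound $\|\boldsymbol\Phi\|_{\sigma,0}\leq c+c(|e|+|m|+|M|)$ tacitly depends on a bound for $|\alpha|$. This is harmless in the paper, where the proposition is only invoked for $\mathfrak f$ in the compact region $G$ of \eqref{ira17}, but you should flag it rather than claim uniformity in $\alpha\in\mathbb R$.
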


\paragraph{Third structural theorem.}
In order to apply the Nash-Moser implicit function theorem to
operator equation \eqref{luna8x}, we have to prove that  the linear
operator $D_{\mathfrak u}\boldsymbol\Phi$ has an approximate inverse. In other
words, we have to show that for every
\begin{equation}\label{luna25x}
    \mathbf F=(F_1,F_2,F_3, F_4, f_5)\in \mathcal F_{\sigma,d}
\end{equation}
the linear equation
\begin{equation}\label{luna25}
D_\varphi\boldsymbol\Phi(\boldsymbol\varphi)[\delta\boldsymbol\varphi]+
D_e\boldsymbol\Phi(\boldsymbol\varphi)[\delta e]+
D_m\boldsymbol\Phi(\boldsymbol\varphi)[\delta m]+
D_M\boldsymbol\Phi(\boldsymbol\varphi)[\delta M]=\mathbf F
\end{equation}
has an analytic approximate solution $(\delta\boldsymbol \varphi, e,
m,M)$. This problem looks like very difficult because of the
complexity of the expression for
$\delta\boldsymbol\Phi
=D_\varphi\boldsymbol\Phi(\boldsymbol\varphi)[\delta\boldsymbol\varphi]$.
The remarkable fact of KAM theory is that   linear equation
\eqref{luna25}  can be reduced to the triangle system of first order
differential equations with
 constant coefficients in the principle part.  This very special change of
 variables is given by the second structural Theorem \ref{sima12}. In order to
 define, it choose an arbitrary vector
$\delta\boldsymbol\varphi= (\delta\boldsymbol\beta, \delta \varphi_0,
\delta\mathbf u, \delta\mathbf w, \delta W_{11},\delta W_{12},\delta
W_{21}) $. Next notice that relations \eqref{sima15} in Theorem
\ref{sima12} read
\begin{subequations}\label{luna27}
\begin{gather}\label{luna27a}
\boldsymbol\chi\,= \,\mathbf V^\top \,\delta\mathbf
u\\\label{luna27b} \boldsymbol\lambda\,=\, \mathbf
W^{-1}\delta\mathbf w-\chi_i\,\mathbf W^{-1}
\frac{\partial}{\partial \xi_i} \mathbf w\\\label{luna27c}
\boldsymbol\Gamma\,=\, \mathbf W^{-1}\delta\mathbf W-\chi_i\,\mathbf
W^{-1} \frac{\partial}{\partial \xi_i} \mathbf W,\\\label{luna27d}
\boldsymbol\mu\,=\, \mathbf V^{-1}\Big(\delta\mathbf v+\chi_i\,
\frac{\partial}{\partial \xi_i} \mathbf
v-\boldsymbol\Lambda\boldsymbol\lambda\Big)
\\\label{luna27e}
\nabla\psi_0=\boldsymbol\mu-\delta\boldsymbol\beta, \quad
\delta\boldsymbol\beta= \frac{1}{(2\pi)^{n-1}}\int_{\mathbb T^{n-1}}
\boldsymbol\mu\, d\boldsymbol\xi,
\end{gather}
\end{subequations}
where $\delta\mathbf W$ is given in terms of
$\delta\boldsymbol\varphi$ by \eqref{gala1}. These equalities
completely  define the vector
$$\boldsymbol\Upsilon=(\delta\boldsymbol\beta,  \psi_0,
\boldsymbol\chi,\boldsymbol\lambda, \Gamma_{11},
    \Gamma_{12}, \Gamma_{21}).$$
Recall that $\mathbf \Gamma$ is $2\times 2$ matrix such that
$\Gamma_{22}=-\Gamma_{11}$. It is worth noting that equalities
\eqref{luna27} establishes linear algebraic relations between
$\delta\boldsymbol\varphi$
 and $\boldsymbol\Upsilon$
The following  theorem shows that the change of
variables \eqref{luna27} brings the linear operator
$D_{\boldsymbol\varphi}\boldsymbol\Phi$ to the simple canonical form.
\begin{theorem}\label{structural3} {\bf Third Structure Theorem.}
Let $\boldsymbol\varphi$, $\alpha$ and $k$ meet all requirements of
Proposition \ref{luna18}.  Furthermore assume that
$$
\delta\Phi_i= D_{\boldsymbol\varphi} \Phi_i(\boldsymbol\varphi)[\delta\boldsymbol\varphi]
$$
 Then for every
$\delta\boldsymbol\varphi\in \mathbb C^{n-1}\times \mathcal
A_{\sigma,d}\times
 \mathcal A^{n-1}_{\sigma,d}\times \mathcal A_{\sigma,d}^2\times\mathcal A_{\sigma,d}^3$,
\begin{subequations}\label{luna26}
\begin{equation}\label{luna26a}
    \delta\Phi_1=\boldsymbol\partial \psi_0+\boldsymbol\omega^\top\cdot
    \delta\boldsymbol\beta+\Pi_1[\boldsymbol\chi, \boldsymbol \mu, \boldsymbol\lambda],
\end{equation}
\begin{equation}\label{luna26b}
    \delta\Phi_2=\mathbf J\boldsymbol\partial\boldsymbol\lambda +\boldsymbol\Omega\boldsymbol\lambda+\mathbf T\boldsymbol\mu+
    \Pi_2[\boldsymbol\chi, \boldsymbol\lambda,\boldsymbol\Gamma]
\end{equation}

\begin{equation}\label{luna26c}
 \delta\Phi_3=-\boldsymbol\partial \boldsymbol\chi+\mathbf S
 \boldsymbol\mu+\mathbf T^\top \boldsymbol\lambda +\Pi_3[\boldsymbol\chi],
\end{equation}

\begin{equation}\label{luna26d}\begin{split}
    \delta\Phi_4=\boldsymbol\partial(\mathbf J\boldsymbol\Gamma)
    +\boldsymbol\Omega\boldsymbol\Gamma+
(\boldsymbol\Omega\boldsymbol\Gamma)^\top +\mathbf U_{ij}
\frac{\partial \lambda_i}{\partial\xi_j}+ \mu_i\mathbf
E_i+\lambda_i\mathbf K_i+   \Pi_4[\boldsymbol\chi,\boldsymbol\Gamma]
\end{split}\end{equation}
\begin{equation}\label{luna26e}\begin{split}
    \delta\Phi_5=\frac{1}{(2\pi)^{n-1}}
    \int_{\mathbb T^{n-1}}\Big
    ((\mathbf W\boldsymbol\lambda)_1+
    \boldsymbol\chi_i\frac{\partial w_1}{\partial\xi_i}\Big) \, d
    \boldsymbol\xi.
\end{split}\end{equation}
\end{subequations}
Here $\delta\boldsymbol\beta,  \psi_0,
\boldsymbol\chi,\boldsymbol\lambda, \Gamma_{11},
    \Gamma_{12}, \Gamma_{21}$ and  $\delta\boldsymbol\varphi $
    are interrelated  by relations \eqref{luna27} . The linear
    differential operators
$\Pi_i$ are defined by
\begin{subequations}\label{luna28}
\begin{equation}\label{luna28a}
    \Pi_1[\boldsymbol \chi,\boldsymbol\mu, \boldsymbol\mu,
    \boldsymbol\lambda]
    =\frac{\partial\Phi_1}{\partial \xi_i}\chi_i+\Phi_2^\top
    \cdot \boldsymbol\lambda+\Phi_3^\top\cdot\boldsymbol\mu,
\end{equation}
\begin{equation}\label{luna28b}
    \Pi_2[\boldsymbol\chi, \boldsymbol\lambda, \boldsymbol\Gamma]=
    \chi_i\frac{\partial\Phi_2}{\partial \xi_i}+\boldsymbol
    \Gamma^\top \Phi_2+\Phi_4\boldsymbol\lambda+(\mathbf J\boldsymbol\lambda)'_\xi\Phi_3,
    \end{equation}
\begin{equation}\label{luna28c}
     \Pi_3[\boldsymbol\chi]=\chi_i\frac{\partial\Phi_3}{\partial \xi_i}-
     \boldsymbol\chi'_\xi \Phi_3,
\end{equation}
\begin{equation}\label{luna28d}
  \Pi_4[\boldsymbol\chi,\boldsymbol\Gamma]=\chi_i\frac{\partial\Phi_4}{\partial \xi_i}
+\Phi_{3,i}\frac{\partial}{\partial \xi_i}(\mathbf
J\boldsymbol\Gamma)+ \Phi_4\boldsymbol\Gamma+
(\Phi_4\boldsymbol\Gamma)^\top,
\end{equation}
where $\Phi_i=\Phi_i(\boldsymbol\varphi, e,m,M)$.
\end{subequations}
The matrix $\mathbf  S$ and $\mathbf T$ are given in terms of
$\boldsymbol\varphi$ by the equalities
\begin{equation}\label{luna29}\begin{split}
    \mathbf S=\mathbf V^\top\frac{\partial^2 H}{\partial\mathbf  y^2}\big
(\boldsymbol\xi+\mathbf u, \mathbf v, \mathbf w)\mathbf V, \\
\mathbf T=\mathbf W^\top\frac{\partial^2 H}{\partial\mathbf
z\partial\mathbf  y}\big (\boldsymbol\xi+\mathbf u, \mathbf v,
\mathbf w)\mathbf V+ \boldsymbol\Lambda^\top\frac{\partial^2
H}{\partial\mathbf  y^2}\big (\boldsymbol\xi+\mathbf u, \mathbf v,
\mathbf w)\mathbf V.
\end{split}\end{equation}
They admit the estimates
\begin{equation}\label{luna30}
    \|\mathbf S-\mathbf S_0\|_{\sigma, d-1}+
    \|\mathbf T-\mathbf T_0\|_{\sigma, d-1}
\leq c(\varepsilon+r),
\end{equation}
where $c$ depends only on $\sigma$, $d$ and $H$, the constant
matrices $\mathbf S_0$, $\mathbf T_0$ are given by
\begin{equation}\label{luna31}
\mathbf S_0=\frac{\partial^2 H_0}{\partial\mathbf  y^2}\big (0,0),
\quad \mathbf T_0=\frac{\partial^2 H_0}{\partial\mathbf
z\partial\mathbf  y}\big (0,0)
\end{equation}
The symmetric matrix valued functions $\mathbf U_{ij}$, $\mathbf
E_i$, $\mathbf K_i$ admit the estimates
\begin{equation}\label{luna32}
    \|\mathbf U_{ij}\|_{\sigma, d-1}+ \|\mathbf E_i\|_{\sigma, d-1}+
    \|\mathbf K_i\|_{\sigma, d-1}
\leq c.
\end{equation}

\end{theorem}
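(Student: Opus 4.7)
The plan is to compute each Gâteaux derivative $\delta\Phi_i = D_{\boldsymbol\varphi}\Phi_i[\delta\boldsymbol\varphi]$ directly from the explicit formulas \eqref{luna8a}--\eqref{luna8e}, then substitute the change of variables \eqref{luna27} (equivalently, the relations \eqref{sima13} supplied by the Second Structure Theorem) to re-express every term in the tangent-space variables $(\boldsymbol\chi,\psi_0,\boldsymbol\mu,\boldsymbol\lambda,\boldsymbol\Gamma,\delta\boldsymbol\beta)$. At that point Lemma \ref{luna11} and the symplectic identities \eqref{anna6}--\eqref{anna10} collapse all of the inconvenient derivatives into the single transport operator $\boldsymbol\partial=\boldsymbol\omega\cdot\nabla_\xi$, yielding the canonical form \eqref{luna26} up to a remainder that is a linear combination of the components $\Phi_j(\boldsymbol\varphi,e,m,M)$; this remainder we collect as $\Pi_i$.

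First I would handle $\delta\Phi_1$. The chain rule gives $\delta\Phi_1=\partial_x H_m\cdot\delta\mathbf u+\partial_y H_m\cdot\delta\mathbf v+\partial_z H_m\cdot\delta\mathbf w-\delta e$. Substituting \eqref{sima13a}--\eqref{sima13c} produces one transport piece $\chi_i\partial_{\xi_i}\Phi_1$ and one piece $\mathbf V\boldsymbol\mu\cdot\partial_y H_m+\mathbf W\boldsymbol\lambda\cdot\partial_z H_m$. By \eqref{luna8c} the coefficient $(\partial_y H_m)\mathbf V$ equals $\boldsymbol\omega^\top+\Phi_3^\top$, so the first contribution becomes $\boldsymbol\omega\cdot\boldsymbol\mu+\Phi_3\cdot\boldsymbol\mu$; after using \eqref{luna27e} to write $\boldsymbol\mu=\delta\boldsymbol\beta+\nabla\psi_0$, one gets exactly $\boldsymbol\omega\cdot\delta\boldsymbol\beta+\boldsymbol\partial\psi_0$. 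The remaining $\mathbf W\boldsymbol\lambda\cdot\partial_z H_m$ combined with $\boldsymbol\Lambda\boldsymbol\lambda\cdot\partial_y H_m$ terms is proportional to $\Phi_2$ via \eqref{luna8b}. Gathering the pieces gives \eqref{luna26a} and \eqref{luna28a}.

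The equation $\delta\Phi_3$ is the clearest illustration of the transport mechanism: differentiating $\Phi_3=(\partial_y H_m\,\mathbf V)^\top-\boldsymbol\omega$ yields a term $(\partial_y H_m)\,\delta\mathbf V$; using \eqref{sima13e} this becomes $-(\partial_y H_m)\mathbf V\nabla_\xi\boldsymbol\chi+\chi_i\partial_{\xi_i}((\partial_y H_m)\mathbf V)$, and Lemma \ref{luna11} together with \eqref{luna8c} converts $(\partial_y H_m)\mathbf V\nabla_\xi\boldsymbol\chi$ into $\boldsymbol\partial\boldsymbol\chi$ modulo a $\Phi_3$-term. The cross terms $\delta\mathbf v,\delta\mathbf w$ introduce second derivatives of $H_m$, which, after insertion of \eqref{sima13b}--\eqref{sima13c} and use of the symplectic identity $\mathbf W^\top\mathbf J\mathbf W=\mathbf J$, assemble into precisely $\mathbf S\boldsymbol\mu+\mathbf T^\top\boldsymbol\lambda$ with $\mathbf S,\mathbf T$ as in \eqref{luna29}. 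The same strategy, applied to the more elaborate expression \eqref{luna8b}, produces $\delta\Phi_2$: the derivative of $\boldsymbol\Lambda$ via \eqref{sima13f} generates $\mathbf V\nabla(\mathbf J\boldsymbol\lambda)\mathbf W$, which paired with $\nabla_y H_m$ and Lemma \ref{luna11} gives $\mathbf J\boldsymbol\partial\boldsymbol\lambda$, while $\delta\mathbf W$ gives $\boldsymbol\Omega\boldsymbol\lambda$ via the normal-form relation at the reference point, and the quadratic-in-$H_m$ pieces combine to $\mathbf T\boldsymbol\mu$. For $\delta\Phi_4$ the derivative of $\mathbf R_i$ in \eqref{sima13g} supplies the leading term $\boldsymbol\partial(\mathbf J\boldsymbol\Gamma)$, the second differential of $H_m$ dotted with $\mathbf W$ supplies $\boldsymbol\Omega\boldsymbol\Gamma+(\boldsymbol\Omega\boldsymbol\Gamma)^\top$, and the remaining third-derivative contributions of $H_m$ assemble into the matrices $\mathbf U_{ij},\mathbf E_i,\mathbf K_i$. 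Finally $\delta\Phi_5$ is read off directly from \eqref{sima13c}.

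The estimates \eqref{luna30} and \eqref{luna32} are obtained by evaluating \eqref{luna29} at $\boldsymbol\varphi=\boldsymbol\varphi_0(\alpha)$, where $\mathbf V=\mathbf I$, $\mathbf W=\mathbf I$, $\boldsymbol\Lambda=0$ and $H=H_0+O(\varepsilon)$, so $\mathbf S=\mathbf S_0+O(\varepsilon+r)$ and similarly for $\mathbf T$; the matrices $\mathbf U_{ij},\mathbf E_i,\mathbf K_i$ are bounded combinations of second and third derivatives of $H_m$ composed with the (controlled) functions $\boldsymbol\xi+\mathbf u,\mathbf v,\mathbf w$. The main obstacle is purely computational bookkeeping: the chain rule produces a long list of terms, and the identification of the clean canonical form rests on spotting which combinations close up via the symplectic relations \eqref{anna6}--\eqref{anna10} and which are absorbed into $\Pi_i$ through Lemma \ref{luna11}. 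Conceptually, the simplification is forced: it reflects the fact that the first variation of a Hamiltonian equation along a solution is itself a Hamiltonian system in the reduced symplectic coordinates, and the Second Structure Theorem is precisely the statement that the change of variables \eqref{luna27} realises this reduction pointwise on $\mathcal G$.
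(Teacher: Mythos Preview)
Your proposal is correct and follows essentially the same route as the paper's proof in Appendix~\ref{astructural3}: compute $\delta\Phi_i$ by the chain rule in the original variables $(\delta\mathbf u,\delta\mathbf v,\delta\mathbf w,\delta\mathbf V,\delta\mathbf W,\delta\boldsymbol\Lambda,\delta\mathbf R_i)$, substitute the Second Structure Theorem relations \eqref{sima13}, and use the identity $(\partial_y H_m)\mathbf V=\boldsymbol\omega^\top+\Phi_3^\top$ from \eqref{luna8c} to convert directional derivatives into $\boldsymbol\partial$ plus $\Phi_j$-remainders. One small correction: in $\delta\Phi_2$ the term $\boldsymbol\Omega\boldsymbol\lambda$ does not come from $\delta\mathbf W$ but from the $\delta\mathbf w$-contribution, via the identification of the block $\frac{\partial H_m}{\partial y_i}\mathbf R_i+\boldsymbol\Lambda^\top\partial_y^2H_m\boldsymbol\Lambda+\ldots+\mathbf W^\top\partial_z^2H_m\mathbf W$ with $\Phi_4+\boldsymbol\Omega$ (cf.\ \eqref{luna8d}); the split $\boldsymbol\Omega\boldsymbol\lambda+\Phi_4\boldsymbol\lambda$ is what produces both the leading term and the $\Phi_4\boldsymbol\lambda$ piece of $\Pi_2$.
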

\begin{proof}
The proof  is in Appendix \ref{astructural3}.
\end{proof}

Theorem \ref{structural3} gives the representation of  the
derivatives
$D_{\boldsymbol\varphi}\boldsymbol\Phi=\delta\boldsymbol\Phi$. As
corollary of this result we obtain the representation for the full
differential $D_\mathfrak u\boldsymbol\Phi(\mathfrak f, \mathfrak
u)$ defined by
\begin{equation}\label{lita203}
D_\mathfrak u\boldsymbol\Phi(\mathfrak f, \mathfrak
u)[\delta\mathfrak u]= \lim\limits_{\tau\to 0} \frac{1}{\tau}\big(
\Phi(\mathfrak f, \mathfrak u+\tau\delta\mathfrak u)-\Phi(\mathfrak
f, \mathfrak u)\big).
\end{equation}
The result is given by the following

\begin{corollary}\label{structuralcor3}
Under the assumptions of Theorem \ref{structural3} the operators
$D_\mathfrak u\Phi_i$ admit the representation
\begin{subequations}\label{lita201}
\begin{gather}
\label{lita201a} D_\mathfrak u\Phi_1(\mathfrak f, \mathfrak
u)[\delta\mathfrak u]\equiv\\\nonumber \boldsymbol\partial
\psi_0+\boldsymbol\omega^\top\cdot
  \delta\boldsymbol\beta+\delta e+ \delta m\cdot  w_1+
    \frac{1}{2}\delta M w_1^2 +\Pi_1[\boldsymbol\chi, \boldsymbol \mu,
   \boldsymbol\lambda],\\
    \label{lita201b}
D_\mathfrak u\Phi_2(\mathfrak f, \mathfrak u)[\delta\mathfrak
u]\equiv\\\nonumber
    \mathbf J\boldsymbol\partial\boldsymbol\lambda+\boldsymbol\Omega \boldsymbol\lambda +\mathbf
T\boldsymbol\mu
   +\delta m\mathbf W^\top\mathbf e_1+
    \delta M w_1\mathbf W^\top \mathbf e_1  +\Pi_2[\boldsymbol\chi, \boldsymbol\lambda,\boldsymbol\Gamma] ,
  \\
\nonumber \boldsymbol\mu =\nabla \psi_0+\delta\boldsymbol\beta
      \\
\label{lita201c}
 D_\mathfrak u\Phi_3(\mathfrak f, \mathfrak
u)[\delta\mathfrak u]\equiv\\\nonumber
    -\boldsymbol\partial \boldsymbol\chi+
 \mathbf S\boldsymbol\mu+\mathbf T^\top \boldsymbol\lambda +
 \Pi_3[\boldsymbol\chi],
\\\label{lita201d}
D_\mathfrak u\Phi_4(\mathfrak f, \mathfrak u)[\delta\mathfrak
u]\equiv\\\nonumber
    \boldsymbol\partial(\mathbf J\boldsymbol\Gamma)
+\boldsymbol\Omega\boldsymbol\Gamma+
(\boldsymbol\Omega\boldsymbol\Gamma)^\top + \mathbf
U_{ij}\frac{\partial \lambda_i}{\partial\xi_j}+ \mu_i\mathbf
E_i+\lambda_i\mathbf K_i+
\\\nonumber
\mathbf W^\top\delta\mathbf M\mathbf W+\Pi_4[\boldsymbol\chi,\boldsymbol\Gamma],\\
\label{lita201e} D_\mathfrak u\Phi_5(\mathfrak f, \mathfrak
u)[\delta\mathfrak u]\equiv\\\nonumber \frac{1}{(2\pi)^{n-1}}
    \int_{\mathbb T^{n-1}}\Big
    ((\mathbf W\boldsymbol\lambda)_1+
    \boldsymbol\chi_i\frac{\partial w_1}{\partial\xi_i}\Big) \, d
    \boldsymbol\xi
\end{gather}
\end{subequations}
\end{corollary}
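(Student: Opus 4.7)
The plan is to exploit the linearity of the G\^ateaux differential in the components of $\delta\mathfrak{u} = (\delta\boldsymbol{\varphi}, \delta e, \delta m, \delta M)$ and to split
\[
D_{\mathfrak{u}}\boldsymbol{\Phi}(\mathfrak{f}, \mathfrak{u})[\delta\mathfrak{u}] = D_{\boldsymbol{\varphi}}\boldsymbol{\Phi}(\mathfrak{f}, \mathfrak{u})[\delta\boldsymbol{\varphi}] + \partial_e \boldsymbol{\Phi}\,\delta e + \partial_m\boldsymbol{\Phi}\,\delta m + \partial_M\boldsymbol{\Phi}\,\delta M.
\]
The first summand is exactly the quantity already represented in Theorem \ref{structural3} through formulas \eqref{luna26a}--\eqref{luna26e}, so the entire content of the corollary reduces to computing the three scalar partials $\partial_e\Phi_i$, $\partial_m\Phi_i$, $\partial_M\Phi_i$ from the explicit expressions \eqref{luna8} and then adding them to \eqref{luna26a}--\eqref{luna26e}.

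For the scalar partials I would proceed componentwise. Since $H_m$ is affine in $m$ and $M$, entering only through $m z_1 + \tfrac{1}{2} M z_1^{\,2}$, and since each $\Phi_i$ evaluates $H_m$ at $\mathbf{z}=\mathbf{w}$, all three derivatives are essentially algebraic. From $\Phi_1 = H_m - e$ one reads off $\partial_m\Phi_1 = w_1$ and $\partial_M \Phi_1 = \tfrac{1}{2} w_1^{\,2}$, together with the constant contribution $\delta e$. The operator $\Phi_2$ contains $\partial_{\mathbf{z}} H_m\cdot \mathbf{W}$, and differentiating the modification in $z_1$ gives $\partial_m \Phi_2 = \mathbf{W}^\top\mathbf{e}_1$ and $\partial_M\Phi_2 = w_1\,\mathbf{W}^\top\mathbf{e}_1$. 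The operator $\Phi_3$ involves only $\partial_{\mathbf{y}} H_m$, which is untouched by the modification, so its scalar partials vanish. For $\Phi_4$ only the block $\mathbf{W}^\top \partial_{\mathbf{z}}^{\,2} H_m\, \mathbf{W}$ feels the perturbation, and only through $\mathbf{M} = \mathrm{diag}(M,0)$, producing the term $\mathbf{W}^\top \delta\mathbf{M}\,\mathbf{W}$; similarly $\partial_e\Phi_4 = \partial_m\Phi_4 = 0$. Finally $\Phi_5$ depends only on $\mathbf{w}$ and so contributes nothing in the scalar variables.

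Assembling these elementary contributions with the canonical form of $D_{\boldsymbol{\varphi}}\boldsymbol{\Phi}$ provided by Theorem \ref{structural3}, where the auxiliary fields $\boldsymbol{\chi}, \boldsymbol{\mu}, \boldsymbol{\lambda}, \boldsymbol{\Gamma}$ are linked to $\delta\boldsymbol{\varphi}$ by the invertible algebraic relations \eqref{luna27}, reproduces the formulas \eqref{lita201a}--\eqref{lita201e} exactly. There is no substantive analytic obstacle at this stage: the difficult step, namely the reduction of $D_{\boldsymbol{\varphi}}\boldsymbol{\Phi}$ to a triangular system whose principal part has constant coefficients, has already been carried out in Theorem \ref{structural3}. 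The only care required is careful book-keeping, tracking that $\mathbf{m} = (m,0)^\top$ and $\mathbf{M} = \mathrm{diag}(M,0)$ act nontrivially only on the $z_1$-slot, and keeping signs consistent with the convention chosen for $\partial_e\Phi_1$ in \eqref{luna8a}.
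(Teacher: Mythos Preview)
Your proposal is correct and follows essentially the same approach as the paper: both split $D_{\mathfrak u}\boldsymbol\Phi$ into the $\boldsymbol\varphi$-part already handled by Theorem~\ref{structural3} plus the scalar partials $D_e\Phi_i$, $D_m\Phi_i$, $D_M\Phi_i$, compute the latter directly from the explicit formulas \eqref{luna8} using the affine dependence of $H_m$ on $(e,m,M)$, and then add. The paper's proof is nothing more than the list of these scalar derivatives followed by substitution into the decomposition, exactly as you outline.
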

\begin{proof}
Relations \eqref{luna8}, \eqref{luna17} lead to the following
expressions for  $ D_e\Phi$, $D_m\Phi$, and $D_M\Phi$
\begin{gather*}
D_e\Phi_1[\delta e]=\delta e, \quad D_e\Phi_i[\delta e]=0\text{~~for~~} i=2,3,4,5\\
 D_m\Phi_1[\delta  m]=\delta  m w_1, \quad
D_m\Phi_2[\delta m]= \delta m\mathbf W^\top \mathbf e_1,
\\
D_m\Phi_3[\delta  m]=0, \quad D_m\Phi_4[\mathbf m]=0,
\quad D_m\Phi_5[\delta  m]=0,\\
 D_M\Phi_1[\delta  M]=\frac{1}{2} w_1^2 \delta
 M , \quad D_M\Phi_2[\delta  M]= \mathbf W^\top\delta
 M\mathbf w,
\\
D_M\Phi_3[\delta  M]=0, \quad D_M\Phi_4[\delta  M]= \mathbf
W^\top\delta M \mathbf W, \quad D_M\Phi_5[\delta M]=0.
\end{gather*}
Substituting these expressions  into the identity
\begin{equation*}
D_\mathfrak u\boldsymbol\Phi(\mathfrak f,\mathfrak u)[\delta\mathfrak u]=
\delta \boldsymbol \Phi+
    D_e\boldsymbol \Phi(\mathfrak f, \mathfrak u)\delta e+
    D_m\boldsymbol \Phi(\mathfrak f, \mathfrak u)\delta m+
     D_M\boldsymbol \Phi(\mathfrak f, \mathfrak u)\delta M
\end{equation*}
and recalling expressions \eqref{luna26} for $\delta
\boldsymbol\Phi_i$ in Theorem \ref{structural3} we arrive at the
desired identity \eqref{lita201}.
\end{proof}

\section{Linear Problem}\label{lita}
In this section we prove the existence an approximate inverse of the
Gateux derivative of the operator $\boldsymbol\Phi$. This result plays the
crucial role in the proof of  solvability of the modified problem
\eqref{luna8}. Choose an arbitrary $\sigma\in [1/2, 1]$ and an
arbitrary integer $d\geq 1$. Next, fix $\mathfrak f=(\alpha, k)\in
\Sigma_\varrho$. In view of Corollary  \ref{structuralcor3} the operator
$\boldsymbol\Phi(\mathfrak f, \mathfrak u)$  is differentiable with
respect to $\mathfrak u$ for all $\mathfrak u=(\boldsymbol\varphi,
e,  m, M)$ satisfying the condition
$$\|\boldsymbol\varphi-\boldsymbol\varphi_0(\alpha)\|_{\sigma,
d}\leq r.$$ Notice that $\boldsymbol\Phi$  is a linear function of
the parameters $e, m,M$.  Consider the linear operator equation
\begin{equation}\label{lita1}
    D_\mathfrak u\boldsymbol\Phi(\mathfrak f, \mathfrak u)
    [\delta\mathfrak u]=\mathbf F\in \mathcal F_{\sigma,d}.
    \end{equation}
We are looking for a solution
$$
\delta\mathfrak u=(\delta\boldsymbol \varphi, \delta e, \delta
m,\delta M)$$
 to this equation in the space $\mathcal
E_{\sigma,d}$ given by Definition \ref{luna15}. Hence
$\delta\boldsymbol\varphi$ have to  satisfy the orthogonality conditions
\begin{equation} \label{lita2f}
\int_{\mathbb T^{n-1}}\delta\varphi_0\, d\xi=0, \quad \int_{\mathbb
T^{n-1}}\delta\mathbf u\, d\xi=0, \quad \int_{\mathbb T^{n-1}}\delta
W_{12}\, d\xi=0,
\end{equation}
which are similar condition \eqref{luna8g}-\eqref{luna8f} for
$\boldsymbol \varphi$. Relations \eqref{lita1}-\eqref{lita2f} form
the closed system of equations for the vector $\delta\mathfrak u$.
However, the operator $D_\mathfrak u\boldsymbol\Phi$ has the
complicated structure and its type is indefinite. In order to
simplify the equation, we exploit Corollary \ref{structuralcor3} of
Third Structural
 Theorem \ref{structural3} and reduce  equation \eqref{lita1} to the
 triangular  canonical form with constant
  coefficients in the principal part.
To this end we make the linear change of unknown functions and
introduce the new unknown vector field
$\Upsilon=(\delta\boldsymbol\beta,\psi_0, \boldsymbol\lambda,
 \boldsymbol\chi, \boldsymbol \Gamma,)$
which is connected with
$$\delta\boldsymbol \varphi=(\delta\boldsymbol\beta,\delta \varphi_0, \delta\mathbf u,
 \delta\mathbf w,\delta\mathbf W)
$$
by relations \eqref{luna27}. Using identities \eqref{lita201} we
rewrite equations \eqref{lita1}-\eqref{lita2f} in the equivalent
form
\begin{subequations}\label{lita3}
\begin{gather}\label{lita3a}
 \boldsymbol\partial \psi_0+\boldsymbol\omega^\top\cdot
  \delta\boldsymbol\beta+\Pi_1[\boldsymbol\chi, \boldsymbol \mu,
   \boldsymbol\lambda]+\delta e+ \delta\mathbf m\cdot \mathbf w+
    \frac{1}{2}\mathbf w^\top \delta\mathbf M \mathbf w=F_1,\\
    \label{lita3b}
\mathbf J\boldsymbol\partial\boldsymbol\lambda +\mathbf
T\boldsymbol\mu
    +\Pi_2[\boldsymbol\chi, \boldsymbol\lambda,\boldsymbol\Gamma]+
    \mathbf W^\top\delta\mathbf m+
    \mathbf W^\top \delta\mathbf M \mathbf w =F_2,
  \\
\nonumber \boldsymbol\mu =\nabla \psi_0+\delta\boldsymbol\beta
      \\
\label{lita3c}
 -\boldsymbol\partial \boldsymbol\chi+
 \mathbf S\boldsymbol\mu+\mathbf T^\top \boldsymbol\lambda +\Pi_3[\boldsymbol\chi]=F_3,
\\\label{lita3d}
\boldsymbol\partial(\mathbf J\boldsymbol\Gamma)
+\boldsymbol\Omega\boldsymbol\Gamma+
(\boldsymbol\Omega\boldsymbol\Gamma)^\top + \mathbf
U_{ij}\frac{\partial \lambda_i}{\partial\xi_j}+ \mu_i\mathbf
E_i+\lambda_i\mathbf K_i
\\\nonumber
+\mathbf W^\top\delta\mathbf M\mathbf W+\Pi_4[\boldsymbol\chi,\boldsymbol\Gamma]=F_4.
\end{gather}
\begin{gather}\label{lita3f}
(2\pi)^{1-n} \int_{\mathbb T^{n-1}}\Big
    ((\mathbf W\boldsymbol\lambda)_1+
    \chi_i\frac{\partial w_1}{\partial\xi_i}\Big) \, d
    \boldsymbol\xi=f_5,\\\label{lita3g}
(2\pi)^{1-n}\int_{\mathbb T^{n-1}}\big[ \psi_0-\delta \boldsymbol
\beta\cdot\mathbf u+
w_2\big(\mathbf W\boldsymbol\lambda\big)_1\big]\, d \xi=0\\
\label{lita3h} (2\pi)^{1-n}\int_{\mathbb T^{n-1}}\mathbf V^{-\top}\boldsymbol\chi\,
d\xi=0,\quad \int_{\mathbb T^{n-1}}(\mathbf
W\boldsymbol\Gamma)_{12}\, d\xi=0.
\end{gather}
\end{subequations}
Recall that the matrices $\mathbf S$ and $\mathbf T$ are given by
\eqref{luna29}.
 This matrices along with the matrices $\mathbf U$, $\mathbf E_i$, and $\mathbf K_i$
 admit estimates \eqref{luna31} and \eqref{luna32}. The linear operators
 $\Pi$ are
 defined by formulae \eqref{luna28}. They vanishes when $\Phi=0$.

\subsection{Approximate equations}
Relations \eqref{lita3} form the closed system of equations
 and the orthogonality
conditions for the functions $\psi_0$, $\boldsymbol\chi$,
$\boldsymbol \lambda$,  $\boldsymbol \Gamma$, and the parameters
 $\boldsymbol\beta$, $\delta m$, $\delta M$, and $\delta e$.
 This  system is not in a triangle form and it is
 inconvenient for the investigation.
In order to cope with this difficulty we notice that, in accordance
 with the basic principles of the KAM theory, we are looking for an approximate
 solution to equations \eqref{lita3}. This approximate solutions should satisfy  equations
 \eqref{lita3} with the accuracy
 up to the discrepancy. Hence we can omit the operators $\Pi_i$ in \eqref{lita3}.
Thus we arrive at the approximate equations
\begin{subequations}\label{lita202}
\begin{gather}\label{lita202a}
 \boldsymbol\partial \psi_0+\boldsymbol\omega^\top\cdot
  \delta\boldsymbol\beta+\delta e+ \delta\mathbf m\cdot \mathbf w+
    \frac{1}{2}\mathbf w^\top \delta\mathbf M \mathbf w=F_1,\\
    \label{lita202b}
\mathbf J\boldsymbol\partial\boldsymbol\lambda +\boldsymbol\Omega\boldsymbol\lambda+\mathbf
T\boldsymbol\mu
    + \mathbf W^\top\delta\mathbf m+
    \mathbf W^\top \delta\mathbf M \mathbf w =F_2,
  \\
\nonumber \boldsymbol\mu =\nabla \psi_0+\delta\boldsymbol\beta
      \\
\label{lita202c}
 -\boldsymbol\partial \boldsymbol\chi+
 \mathbf S\boldsymbol\mu+\mathbf T^\top \boldsymbol\lambda
 =F_3,
\\\label{lita202d}
\boldsymbol\partial(\mathbf J\boldsymbol\Gamma)
+\boldsymbol\Omega\boldsymbol\Gamma+
(\boldsymbol\Omega\boldsymbol\Gamma)^\top + \mathbf
U_{ij}\frac{\partial \lambda_i}{\partial\xi_j}+ \mu_i\mathbf
E_i+\\\nonumber\lambda_i\mathbf K_i+ \mathbf W^\top\delta\mathbf
M\mathbf W=F_4.
\end{gather}
\begin{gather}\label{lita202f}
 {(2\pi)^{1-n}}\int_{\mathbb T^{n-1}}\Big
    ((\mathbf W\boldsymbol\lambda)_1+
    \boldsymbol\chi\cdot \nabla w_1\Big) \, d
    \boldsymbol\xi=f_5,\\\label{lita202g}
{(2\pi)^{1-n}}\int_{\mathbb T^{n-1}}\big[
\psi_0-\delta\boldsymbol\beta\cdot\beta\mathbf u+
w_2\big(\mathbf W\boldsymbol\lambda\big)_1\big]\, d \xi=0\\
\label{lita202h} {(2\pi)^{1-n}}\int_{\mathbb T^{n-1}}\mathbf
V^{-\top}\boldsymbol\chi\, d\xi=0,\quad {(2\pi)^{1-n}}\int_{\mathbb T^{n-1}}(\mathbf
W\boldsymbol\Gamma)_{12}\, d\xi=0,
\end{gather}
\end{subequations}
where $\delta\mathbf m=(\delta m,0)^\top$ and $\delta\mathbf M=
\text{diag}(\delta M, 0)$.

 The  difficulty is that the "angle
variable" $w_1$ is a growing function of $\alpha$. Hence the extra
term $m w_1+2^{-1} Mw_1^2$ in the modified Hamiltonian has a
polynomial growth in $\alpha$. In other words, this means that the
system of equations \eqref{lita202} contain secular terms. To cope
with this difficulty
 we collect all secular  terms together and introduce the new parameters
 \begin{equation}\label{lita5}\begin{split}
     q= \delta e+\delta\boldsymbol\beta^\top\cdot \boldsymbol\omega+
    \alpha\delta m +2^{-1} \alpha^2 \delta M,\\
     p=\delta m+\alpha \delta M, \quad \delta\mathbf p=(\delta p, 0)
 \end{split}\end{equation}
 Recall that $\delta\boldsymbol m=(\delta m,0)$,
  $\delta\boldsymbol M=\text{diag~}(\delta M,0).$
 Thus we arrive at the following equations
\begin{subequations}\label{lita4}
\begin{gather}\label{lita4a}
 \boldsymbol\partial \psi_0+ q+  p\cdot ( w_1-\alpha)+
    \frac{1}{2} \delta M (w_1-\alpha)^2=F_1,\\
    \label{lita4b}
\mathbf J\boldsymbol\partial\boldsymbol\lambda +\boldsymbol\Omega\boldsymbol\lambda+\mathbf
T\boldsymbol\mu
    +p\mathbf W^\top\mathbf e_1+
    \delta M(w_1-\alpha)\mathbf W^\top\mathbf e_1 =F_2,\\\nonumber
    \boldsymbol\mu =\nabla \psi_0+\delta\boldsymbol\beta\\
\label{lita4c}
 -\boldsymbol\partial \boldsymbol\chi+\mathbf S\boldsymbol\mu+
 \mathbf T^\top \boldsymbol\lambda =F_3,
\\\label{lita4d}
\boldsymbol\partial(\mathbf J\boldsymbol\Gamma)
+\boldsymbol\Omega\boldsymbol\Gamma+
(\boldsymbol\Omega\boldsymbol\Gamma)^\top +\mathbf U_{ij}
\frac{\partial \lambda_i}{\partial\xi_j}+ \mu_i\mathbf
E_i+\lambda_i\mathbf K_i+   \\\nonumber +\mathbf W^\top\delta\mathbf
M\mathbf W=F_4,\quad \Gamma_{11}=-\Gamma_{22},
\end{gather}
\begin{gather}\label{lita4f}
 {(2\pi)^{1-n}}\int_{\mathbb T^{n-1}}\Big((\mathbf W\boldsymbol\lambda)_1+
    \boldsymbol\chi\cdot\nabla w_1\Big) \, d
    \boldsymbol\xi=f_5,\\\label{lita4g}
{(2\pi)^{1-n}}\int_{\mathbb T^{n-1}}\big[ \psi_0-\delta \boldsymbol
\beta\cdot\mathbf u+
w_2\big(\mathbf W\boldsymbol\lambda\big)_1\big]\, d \xi=0\\
\label{lita4h} {(2\pi)^{1-n}}\int_{\mathbb T^{n-1}}\mathbf V^{-\top}\boldsymbol\chi\,
d\xi=0,\quad {(2\pi)^{1-n}}\int_{\mathbb T^{n-1}}(\mathbf
W\boldsymbol\Gamma)_{12}\, d\xi=0.
\end{gather}

\end{subequations}
The following theorem constitutes the existence and uniqueness of
solutions to problem \eqref{lita4}
\begin{theorem}\label{lita210} Let a fixed $\sigma\in [1/4,1]$, $d\geq 2$,
and the matrix $\mathbf K_0=\mathbf S_0-\mathbf t_0\otimes \mathbf
t_0$ given by \eqref{sandt}, satisfies the condition $\text{~det~}
\mathbf K_0\neq 0$. Then there are $\varepsilon_0>0$ and  $r_0$ with
the following properties. For every
$$
(\alpha, k)\in \Sigma_{\varrho}, \quad\|\boldsymbol\varphi-
\boldsymbol\varphi_0(\alpha)\|_{\sigma, d}\leq r_0, \quad
\boldsymbol\varphi_0(\alpha)= (0,0,0,\alpha\mathbf e_1,1,0,0), \quad
|\varepsilon|\leq \varepsilon_0, \quad ,
$$
$$
0\leq \sigma_0<\sigma_1\leq \sigma, \quad \sigma_1\geq 1/4,
$$
and all
$ \mathbf F=(F_1, F_2, F_3, F_4,
f_5)\in \mathcal F_{\sigma_1,0},
$
 problem \eqref{lita4} has a unique solution $$
(\psi_0, \boldsymbol\lambda, \boldsymbol\chi, \boldsymbol\Gamma,
\delta\boldsymbol \beta, q,  p,  \delta M)\in
\mathcal A_{\sigma_0, 0}\times \mathcal A_{\sigma_0,0}^{2}\times
\mathcal A_{\sigma,d}^{n-1}\times \mathcal A_{\sigma_0,0}^4\times
\mathbb C^{n-1}\times \mathbb C^{3}.
$$
This solution admits the estimate
\begin{equation}\label{lita213}
\|(\psi_0, \boldsymbol\lambda, \boldsymbol\chi,
\boldsymbol\Gamma)\|_{\sigma_0,0}+ |(\delta\boldsymbol \beta,
q, \ p, \delta M)| \leq
{c}{(\sigma_1-\sigma_0)^{-8n-12}} \|\mathbf F\|_{\sigma_1,0}
\end{equation}
where  the constant $c$ is independent of $\varepsilon_0$, $r_0$,
and $\sigma_i$.
\end{theorem}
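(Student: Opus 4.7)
The plan is to decouple the quasi-triangular system \eqref{lita4a}--\eqref{lita4h} into a finite-dimensional linear algebraic system for the averages (zero Fourier modes and constants) together with a cascade of cohomological equations of the form $\boldsymbol\partial u = g$ or $(\boldsymbol\partial^2 - k)u = g$ on the non-zero Fourier modes. The diophantine condition \eqref{diophantine} inverts the latter at the cost of a fixed polynomial loss of analyticity radius, while the non-degeneracy hypothesis $\det \mathbf K_0 \neq 0$ renders the averaged algebraic system uniquely solvable. The smallness of $\varepsilon_0$ and $r_0$ ensures that $\mathbf S - \mathbf S_0$, $\mathbf T - \mathbf T_0$, $\mathbf U_{ij}$, $\mathbf E_i$, $\mathbf K_i$ behave as small perturbations of their constant-coefficient principal parts via \eqref{luna30}, \eqref{luna32}, so that the final estimate depends only on the leading constants $\mathbf S_0$, $\mathbf T_0$ and on $\mathbf K_0$.

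First, I average each of \eqref{lita4a}--\eqref{lita4d} over $\mathbb T^{n-1}$. Using $\overline{\psi_0} = 0$, $\overline{w_1} = \alpha$, together with the normalizations \eqref{luna31}, the averaged equations reduce, up to small perturbations, to an algebraic system for $\delta\boldsymbol\beta$, $\overline{\boldsymbol\lambda}$, $q$, $p$, $\delta M$. Since the unperturbed $H_0$ depends only on $(\mathbf y, z_2)$, the matrix $\mathbf T_0$ has the block structure whose first row vanishes and whose second row is $\mathbf t_0^\top$. The second component of averaged \eqref{lita4b} therefore yields $\overline{\lambda_2} = \overline{(F_2)_2} - \mathbf t_0 \cdot \delta\boldsymbol\beta$. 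Substituting this into the average of \eqref{lita4c} gives $(\mathbf S_0 - \mathbf t_0 \otimes \mathbf t_0)\,\delta\boldsymbol\beta = \overline{F_3} - \mathbf t_0\,\overline{(F_2)_2}$, whose matrix is exactly $\mathbf K_0$; by hypothesis this is invertible, so $\delta\boldsymbol\beta$ is determined. The first component of averaged \eqref{lita4b} and the $\boldsymbol\Omega$-block of averaged \eqref{lita4d} then determine $p$, $\overline{\lambda_1}$ and $\delta M$, and finally the average of \eqref{lita4a} determines $q$.

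With the constants and the means of $\boldsymbol\lambda$, $\boldsymbol\Gamma$ fixed, I invert the zero-mean parts of the four equations sequentially. Equation \eqref{lita4a} becomes $\boldsymbol\partial \psi_0 = \tilde F_1$ with $\overline{\tilde F_1} = 0$, solvable by \eqref{diophantine} with a loss of roughly $n+1$ derivatives. From \eqref{lita4b}, eliminating $\lambda_2 = \boldsymbol\partial \lambda_1 + (F_2)_2 - \ldots$ from the second component and substituting into the first yields the scalar equation $(\boldsymbol\partial^2 - k)\lambda_1 = \tilde G$; since for $k \in [0,1]$ and $s \in \mathbb Z^{n-1}\setminus\{0\}$ the symbol $(\omega\cdot s)^2 + k$ is bounded below by $|\omega\cdot s|^2$, the diophantine estimate again applies, now with a double loss. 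Equation \eqref{lita4c} is then inverted for $\boldsymbol\chi$ and \eqref{lita4d} for $\boldsymbol\Gamma$ (combining a diophantine inversion with the algebraic inversion of $\boldsymbol\Gamma \mapsto \boldsymbol\Omega\boldsymbol\Gamma + (\boldsymbol\Omega\boldsymbol\Gamma)^\top$ on the non-zero modes). Careful bookkeeping of these four inversions, compounded with the derivative losses incurred when recomposing with the variable coefficients, produces the exponent $8n + 12$ appearing in \eqref{lita213}.

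The principal obstacle is the algebraic identification of $\mathbf K_0$ as the Schur complement obtained by eliminating $\overline{\lambda_2}$: this rests on the specific block structure of $\mathbf T_0$, which itself is a consequence of the $z_1$-independence of the unperturbed normal form \eqref{ee1.04}. The secondary analytic difficulty is the uniform control of derivative losses over the full parameter strip $\Sigma_\varrho$ in the presence of the perturbations $\mathbf S - \mathbf S_0$, $\mathbf T - \mathbf T_0$, and of the operators $\Pi_i$; these must remain negligible relative to the inverses constructed above, which is precisely why $\varepsilon_0$ and $r_0$ must be chosen small. Uniqueness is then automatic: the averaged algebraic system has a unique solution since $\mathbf K_0$ is invertible, and each cohomological inversion has a unique mean-zero solution.
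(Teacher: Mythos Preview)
Your strategy matches the paper's: split into zero-mode algebra plus diophantine inversions, identify $\mathbf K_0$ as the Schur complement governing $\delta\boldsymbol\beta$, and treat the variable coefficients as perturbations controlled by $\varepsilon_0,r_0$. The paper carries this out through a sequence of model lemmas for $\boldsymbol\partial\psi_0=g$, $\mathbf J\boldsymbol\partial\boldsymbol\lambda+\boldsymbol\Omega\boldsymbol\lambda=\mathbf H$, and the $\boldsymbol\Gamma$-equation, then assembles them exactly as you describe.

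There is, however, one structural gap. You write ``With the constants and the means \dots\ fixed, I invert the zero-mean parts \dots\ sequentially,'' but the averaged system is \emph{not} decoupled from the oscillatory part once $\mathbf S,\mathbf T$ are non-constant: the averages of \eqref{lita4b}--\eqref{lita4c} contain terms like $\overline{\mathbf T\nabla\psi_0^*}$ and $\overline{\mathbf T^\top\boldsymbol\lambda^*}$, so the right-hand side of your algebraic system for $\delta\boldsymbol\beta$ depends on the deviations you have not yet computed. The paper closes this loop by a contraction-mapping argument: it writes the mean values as $\overline{\boldsymbol\mu}=\mathcal M(\psi_0^*,\boldsymbol\lambda^*)+\mathfrak F_\beta$ etc., with $|\mathcal M|\le c(\varepsilon_0+r_0)(|\psi_0^*|+|\boldsymbol\lambda^*|)$, feeds these back into the deviation equations, and obtains an operator equation $(\psi_0^*,\boldsymbol\lambda^*,\boldsymbol\chi^*)=\Xi_1\Xi_2(\psi_0^*,\boldsymbol\lambda^*,\boldsymbol\chi^*)+\Xi_1\mathbf G^*$ whose fixed point exists because $\|\Xi_1\Xi_2\|\le c(\varepsilon_0+r_0)<1$. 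A second contraction then absorbs the residual terms $p(w_1-\alpha)$, $p(\mathbf W^\top-\mathbf I)\mathbf e_1$, $(\mathbf I-\mathbf V^{-\top})\boldsymbol\chi$, etc., coming from the integral conditions \eqref{lita4f}--\eqref{lita4h}. Your phrase ``up to small perturbations'' is where this machinery must go; without it the argument is circular.

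Two minor corrections: $\overline{\lambda_1}$ is fixed by the integral condition \eqref{lita4f} (to leading order $\overline{\lambda_1}\approx f_5$), not by the first component of averaged \eqref{lita4b}; the latter instead determines $p$ once $\overline{\lambda_1}$ is known. And the operators $\Pi_i$ have already been discarded in passing from \eqref{lita3} to \eqref{lita4}, so they play no role here.
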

\begin{proof}
The proof is in Appendix \ref{prooflita205}.
\end{proof}
  Notice that problems
\eqref{lita202} and \eqref{lita4} are equivalent. The vector
$\boldsymbol \Upsilon=(\delta\boldsymbol \beta,\psi_0,
\boldsymbol\lambda, \boldsymbol\chi, \boldsymbol\Gamma)$ and the
parameters $\delta e$, $\delta m$, and $\delta M$ satisfy equations
\eqref{lita202} if and only if $\boldsymbol \Upsilon$, $\delta M$
and the parameters $\delta q$, $\delta p$,  given by \eqref{lita5},
satisfy equations \eqref{lita4}. Thus we obtain the following
\begin{corollary}\label{lita214}
Under the assumptions of Theorem \ref{lita210}, problem
\eqref{lita202} has a unique solution
$$
(\psi_0, \boldsymbol\lambda, \boldsymbol\chi, \boldsymbol\Gamma,
\delta\boldsymbol \beta,\delta e, \delta m, \delta M)\in
\mathcal A_{\sigma_0, 0}\times \mathcal A_{\sigma_0,0}^{2}\times
\mathcal A_{\sigma,d}^{n-1}\times \mathcal A_{\sigma_0,0}^4\times
\mathbb C^{n-1}\times \mathbb C^{3}.
$$
This solution admits the estimate
\begin{equation}\label{lita215}
\|(\psi_0, \boldsymbol\lambda, \boldsymbol\chi,
\boldsymbol\Gamma)\|_{\sigma_0,0}+ |(\delta\boldsymbol \beta,
 \delta M)| \leq {c}{(\sigma_1-\sigma_0)^{-8n-12}}
\|\mathbf F\|_{\sigma_1,0}
\end{equation}
\begin{equation}\label{lita216}
|( \delta e, \delta m)| \leq
(1+\alpha^2){c}{(\sigma_1-\sigma_0)^{-8n-12}} \|\mathbf
F\|_{\sigma_1,0}
\end{equation}

\end{corollary}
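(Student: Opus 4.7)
The plan is to deduce the corollary directly from Theorem \ref{lita210} by exploiting the invertible affine change of parameters \eqref{lita5}, which is precisely designed to absorb the secular ($\alpha$-growing) terms in the modified Hamiltonian into a bounded set of new unknowns $q$, $p$, $\delta M$.

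First I would verify that the substitution
\begin{equation*}
q=\delta e+\delta\boldsymbol\beta^{\top}\cdot\boldsymbol\omega+\alpha\,\delta m+\tfrac12\alpha^{2}\delta M,\qquad p=\delta m+\alpha\,\delta M,
\end{equation*}
turns \eqref{lita202} into \eqref{lita4}. For \eqref{lita202a}, writing $w_{1}=\alpha+(w_{1}-\alpha)$ and collecting powers of $(w_{1}-\alpha)$ turns $\delta e+\delta\boldsymbol\beta^{\top}\cdot\boldsymbol\omega+\delta m\,w_{1}+\tfrac12\delta M w_{1}^{2}$ into $q+p(w_{1}-\alpha)+\tfrac12\delta M(w_{1}-\alpha)^{2}$, reproducing \eqref{lita4a}. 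For \eqref{lita202b}, the combination $\mathbf{W}^{\top}\delta\mathbf{m}+\mathbf{W}^{\top}\delta\mathbf{M}\mathbf{w}=\mathbf{W}^{\top}(\delta m+\delta M w_{1})\mathbf{e}_{1}=\mathbf{W}^{\top}\bigl(p+\delta M(w_{1}-\alpha)\bigr)\mathbf{e}_{1}$ gives exactly \eqref{lita4b}. The remaining equations \eqref{lita202c}--\eqref{lita202h} are unaffected (only $\delta M$ itself appears in \eqref{lita202d}), so they coincide with \eqref{lita4c}--\eqref{lita4h}.

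Next I would invert the substitution. Solving the triangular linear system for $(\delta m,\delta e)$ with $\delta M$ and $\delta\boldsymbol\beta$ fixed yields
\begin{equation*}
\delta m=p-\alpha\,\delta M,\qquad \delta e=q-\delta\boldsymbol\beta^{\top}\cdot\boldsymbol\omega-\alpha\,p+\tfrac12\alpha^{2}\delta M.
\end{equation*}
Because this map is a bijection, existence and uniqueness of the solution to \eqref{lita202} are equivalent to the corresponding statements for \eqref{lita4}, which are supplied by Theorem~\ref{lita210}. The components $(\psi_{0},\boldsymbol\lambda,\boldsymbol\chi,\boldsymbol\Gamma,\delta\boldsymbol\beta,\delta M)$ are the same in both problems and inherit estimate \eqref{lita215} verbatim from \eqref{lita213}.

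Finally I would track the $\alpha$-dependence for $\delta e$ and $\delta m$. The inversion formulas give
\begin{equation*}
|\delta m|\le |p|+|\alpha|\,|\delta M|,\qquad |\delta e|\le |q|+|\boldsymbol\omega|\,|\delta\boldsymbol\beta|+|\alpha|\,|p|+\tfrac12|\alpha|^{2}\,|\delta M|,
\end{equation*}
and applying \eqref{lita213} to each term on the right produces the factor $(1+|\alpha|^{2})$ in \eqref{lita216}. There is no real obstacle here: the content of the corollary is entirely contained in Theorem~\ref{lita210}, and the proof is the observation that \eqref{lita5} is an invertible affine change of variables whose inverse has coefficients bounded by $1+|\alpha|+|\alpha|^{2}\lesssim 1+\alpha^{2}$. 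The only minor bookkeeping point to emphasize is that the polynomial growth in $\alpha$ is carried solely by the scalars $\delta e,\delta m$ and does not infect the functional components $(\psi_{0},\boldsymbol\lambda,\boldsymbol\chi,\boldsymbol\Gamma)$ or the vector $\delta\boldsymbol\beta$, which is why \eqref{lita215} and \eqref{lita216} are stated separately.
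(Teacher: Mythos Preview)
Your proof is correct and follows exactly the approach the paper takes. The paper's own argument is essentially the single sentence preceding the corollary, namely that problems \eqref{lita202} and \eqref{lita4} are equivalent via the affine substitution \eqref{lita5}; you have simply spelled out the verification of this equivalence and the inversion formulas, which the paper leaves implicit.
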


\subsection{ Differential of $\boldsymbol \Phi$. Approximate
inverse}\label{asol} In this section we construct an approximate
inverse to the operator $D\boldsymbol\Phi$. Our considerations are
based on the following construction. For given $\boldsymbol\varphi$
denote by $\Xi(\boldsymbol\varphi)$ the linear operator  defined by
the equalities
\begin{subequations}\label{asol1}
\begin{equation}\label{asol0}
\Xi:\boldsymbol\Upsilon=(\delta\boldsymbol\beta, \boldsymbol\psi_0,
\boldsymbol\lambda, \boldsymbol\chi, \boldsymbol\Gamma) \to
\delta\boldsymbol\varphi=(\delta\boldsymbol\beta, \delta\varphi_0,
\delta\mathbf u, \delta\mathbf w, \delta W_{11}, \delta W_{12},
\delta W_{21})
\end{equation}
\begin{gather}
\label{asol1c} \delta\mathbf u\,=\, \chi_i\frac{\partial}{\partial
\xi_i} \,(\boldsymbol\xi +
\mathbf u),\\
\label{asol1d} \delta\mathbf w\,=\, \mathbf W \boldsymbol
\lambda+\chi_i\, \frac{\partial}{\partial \xi_i} \mathbf
w,\\
\label{asol1e} \delta\mathbf W\,=\, \mathbf W \boldsymbol
\Gamma+\chi_i\, \frac{\partial}{\partial \xi_i} \mathbf
W,\\
\label{asol1a}  \delta \varphi_0=\psi_0 +w_2\delta w_1+
\Big(\frac{\partial\varphi_0}{\partial \xi_i} -w_2 \frac{\partial
w_1}{\partial \xi_i}\Big)\chi_i- \mathbf u\cdot \delta\boldsymbol
\beta .
\end{gather}
\end{subequations}
In view of the Second structural Theorem \ref{sima12} the operator
$\boldsymbol\Xi$ has the inverse $\boldsymbol\Xi^{-1}$ defined by
\begin{subequations}\label{asol15}
\begin{equation}\label{asol15a0}
\Xi^{-1}\delta\boldsymbol\varphi=(\delta\boldsymbol\beta,
\delta\varphi_0, \delta\mathbf u, \delta\mathbf w, \delta W_{11},
\delta W_{12}, \delta
W_{21})\boldsymbol\Upsilon=(\delta\boldsymbol\beta,
\boldsymbol\psi_0, \boldsymbol\lambda, \boldsymbol\chi,
\boldsymbol\Gamma)
\end{equation}

\begin{gather}\label{asol15a}
\boldsymbol\chi\,= \,\mathbf V^\top \,\delta\mathbf
u\\\label{asol15b} \boldsymbol\lambda\,=\, \mathbf
W^{-1}\delta\mathbf w-\chi_i\,\mathbf W^{-1}
\frac{\partial}{\partial \xi_i} \mathbf w\\\label{asol15c}
\boldsymbol\Gamma\,=\, \mathbf W^{-1}\delta\mathbf W-\chi_i\,\mathbf
W^{-1} \frac{\partial}{\partial \xi_i} \mathbf W,\\\label{asol15d}
\boldsymbol\mu\,=\, \mathbf V^{-1}\Big(\delta\mathbf v+\chi_i\,
\frac{\partial}{\partial \xi_i} \mathbf v-
\boldsymbol\Lambda\boldsymbol\lambda\Big)\\\label{sima15e}
\psi_0=\delta\varphi_0-\chi_i \frac{\partial
\varphi_0}{\partial\xi_i}+ w_2\chi_i \frac{\partial
w_1}{\partial\xi_i} +\delta\boldsymbol\beta^\top \cdot \mathbf u.
\end{gather}
\end{subequations}
 Recall the notation
 $$
 \mathfrak f=(\alpha,k), \quad \mathfrak u=(\boldsymbol\varphi, e, m,
 M)
 $$
Introduce also the notation
$$
\delta\mathfrak u=(\delta\boldsymbol \varphi, \delta e, \delta m,
\delta M).
$$

\begin{definition}\label{asoldef} Let  $\mathcal R(\mathfrak f, \mathfrak u)$ be a linear operator defined by the equality
\begin{equation}\label{asol4}
    \mathcal R(\mathfrak f, \mathfrak u)[\mathbf F]=
    \big(\boldsymbol\Xi(\boldsymbol\varphi)[\boldsymbol\Upsilon],
    \delta e, \delta m, \delta M)
\end{equation}
where $\boldsymbol\Upsilon$, $\delta e$, $\delta m$, $\delta M$ is a solution
to the approximate problem  \eqref{lita202}. Notice that the
coefficients of  equations \eqref{lita202} are completely defined by
$\mathfrak f$ and $\mathfrak u$.
\end{definition}

The following proposition constitutes the basic properties  of the
operator $\mathcal R$. In particular, it follows that $\mathcal R$
is an approximate inverse to the operator $D\boldsymbol \Phi$.

\begin{proposition}\label{asol6}
Let $\sigma\in (0,1]$. Then there are $\varepsilon_0>0$ and $r_0>0$
with the following properties. For all $\mathfrak f$, $\mathfrak u$,
and $\varepsilon$, and $\sigma_i$ satisfying the conditions
$$
\mathfrak f\in \Sigma_\varrho, |\varepsilon|\leq \varepsilon_0,
\|\boldsymbol\varphi-\boldsymbol \varphi_0(\alpha)\|_{\sigma,d}\leq
r_0,
$$
$$
0\leq \sigma_0<\sigma_1\leq \sigma,
$$
the operator $\mathcal R(\mathfrak f,\mathfrak u):\mathcal
F_{\sigma_1,0}\to \mathcal E_{\sigma_0,d}$ is bounded and admits the
estimate
\begin{equation}\label{asol7}
    \|\mathcal R(\mathfrak f, \mathfrak u)
    [\delta\mathfrak u]\|_{\sigma_0,d}\leq  c(1+\alpha^2)
    (\sigma_1-\sigma_0)^{-8n-12-d}
\|\delta\mathfrak u\|_{\mathcal E_{\sigma_1,d}}.
\end{equation}
Moreover, it satisfies the inequalities
\begin{multline}\label{asol8x}
    \big\|\,\,\big(\mathcal R(\mathfrak f, \mathfrak u)D\Phi(\mathfrak f, \mathfrak u)-\text{\rm Id}\big)
    \,[\delta \mathfrak u]
    \,\,\big\|_{\sigma_0,d}\leq \\ c
    (\sigma_1-\sigma_0)^{-8n-13-2d}
    \|\boldsymbol\Phi(\mathfrak f, \mathfrak u)\|_{\sigma,0}\,\,
\|\delta\mathfrak u\|_{\mathcal E_{\sigma_1,d}}.
\end{multline}
\begin{multline}\label{asol9x}
    \big\|\,\,\big(D\Phi(\mathfrak f, \mathfrak u)
    \mathcal R( \mathfrak f, \mathfrak u)-\text{\rm Id}\big)
   \, [\mathbf F]
    \,\,\big\|_{\sigma_0,0}\leq \\ c
    (\sigma_1-\sigma_0)^{-8n-14}
    \|\boldsymbol\Phi(\mathfrak f, \mathfrak u))\|_{\sigma_1,0}\,\,
\|\mathbf F\|_{\sigma_1,0},
\end{multline}
i.e., $\mathcal R$ is an approximate inverse to $D\boldsymbol\Phi$.
\end{proposition}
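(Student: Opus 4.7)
The strategy is to exploit Corollary \ref{structuralcor3}, which expresses
\[
D_\mathfrak u \boldsymbol\Phi(\mathfrak f, \mathfrak u)[\delta\mathfrak u] \;=\; \mathcal L_{\mathrm{appr}}(\boldsymbol\Upsilon, \delta e, \delta m, \delta M) \;+\; \boldsymbol\Pi[\boldsymbol\Upsilon],
\]
where $\boldsymbol\Upsilon = \boldsymbol\Xi^{-1}(\boldsymbol\varphi)[\delta\boldsymbol\varphi]$ is determined by the formulae \eqref{asol15}, $\mathcal L_{\mathrm{appr}}$ denotes the linear operator appearing on the left-hand sides of the approximate problem \eqref{lita202}, and $\boldsymbol\Pi = (\Pi_1,\ldots,\Pi_4,0)$ is the correction whose components \eqref{luna28} are first-order differential operators in $\boldsymbol\Upsilon$ with coefficients linear in $\Phi_j(\boldsymbol\varphi, e, m, M)$. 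This yields the crucial smallness estimate
\[
\|\boldsymbol\Pi[\boldsymbol\Upsilon]\|_{\sigma',0} \;\leq\; c\,(\sigma - \sigma')^{-1}\,\|\boldsymbol\Phi(\mathfrak f, \mathfrak u)\|_{\sigma, 0}\,\|\boldsymbol\Upsilon\|_{\sigma, 0},
\]
the Cauchy factor compensating for the derivative in $\Pi$. By Definition \ref{asoldef}, $\mathcal R$ is the composition of the solution operator $\mathcal L_{\mathrm{appr}}^{-1}$ provided by Corollary \ref{lita214} with the isomorphism $\boldsymbol\Xi$ on the $\delta\boldsymbol\varphi$-slot.

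To prove \eqref{asol7}, I choose an intermediate $\sigma^* = (\sigma_0 + \sigma_1)/2$ and apply Corollary \ref{lita214} to \eqref{lita202} with data $\mathbf F\in\mathcal F_{\sigma_1,0}$: this yields $(\boldsymbol\Upsilon, \delta e, \delta m, \delta M)$ with $\|\boldsymbol\Upsilon\|_{\sigma^*, 0} + |\delta\boldsymbol\beta| + |\delta M| \leq c(\sigma_1 - \sigma_0)^{-8n-12}\|\mathbf F\|_{\sigma_1, 0}$ and $|\delta e| + |\delta m| \leq c(1+\alpha^2)(\sigma_1-\sigma_0)^{-8n-12}\|\mathbf F\|_{\sigma_1, 0}$ by \eqref{lita216}. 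Since $\boldsymbol\varphi$ is close to $\boldsymbol\varphi_0(\alpha)$ and the coefficients in \eqref{asol1} are bounded, $\|\delta\boldsymbol\varphi\|_{\sigma^*, 0} \leq c\|\boldsymbol\Upsilon\|_{\sigma^*, 0}$. A single Cauchy step then gives $\|\delta\boldsymbol\varphi\|_{\sigma_0, d} \leq c(\sigma^* - \sigma_0)^{-d}\|\delta\boldsymbol\varphi\|_{\sigma^*, 0}$; combining the three factors delivers \eqref{asol7}.

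For the left approximate inverse \eqref{asol8x}, I set $\mathbf F := D_\mathfrak u\boldsymbol\Phi[\delta\mathfrak u]$, decompose it as $\mathbf F = \mathcal L_{\mathrm{appr}}(\boldsymbol\Upsilon, \ldots) + \boldsymbol\Pi[\boldsymbol\Upsilon]$ with $\boldsymbol\Upsilon = \boldsymbol\Xi^{-1}[\delta\boldsymbol\varphi]$, and use linearity together with the uniqueness part of Corollary \ref{lita214} to conclude
\[
\mathcal R[\mathbf F] - \delta\mathfrak u \;=\; \mathcal R\bigl[\boldsymbol\Pi[\boldsymbol\Upsilon]\bigr].
\]
Estimating $\boldsymbol\Upsilon$ in $\mathcal F_{\sigma^*,0}$ via \eqref{asol15} and a Cauchy step, controlling $\boldsymbol\Pi$ by the smallness estimate above, and finally applying \eqref{asol7} to $\boldsymbol\Pi[\boldsymbol\Upsilon]$ yields \eqref{asol8x}. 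For the right approximate inverse \eqref{asol9x}, the same identity applied in the other direction gives $D\boldsymbol\Phi[\mathcal R[\mathbf F]] = \mathbf F + \boldsymbol\Pi[\boldsymbol\Upsilon']$ with $\boldsymbol\Upsilon' = \boldsymbol\Xi^{-1}[\mathcal R[\mathbf F]]$; inserting the bound \eqref{asol7} on $\mathcal R[\mathbf F]$ and then the $\boldsymbol\Pi$-smallness estimate closes the loop.

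The principal obstacle is the precise accounting of Cauchy losses so that the exponents $-8n-12-d$, $-8n-13-2d$, and $-8n-14$ in the three estimates come out exactly as stated: $\boldsymbol\Pi$ involves first derivatives of $\boldsymbol\Upsilon$, $\boldsymbol\Xi$ involves first derivatives of $\boldsymbol\varphi$, and $D\boldsymbol\Phi$ is itself first-order, so each composition costs one inverse power of a strip width, and the intermediate radii must be tuned to optimize the resulting geometric progression. One also has to verify that the orthogonality conditions \eqref{lita2f} on $\delta\boldsymbol\varphi$ hold automatically for $\mathcal R[\mathbf F]$; this follows from the conditions \eqref{lita202g}--\eqref{lita202h} built into the approximate problem together with the explicit relations \eqref{asol1} defining $\boldsymbol\Xi$.
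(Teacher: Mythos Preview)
Your proposal is correct and follows essentially the same route as the paper: you build $\mathcal R$ from Corollary~\ref{lita214} composed with $\boldsymbol\Xi$, prove \eqref{asol7} by a Cauchy step at an intermediate radius, and obtain both approximate-inverse identities from the decomposition $D_{\mathfrak u}\boldsymbol\Phi=\mathcal L_{\mathrm{appr}}+\boldsymbol\Pi$ together with the $\boldsymbol\Phi$-smallness of $\boldsymbol\Pi$. The paper makes the same argument, writing the difference in Step~2 as $(\boldsymbol\Xi[\tilde{\boldsymbol\Upsilon}],\tilde{\delta e},\tilde{\delta m},\tilde{\delta M})$ where $\tilde{\boldsymbol\Upsilon}$ solves \eqref{lita202} with right-hand side $\boldsymbol\Pi[\boldsymbol\Upsilon]$, which is exactly your $\mathcal R[\boldsymbol\Pi[\boldsymbol\Upsilon]]$.
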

\begin{proof}
The proof is based on the following auxiliary lemma .
\begin{lemma}\label{asol2}
Let a fixed $\sigma\in (0,1)$ and  $d\geq 2$ and $r_0<1/2$. Then for
every
$$
\mathfrak f\in \Sigma_{\varrho}, \quad\|\boldsymbol\varphi-
\boldsymbol\varphi_0(\alpha)\|_{\sigma, d}\leq r_0, \quad
\boldsymbol\varphi_0(\alpha)= (0,0,0,\boldsymbol\alpha,1,0,0),
$$
and every
$$
0\leq \sigma_0<\sigma_1\leq \sigma,
$$
the operator
$$\boldsymbol \Xi:\mathbb C\times \mathcal A_{\sigma_1,0}\times
\mathcal A_{\sigma_1,0}^2\times \mathcal A_{\sigma_1,0}^{n-1}\times
\mathcal A_{ \sigma_1,0}^4\to \mathbb C\times \mathcal
A_{\sigma_0,d}\times \mathcal A_{\sigma_0,d}^2\times \mathcal
A_{\sigma_1,d}^{n-1}\times \mathcal A_{ \sigma_0,d}^3
$$ is well defined and admits the estimate
\begin{equation}\label{asol3}
    \big\|\,\boldsymbol
    \Xi(\boldsymbol\varphi)\,[\boldsymbol\Upsilon]\,\big\|_{\sigma_0,d}
    \leq c  (\sigma_1-\sigma_0)^{-d}\,\,\|\boldsymbol\Upsilon\|_{\sigma_1,0}.
\end{equation}
The  inverse operator
$$\boldsymbol \Xi^{-1}:\mathbb C\times \mathcal A_{\sigma_1,0}\times
\mathcal A_{\sigma_1,0}^2\times \mathcal A_{\sigma_1,0}^{n-1}\times
\mathcal A_{ \sigma_1,0}^4\to \mathbb C\times \mathcal
A_{\sigma_0,d}\times \mathcal A_{\sigma_0,d}^2\times \mathcal
A_{\sigma_1,d}^{n-1}\times \mathcal A_{ \sigma_0,d}^3
$$ is well defined and admits the estimate
\begin{equation}\label{asol3bis}
    \big\|\,\boldsymbol
    \Xi^{-1}(\boldsymbol\varphi)\,[\delta \boldsymbol\varphi]\,\big\|_{\sigma_0,d}
    \leq c  (\sigma_1-\sigma_0)^{-d}\, \|\delta\boldsymbol\varphi\|_{\sigma_1,0}.
\end{equation}

\end{lemma}
\begin{proof}
In order to avoid repetitions we prove estimate \eqref{asol3}. The
proof of estimate \eqref{asol3bis} is similar. Notice that for
$d\geq 1$ and $r_0\leq 1/2$, the equality
$W_{22}=W_{11}^{-1}(1-W_{1,2}W_{21})$ implies
$$
\|w_2\|_{\sigma,0}+\|\nabla w_1\|_{\sigma,0}+\|\nabla\boldsymbol
W\|_{\sigma,0} +\|\boldsymbol W-\mathbf I\|_{\sigma,0}\leq
c\|\boldsymbol \varphi-\boldsymbol\varphi(\alpha)\|_{\sigma,d}
$$
Obviously we have
$$
\|w_2\|_{\sigma,0}+\|\nabla
w_1\|_{\sigma,0}+\|\nabla(\boldsymbol\xi+\boldsymbol u)\|
+\|\boldsymbol W\|_{\sigma,0}\leq c\|\boldsymbol
\varphi-\boldsymbol\varphi(\alpha)\|_{\sigma,d}\leq c
$$
Since $\mathcal A_{\sigma_1,0}$ is a Banach algebra, it follows from
this and \eqref{asol1} that
\begin{equation*}
    \|\boldsymbol
    \Xi(\boldsymbol\varphi)[\boldsymbol\Upsilon]\|_{\sigma_1,0}
    \leq c  \|\Upsilon]\|_{\sigma_1,0}.
\end{equation*}
It remains to note that
\begin{equation*}
    \|\boldsymbol
    \Xi(\boldsymbol\varphi)[\boldsymbol\Upsilon]\|_{\sigma_0,d}
    \leq c (\sigma_1-\sigma_0)^{-d} \|\boldsymbol
    \Xi(\boldsymbol\varphi)[\boldsymbol\Upsilon]\|_{\sigma_1,0}.
\end{equation*}
\end{proof}
Let us turn to the proof of the proposition. The proof falls into
three steps.

{\bf Step 1.} We begin with proving of estimate \eqref{asol7}. It
follows from Corollary \ref{lita214} that, under the assumptions of
Proposition \ref{asol6}, for a suitable choice of $\varepsilon_0$
and $r_0$, problem \eqref{lita202} has the unique  solution $\Upsilon
=(\delta \boldsymbol\beta, \psi_0,
\boldsymbol\lambda,\boldsymbol\chi, \boldsymbol \Gamma)$, and $\delta
e$, $\delta m$, $\delta M$ satisfying the inequality
\begin{multline}\label{asol11x}
    \|\boldsymbol \Upsilon\|_{(\sigma_0+\sigma_1)/2,0}+|\delta e|+
    |\delta m|+|\delta M|\\\leq c(1+\alpha^2)
    (\sigma_1-\sigma_0)^{-8n-12}\,\|\mathbf F\|_{\sigma_1,0}.
\end{multline}
Using this estimate and applying Lemma \ref{asol2} with $\sigma_1$
replaced by $(\sigma_0+\sigma_1)/2$ we obtain
\begin{equation}\label{asol10}\begin{split}
\big\|\,\boldsymbol
    \Xi(\boldsymbol\varphi)\,[\boldsymbol\Upsilon]\,\big\|_{\sigma_0,d}
    \leq c\,
    (\sigma_1-\sigma_0)^{-d}\,\|\boldsymbol\Upsilon]\|_{(\sigma_0+\sigma_1)/2,0}\\\leq
   c(\sigma_1-\sigma_0)^{-8n-12-d}\,\|\mathbf F\|_{\sigma_1,0} .
\end{split}\end{equation}
Combining \eqref{asol11x} and  \eqref{asol10} and recalling
Definition \ref{asoldef} we arrive at \eqref{asol7}.

{\bf Step 2.} Now our task is  to prove  estimate \eqref{asol8}.
Assume that $\varepsilon$, $r_0$ meet all requirements  of
Corollary \ref{lita214}. Choose an arbitrary
$$\delta\boldsymbol\varphi
=(\delta\boldsymbol\beta, \delta\varphi_0, \delta \mathbf u, \delta
W_{11}, \delta W_{12}, \delta W_{21})\text{~~ and~~}(\delta e,
\delta m, \delta M)$$
 such that
$$
\delta \mathfrak u\equiv ( \delta\boldsymbol\varphi, \delta e,
\delta m,\delta M)\in \mathcal E_{\sigma_1,d}.
$$
Set
\begin{equation}\label{asol16}
\boldsymbol \Upsilon=\boldsymbol\Xi^{-1}(\boldsymbol\varphi) [
\delta\boldsymbol\varphi]
\end{equation}
By construction, the vector $$\boldsymbol\Upsilon=
(\delta\boldsymbol\beta, \psi_0, \boldsymbol\lambda,
\boldsymbol\chi, \boldsymbol\Gamma)$$
 is connected with
$\delta\boldsymbol\varphi$ by relations \eqref{luna27}. Hence
$\boldsymbol\Upsilon$ and the parameters $(\delta e, \delta m,
\delta M)$ meet all requirements of Corollary \ref{structuralcor3}.
It follows that they satisfy identities  \eqref{lita201}. Notice that
relations \eqref{lita201} can be regarded as system of equations
\eqref{lita202a}-\eqref{lita202f} with the right hand sides
\begin{equation}\begin{split}\label{asol11}
F_1=-\Pi_1[\boldsymbol\chi,
 \boldsymbol\lambda, \boldsymbol\mu]+D_{\mathfrak u}
\Phi_1(\mathfrak f, \mathfrak u)[\delta\mathfrak u],\\
F_2=-\Pi_2[\boldsymbol\chi,
 \boldsymbol\lambda, \boldsymbol \Gamma]+D_{\mathfrak u}
\Phi_2(\mathfrak f, \mathfrak u)[\delta\mathfrak u],\\
F_3=-\Pi_3[\boldsymbol\chi]+D_{\mathfrak u} \Phi_3(\mathfrak f,
\mathfrak u)[\delta\mathfrak u],\\
F_4=-\Pi_2[\boldsymbol\chi,
 \boldsymbol \Gamma]+D_{\mathfrak u}
\Phi_4(\mathfrak f, \mathfrak u)[\delta\mathfrak u],\\
f_5=D_{\mathfrak u} \Phi_5(\mathfrak f, \mathfrak u)[\delta\mathfrak
u]
\end{split}\end{equation}
 Moreover, since $
\delta\mathfrak u\in \mathcal E_{\sigma_1, d}$, its component $
\delta \boldsymbol \varphi$ satisfies the integral relations
\eqref{lita2f} which, in view of \eqref{asol1c}-\eqref{asol1a}, are
equivalent to the integral conditions
\eqref{lita202g}-\eqref{lita202h}. Therefore,
 the vector $\boldsymbol\Upsilon$ and the parameters
 $(\delta e, \delta m,\delta M)$ satisfy equations
 \eqref{lita202} with the right hand sides given by \eqref{asol11}.

 On the other hand,  Definition \ref{asoldef} implies the identity
\begin{equation}\label{asol12}
    \mathcal R(\mathfrak f, \mathfrak u)\, \big[\,D_{\mathfrak u}
    \boldsymbol \Phi(\mathfrak f, \mathfrak u)[\delta\mathfrak u]\,\big]=
    \big(\boldsymbol\Xi(\boldsymbol\varphi)[\boldsymbol\Upsilon'],
    \delta e', \delta m', \delta M')
\end{equation}
where the vector $\boldsymbol\Upsilon'$ and the parameters
 $(\delta e', \delta m',\delta M')$ satisfy equations
 \eqref{lita202} with the right hand sides
\begin{equation}\label{asol13}
    F'_i= D\Phi_i(\mathfrak f, \mathfrak u)[\delta\mathfrak u], \,\,
    i=1,\dots,4,\,\, f'_5=
    D\Phi_5(\mathfrak f, \mathfrak u)[\delta\mathfrak u]
\end{equation}
It follows from  \eqref{asol16} that
\begin{equation}\label{asol17}
    \mathcal R(\mathfrak f, \mathfrak u)\big[\,D_{\mathfrak u}
    \boldsymbol \Phi(\mathfrak f, \mathfrak u)[\delta\mathfrak u]\,\big]\,-\,
  \delta\mathfrak u  =\\
    \big(\boldsymbol\Xi(\boldsymbol\varphi)[\tilde{\boldsymbol\Upsilon}],
    \tilde{\delta e}, \tilde{\delta m}, \tilde{\delta M})
\end{equation}
Where
$$
\tilde{\boldsymbol\Upsilon}=\boldsymbol\Upsilon'-\boldsymbol\Upsilon,
\quad \tilde{\delta e}=\delta e'-\delta e, \quad \tilde{\delta m}=\delta
m'-\delta m,\quad \tilde{\delta M}=\delta M'-\delta M.
$$
Notice that $(\boldsymbol\Upsilon,\delta e, \delta m,\delta M)$
satisfies equations \eqref{lita202} with the right hand sides
\eqref{asol11}. On the other hand,  $(\boldsymbol\Upsilon',\delta
e', \delta m',\delta M')$ satisfies equations \eqref{lita202} with
the right hand sides \eqref{asol13}. It follows from this that
$(\tilde{\boldsymbol\Upsilon},\tilde{\delta e}, \tilde{\delta m},\tilde{\delta M})$
satisfies equations \eqref{lita202} with the right hand sides
\begin{equation}\label{asol18}
    \tilde{F}_i=\Pi_i[\boldsymbol\Upsilon ],\, i=1, \dots, \, 4,  \,\, \tilde{f}_5=0.
\end{equation}
Here the operators $\Pi_i$ are given by formulae \eqref{luna28}:
\begin{subequations}\label{asol20}
\begin{equation}\label{asol20a}
    \Pi_1[\boldsymbol\Upsilon]
    =\frac{\partial\Phi_1}{\partial \xi_i}\chi_i+\Phi_2^\top
    \cdot \boldsymbol\lambda+\Phi_3^\top\cdot\boldsymbol\mu,
\end{equation}
\begin{equation}\label{asol20b}
    \Pi_2[\boldsymbol\chi, \boldsymbol\lambda, \boldsymbol\Gamma]=
    \chi_i\frac{\partial\Phi_2}{\partial \xi_i}+\boldsymbol
    \Gamma^\top \Phi_2+\Phi_4\boldsymbol\lambda+(\mathbf J\boldsymbol\lambda)'_\xi\Phi_3,
    \end{equation}
\begin{equation}\label{asol20c}
     \Pi_3[\boldsymbol\chi]=\chi_i\frac{\partial\Phi_3}{\partial \xi_i}-
     \boldsymbol\chi'_\xi \Phi_3,
\end{equation}
\begin{equation}\label{asol20d}
  \Pi_4[\boldsymbol\chi,\boldsymbol\Gamma]=\chi_i\frac{\partial\Phi_4}{\partial \xi_i}
+\Phi_{3,i}\frac{\partial}{\partial \xi_i}(\mathbf
J\boldsymbol\Gamma)+ \Phi_4\boldsymbol\Gamma+
(\Phi_4\boldsymbol\Gamma)^\top,
\end{equation}
where $\Phi_i=\Phi_i(\mathfrak f, \mathfrak u)$ and $\boldsymbol\mu
=\delta\boldsymbol\beta +\nabla\psi_0$.
\end{subequations}
We thus get
\begin{subequations}\label{asol51}
\begin{gather}\label{asol51a}
 \boldsymbol\partial \tilde{\psi}_0+\boldsymbol\omega^\top\cdot
  \delta\tilde{\boldsymbol\beta}+\tilde{\delta e}+
  \tilde{\delta\mathbf m}\cdot \mathbf w+
    \frac{1}{2}\mathbf w^\top \tilde{\delta\mathbf M} \mathbf w=\tilde{F}_1,\\
    \label{asol51b}
\mathbf J\boldsymbol\partial\tilde{\boldsymbol\lambda}+
\boldsymbol\Omega \tilde{\boldsymbol\lambda} +\mathbf
T\tilde{\boldsymbol\mu}
    + \tilde{\delta m}\mathbf W^\top\mathbf e_1+
    \mathbf W^\top \tilde{\delta\mathbf M} \mathbf w =\tilde{F}_2,
  \\
\nonumber \tilde{\boldsymbol\mu} =\nabla \tilde{\psi}_0+\delta\tilde{\boldsymbol\beta}
      \\
\label{asol51c}
 -\boldsymbol\partial \tilde{\boldsymbol\chi}+
 \mathbf S\tilde{\boldsymbol\mu}+\mathbf T^\top \tilde{\boldsymbol\lambda}
 =\tilde{F}_3,
\\\label{asol51d}
\boldsymbol\partial(\mathbf J\tilde{\boldsymbol\Gamma})
+\boldsymbol\Omega\tilde{\boldsymbol\Gamma}+
(\boldsymbol\Omega\tilde{\boldsymbol\Gamma})^\top + \mathbf
U_{ij}\frac{\partial \lambda_i}{\partial\xi^*_j}+ \tilde{\mu}_i\mathbf
E_i+\\\nonumber\tilde{\lambda}_i\mathbf K_i+ \mathbf W^\top\delta\tilde{\mathbf
M}\mathbf W=\tilde{F}_4.
\end{gather}
\begin{gather}\label{asol51f}
 \int_{\mathbb T^{n-1}}\Big
    ((\mathbf W\tilde{\boldsymbol\lambda})_1+
    \tilde{\boldsymbol\chi}\cdot \nabla w_1\Big) \, d
    \boldsymbol\xi=0,\\\label{asol51g}
\int_{\mathbb T^{n-1}}\big[
\tilde{\psi}_0-\delta\tilde{\boldsymbol\beta}\cdot\mathbf u+
w_2\big(\mathbf W\tilde{\boldsymbol\lambda}\big)_1\big]\, d \xi=0\\
\label{asol51h} \int_{\mathbb T^{n-1}}\mathbf
V^{-\top}\tilde{\boldsymbol\chi}\, d\xi=0,\quad \int_{\mathbb
T^{n-1}}(\mathbf W\tilde{\boldsymbol\Gamma})_{12}\, d\xi=0,
\end{gather}
\end{subequations}
where  $\delta\tilde{\mathbf M}=
\text{diag}(\delta \tilde{M}, 0)$.
Let us estimate $\tilde{\mathbf F}$. Equality $\boldsymbol\Upsilon
=\boldsymbol\Xi^{-1} \boldsymbol\delta \varphi$ and estimate
\eqref{asol3bis} imply
$$
\|\boldsymbol\Upsilon\|_{(\sigma_1+\sigma_0)/2, d} \leq
c(\sigma_1-\sigma_0)^{-d}\,\,\|\delta \boldsymbol\varphi\|_{\sigma_1,0}
$$
It follows from this that
\begin{equation}\label{asol22}
\|\boldsymbol\Upsilon\|_{(2\sigma_1+\sigma_0)/3, 0}+\|\partial_\xi
\boldsymbol\Upsilon\|_{(2\sigma_1+\sigma_0)/3, 0}\leq\\
c(\sigma_1-\sigma_0)^{-d} \|\delta \boldsymbol
\varphi\|_{\sigma_1,0}.
\end{equation}
Next we have
\begin{equation}\label{asol21}
    \|\boldsymbol\Phi\|_{(2\sigma_1+\sigma_0)/3,0}+
    \|\partial_\xi\boldsymbol \Phi\|_{(2\sigma_1+\sigma_0)/3,0}
   \leq c(\sigma_1-\sigma_0)^{-1} \|\boldsymbol\Phi\|_{\sigma_1,0}.
\end{equation}
Combining estimates \eqref{asol22} and \eqref{asol21} and recalling
formulae \eqref{asol20} we arrive at
\begin{equation}\label{asol23}
    \|\Pi[\Upsilon]\|_{(2\sigma_1+\sigma_0)/3, 0}\leq
    c(\sigma_1-\sigma_0)^{-d-1} \|\boldsymbol\Phi\|_{\sigma_1,0}\,\,
\|\delta \boldsymbol\varphi\|_{\sigma_1,0}
\end{equation}
Since $\delta \boldsymbol\varphi$ is the component of the vector
$\mathfrak u$, it follows from this and \eqref{asol18} that
\begin{equation}\label{asol24}
 \|\tilde{\mathbf F}\|_{(2\sigma_1+\sigma_0)/3, 0}\leq
    c(\sigma_1-\sigma_0)^{-d-1} \, \|\boldsymbol\Phi\|_{\sigma_1,0}\,\,
\|\delta \mathfrak u\|_{\sigma_1,0}
\end{equation}
 Applying Corollary \ref{lita214} to problem \eqref{asol51} we conclude
that it has the unique solution. Moreover, estimates
\eqref{lita215} and  \eqref{lita216} in this corollary imply the
estimate
\begin{equation*}\begin{split}
c(1+|\alpha^2|)(\sigma_1-\sigma_0)^{-8n-12} \|\tilde{\mathbf
F}\|_{(2\sigma_1+\sigma_0)/3, 0}.
\end{split}\end{equation*}

This result along with  inequality \eqref{asol24} leads to the
estimate
 \begin{equation*}\begin{split}
    \|\tilde{\boldsymbol\Upsilon}\|_{(\sigma_1+\sigma_0)/2,0}+
    |\delta \tilde{e}|+|\delta \tilde{m}|+|\delta \tilde{M}|\leq\\
c(1+|\alpha^2|)(\sigma_1-\sigma_0)^{-8n-13-d}\,
\|\boldsymbol\Phi\|_{\sigma_1,0} \,\|\delta \mathfrak
u\|_{\sigma_1,0}.
\end{split}\end{equation*}
From this, relation \eqref{asol17} and estimate \eqref{asol3} for
the norm of the operator $\boldsymbol\Xi$ we obtain
\begin{gather*}
 \big\|\,\mathcal R(\mathfrak f, \mathfrak u)\big[\,D_{\mathfrak u}
    \boldsymbol \Phi(\mathfrak f, \mathfrak u)\,[\delta\mathfrak u]\,\big]-
  \delta\mathfrak u\,\big\|_{\sigma_0, d} =\\
    \big\|\, \boldsymbol\Xi(\boldsymbol\varphi)
    [\tilde{\boldsymbol\Upsilon}]\,\big\|_{\sigma_0,d}+|
    \delta \tilde{e}|+|\delta \tilde{m}|+| \delta \tilde{M}|\leq\\
(\sigma_1-\sigma_0)^{-d}
\|\tilde{\boldsymbol\Upsilon}\|_{(\sigma_1+\sigma_0)/2,0}+
    |\delta \tilde{e}|+|\delta \tilde{m}|+|\delta \tilde{M}|\leq\\
     c(1+|\alpha^2|)(\sigma_1-\sigma_0)^{-8n-13-2d}
\|\boldsymbol\Phi\|_{\sigma_1,0} \|\delta \mathfrak
u\|_{\sigma_1,0}.
\end{gather*}
which obviously leads to \eqref{asol8x}.

{\bf Step 3.} It remains to prove estimate \eqref{asol9x}. Choose an
arbitrary $\mathbf F\in \mathcal F_{\sigma_1,0}$ and set
$\delta\mathfrak u:= \mathcal R(\mathfrak f, \mathfrak u)[\mathbf
F]$. It follows from the definition \ref{asoldef} that
$$
\delta\mathfrak u=(\boldsymbol\Xi[\boldsymbol\Upsilon], \delta e,
\delta m, \delta M),
$$
where $(\boldsymbol\Upsilon, \delta e, \delta m, \delta M)$ is a
solution to problem \eqref{lita202}. On the other hand, in view of
Corollary \ref{structuralcor3}, the vector
$$D_{\mathfrak
u}\boldsymbol \Phi(\mathfrak f, \mathfrak u)\mathcal R(\mathfrak f,
\mathfrak u)[\mathbf F]\equiv D_{\mathfrak u}\boldsymbol
\Phi(\mathfrak f, \mathfrak u)\delta\mathfrak u
$$
satisfies identities \eqref{lita201}. Combining \eqref{lita201} and
\eqref{lita202} we arrive at the identity
\begin{equation}\label{asol26}
D_{\mathfrak u}\boldsymbol \Phi(\mathfrak f, \mathfrak u)\mathcal
R(\mathfrak f, \mathfrak u)[\mathbf F]-\mathbf F=\\
(\Pi_1[\boldsymbol \Upsilon], \,\Pi_2[\boldsymbol
\Upsilon],\,\Pi_3[\boldsymbol \Upsilon],\,\Pi_4[\boldsymbol
\Upsilon],\,\Pi_5[\boldsymbol \Upsilon]).
\end{equation}
Applying  Corollary \ref{lita214} to equations \eqref{lita202} we
conclude that
\begin{equation*}
    \|\boldsymbol\Upsilon\|_{(\sigma_0+\sigma_1)/2,0}+
   |\delta e|+|\delta m|+|\delta M|\leq\\
     c(1+|\alpha^2|)(\sigma_1-\sigma_0)^{-8n-12}
     \|\mathbf F\|_{\sigma_1,0},
\end{equation*}
which gives
\begin{equation}\label{asol27}
\|\boldsymbol\Upsilon\|_{\sigma_0, 0}+\|\partial_\xi
\boldsymbol\Upsilon\|_{\sigma_0, 0}\leq\\
c(\sigma_1-\sigma_0)^{-8n-13} \|\mathbf F\|_{\sigma_1,0}.
\end{equation}
Next we have
\begin{equation}\label{asol28}
    \|\boldsymbol\Phi\|_{\sigma_0,0}+
    \|\partial_\xi\boldsymbol \Phi\|_{\sigma_0,0}
   \leq c(\sigma_1-\sigma_0)^{-1} \|\boldsymbol \Phi\|_{\sigma_1,0}.
\end{equation}
Combining estimates \eqref{asol27} and \eqref{asol28} and recalling
formulae \eqref{asol20} we arrive at
\begin{equation}\label{asol29}
    \|\Pi[\Upsilon]\|_{\sigma_0, 0}\leq
    c(\sigma_1-\sigma_0)^{-8n-14} \|\Phi_i\|_{\sigma_1,0}
\|\mathbf F\|_{\sigma_1,0}.
\end{equation}
It remains to note that desired estimate \eqref{asol9x} is a
straightforward consequence of estimate \eqref{asol29} and equality
\eqref{asol26}.
\end{proof}

\section{Implicit function theorem}\label{ira}

\subsection{Nash-Moser-Zehnder Implicit Function  Theorem}
We prove the local solvability of  operator equation \eqref{luna8x}
 by using the nash-Moser implicit function theorem. There are many different
  versions of this celebrated theorem. Our considerations
  are based on the
  version of the Nash-Moser implicit function theorem
  proposed by Zhender, see \cite{Nirenberg}, \cite{Zender}.

Fix $d\geq 2$. Recall Definitions \ref{luna15} and \ref{luna16} of
spaces $\mathcal E_{\sigma,d}$ and $\mathcal F_{\sigma,d}$. Let us
consider the abstract operator equation
\begin{equation}\label{ira9}
    \boldsymbol\Phi(\mathfrak f, \mathfrak u)=0, \quad
    \mathfrak f\in \mathbb C^2, \quad \mathfrak u\in \mathcal
    E_{\sigma, d}.
\end{equation}
Here $\boldsymbol\Phi$ is a smooth operator. Assume that the
operator $\boldsymbol\Phi$ satisfies the following conditions, cf.
\cite{Nirenberg}, ch.6.1.

For fixed $R>0$, $N>0$, and $(\mathfrak f_0, \mathfrak u_0)\in
\mathbb C^2\times \mathcal E_{1,d}$ denote by $\mathcal
B_\sigma(\mathfrak f_0, \mathfrak u_0)\subset \mathbb C^2\times
\mathcal E_{\sigma,d}$ the ball
\begin{equation}\label{ira3}
    \mathcal B_\sigma(\mathfrak f_0, \mathfrak u_0)=\{(\mathfrak f, \mathfrak u):\,\,
    |\mathfrak f-\mathfrak f_0|<N,\quad
     \|\mathfrak u-\mathfrak u_0\|_{\sigma,d}<R\}.
\end{equation}

\begin{itemize}
\item[{\bf H.3}] The mapping $\boldsymbol \Phi$ is
defined in $\mathcal B_0(\mathfrak f_0,\mathfrak u_0)$. Moreover,
the mapping $\boldsymbol\Phi: \mathcal B_\sigma(\mathfrak
f_0,\mathfrak u_0)\to \mathcal F_{\sigma,0}$ is continuous for all
$\sigma\in (0,1]$. For every $\sigma'<\sigma$, the mapping
$\boldsymbol\Phi(\mathfrak f, \cdot): \mathcal B_\sigma (\mathfrak
f_0,\mathfrak u_0)\cap \mathcal E_{\sigma,d}\to \mathcal
F_{\sigma',0}$, $\sigma'<\sigma$
 is differentiable. For every $(\mathfrak f, \mathfrak u)\in
  \mathcal B_\sigma(\mathfrak f_0, \mathfrak u_0)$,
 the quantity
 $$
Q(\mathfrak f; \mathfrak u, \mathfrak v)= \boldsymbol\Phi(\mathfrak
f, \mathfrak u)- \boldsymbol\Phi(\mathfrak f, \mathfrak
v)-D_{\mathfrak v}\boldsymbol \Phi(\mathfrak f, \mathfrak v)
(\mathfrak u-\mathfrak v)
$$
admits the estimate
\begin{equation}\label{ira4}
    \|Q(\mathfrak f; \mathfrak u, \mathfrak v)\|_{\sigma',0}\leq c_0(\sigma-\sigma')^{-2\tau}
    \|\mathfrak u-\mathfrak v\|_{\sigma,d}^2.
\end{equation}
\item[{\bf H.4}]  For every $\sigma\in (0,1]$ and
$(\mathfrak f, \mathfrak u), (\mathfrak g, \mathfrak u)\in \mathcal
B_\sigma(\mathfrak f_0, \mathfrak u_0)$, we have
\begin{equation}\label{ira5}
    \|\boldsymbol\Phi(\mathfrak f, \mathfrak u)-
\boldsymbol\Phi(\mathfrak g, \mathfrak u)\|_{\sigma,0}\leq c_0
    |\mathfrak f-\mathfrak g|.
\end{equation}
\item[{\bf H.5}]  For every $\sigma\in (0,1]$, every
$0\leq \sigma'<\sigma$, and every $(\mathfrak f, \mathfrak u),\in
\mathcal B_\sigma(\mathfrak f_0,\mathfrak u_0)$,
 there exists the linear continuous mapping
$\mathcal R(\mathfrak f, \mathfrak u): \mathcal F_{\sigma,0}\to
\mathcal E_{\sigma',d}$ such that for all $\mathfrak h\in \mathcal
F_{\sigma,0}$ and all $\mathfrak v\in \mathcal E_{\sigma,d}$,
\begin{equation}\label{ira6}
\|\mathcal R(\mathfrak f, \mathfrak u)\mathfrak h\|_{\sigma',d} \leq
c_0(\sigma-\sigma')^{-\gamma}\|\mathfrak h\|_{\sigma,0}
\end{equation}
\begin{multline}\label{ira7}
\|\big(D_{\mathfrak u} \boldsymbol\Phi(\mathfrak f, \mathfrak u)\,
\mathcal R(\mathfrak f, \mathfrak u)-\mathbf I\big)\mathfrak
h\|_{\sigma',0}
\\\leq c_0(\sigma-\sigma')^{-2\tau-\gamma}\|\,\,
\boldsymbol\Phi(\mathfrak f, \mathfrak
u)\|_{\sigma,0}\,\,\|\mathfrak h\|_{\sigma,0}.
\end{multline}
\begin{multline}\label{ira8}
\|\big(\mathcal R(\mathfrak f, \mathfrak u)\,\, D_{\mathfrak u}
\boldsymbol\Phi(\mathfrak f, \mathfrak u)-\mathbf I\big) \mathfrak
v\|_{\sigma',d}
\\\leq c_0(\sigma-\sigma')^{-2\tau-\gamma}\|\,\,
\boldsymbol\Phi(\mathfrak f, \mathfrak
u)\|_{\sigma,0}\,\,\|\mathfrak v\|_{\sigma,d}.
\end{multline}
\end{itemize}
The following theorem, see \cite{Nirenberg}, Theorem 6.1 and
Corollaries, constitute the local existence and uniqueness of
solutions to  operator equation \eqref{ira9}

\begin{theorem}\label{ira10}
Assume that  $\boldsymbol \Phi$ satisfies Conditions $({\mathbf H.\mathbf 3})-({\mathbf H.\mathbf 5})$.
Then there exists a constant $C$, depending only on $c_0$, $\tau$,
and $\gamma$ with the following properties. If for some $\sigma\in
(0,1]$, the couple $(\mathfrak f, \mathfrak v(\mathfrak f))\in
\mathcal B_\sigma(\mathfrak f_0, \mathfrak u_0)$ satisfies the
conditions
\begin{equation}\label{ira11}
    \|\mathfrak v(\mathfrak f)-\mathfrak u_0\|_{\sigma,d}\leq r\leq R, \quad
\|\boldsymbol\Phi(\mathfrak f, \mathfrak v(\mathfrak
f))\|_{\sigma,0}\leq C(R-r)\sigma^{2(\tau+\gamma)},
\end{equation}
then equation \eqref{ira9} have a  solution $\mathfrak u=\mathfrak
u(\mathfrak f) \in \mathcal E_{\sigma/2,d}$ such that
\begin{equation}\label{ira11}
    \|\mathfrak v(\mathfrak f)-\mathfrak u(\mathfrak f)\|_{\sigma/2,d}\leq C^{-1}
\|\boldsymbol\Phi(\mathfrak f, \mathfrak v(\mathfrak
f))\|_{\sigma,0}\sigma^{-\gamma}.
 \end{equation}
Moreover, if the mapping $\mathcal B_\sigma \cap \mathbb
C^2\ni\mathfrak f\to \mathfrak v(\mathfrak f)\in \mathcal
E_{\sigma,d}$ is continuous, then the mapping $\mathcal B_\sigma
(\mathfrak f_0, \mathfrak u_0)\cap \mathbb C^2\ni\mathfrak f\to
\mathfrak u(\mathfrak f)\in \mathcal E_{\sigma/2,d}$ is continuous.
If, in addition, $\mathfrak v(\mathfrak f)$ satisfies the inequality
\begin{equation}\label{ira12}
    2C^{-1}\|\boldsymbol\Phi(\mathfrak f, \mathfrak v(\mathfrak
    f)\|_{\sigma,0}\sigma^{-\gamma}<1,
\end{equation}
then the solution $\mathfrak u(\mathfrak f)$ is unique.
\end{theorem}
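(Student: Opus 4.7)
The plan follows the classical Nash--Moser--Zehnder Newton scheme with analyticity loss. Introduce a geometric sequence of radii $\sigma_n := \sigma/2 + \sigma\cdot 2^{-n-1}$, so that $\sigma_0 = \sigma$, $\sigma_n \downarrow \sigma/2$, and $\sigma_n - \sigma_{n+1} = \sigma\cdot 2^{-n-2}$. Starting from $\mathfrak u_0 := \mathfrak v(\mathfrak f)$, define the modified Newton iterates
\[
\mathfrak u_{n+1} := \mathfrak u_n - \mathcal R(\mathfrak f, \mathfrak u_n)\,\boldsymbol\Phi(\mathfrak f, \mathfrak u_n),
\]
where $\mathcal R(\mathfrak f, \mathfrak u_n): \mathcal F_{\sigma_n,0} \to \mathcal E_{\sigma_{n+1}, d}$ is the approximate inverse supplied by hypothesis $(\mathbf H.\mathbf 5)$ at the pair $(\sigma_n, \sigma_{n+1})$. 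One shows inductively that $\mathfrak u_n \in \mathcal B_{\sigma_n}(\mathfrak f_0, \mathfrak u_0) \cap \mathcal E_{\sigma_n, d}$ at every stage.

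The central estimate is quadratic convergence of the residuals $a_n := \|\boldsymbol\Phi(\mathfrak f, \mathfrak u_n)\|_{\sigma_n, 0}$. Inserting the Newton step into the Taylor expansion yields
\[
\boldsymbol\Phi(\mathfrak f, \mathfrak u_{n+1}) = \bigl(\mathbf I - D_{\mathfrak u}\boldsymbol\Phi(\mathfrak f, \mathfrak u_n)\,\mathcal R(\mathfrak f, \mathfrak u_n)\bigr)\boldsymbol\Phi(\mathfrak f, \mathfrak u_n) + Q(\mathfrak f; \mathfrak u_{n+1}, \mathfrak u_n).
\]
Hypothesis $(\mathbf H.\mathbf 5)$ bounds the first term by $c_0(\sigma_n - \sigma_{n+1})^{-2\tau - \gamma}\,a_n^2$, while $(\mathbf H.\mathbf 3)$ combined with the step estimate $\|\mathfrak u_{n+1} - \mathfrak u_n\|_{\sigma_{n+1}, d} \leq c_0(\sigma_n - \sigma_{n+1})^{-\gamma} a_n$ bounds the remainder by a constant times $(\sigma_n - \sigma_{n+1})^{-2\tau - 2\gamma}\,a_n^2$. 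Altogether
\[
a_{n+1} \leq C_1\,\sigma^{-K}\, 2^{Kn}\, a_n^2
\]
for some $K = K(\tau, \gamma)$ and $C_1 = C_1(c_0,\tau,\gamma)$. Setting $\widetilde a_n := 2^{Kn} C_1 \sigma^{-K} a_n$ turns this into $\widetilde a_{n+1} \leq 2^K \widetilde a_n^2$, which decays super-exponentially like $\rho^{2^n}$ for some $\rho < 1$ provided $a_0$ is smaller than a small constant times $\sigma^{K}$. This is precisely what the hypothesis $\|\boldsymbol\Phi(\mathfrak f, \mathfrak v(\mathfrak f))\|_{\sigma,0} \leq C(R-r)\sigma^{2(\tau+\gamma)}$ delivers.

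The step bound $\|\mathfrak u_{n+1} - \mathfrak u_n\|_{\sigma_{n+1}, d} \leq c_0\sigma^{-\gamma}\,2^{(n+2)\gamma}\,a_n$ combined with the quadratic decay of $a_n$ gives a geometrically convergent majorant for $\sum_n \|\mathfrak u_{n+1} - \mathfrak u_n\|_{\sigma/2, d}$, so $\{\mathfrak u_n\}$ is Cauchy in $\mathcal E_{\sigma/2, d}$ and converges to a limit $\mathfrak u(\mathfrak f)$ satisfying $\boldsymbol\Phi(\mathfrak f, \mathfrak u(\mathfrak f)) = 0$ together with the claimed estimate. Since the total displacement is controlled by $\|\boldsymbol\Phi(\mathfrak f, \mathfrak v)\|_{\sigma,0}\,\sigma^{-\gamma}$, the iterates stay in $\mathcal B_{\sigma_n}$. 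Continuity of $\mathfrak u(\mathfrak f)$ in $\mathfrak f$ follows from uniform-in-$n$ convergence of the scheme: hypothesis $(\mathbf H.\mathbf 4)$, together with the continuous dependence of $\mathcal R$ on its arguments, makes each $\mathfrak u_n$ continuous in $\mathfrak f$, and uniform convergence on the ball transfers continuity to the limit. For uniqueness, two solutions $\mathfrak u_1, \mathfrak u_2$ in the smallness region satisfy
\[
\mathfrak u_1 - \mathfrak u_2 = -\mathcal R(\mathfrak f, \mathfrak u_2)\,Q(\mathfrak f;\mathfrak u_1,\mathfrak u_2) + \bigl(\mathbf I - \mathcal R(\mathfrak f, \mathfrak u_2)\,D_{\mathfrak u}\boldsymbol\Phi(\mathfrak f, \mathfrak u_2)\bigr)(\mathfrak u_1 - \mathfrak u_2),
\]
and under the smallness condition \eqref{ira12} the right-hand side has norm strictly less than $\|\mathfrak u_1-\mathfrak u_2\|_{\sigma/2,d}$, forcing $\mathfrak u_1 = \mathfrak u_2$.

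The main obstacle is the standard Nash--Moser tension between the loss-of-regularity factors $(\sigma_n - \sigma_{n+1})^{-K} \sim 2^{Kn}$ supplied by $(\mathbf H.\mathbf 3)$ and $(\mathbf H.\mathbf 5)$ and the quadratic gain $a_n \mapsto a_n^2$. The scheme converges only if the initial residual is small not in an absolute sense but \emph{relative to} the geometric loss $\sigma^{K}$, which is exactly what fixes the exponent $2(\tau+\gamma)$ in the hypothesis. Equally delicate is the inductive bookkeeping that simultaneously enforces three invariants: super-exponential decay of $a_n$, cumulative displacement staying inside $\mathcal B$, and convergence of the radii to $\sigma/2$ with the loss factors under control; failure on any of these fronts breaks the iteration.
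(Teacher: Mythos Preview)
Your existence and continuity arguments are the standard Nash--Moser--Zehnder scheme, which is precisely what the paper invokes by citing Nirenberg's Theorem~6.1 and its Corollary; that part is fine and in fact more explicit than the paper itself.

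The uniqueness argument, however, has a gap. You claim that the right-hand side of
\[
\mathfrak u_1 - \mathfrak u_2 = -\mathcal R(\mathfrak f, \mathfrak u_2)\,Q(\mathfrak f;\mathfrak u_1,\mathfrak u_2) + \bigl(\mathbf I - \mathcal R(\mathfrak f, \mathfrak u_2)\,D_{\mathfrak u}\boldsymbol\Phi(\mathfrak f, \mathfrak u_2)\bigr)(\mathfrak u_1 - \mathfrak u_2)
\]
has $\|\cdot\|_{\sigma/2,d}$-norm strictly less than $\|\mathfrak u_1 - \mathfrak u_2\|_{\sigma/2,d}$. But both $Q$ (via $(\mathbf H.\mathbf 3)$) and $\mathcal R$ (via $(\mathbf H.\mathbf 5)$) \emph{lose analyticity}: they map into $\mathcal E_{\sigma'',d}$ only for $\sigma''<\sigma/2$. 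The second term does vanish because $\boldsymbol\Phi(\mathfrak f,\mathfrak u_2)=0$, so you correctly reduce to $\mathfrak u_1-\mathfrak u_2=-\mathcal R\,Q$; but what you then obtain is
\[
\|\mathfrak u_1 - \mathfrak u_2\|_{\sigma'',d} \leq c(\sigma/2 - \sigma'')^{-2\tau-\gamma}\,\|\mathfrak u_1 - \mathfrak u_2\|_{\sigma/2,d}^{2},
\]
an inequality between two \emph{different} norms. This is not a contraction in $\|\cdot\|_{\sigma/2,d}$, and a single step cannot force $\mathfrak u_1=\mathfrak u_2$.

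The paper closes this exactly the way you handled existence: iterate on a shrinking sequence $\sigma_n=2^{-n-1}\sigma\downarrow 0$. The quadratic recursion
\[
\|\mathfrak u_1 - \mathfrak u_2\|_{\sigma_{n+1},d} \leq c\,2^{n(2\tau+\gamma)}\,\|\mathfrak u_1 - \mathfrak u_2\|_{\sigma_n,d}^{2}
\]
unrolls to $\|\mathfrak u_1-\mathfrak u_2\|_{\sigma_{n+1},d}\leq c^{n}2^{n^{2}(2\tau+\gamma)}\|\mathfrak u_1-\mathfrak u_2\|_{\sigma/2,d}^{2^{n}}$, and condition~\eqref{ira12} together with~\eqref{ira11} gives $\|\mathfrak u_1-\mathfrak u_2\|_{\sigma/2,d}<1$, so the right-hand side tends to $0$ and hence $\|\mathfrak u_1-\mathfrak u_2\|_{0,d}=0$. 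The repair is minor in spirit---it is the same quadratic-gain-versus-geometric-loss balance you already executed for existence---but the one-step contraction as written does not close.
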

\begin{proof} Existence of solution $\mathfrak u(\mathfrak f)$ is
exactly the statement Theorem 6.1 in \cite{Nirenberg}. The
continuity of this solution is a consequence of the Corollary of
this theorem, see \cite{Nirenberg}. The uniqueness also results from
ch.6 in\cite{Nirenberg}. However, since our formulation is slightly
different, we recall these arguments. Let $\mathfrak u(f), \mathfrak
u'(\mathfrak f)\in \mathcal B_\sigma(\mathfrak f_0,\mathfrak u_0)$
satisfy equation \eqref{ira9} and inequalities \eqref{ira11}. It
follows  that
$$
D_{\mathfrak u}\boldsymbol\Phi(\mathfrak f, \mathfrak u(\mathfrak
f))(\mathfrak u'(\mathfrak f)-\mathfrak u(\mathfrak f))=-Q(\mathfrak
f, \mathfrak u, \mathfrak u')
$$
Applying to both sides of this identity the operator $\mathcal
R(\mathfrak f, \mathfrak u(\mathfrak f)$ and recalling that
$\mathfrak u(\mathfrak f)$ satisfies equation \eqref{ira9} we obtain
$$
\mathfrak u'(\mathfrak f)-\mathfrak u(\mathfrak f)=-\mathcal
R(\mathfrak f, \mathfrak u(\mathfrak f)Q(\mathfrak f, \mathfrak u,
\mathfrak u').
$$
Choose an arbitrary $\sigma''<\sigma'\leq \sigma$.Applying
inequalities \eqref{ira4} and \eqref{ira6} we arrive at
\begin{multline*}
\|\mathfrak u'(\mathfrak f)-\mathfrak u(\mathfrak f)\|_{\sigma'',d}
\leq c(\sigma'-\sigma'')^{-\gamma}\|Q(\mathfrak f, \mathfrak u,
\mathfrak u')\|_{(\sigma'+\sigma'')/2,d}\leq\\
c(\sigma'-\sigma'')^{-2\tau-\gamma} \|\mathfrak u'(\mathfrak
f)-\mathfrak u(\mathfrak f)\|_{\sigma'',d}^2.
\end{multline*}
Now set $\sigma_n=2^{-n-1}\sigma$, $n\geq 0$. We have
$$
\|\mathfrak u'(\mathfrak f)-\mathfrak u(\mathfrak
f)\|_{\sigma_{n+1},d}\leq c 2^{n(\gamma+2\tau)} \|\mathfrak
u'(\mathfrak f)-\mathfrak u(\mathfrak f)\|_{\sigma_{n},d},
$$
which leads to the inequality
$$
\|\mathfrak u'(\mathfrak f)-\mathfrak u(\mathfrak
f)\|_{\sigma_{n+1},d}\leq c^n 2^{n^2(\gamma+2\tau)} \|\mathfrak
u'(\mathfrak f)-\mathfrak u(\mathfrak f)\|_{\sigma/2,d}^{2^n},
$$
On the other hand, estimate \eqref{ira11} imply
\begin{equation*}
    \|\mathfrak u'(\mathfrak f)-\mathfrak u(\mathfrak f)
    \|_{\sigma/2,d}\leq 2C^{-1}
\|\boldsymbol\Phi(\mathfrak f, \mathfrak v(\mathfrak
f))\|_{\sigma,0}\sigma^{-\gamma}.
 \end{equation*}
Combining the obtained results and recalling \eqref{ira12} we
finally obtain
\begin{multline*}
\|\mathfrak u'(\mathfrak f)-\mathfrak u(\mathfrak f)\|_{0,d}\leq
\|\mathfrak u'(\mathfrak f)-\mathfrak u(\mathfrak
f)\|_{\sigma_{n+1},d}\leq \\c^n 2^{n^2(\gamma+2\tau)} ( 2C^{-1}
\|\boldsymbol\Phi(\mathfrak f, \mathfrak v(\mathfrak
f))\|_{\sigma,0}\sigma^{-\gamma})^{2^n}\to 0\text{~~as~~}
n\to\infty.
\end{multline*}

\end{proof}

\subsection{ Solvability of the main operator equation}

We are now in a position to prove the local solvability and
uniqueness result for the main operator equation \eqref{luna8x}.
Recall the notation
\begin{equation}\label{ira15}
    \mathfrak f=(\alpha, k), \,\mathfrak u=
    (\boldsymbol\varphi, e, m,M)\,\text{where}
\,    \boldsymbol\varphi=(\boldsymbol\beta,\varphi_0, \mathbf u,
\mathbf w, W_{11}, W_{12}, W_{22}).
\end{equation}
and
\begin{equation}\label{ira16}
    \mathfrak v(\mathfrak f)=(\boldsymbol\varphi_0(\alpha),\,
     -k\alpha-k\alpha^2/2,\,
    k\alpha,\, -k),
\end{equation}
where
$$\boldsymbol\varphi_0(\alpha)=(0,0, \alpha\mathbf e_1, 1, 0,0).
$$
Fix an arbitrary $\sigma\in (0,1]$ and $d\geq 1$. Next, fix
$\varrho>0$ satisfying Condition ({\bf H.1}) of Theorem
\ref{theo1.4} and set
\begin{equation}\label{ira17}
    G=\{(\alpha, k)\in \mathbb C^2:\,\,
    |\text{Re~}\alpha|\leq 4\pi, \text{Re~}k\in[0,1], \,\,\,
     |\text{Im~}\alpha|\leq \varrho,  |\text{Im~} k|\leq\varrho\}.
\end{equation}
The following theorem is the main result of this section
\begin{theorem}\label{ira18}
Let conditions $(\mathbf H.\mathbf 1)$-$(\mathbf H.\mathbf 2)$ be satisfied. Then
there is $\varepsilon_0>0$ such that for all $|\varepsilon|\leq
\varepsilon_0$ and for all  $\mathfrak f\in G$ operator equation
\eqref{luna8x} has a unique solution $\mathfrak u(\mathfrak f)\in
\mathcal E_{\sigma/2,d}$ such that
\begin{equation}\label{ira19}
    \|\mathfrak u(\mathfrak f)-\mathfrak v(\mathfrak
    f)\|_{\sigma/2,d}\leq c|\varepsilon|.
\end{equation}
The mapping
$
G\ni\mathfrak f\to \mathfrak u(\mathfrak f)\in \mathcal
E_{\sigma/2,d}$ has a continuous  extension  to the strip
$$\Sigma_\rho=\{(\alpha, k)\in \mathbb C^2:\,\,
    \text{\rm Re~}\alpha\in \mathbb R, \text{\rm ~Re~}k\in[0,1], \,\,\,
     |\text{\rm Im~}\alpha|\leq \varrho,\,\,  |\text{\rm Im~} k|\leq\varrho\}.$$
 Moreover, the extended  mappings
$$
\mathfrak f\to \boldsymbol\varphi-\boldsymbol\varphi_0(\alpha), \quad
\mathfrak f\to M,\quad  \mathfrak f\to m+\alpha M
$$
are $2\pi$- periodic in $\alpha$.
\end{theorem}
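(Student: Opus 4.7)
The plan is to apply the Nash-Moser-Zehnder Theorem \ref{ira10} to the operator equation \eqref{luna8x} with the initial approximation $\mathfrak u_0 = \mathfrak v(\mathfrak f)$ from \eqref{luna8xxx}. The essential smallness input comes from the Remark after \eqref{luna17}: $\boldsymbol\Phi(\mathfrak f, \mathfrak v(\mathfrak f)) \equiv 0$ at $\varepsilon = 0$. Since every component $\Phi_i$ in \eqref{luna8} depends on the Hamiltonian only through $\varepsilon H_1$, this observation combined with Proposition \ref{luna18} yields
\[
\|\boldsymbol\Phi(\mathfrak f, \mathfrak v(\mathfrak f))\|_{\sigma, 0} \leq c|\varepsilon|
\]
uniformly for $\mathfrak f \in G$.

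The abstract hypotheses of Theorem \ref{ira10} are supplied by the preceding sections. Hypothesis (H.3), the quadratic-remainder estimate, is \eqref{luna24} combined with one Cauchy estimate that absorbs the loss of one derivative into a $(\sigma - \sigma')^{-2\tau}$ factor. Hypothesis (H.4) is immediate on inspecting \eqref{luna8}: the parameter $k$ enters only as an entry of the constant matrix $\boldsymbol\Omega$ in $\Phi_4$ and $\alpha$ enters only as an additive scalar in $\Phi_5$, so $\boldsymbol\Phi$ is affine in $\mathfrak f$ with uniformly bounded gradient. Hypothesis (H.5) is precisely Proposition \ref{asol6} with $\gamma = 8n + 14$ read off from \eqref{asol7}--\eqref{asol9x}. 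Choosing $\varepsilon_0$ so small that condition \eqref{ira11} holds with $r = R/2$, Theorem \ref{ira10} furnishes at each $\mathfrak f_0 \in G$ a local solution $\mathfrak u(\mathfrak f) \in \mathcal E_{\sigma/2, d}$ satisfying $\|\mathfrak u(\mathfrak f) - \mathfrak v(\mathfrak f)\|_{\sigma/2, d} \leq c|\varepsilon|$ in a $\mathbb C^2$-neighborhood of $\mathfrak f_0$. Compactness of $G$ reduces matters to finitely many such neighborhoods; the uniqueness assertion of Theorem \ref{ira10} (justified by \eqref{ira12} for small $\varepsilon$) forces the local pieces to agree on overlaps, producing a single continuous map $\mathfrak u : G \to \mathcal E_{\sigma/2, d}$.

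The extension to $\Sigma_\varrho$ and the two periodicity claims both follow from the translation symmetry $\tau : (\mathbf x, \mathbf y, z_1, z_2) \mapsto (\mathbf x, \mathbf y, z_1 + 2\pi, z_2)$, which is canonical and leaves $H_0 + \varepsilon H_1$ invariant by the $2\pi$-periodicity stipulated in (H.1). A direct substitution gives $H_m \circ \tau^{-1} = H_{\tilde m}$ with $\tilde m = m - 2\pi M$, $\tilde M = M$, up to an additive constant absorbed in $e$. Consequently, if $(\boldsymbol\theta, e, m, M)$ solves the modified problem at $(\alpha, k)$, then $(\tau \circ \boldsymbol\theta, \tilde e, m - 2\pi M, M)$ solves it at $(\alpha + 2\pi, k)$; the gauge conditions \eqref{luna8f}--\eqref{luna8g} are preserved because $\tau$ only shifts $\mathbf w$ by the constant vector $(2\pi, 0)^\top$. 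This identity defines the continuous extension of $\mathfrak u$ to $\Sigma_\varrho$ and immediately yields $M(\alpha+2\pi, k) = M(\alpha, k)$ and $(m + \alpha M)(\alpha+2\pi, k) = (m + \alpha M)(\alpha, k)$, as well as the periodicity of $\boldsymbol\varphi - \boldsymbol\varphi_0(\alpha)$ (the $\alpha$-dependent component of $\boldsymbol\varphi_0$ being exactly the linear $\alpha \mathbf e_1$ term in $\mathbf w$, which cancels the $+2\pi$ shift in $w_1$).

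The main obstacle has already been absorbed into the preceding work, most notably the approximate-inverse estimates of hypothesis (H.5), which come from Proposition \ref{asol6} and ultimately rest on Theorem \ref{lita210}. With these in hand the proof of Theorem \ref{ira18} is mostly verification: matching the exponents $\tau, \gamma$ in \eqref{ira4}--\eqref{ira8} against those in \eqref{luna24} and \eqref{asol7}--\eqref{asol9x}, and carrying out the translation-symmetry bookkeeping needed to upgrade the local solution on $G$ to a continuous periodic mapping on $\Sigma_\varrho$.
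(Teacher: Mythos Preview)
Your proposal is correct and follows essentially the same route as the paper: verify (H.3)--(H.5) from Propositions~\ref{luna18} and~\ref{asol6}, apply Theorem~\ref{ira10} locally with the approximate solution $\mathfrak v(\mathfrak f)$, patch via compactness and uniqueness, and then use the $z_1 \mapsto z_1 + 2\pi$ translation symmetry together with uniqueness to obtain the periodicity relations and the extension to $\Sigma_\varrho$. Two minor bookkeeping points: the paper reads off $\tau = 0$ directly from \eqref{luna24} (no Cauchy shift is needed, since $\|\cdot\|_{\sigma,0} \le \|\cdot\|_{\sigma,d-1}$), and the exponent $\gamma$ actually picks up an extra $2d$ from \eqref{asol8x}, so $\gamma = 8n+14+2d$ rather than $8n+14$; neither affects the argument.
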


\begin{proof}In view of Proposition \ref{luna18} for every
$\mathfrak f\in G$, there is $r>0$ independent of $\mathfrak f$ with
the following property. If
\begin{equation}\label{ira800}
\|\mathfrak v(\mathfrak f)-\mathfrak u\|_{\sigma,d}\leq r,
\end{equation}
 then the operator $\Phi(\mathfrak f, \mathfrak u)\in
\mathcal F_{\sigma, 0}$ is  differentiable at the point $(\mathfrak
f, \mathfrak u)$ with respect to $\mathfrak u$ and satisfies
estimate \eqref{ira4} with the exponent $\tau=0$. On the other hand,
it follows from Proposition \ref{asol6}, that there exists
$\varepsilon_0>0$ independent of $\mathfrak f$ with the following
properties. If $\mathfrak f\in G$, $\mathfrak u\in \mathcal
F_{\sigma,0}$ satisfy inequality \eqref{ira800}, and
$|\varepsilon|\leq \varepsilon_0$, then there is the linear operator
$\mathcal R(\mathfrak f, \mathfrak u)$ satisfying inequalities
\eqref{ira6}-\eqref{ira8} with the exponents $\tau=0$ and
$\gamma=8n+14+2d$. In other words, if $(\mathfrak f,\mathfrak u)$
satisfies inequality \eqref{ira800} and $|\varepsilon|\leq
\varepsilon_0$, then the operator $\Phi$ satisfies inequalities
\eqref{ira4} and \eqref{ira6}-\eqref{ira8} in Conditions $(\mathbf
H.\mathbf 3)$-$(\mathbf H.\mathbf 5)$ of Theorem \ref{ira10}.

Now choose an arbitrary $\mathfrak f_0=(\alpha_0, k_0)\in G$ and set
$\mathfrak u_0= \mathfrak v(\mathfrak f_0)$. It follows from this
and \eqref{ira16} that
\begin{equation}\label{ira23}
    \|\mathfrak v(\mathfrak f)-\mathfrak u_0\|_{\sigma,d}\leq 20
    |\mathfrak f-\mathfrak f_0|\text{~~for all~~}
    \mathfrak f\in G.
\end{equation}
Recall definition \eqref{ira3} of the ball $\mathcal
B_\sigma(\mathfrak f_0,\mathfrak u_0)$:
\begin{equation*}
    \mathcal B_\sigma(\mathfrak f_0, \mathfrak u_0)=\{(\mathfrak f, \mathfrak u):\,\,
    |\mathfrak f-\mathfrak f_0|<N,\quad
     \|\mathfrak u-\mathfrak u_0\|_{\sigma,d}<R\}.
\end{equation*}
We have
\begin{equation*}\begin{split}
\|\mathfrak v(\mathfrak f)-\mathfrak u\|_{\sigma,d}\leq \|\mathfrak
u_0-\mathfrak u\|_{\sigma,d}+\|\mathfrak v(\mathfrak f)-\mathfrak
u_0\|_{\sigma,d}\\\leq R+20 |\mathfrak f-\mathfrak f_0|\leq R+20N
\end{split}\end{equation*}
for $(\mathfrak f, \mathfrak u)\in \mathcal B_\sigma$.
 Hence for
\begin{equation}\label{ira803}
R+20N\leq r,\quad N<\rho
\end{equation}
the vectors  $\mathfrak f$ and $\mathfrak u$ satisfy inequality
\eqref{ira800} in the ball $\mathcal B_\sigma(\mathfrak
f_0,\mathfrak u_0)$.  Moreover, we have $\mathcal B_\sigma(\mathfrak
f_0,\mathfrak u_0)\cap \mathbb C^2\subset \Sigma_\rho$. Now fix $N$
and $R$ satisfying \eqref{ira803}. It follows from this and what was
mentioned above that for $|\varepsilon|\leq \varepsilon_0$, the
operator $\Phi$ satisfies Conditions $(\mathbf H.\mathbf 3)$-$(\mathbf H.\mathbf 5)$
with the exponents $\tau=0$, $\gamma=9n+12+d$, and the constant
$c_0$ independent of
 $\mathfrak f_0$. Hence $\Phi$ meets all requirements of
 Theorem \ref{ira10}.
 Applying this theorem
  we conclude that
there is a constant $C$, depending only on $c_0$ and $\gamma$, with
the following properties. If  the couple $(\mathfrak f, \mathfrak
v(\mathfrak f))\in \mathcal B_\sigma(\mathfrak f_0, \mathfrak u_0)$
satisfies the conditions
\begin{equation}\label{ira21}
    \|\mathfrak v(\mathfrak f)-\mathfrak u_0\|_{\sigma,d}\leq R/2, \quad
\|\boldsymbol\Phi(\mathfrak f, \mathfrak v(\mathfrak
f))\|_{\sigma,0}\leq 2^{-1} CR\sigma^{2\gamma},
\end{equation}
then equation \eqref{ira9} have a unique solution $\mathfrak
u=\mathfrak u(\mathfrak f) \in \mathcal E_{\sigma/2,d}$ such that
\begin{equation}\label{ira22}
    \|\mathfrak v(\mathfrak f)-\mathfrak u(\mathfrak f)\|_{\sigma/2,d}
    \leq C^{-1}
\|\boldsymbol\Phi(\mathfrak f, \mathfrak v(\mathfrak
f))\|_{\sigma,0}\sigma^{-\gamma}.
 \end{equation}
Moreover,  the mapping $\mathcal B_\sigma (\mathfrak f_0, \mathfrak
u_0)\cap \mathbb C^2\ni\mathfrak f\to \mathfrak u(\mathfrak f)\in
\mathcal E_{\sigma/2,d}$ is continuous. In order to prove the local
existence and uniqueness of solution to equation \eqref{luna8x}, we
have to show that conditions \eqref{ira21}is fulfilled in the ball
$\mathcal B_\sigma(\mathfrak f_0, \mathfrak u_0)$. First we notice
that, in view of relations \eqref{ira16} and  \eqref{luna8} , we
have
\begin{gather*}
\Phi_1(\mathfrak f, \mathfrak v(\mathfrak f ))= \varepsilon
H_1(\text{id},0, \alpha\mathbf e_1),
\\
 \Phi_2(\mathfrak f, \mathfrak v(\mathfrak f ))=
\varepsilon\frac{\partial H_1}{\partial \mathbf z} (\text{id},0,
\alpha\mathbf e_1)\big\}^\top,
\\
 \Phi_3(\mathfrak f, \mathfrak v(\mathfrak f ))=
\varepsilon\big\{\frac{\partial H_1}{\partial \mathbf y} (\text{id},
0, \alpha\mathbf e_1)\big\}^\top,
\end{gather*}
\begin{equation*}
 \Phi_4(\mathfrak f, \mathfrak v(\mathfrak f ))=
\varepsilon\frac{\partial^2 H_1}{\partial\mathbf z^2} (\text{id}, 0,
\alpha\mathbf e_1), \end{equation*}
\begin{equation*}
    \Phi_5(\mathfrak f, \mathfrak v(\mathfrak f))=0.
\end{equation*}
It follows from this and analyticity conditions \eqref{ee1.07}
imposed on $H_1$ that
$$
\|\Phi(\mathfrak f, \mathfrak v(f))\|_{\sigma,0}\leq
c_1|\varepsilon|
$$
for all $\mathfrak f\in \Sigma_\rho$. Here the constant $c_1$ is
independent on $\mathfrak f$. From this and \eqref{ira23} we obtain
that  condition \eqref{ira21} is fulfilled for all $\mathfrak f$
satisfying the inequality
\begin{equation}\label{ira25}
    |\mathfrak f-\mathfrak f_0|\leq N_1=N/40,
\end{equation}
and all $\varepsilon$ such that
$$
\varepsilon\leq \varepsilon_0<c_1^{-1} CN\sigma^{2\gamma}.
$$
Applying Theorem \ref{ira10} we conclude that for all $\mathfrak
f\in G$ satisfying \eqref{ira25} and for all $|\varepsilon|\leq
\varepsilon_0$ operator equation \eqref{luna8x} has a solution
$\mathfrak u(\mathfrak f)$ satisfying the inequality
\begin{equation}\label{ira26}
    \|\mathfrak v(\mathfrak f)-\mathfrak u(\mathfrak f)\|_{\sigma/2,d}
    \leq c_1 C^{-1}
\sigma^{-\gamma}|\varepsilon|\equiv c_2|\varepsilon|.
 \end{equation}
This solution is unique. Moreover, the mapping $\mathbb
C^2\ni\mathfrak f\to \mathfrak u(\mathfrak f)\in \mathcal
E_{\sigma/2, d}$ is continuous in the disk $\{|\mathfrak f-\mathfrak
f_0|\leq N_1\}$.

Our next task is to prove that this solution is defined for all
$\mathfrak f\in G$. Since $G$ is a compact set, there is a finite
collection of the balls
$$
G_i=\{|\mathfrak f-\mathfrak f_i|\leq N_1/2\}, \quad 1\leq i \leq m,
\quad \mathfrak f_i\in G
$$
such that $G\subset \cup_i G_i$. It follows from the local
solvability of operator equation \eqref{luna8x} that for
$|\varepsilon|\leq\varepsilon_0$ there is a unique continuous
mapping $G_i\ni \mathfrak f \to \mathfrak u_i(\mathfrak f)\in
\mathcal E_{\sigma/2,d}$ such that
\begin{equation}\label{ira801}
\Phi(\mathfrak f, \mathfrak u_i(\mathfrak f))=0, \quad \|\mathfrak
u_i(\mathfrak f)-\mathfrak v(\mathfrak f)\|_{\sigma/2, d}\leq
c_2|\varepsilon|.
\end{equation}
If $\mathfrak f\in G_i\cap G_j$  then $\mathfrak u_i(\mathfrak
f)=\mathfrak u_j(\mathfrak f)$. Indeed,  $\mathfrak u_j$ satisfies
inequality \eqref{ira801} and the equation $\Phi(\mathfrak f,
\mathfrak u_j)=0$. On the other hand,  $\mathfrak u_i(\mathfrak f)$
is the unique solution to the equation $\Phi(\mathfrak f, \mathfrak
u)=0$ with $\mathfrak f\in G_i$ satisfying this inequality. Hence
$\mathfrak u_i=\mathfrak u_j$. Therefore,  the relation $\mathfrak
u(\mathfrak f)=\mathfrak u_i(\mathfrak f)$ for $\mathfrak f\in G_i$
defines the continuous mapping $G\ni \mathfrak f\to\mathfrak
u(\mathfrak f)\in \mathcal E_{\sigma/2,d}$.

It remains to prove that this mapping has the analytic extension to
the strip $\Sigma_\rho$. Fix an arbitrary $\mathfrak f=(\alpha,
k)\in G$ such that
\begin{equation}\label{ira27}
    \text{~Re~}\alpha\in [0, \pi/2]
\end{equation}
and set
$$\tilde{\alpha}=\alpha+2\pi, \quad \tilde{ \mathfrak
f}=(\tilde \alpha, k).$$ Let $\tilde{\mathfrak u}=\mathfrak
u(\tilde{\mathfrak f})$ be a solution to the operator equation
$\Phi(\tilde{\mathfrak f},\tilde{\mathfrak u})=0$. We have
$$
\tilde{\mathfrak u}=(\tilde{\boldsymbol\varphi}, \tilde e, \tilde
m,\tilde M), \text{~~where~~}\tilde{\boldsymbol
\varphi}=(\tilde{\boldsymbol\beta},\tilde{\varphi}_0, \tilde{\mathbf
u}, \tilde{\mathbf w}, \tilde W_{11}, \tilde W_{12}, \tilde W_{22}).
$$
Now set
\begin{equation}\label{ira30}\begin{split}
{\mathfrak u}^*=(\boldsymbol \varphi^*,  e^*,  m^*,
M^*),\intertext{where}
\boldsymbol\varphi^*=\tilde{\boldsymbol\varphi} -(0,0,0, 2\pi
\mathbf e_1,
0,0,0),\\
e^*=\tilde e-(2\pi\tilde m+2\pi^2\tilde M), \quad m^*=\tilde m +2\pi
\tilde M, \quad M^*=\tilde M.
\end{split}
\end{equation}
Since the basic Hamiltonian $H(\mathbf x, \mathbf y, \mathbf z)$ is
$2\pi$-periodic in $z_1$ it follows from definition \eqref{luna8} of
the operator $\Phi$ that
\begin{equation}\label{ira28}
    \Phi(\mathfrak f, \mathfrak u^*)\equiv \Phi(
    \tilde{\mathfrak f},\tilde{\mathfrak u}).
\end{equation}
Next, we have
\begin{equation*}
    \mathfrak v(\tilde{\mathfrak f})=
\mathfrak v(\mathfrak f)-(2\pi k+2\pi^2 k, -2\pi k, k).
\end{equation*}
Relations \eqref{ira30} imply
\begin{equation}\label{ira31}
    \mathfrak u^*-\mathfrak v(\mathfrak f)=
    \tilde{\mathfrak u}-\mathfrak v(\tilde {\mathfrak f})
\end{equation}
Recall that $\tilde{\mathfrak u}=\mathfrak u(\tilde{\mathfrak f})$
satisfies the inequality
\begin{equation*}
    \|\tilde{\mathfrak u}-\mathfrak v(\tilde {\mathfrak
    f})\|_{\sigma/2,d}\leq c_2|\varepsilon|.
\end{equation*}
and $\Phi(
    \tilde{\mathfrak f},\tilde{\mathfrak u})=0$. From this, \eqref{ira28} and \eqref{ira31} we obtain
$$
 \Phi(\mathfrak f, \mathfrak u^*)=0, \quad
 \|{\mathfrak u}^*-\mathfrak v ({\mathfrak
    f})\|_{\sigma/2,d}\leq c_2|\varepsilon|, \quad \mathfrak f\in G.
$$
Hence $\mathfrak u^*=\mathfrak u(\mathfrak f)$, which along with
\eqref{ira30} and the equality $\tilde{\mathfrak u}= \mathfrak
u(\tilde{\mathfrak f})$ yields
\begin{equation*}\begin{split}
    \boldsymbol \varphi(\mathfrak f)+2\pi(0,0,0,\mathbf e_1, 0,0,0)=
\boldsymbol\varphi(\tilde{\mathfrak f}), \\
m(\mathfrak f)= m(\tilde{\mathfrak f})+2\pi M(\tilde{\mathfrak
f}),\quad M(\mathfrak f)= M(\tilde{\mathfrak f}). \end{split}
\end{equation*}
We can rewrite these relations in the equivalent form
\begin{equation}\label{ira32}\begin{split}
    \boldsymbol \varphi(\mathfrak f)-\alpha(0,0,0,\mathbf e_1, 0,0,0)=
\boldsymbol\varphi(\tilde{\mathfrak f})-(\alpha+2\pi)(0,0,0,\mathbf e_1, 0,0,0), \\
m(\mathfrak f)+\alpha M(\mathfrak f)= m(\tilde{\mathfrak f})+
(\alpha+2\pi) M(\tilde{\mathfrak f}),\quad M(\mathfrak f)=
M(\tilde{\mathfrak f}).
\end{split}
\end{equation}
Recall that these relations holds true for all $\mathfrak f$ and
$\tilde{\mathfrak f}$ satisfying the conditions
$$
\mathfrak f=(\alpha,k)\in G, \quad \tilde{\mathfrak
f}=(\alpha+2\pi,k),\quad \text{Re~}\alpha\in [0,\pi/2].
$$
Since the vector-function $\mathfrak u(\mathfrak f)$ and all its
components are holomorphic in $\alpha$, relations \eqref{ira32}
imply that the mappings $$\mathfrak f\to
\boldsymbol\varphi-\boldsymbol\varphi_0(\alpha), \quad \mathfrak f\to
M,\quad  \mathfrak f\to m+\alpha M$$ are holomorphic in the strip
$\Sigma_\rho$ and $2\pi$ periodic in $\alpha$. This completes the
proof of Theorem \ref{ira18}.
\end{proof}

\section{Dependence on parameters. Jacobi vector fields}\label{kira}

\subsection{Differentiability with respect to parameters}Theorem \ref{ira18} guarantees the local existence of
solutions to the operator equation
\begin{equation}\label{kira1}
\Phi(\mathfrak f, \mathfrak u)=0.
\end{equation}
In this section we investigate in details the dependence of this
solution on the parameter $\mathfrak f$. The result is given by the
following

\begin{theorem}\label{kira2}
Let conditions $(\mathbf H.\mathbf 1)$-$(\mathbf H.\mathbf 2)$ be satisfied and
$\varepsilon_0>0$ be given by Theorem \ref{ira18}. Let
$|\varepsilon|\leq \varepsilon_0$ and $\mathfrak u=\mathfrak
u(\mathfrak f)$ be a solution to equation \eqref{kira1},
\begin{equation}\label{kira3}\begin{split}
\mathfrak u(\mathfrak f)=(\boldsymbol\varphi(\alpha,k), e(\alpha,k),
m(\alpha,k), M(\alpha,k),\\
\boldsymbol\varphi(\alpha,k)= (\boldsymbol\beta, \varphi_0, \mathbf
u, \mathbf w, W_{11},W_{12},W_{21}).
\end{split}\end{equation}
Then the mappings
\begin{equation}\label{kira4}\begin{split}
\mathbb R\times [0,1]\ni(\alpha, k)\to \mathbf u(\alpha,k)\in
\mathcal A_{\sigma/4,d},\\
\mathbb R\times [0,1]\ni(\alpha, k)\to \varphi_0(\alpha,k)\in
X_{\sigma/4,d},\\ \mathbb R\times [0,1]\ni(\alpha, k)\to
\mathbf w(\alpha,k)-\alpha\mathbf e_1\in \mathcal A_{\sigma/2,d},\\
 \mathbb R\times [0,1]\ni(\alpha, k)\to
\mathbf W(\alpha,k)\in \mathcal A_{\sigma/2,d},
\\
\mathbb R\times [0,1]\ni(\alpha, k)\to \boldsymbol\beta(\alpha,k)\in
\mathbb R^{n-1},\\
 \mathbb R\times [0,1]\ni(\alpha, k)\to M(\alpha,k)\in
\mathbb R^1,
\\
\mathbb R\times [0,1]\ni(\alpha, k)\to m(\alpha,k)+\alpha
M(\alpha,k)\in \mathbb R^1,
\end{split}\end{equation}
are continuously differentiable and $2\pi$-periodic in $\alpha$.
Moreover, they are analytic in $\mathbb R\times (0,1)$ and satisfy
the inequalities
\begin{equation}\label{kira5}\begin{split}
    \|\partial_\alpha^r\mathbf u\|_{\sigma/2,d}+
    \|\partial_\alpha^r\varphi_0\|_{\sigma/2,d}\leq
    c(r)|\varepsilon|,\\
\|\partial_\alpha^r(\mathbf w-\alpha\mathbf e_1)\|_{\sigma/2,d}+ \|\partial_\alpha^r(\mathbf W-\mathbf I)
\|_{\sigma/2,d}\leq c(r)|\varepsilon|,\\
|\partial_\alpha^r\boldsymbol\beta|+|\partial_\alpha^r(m+\alpha M)|+
|\partial_\alpha^r (M+k)|\leq c(r)|\varepsilon|,
\end{split}\end{equation}
and
\begin{equation}\label{kira6}\begin{split}
    \|\partial_k\mathbf u\|_{\sigma/2,d}+
    \|\partial_k\varphi_0\|_{\sigma/2,d}\leq
    c|\varepsilon|,\\
\|\partial_k(\mathbf w-\alpha\mathbf e_1)\|_{\sigma/2,d}+\|\partial_k(\mathbf W-\mathbf I)
\|_{\sigma/2,d}\leq c|\varepsilon|,\\
|\partial_k\boldsymbol\beta|+|\partial_k(m+\alpha M)|+
|\partial_k(M+k)|\leq c|\varepsilon|,
\end{split}\end{equation}
where $r\geq 0$ is an arbitrary integer, the constant $c$ is
independent of $\alpha, k$, and $\varepsilon$.
\end{theorem}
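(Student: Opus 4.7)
The plan is to lift the conclusions of Theorem~\ref{ira18} (existence, uniqueness, continuity, the $O(|\varepsilon|)$ bound, and $2\pi$-periodicity in $\alpha$) from continuous to differentiable dependence on $\mathfrak f=(\alpha,k)$ by means of Cauchy's integral formula on the complex strip $\Sigma_\varrho$. The substantive work is to upgrade continuous dependence on $\mathfrak f$ to analytic dependence on the interior of $\Sigma_\varrho$; the derivative bounds \eqref{kira5}--\eqref{kira6} then fall out almost automatically from the uniform bound $\|\mathfrak u-\mathfrak v\|_{\sigma/2,d}\le c|\varepsilon|$.

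\textbf{Step 1: analyticity in the interior.} First I would show that $\mathfrak f\mapsto\mathfrak u(\mathfrak f)$ is holomorphic on the interior of $\Sigma_\varrho$. The operator $\boldsymbol\Phi(\mathfrak f,\mathfrak u)$ is polynomial in $\mathfrak f$, and the approximate inverse $\mathcal R(\mathfrak f,\mathfrak u)$ from Definition~\ref{asoldef} is constructed by solving the linear problem \eqref{lita4} whose coefficients are polynomial in $(\alpha,k)$; hence $\mathcal R$ depends holomorphically on $\mathfrak f$. Consequently every Newton-type iterate in the Nash--Moser scheme underlying Theorem~\ref{ira10} is holomorphic in $\mathfrak f$, and the super-exponential convergence of the scheme is uniform on compact subsets of the interior of $\Sigma_\varrho$. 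By Weierstrass's theorem applied componentwise in the target Banach spaces, the limit $\mathfrak u(\mathfrak f)$ is holomorphic on $\{\text{Re}\,\alpha\in\mathbb R,\ \text{Re}\,k\in(0,1),\ |\text{Im}\,\mathfrak f|<\varrho\}$.

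\textbf{Step 2: periodic bounded extension.} Set $\Delta(\mathfrak f):=\mathfrak u(\mathfrak f)-\mathfrak v(\mathfrak f)$. Theorem~\ref{ira18} gives $\|\Delta(\mathfrak f)\|_{\sigma/2,d}\le c|\varepsilon|$ uniformly on $\Sigma_\varrho$ and states that the components of $\Delta$ corresponding to $\boldsymbol\varphi-\boldsymbol\varphi_0(\alpha)$, $M+k$, and $m+\alpha M$ are $2\pi$-periodic in $\alpha$. These components are therefore bounded, $2\pi$-periodic, holomorphic functions of $\alpha$ on the strip $|\text{Im}\,\alpha|<\varrho$, with values in the relevant Banach spaces; this is exactly the setup in which Cauchy estimates produce uniform derivative bounds.

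\textbf{Step 3: Cauchy estimates.} For each real $\alpha_0$ and each $k_0\in[0,1]$, applying Cauchy's formula on the circle $|\alpha-\alpha_0|=\varrho/2$ yields
\[
\partial_\alpha^r\Delta(\alpha_0,k_0)=\frac{r!}{2\pi i}\oint_{|\alpha-\alpha_0|=\varrho/2}\frac{\Delta(\alpha,k_0)}{(\alpha-\alpha_0)^{r+1}}\,d\alpha,
\]
and taking norms gives $\|\partial_\alpha^r\Delta\|_{\sigma/2,d}\le c(r)\varrho^{-r}|\varepsilon|$, precisely \eqref{kira5}. For interior $k_0\in(0,1)$ the analogous Cauchy formula in $k$ on a disc of radius $\min(k_0,1-k_0,\varrho)/2$ delivers \eqref{kira6}. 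At the endpoints $k\in\{0,1\}$, where the disc of integration is truncated, one argues via a Schwarz-type reflection, or by extending the Nash--Moser solution to a slightly larger range of $\text{Re}\,k$ (the hypotheses $(\mathbf H.\mathbf 1)$--$(\mathbf H.\mathbf 2)$ remain in force for $k$ in a small complex neighbourhood of $[0,1]$), using uniqueness to glue the extension to $\mathfrak u(\mathfrak f)$.

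\textbf{Main obstacle.} The principal technical point is Step~1, namely promoting the continuous dependence on $\mathfrak f$ provided by Theorem~\ref{ira10} to holomorphic dependence. This requires checking that the Zehnder-type iteration can be run with $\mathfrak f$ treated as a holomorphic parameter and that the smoothing operators involved respect this holomorphy -- a standard but nontrivial consistency check. A secondary issue is the $k$-derivative up to the boundary of $[0,1]$, which either needs the reflection argument indicated above or a direct implicit-function-theorem argument in a thickening of the strip. Once these two points are settled, the $\alpha$-periodicity and the $O(|\varepsilon|)$ bound of Theorem~\ref{ira18} deliver \eqref{kira5}--\eqref{kira6} through a single invocation of the Cauchy integral formula.
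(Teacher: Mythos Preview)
Your treatment of the $\alpha$-derivatives matches the paper: Theorem~\ref{ira18} already delivers $\mathfrak u(\mathfrak f)$ as a continuous function on the complex strip $\Sigma_\varrho$, hence holomorphic in $\alpha$; periodicity plus the uniform bound \eqref{ira19} then yield \eqref{kira5} via Cauchy's estimate, exactly as you outline. The gap is in your handling of $\partial_k$ at the endpoint $k=0$. Neither of the fixes you propose works. Schwarz reflection fails because the equation has no usable $k\mapsto -k$ symmetry: $\boldsymbol\Omega=\text{diag}(-k,1)$ is not invariant. Domain extension to $\text{Re}\,k<0$ fails not because $(\mathbf H.\mathbf 1)$--$(\mathbf H.\mathbf 2)$ break down (they do not mention $k$), but because the small-divisor estimates behind the approximate inverse do. Lemma~\ref{lita21lemma} inverts $\mathbf J\boldsymbol\partial+\boldsymbol\Omega$ through the Fourier multiplier $\big(k+(\boldsymbol\omega\cdot\mathbf s)^2\big)^{-1}$, and Lemma~\ref{nisa15} uses $\big((\boldsymbol\omega\cdot\mathbf s)^2+4k\big)^{-1}$; both require $\text{Re}\,k\ge 0$. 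For real $k<0$ the set $\{(\boldsymbol\omega\cdot\mathbf s)^2\}$ is dense in $[0,\infty)$, so these denominators are not bounded away from zero and the linear problem is genuinely ill-posed. The strip $\Sigma_\varrho$ cannot be enlarged past $\text{Re}\,k=0$.

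The paper avoids the issue by a direct argument that does not need analyticity in $k$ beyond $(0,1)$. One differentiates $\boldsymbol\Phi(\mathfrak f,\mathfrak u(\mathfrak f))=0$ with respect to $k$ on the open interval; since only $\Phi_4$ depends on $k$ (through $\boldsymbol\Omega$), the linearised equation for $(\partial_k\boldsymbol\varphi,\partial_k e,\partial_k m,\partial_k M+1)$ has right-hand side $\mathcal Z=(0,0,0,\mathcal Z_4,0)$ with $\mathcal Z_4=\text{diag}(1,0)-\mathbf W^\top\text{diag}(1,0)\mathbf W$, and $\|\mathcal Z\|_{\sigma/2,0}\le c|\varepsilon|$ by the already-established bound on $\mathbf W-\mathbf I$. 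Because $\boldsymbol\Phi(\mathfrak f,\mathfrak u(\mathfrak f))=0$ exactly, the discrepancy terms in \eqref{asol8x}--\eqref{asol9x} vanish and the approximate inverse $\mathcal R(\mathfrak f,\mathfrak u)$ of Proposition~\ref{asol6} is an exact left inverse of $D_{\mathfrak u}\boldsymbol\Phi$; its norm bound \eqref{asol7} holds uniformly for all $\mathfrak f\in\Sigma_\varrho$, in particular at $k=0,1$. This gives $\|\partial_k\mathfrak u\|_{\sigma/4,d}\le c|\varepsilon|$ uniformly on $(0,1)$, and hence continuous differentiability up to the closed interval, which is \eqref{kira6}.
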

\begin{proof}
By virtue of Theorem \eqref{ira18}, the mapping $\Sigma_\rho\ni
\mathfrak f\to \mathfrak u(\mathfrak f)\in \mathcal E_{\sigma/2,0}$ is continuous in the
complex strip $\Sigma_\rho$ which contains the real axis. Hence this
mapping is holomorphic function of $\alpha$. It  is also a
holomorphic function of $k$ on the interval $(0,1)$. Moreover,
Theorem \ref{ira18} shows that $\boldsymbol \beta$, $\varphi_0$,
$\mathbf u$, $\mathbf w-\alpha \mathbf e_1$, $W_{ij}$, $m+\alpha M$
and $M$ are periodic in $\alpha$. Since these functions are
holomorphic in $\alpha$ on the real axis, estimates \eqref{kira5}
obviously follows from estimates \eqref{ira19} in Theorem
\ref{ira18}.

However, this theorem does not guarantee the differentiability of
$\mathfrak u(\alpha,k)$ with respect to $k$ on the closed segment
$k\in [0,1]$.  In order to prove estimates \eqref{kira6} fix an
arbitrary $\alpha\in \mathbb R^1$ and consider the function
$\mathfrak u(\alpha,k)$. In view of Theorem \ref{ira18} this
function is continuous in the rectangular $\text{Re~}k\in [0,1]$,
$|\text{Im~}k|\leq \rho$. Hence it is holomorphic on the interval
$(0,1)$ and continuous on $[0,1]$. Next the function $\mathfrak
u(\alpha, k)$ satisfies the operator equation
\begin{equation}\label{kira7}
    \Phi(\mathfrak f, \mathfrak u)\equiv
    (\Phi_1(\mathfrak u),\Phi_2(\mathfrak u),\Phi_3(\mathfrak u),
    \Phi_4(k,\mathfrak u),\Phi_5(\mathfrak u))=0,
\end{equation}
where the differential operators $\Phi_i$ are defined by
\eqref{luna8}. It follows from this relation that the only $\Phi_4$
depends on $k$ via the matrix $\boldsymbol\Omega=\text{diag~}(-k,1)$
in the left hand side of \eqref{luna8d}. Since $\mathfrak u$ is
analytic function of $k$ on the interval $(0,1)$, we can
differentiate \eqref{kira7} with respect to $k$ to obtain
\begin{multline}\label{kira8}
    D_{\boldsymbol \varphi} \Phi(\mathfrak f,\mathfrak
    u)\partial_k\boldsymbol\varphi+
 D_{e} \Phi(\mathfrak f,\mathfrak
    u)\partial_k e+\\D_{m} \Phi(\mathfrak f,\mathfrak
    u)\partial_k m+D_{M} \Phi(\mathfrak f,\mathfrak
    u)(\partial_k M+1)=\mathcal Z\text{~~for~~} k\in (0,1).
\end{multline}
Here
$$
\mathcal Z=(0,0,0, \mathcal Z_4,0), \quad \mathcal
Z_4=\text{diag~}(1,0)-\mathbf W^\top \text{diag~}(1,0)\mathbf W.
$$
It follows from estimate \eqref{kira5} that
\begin{equation}\label{kira9}
    \|\mathcal Z\|_{\sigma/2, 0}\leq c|\varepsilon|.
\end{equation}
Relation \eqref{kira8} can be considered as equation for
$$\partial_k\mathfrak u=(\partial_k\boldsymbol\varphi, \partial_k e, \partial_k
m,\partial_k M).$$ Applying to \eqref{kira8} Proposition \ref{asol6}
with $\sigma$, $\sigma_1$, and $\sigma'$ replaced by $\sigma/2$,
$\sigma/2$, and $\sigma/4$ we conclude that for a suitable choice of
$\varepsilon_0$, equation \eqref{kira8} has a unique solution
$$
(\partial_k\boldsymbol\varphi,\partial_k e,\partial_k m, \partial_k
M+1)=\mathcal R(\mathfrak f, \mathfrak u)\mathcal Z
$$
Estimate \eqref{asol7}for the resolvent $\mathcal R$ in Proposition
\eqref{asol6} implies the inequality
$$
\|\partial_k\mathfrak \varphi\|_{\sigma/4,d}+|\partial_k m|+
|\partial_k M+1|\leq c \sigma^{-9n-12-d}\|\mathcal
Z\|_{\sigma/2,0}\leq c|\varepsilon|,
$$
which obviously yields \eqref{kira6}. This completes the proof of
Theorem \ref{kira2}.
\end{proof}

\subsection{Representation of derivatives. Jacobi vector fields}  In this section we obtain the representation for the derivatives of solutions to the modified problem with respect to the parameters $\alpha$ and $k$. Let $\boldsymbol\varphi(\alpha, k)= (\boldsymbol\beta, \varphi_0, \mathbf u, \mathbf w, W_{11},W_{12},
W_{21})$ be a solution to the operator equation \eqref{kira1} given by Theorem \ref{kira2}.
Let $\mathbf V$, $\mathbf W$, and $\mathbf v$ are given by \eqref{anna5}
\begin{equation}\label{kira10}\begin{split}
    \mathbf V=(\mathbf I_{n-1}+\mathbf u')^{-\top}, \quad  \mathbf v=\beta+\mathbf V\,\big( \,\nabla\varphi_0-w_2\nabla w_1\,\big),\\ W_{22}=W_{11}^{-1}(1+W_{12}W_{21}).
\end{split}\end{equation}
For $\tau=\alpha,k$ denote by $\boldsymbol\chi^{(\tau)}$, $\boldsymbol\lambda^{(\tau)}$, and $\boldsymbol\mu^{(\tau)}$ the functions
\begin{subequations}\label{kira11}
\begin{gather}\label{kira11a}
\boldsymbol\chi^{(\tau)}\,= \,\mathbf V^\top \,\partial_\tau\mathbf
u\\\label{kira11b} \boldsymbol\lambda^{(\tau)}\,=\, \mathbf
W^{-1}\partial_\tau\mathbf w-\chi_i^{(\tau)}\,\mathbf W^{-1}
\frac{\partial}{\partial \xi_i} \mathbf w\\\label{kira11c}
\boldsymbol\mu^{(\tau)}\,=\, \mathbf V^{-1}\Big(\partial_\tau\mathbf v+\chi_i^{(\tau)}\,
\frac{\partial}{\partial \xi_i} \mathbf v-
\boldsymbol\Lambda\boldsymbol\lambda^{(\tau)}\Big).
\end{gather}
\end{subequations}
The vector fields  $\boldsymbol\chi^{(\tau)}$, $\boldsymbol\lambda^{(\tau)}$, and $\boldsymbol\mu^{(\tau)}$ can be regarded as the Jacobi vector fields for the invariant tori problem.
We also set
\begin{equation}\label{kira12}\begin{split}
    p^{(\tau)}=\partial_\tau m\, +\, \alpha\partial_\tau M.
\end{split}\end{equation}
Here the matrix $\boldsymbol\Lambda$ are given by \eqref{anna17}:
$$
\boldsymbol\Lambda=- \mathbf V\, (\mathbf w')\,\mathbf J\,\mathbf W.
$$
Throughout of this an the next sections we will use the following notation. For every
integrable periodic function $f(\xi)$ we set
\begin{equation}\label{kiraa}
    \overline {f}=(2\pi)^{1-n}\int_{\mathbb T^{n-1}} f(\xi)\, d\xi, \quad
    f^*=f-\overline{f}.
\end{equation}
We have the following
\begin{lemma}\label{kira13} Under the assumptions of Theorem \ref{kira2}, the functions
$\boldsymbol\chi^{(\tau)}$, $\boldsymbol\lambda^{(\tau)}$,  $\boldsymbol\mu^{(\tau)}$, and the parameter
$q^{(\tau)}$ satisfy the equations
\begin{subequations}\label{kira14}
\begin{gather}\label{kira14a}\partial\, \boldsymbol\mu^{(\tau)}\,=\,
-p^{(\tau)}\nabla w_1 -\frac{1}{2}\partial_\tau M\, \nabla (w_1^*)^2,\\
\label{kira14b}
\partial\, \boldsymbol\lambda^{(\tau)}\,+\, \boldsymbol\Omega\, \boldsymbol\lambda^{(\tau)}\,+\,
\mathbf T\, \boldsymbol\mu^{(\tau)}+ p^{(\tau)}\mathbf W^\top\boldsymbol e_1\,=
\, -\, \partial_\tau M w_1^*\mathbf W^\top\boldsymbol e_1,\\
\label{kira14c}
-\partial\, \boldsymbol\lambda^{(\tau)}\,+\,\mathbf S\boldsymbol\mu^{(\tau)}\, +\, \mathbf T^\top
\boldsymbol\lambda^{(\tau)}=0
\end{gather}
and the orthogonality conditions
\begin{gather}\label{kira14d}
\overline{\{W\boldsymbol\lambda^{(\tau)}\cdot \mathbf e_1\}}\,\,+\,\,\overline{\{\boldsymbol\chi^{(\tau)}\cdot \nabla w_1\}}
=\delta_{\alpha\tau}, \\\label{kira14e}
\overline{\{\mathbf V^{-\top}\boldsymbol\chi^{(\tau)}\}}\,=\, 0
\end{gather}
\end{subequations}
Here the matrices $\mathbf S$ and $\mathbf T$ are defined by \eqref{luna29}
\begin{equation}\label{kira15}\begin{split}
    \mathbf S=\mathbf V^\top\frac{\partial^2 H}{\partial\mathbf  y^2}\big
(\boldsymbol\xi+\mathbf u, \mathbf v, \mathbf w)\mathbf V, \\
\mathbf T=\mathbf W^\top\frac{\partial^2 H}{\partial\mathbf
z\partial\mathbf  y}\big (\boldsymbol\xi+\mathbf u, \mathbf v,
\mathbf w)\mathbf V+ \boldsymbol\Lambda^\top\frac{\partial^2
H}{\partial\mathbf  y^2}\big (\boldsymbol\xi+\mathbf u, \mathbf v,
\mathbf w)\mathbf V.
\end{split}\end{equation}
In view of \eqref{luna30} and \eqref{kira4}, they admit the estimates
\begin{equation}\label{kira16}
    \|\mathbf S-\mathbf S_0\|_{\sigma/2, d-1}+
    \|\mathbf T-\mathbf T_0\|_{\sigma/2, d-1}
\leq c|\varepsilon|,
\end{equation}
where  the constant
matrices $\mathbf S_0$, $\mathbf T_0$ are given by
\begin{equation}\label{kira17}
\mathbf S_0=\frac{\partial^2 H_0}{\partial\mathbf  y^2}\big (0,0),
\quad \mathbf T_0=\frac{\partial^2 H_0}{\partial\mathbf
z\partial\mathbf  y}\big (0,0)
\end{equation}

\end{lemma}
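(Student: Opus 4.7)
The plan is to differentiate the operator equation $\boldsymbol\Phi(\mathfrak f, \mathfrak u(\mathfrak f)) \equiv 0$ componentwise with respect to $\tau \in \{\alpha, k\}$. Theorem \ref{kira2} provides the required differentiability of $\mathfrak u(\mathfrak f)$, so the chain rule yields
\begin{equation*}
D_{\mathfrak u}\Phi_i(\mathfrak f, \mathfrak u)[\partial_\tau \mathfrak u] \;=\; -(\partial_\tau \Phi_i)_{\mathrm{expl}}, \qquad i = 1,\dots,5,
\end{equation*}
where the right-hand side collects the partial $\mathfrak f$-derivative of $\Phi_i$ in its explicit arguments. Of the five components, only $\Phi_4$ has explicit $k$-dependence (through $\boldsymbol\Omega$) and only $\Phi_5$ has explicit $\alpha$-dependence (through the $-\alpha$ term), so all the other right-hand sides vanish. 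I would then invoke Corollary \ref{structuralcor3}, which expresses $D_{\mathfrak u}\Phi_i[\delta\mathfrak u]$ in the canonical variables $(\delta\boldsymbol\beta, \psi_0, \boldsymbol\chi, \boldsymbol\lambda, \boldsymbol\mu, \boldsymbol\Gamma)$ obtained from $\delta\boldsymbol\varphi$ via the Second Structure inverse \eqref{luna27}. Specialising to $\delta\boldsymbol\varphi = \partial_\tau\boldsymbol\varphi$ reproduces verbatim the definitions \eqref{kira11a}--\eqref{kira11c} of $\boldsymbol\chi^{(\tau)}, \boldsymbol\lambda^{(\tau)}, \boldsymbol\mu^{(\tau)}$. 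A crucial simplification is that, since $\Phi_j(\mathfrak f, \mathfrak u(\mathfrak f)) \equiv 0$ along the branch, the remainder operators $\Pi_i$ in \eqref{lita201} -- being linear combinations with factors $\Phi_j$ -- drop out identically.

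With this setup I would read off each equation of \eqref{kira14} from the appropriate row of \eqref{lita201}. Identity \eqref{kira14c} is immediate from $\partial_\tau\Phi_3 = 0$. Equation \eqref{kira14b} comes from $\partial_\tau\Phi_2 = 0$: writing $w_1 = w_1^* + \alpha$ in the $\delta m$ and $\delta M$ terms and collecting the secular piece gives exactly the combination $p^{(\tau)}\mathbf W^\top\mathbf e_1$ on the left and the oscillatory remainder $\partial_\tau M\, w_1^* \mathbf W^\top\mathbf e_1$ on the right. For \eqref{kira14a}, the equation $\partial_\tau\Phi_1 = 0$ is the scalar identity
\begin{equation*}
\boldsymbol\partial\psi_0^{(\tau)} + \boldsymbol\omega^\top\!\cdot\!\partial_\tau\boldsymbol\beta + \partial_\tau e + \partial_\tau m\, w_1 + \tfrac{1}{2}\partial_\tau M\, w_1^2 = 0;
\end{equation*}
I would apply $\nabla_\xi$ to kill the $\xi$-constants, use $\boldsymbol\mu^{(\tau)} = \nabla_\xi\psi_0^{(\tau)} + \partial_\tau\boldsymbol\beta$ to identify the left side with $\boldsymbol\partial\boldsymbol\mu^{(\tau)}$, and invoke $\tfrac{1}{2}\nabla(w_1^*)^2 = w_1^*\nabla w_1$ -- valid because $\overline{w_1} = \alpha$ is $\xi$-independent by \eqref{luna8e} -- to arrive at \eqref{kira14a}. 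The orthogonality \eqref{kira14d} follows from $\partial_\tau\Phi_5 = 0$, with the Kronecker $\delta_{\alpha\tau}$ supplied by the explicit $-\alpha$ in $\Phi_5$; and \eqref{kira14e} is simply the $\tau$-derivative of the normalisation $\overline{\mathbf u} = 0$ in \eqref{luna8f}, rewritten through the relation $\partial_\tau \mathbf u = \mathbf V^{-\top}\boldsymbol\chi^{(\tau)}$ from \eqref{kira11a}.

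I do not anticipate any serious obstacle: each assertion of the lemma is a one-line consequence of differentiating an identity already placed in the canonical form supplied by the Third Structure Theorem and then killing the $\Pi_i$-remainders via $\Phi_j(\mathfrak f, \mathfrak u(\mathfrak f)) \equiv 0$. The only step requiring a little bookkeeping is the regrouping in \eqref{kira14a}, which uses the mean-value constraint $\overline{w_1} = \alpha$ together with the definition of $p^{(\tau)}$ to separate the secular piece $p^{(\tau)}\nabla w_1$ from the oscillatory piece $\tfrac{1}{2}\partial_\tau M \nabla(w_1^*)^2$. The concluding estimates \eqref{kira16} for $\mathbf S$ and $\mathbf T$ are inherited directly from \eqref{luna30} via the smallness $\|\boldsymbol\varphi - \boldsymbol\varphi_0(\alpha)\|_{\sigma/2,d} \leq c|\varepsilon|$ guaranteed by Theorem \ref{kira2}.
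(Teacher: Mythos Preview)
Your proposal is correct and follows essentially the same route as the paper: differentiate $\boldsymbol\Phi(\mathfrak f,\mathfrak u(\mathfrak f))=0$ in $\tau$, invoke Corollary~\ref{structuralcor3} to rewrite $D_{\mathfrak u}\Phi_i[\partial_\tau\mathfrak u]$ in the canonical variables \eqref{kira11}, drop the $\Pi_i$ remainders since $\Phi_j\equiv0$ along the branch, and then read off each line of \eqref{kira14} by the same regroupings (gradient of the $\Phi_1$-identity, the secular/oscillatory split via $w_1=\alpha+w_1^*$ and $p^{(\tau)}=\partial_\tau m+\alpha\,\partial_\tau M$, and the $\partial_\tau\Phi_5=-\delta_{\alpha\tau}$ contribution). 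Your remark that the $\Pi_i$ vanish identically is exactly the mechanism the paper uses implicitly when it records \eqref{kira20a}--\eqref{kira20d} without those terms.
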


\begin{proof}
Since $\mathfrak u=(\boldsymbol\varphi, e, m, M)$ is continuously differentiable with respect
to  $\tau=\alpha, k$  we can
differentiate \eqref{kira7} with respect to $\tau$ to obtain
\begin{multline}\label{kira18}
    D_{\boldsymbol \varphi} \Phi_i(\mathfrak f,\mathfrak
    u)\partial_\tau\boldsymbol\varphi+
 D_{e} \Phi_i(\mathfrak f,\mathfrak
    u)\partial_\tau e+\\D_{m} \Phi_i(\mathfrak f,\mathfrak  u)\partial_\tau m+D_{M} \Phi(\mathfrak f,\mathfrak
    u)(\partial_\tau M)+\partial_\tau \Phi_i=0.
\end{multline}
$i=1,\dots,5$. Introduce the functions
\begin{subequations}\label{kira19}
\begin{gather}\label{kira19a}
\boldsymbol\chi^{(\tau)}\,= \,\mathbf V^\top \,\partial_\tau\mathbf
u\\\label{kira19b} \boldsymbol\lambda^{(\tau)}\,=\, \mathbf
W^{-1}\partial_\tau\mathbf w-\chi_i^{(\tau)}\,\mathbf W^{-1}
\frac{\partial}{\partial \xi_i} \mathbf w\\\label{kira19c}
\boldsymbol\Gamma^{(\tau)}\,=\, \mathbf W^{-1}\partial_\tau\mathbf W-\chi_i^{(\tau)}\,\mathbf
W^{-1} \frac{\partial}{\partial \xi_i} \mathbf W,\\\label{kira19d}
\boldsymbol\mu^{(\tau)}\,=\, \mathbf V^{-1}\Big(\partial_\tau\mathbf v+\chi_i^{(\tau)}\,
\frac{\partial}{\partial \xi_i} \mathbf
v-\boldsymbol\Lambda\boldsymbol\lambda^{(\tau)}\Big)
\\\label{kira19e}
\nabla\psi_0^{(\tau)}=\boldsymbol\mu^{(\tau)}-\boldsymbol\beta^{(\tau)}, \quad
\boldsymbol\beta^{(\tau)}= \frac{1}{(2\pi)^{n-1}}\int_{\mathbb T^{n-1}}
\boldsymbol\mu^{(\tau)}\, d\boldsymbol\xi,
\end{gather}
\end{subequations}
Since $\mathbf u$, $\mathbf v$, $\mathbf w$, and $\mathbf W$ form a canonical mapping in the group $\mathcal G$, the vector functions $\partial_\tau \mathbf u$,  $\partial_\tau \mathbf v$,
 $\partial_\tau \mathbf w$, and  $\partial_\tau \mathbf W$ determine the element of the tangent space of $\mathcal G$. Hence we can apply  Corollary \ref{structuralcor3} of the structural theorem \ref{structural3}
 to obtain
\begin{subequations}\label{kira20}
\begin{gather}\label{kira20a}
    D_\varphi\Phi_1[\partial_\tau \boldsymbol\varphi]+
   D_e\Phi_1[\partial_\tau e] +D_m\Phi_1[\partial_\tau m]+D_M\Phi_1[\partial_\tau M]=\\
   \nonumber\boldsymbol\partial \psi_0^{(\tau)}+\boldsymbol\omega^\top\cdot
    \boldsymbol\beta^\tau+\partial_\tau e+\partial_\tau m\, w_1+\partial_\tau M\, \frac{1}{2}w_1^2,\\
   \label{kira20b}
D_\varphi\Phi_2[\partial_\tau \boldsymbol\varphi]+
   D_e\Phi_2[\partial_\tau e] +D_m\Phi_2[\partial_\tau m]+D_M\Phi_b[\partial_\tau M]\equiv\\\nonumber
    \mathbf J\boldsymbol\partial\boldsymbol\lambda^{(\tau)} +\boldsymbol\Omega\boldsymbol\lambda^{(\tau)}+\mathbf
T\boldsymbol\mu^{(\tau)}
    +\partial_\tau m \mathbf W^\top \mathbf e_1+
    \partial_\tau M w_1\mathbf W^\top \mathbf e_1,
  \\
\nonumber \boldsymbol\mu^{(\tau)} =\nabla \psi_0^{(\tau)}+\boldsymbol\beta^{(\tau)}
      \\
\label{kira20c}
 D_\varphi\Phi_3[\partial_\tau \boldsymbol\varphi]+
   D_e\Phi_3[\partial_\tau e] +D_m\Phi_3[\partial_\tau m]+D_M\Phi_M[\partial_\tau M]\equiv\\\nonumber
    -\boldsymbol\partial \boldsymbol\chi^{(\tau)}+
 \mathbf S\boldsymbol\mu^{(\tau)}+\mathbf T^\top \boldsymbol\lambda^{(\tau)},
\\
\label{kira20d} D_\varphi\Phi_5[\partial_\tau \boldsymbol\varphi]+
   D_e\Phi_5[\partial_\tau e] +D_m\Phi_3[\partial_\tau m]+D_M\Phi_M[\partial_\tau M]\equiv\\\nonumber \overline{
    \boldsymbol\lambda^{(\tau)}\cdot \mathbf W^\top \mathbf e_1}\,\,+\,\,
    \overline{\boldsymbol\chi^{(\tau)}\cdot\nabla w_1}
\end{gather}
\end{subequations}
Since $D_\tau \Phi_1=0$ we have from equality \eqref{kira18}
\begin{equation}\label{kira22}
  \boldsymbol\partial \psi_0^{(\tau)}+\boldsymbol\omega^\top\cdot
    \boldsymbol\beta^{(\tau)}+\partial_\tau e+\partial_\tau m\, w_1+\partial_\tau M\, \frac{1}{2}w_1^2=0
\end{equation}
Next we have
\begin{equation}\label{kira23}
\nabla\,(\boldsymbol\partial \psi_0^{(\tau)}+\boldsymbol\omega^\top\cdot
    \boldsymbol\beta^{(\tau)})=\boldsymbol\partial\boldsymbol\mu^{(\tau)}.
\end{equation}
On the other hand, relations $p^{(\tau)}=\partial_\tau m+\alpha\partial_\tau M$ and
$w_1=\alpha +w_1^*$ yield
\begin{equation}\label{kira24}
\nabla (\partial_\tau m\, w_1+\partial_\tau M\, \frac{1}{2}w_1^2)=p^{(\tau)}\nabla w_1^*+\partial_\tau
M\frac{1}{2}\nabla(w_1^*)^2
\end{equation}
Taking the gradient from both the sides of \eqref{kira22} and using equalities
\eqref{kira23}-\eqref{kira24} we arrive at \eqref{kira14a}.
Since $\partial_\tau \Phi_2=0$ it follows from \eqref{kira18} and \eqref{kira20b} that
\begin{equation}\label{kira25}
    \mathbf J\boldsymbol\partial\boldsymbol\lambda^{(\tau)} +\boldsymbol\Omega\boldsymbol\lambda^{(\tau)}+\mathbf
T\boldsymbol\mu^{(\tau)}
    +\partial_\tau m \mathbf W^\top \mathbf e_1+
    \partial_\tau M w_1\mathbf W^\top \mathbf e_1=0.
\end{equation}
It is easily seen that
$$
\partial_\tau m \mathbf W^\top \mathbf e_1+
    \partial_\tau M w_1\mathbf W^\top \mathbf e_1=p^{(\tau)}\mathbf W^\top\mathbf e_1+w_1^*
\mathbf W^\top\mathbf e_1.
$$
Substituting this relation into \eqref{kira25} we arrive at \eqref{kira14b}. Next \eqref{kira18} and \eqref{kira20c} yield \eqref{kira14c} since $\partial_\tau \Phi_3=0$.
Next, we have $\partial_\tau \Phi_5=-\delta_{\tau\alpha}$. Substituting this equality and relation
\eqref{kira20d} into \eqref{kira18} we arrive at \eqref{kira14d}. It remains to note that equation \eqref{kira14e} obviously follows from the equality  $\overline{\partial_\tau \mathbf u}=0$
and relation \eqref{kira19a}.

\end{proof}

\begin{corollary}\label{kira26}
Under the assumptions of Theorem \ref{kira2}, we have
\begin{equation}\label{kira27}\begin{split}
  ( \boldsymbol\mu^{(\alpha)},\boldsymbol\lambda^{(\alpha)}, \boldsymbol\chi^{(\alpha)},p^{(\alpha)})
  =(\boldsymbol\mu^{(1)},\boldsymbol\lambda^{(1)},\boldsymbol\chi^{(1)},p^{(1)})
  +\partial_\alpha M(\boldsymbol\mu^{(2)},\boldsymbol\lambda^{(2)},\boldsymbol\chi^{(2)},p^{(2)})\\
 ( \boldsymbol\mu^{(k)},\boldsymbol\lambda^{(k)}, \boldsymbol\chi^{(k)},p^{(k)})
  =\partial_k M(\boldsymbol\mu^{(2)},\boldsymbol\lambda^{(2)},\boldsymbol\chi^{(2)},p^{(2)}),
 \end{split}\end{equation}
Where $(\boldsymbol\mu^{(i)},\boldsymbol\lambda^{(i)},\boldsymbol\chi^{(i)},q^{(i)})$,
$i=1,2$ are solutions to the equations
\begin{subequations}\label{kira28}
\begin{gather}\label{kira28a}\boldsymbol\partial\, \boldsymbol\mu^{(1)}\,=\,
-p^{(1)}\nabla w_1,\\
\label{kira28b}
\mathbf J\boldsymbol\partial\, \boldsymbol\lambda^{(1)}\,+\, \boldsymbol\Omega\, \boldsymbol\lambda^{(1)}\,+\,
\mathbf T\, \boldsymbol\mu^{(1)}+ p^{(1)}\mathbf W^\top\boldsymbol e_1=0,\\
\label{kira28c}
-\boldsymbol\partial\, \boldsymbol\lambda^{(1)}\,+\,\mathbf S\boldsymbol\mu^{(1)}\, +\, \mathbf T^\top
\boldsymbol\lambda^{(1)}=0
\end{gather}
\begin{gather}\label{kira28d}
\overline{\{W\boldsymbol\lambda^{(1)}\cdot\mathbf e_1\}}\,\,+\,\,\overline{\{\boldsymbol\chi^{(1)}\cdot \nabla w_1\}}
=1, \\\label{kira28e}
\overline{\{\mathbf V^{-\top}\boldsymbol\chi^{(1)}\}}\,=\, 0,
\end{gather}
\end{subequations}
and
\begin{subequations}\label{kira29}
\begin{gather}\label{kira29a}\boldsymbol\partial\, \boldsymbol\mu^{(2)}\,=\,
-p^{(2)}\nabla w_1 -\frac{1}{2} \, \nabla (w_1^*)^2,\\
\label{kira29b}
\mathbf J\boldsymbol\partial\, \boldsymbol\lambda^{(2)}\,+\, \boldsymbol\Omega\, \boldsymbol\lambda^{(2)}\,+\,
\mathbf T\, \boldsymbol\mu^{(2)}+p^{(2)}\mathbf W^\top\boldsymbol e_1\,=
\, -\,w_1^*\mathbf W^\top\boldsymbol e_1,\\
\label{kira29c}
-\boldsymbol\partial\, \boldsymbol\lambda^{(2)}\,+\,\mathbf S\boldsymbol\mu^{(2)}\, +\, \mathbf T^\top
\boldsymbol\lambda^{(2)}=0
\end{gather}
\begin{gather}\label{kira29d}
\overline{\{W\boldsymbol\lambda^{(2)}\cdot \mathbf e_1\}}\,\,+\,\,\overline{\{\boldsymbol\chi^{(2)}\cdot \nabla w_1\}}
=0, \\\label{kira28e}
\overline{\{\mathbf V^{-\top}\boldsymbol\chi^{(2)}\}}\,=\, 0
\end{gather}
\end{subequations}

\end{corollary}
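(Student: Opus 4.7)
The plan is to establish the two decompositions in \eqref{kira27} by exploiting the linearity of the Jacobi system \eqref{kira14} in the unknown vector $(\boldsymbol\mu^{(\tau)},\boldsymbol\lambda^{(\tau)},\boldsymbol\chi^{(\tau)},p^{(\tau)})$, combined with the uniqueness part of Corollary \ref{lita214}. The crucial observation is that in Lemma \ref{kira13} the parameter $\tau$ enters the right-hand sides of \eqref{kira14} only through the scalars $\partial_\tau M$ (inside the forcings of \eqref{kira14a} and \eqref{kira14b}) and the Kronecker symbol $\delta_{\alpha\tau}$ (in the normalization \eqref{kira14d}). Everything else — the coefficients $\mathbf S, \mathbf T, \mathbf V, \mathbf W, \boldsymbol\Omega$, and $w_1$ — is independent of $\tau$, being determined by the fixed solution branch $\mathfrak u(\mathfrak f)$ of Theorem \ref{ira18}.

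First, I would note that \eqref{kira14} is exactly the system \eqref{lita202} specialized at $\mathfrak u = \mathfrak u(\mathfrak f)$: the remainder operators $\Pi_i$ from \eqref{lita3} vanish because $\boldsymbol\Phi(\mathfrak f,\mathfrak u) = 0$, so formulae \eqref{asol20} reduce each $\Pi_i[\boldsymbol\Upsilon]$ to zero. Corollary \ref{lita214} therefore applies and furnishes unique analytic solutions $(\boldsymbol\mu^{(1)},\boldsymbol\lambda^{(1)},\boldsymbol\chi^{(1)},p^{(1)})$ and $(\boldsymbol\mu^{(2)},\boldsymbol\lambda^{(2)},\boldsymbol\chi^{(2)},p^{(2)})$ to the reduced systems \eqref{kira28} and \eqref{kira29}, which are precisely the two elementary cases $(\partial_\tau M,\delta_{\alpha\tau})=(0,1)$ and $(\partial_\tau M,\delta_{\alpha\tau})=(1,0)$ respectively.

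Next, I would verify the two decompositions by direct substitution using superposition. For $\tau = k$, the tuple $\partial_k M\,(\boldsymbol\mu^{(2)},\boldsymbol\lambda^{(2)},\boldsymbol\chi^{(2)},p^{(2)})$ satisfies \eqref{kira14a}-\eqref{kira14c} by linearly scaling \eqref{kira29a}-\eqref{kira29c} by $\partial_k M$, and it satisfies \eqref{kira14d}-\eqref{kira14e} with $\delta_{\alpha k}=0$ because both sides of \eqref{kira29d}-\eqref{kira28e} scale to zero. By the uniqueness half of Corollary \ref{lita214} applied to \eqref{kira14} with $\tau=k$, this tuple must coincide with $(\boldsymbol\mu^{(k)},\boldsymbol\lambda^{(k)},\boldsymbol\chi^{(k)},p^{(k)})$, giving the second line of \eqref{kira27}. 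For $\tau=\alpha$, the tuple $(\boldsymbol\mu^{(1)},\ldots)+\partial_\alpha M\,(\boldsymbol\mu^{(2)},\ldots)$ satisfies \eqref{kira14a}-\eqref{kira14c} with $\tau=\alpha$ by adding \eqref{kira28a}-\eqref{kira28c} to $\partial_\alpha M$ times \eqref{kira29a}-\eqref{kira29c}, and it satisfies the normalization \eqref{kira14d} since $1+\partial_\alpha M\cdot 0=\delta_{\alpha\alpha}$; uniqueness again forces equality with $(\boldsymbol\mu^{(\alpha)},\boldsymbol\lambda^{(\alpha)},\boldsymbol\chi^{(\alpha)},p^{(\alpha)})$, yielding the first line of \eqref{kira27}.

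The only real obstacle is ensuring that Corollary \ref{lita214} is applicable along the entire branch $(\alpha,k)\in\mathbb R\times[0,1]$, which requires the smallness condition $\|\boldsymbol\varphi(\alpha,k)-\boldsymbol\varphi_0(\alpha)\|_{\sigma,d}\le r_0$ of Theorem \ref{lita210}. This is immediate from the estimate $\|\mathfrak u(\mathfrak f)-\mathfrak v(\mathfrak f)\|_{\sigma/2,d}\le c|\varepsilon|$ of \eqref{ira19} in Theorem \ref{ira18} after shrinking $\varepsilon_0$ if necessary so that $c\varepsilon_0\le r_0$. Modulo this quantitative check, the proof is purely a bookkeeping argument exploiting linearity and uniqueness.
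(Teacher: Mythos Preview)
Your argument is correct and is precisely the natural one: the paper states Corollary~\ref{kira26} without any proof, evidently regarding it as an immediate consequence of Lemma~\ref{kira13} by linearity, and you have supplied exactly that linearity/superposition argument together with the uniqueness needed to close it.

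One minor bookkeeping point: Corollary~\ref{lita214} pertains to the full approximate system~\eqref{lita202}, which also carries the $\boldsymbol\Gamma$–equation~\eqref{lita202d} and the parameter $\delta M$, whereas~\eqref{kira14} (and~\eqref{kira28},~\eqref{kira29}) involve only the closed subsystem in $(\boldsymbol\mu,\boldsymbol\lambda,\boldsymbol\chi,p)$. This causes no real difficulty—the $(\boldsymbol\mu,\boldsymbol\lambda,\boldsymbol\chi,p)$ block decouples from $\boldsymbol\Gamma$ in~\eqref{lita202}, so uniqueness for the subsystem follows—but the cleanest citation is either Proposition~\ref{nisa24} in the appendix (which treats exactly this truncated subsystem) or Theorem~\ref{figa5}, which the paper proves immediately afterwards and which is tailored to the shape of~\eqref{kira28} and~\eqref{kira29}. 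Since the paper in fact \emph{defers} the existence and uniqueness of $(\boldsymbol\mu^{(i)},\boldsymbol\lambda^{(i)},\boldsymbol\chi^{(i)},p^{(i)})$ to Theorems~\ref{figa8} and~\ref{figa18} in the next section, your forward use of a uniqueness result is in keeping with the paper's own logical order.
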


In the next section we investigate problems \eqref{kira28} and \eqref{kira29} in many details.

\section{Linear problem}\label{figa}
In this section we prove the solvability of problems \eqref{kira28} and \eqref{kira29} and
establish estimates for their solutions in the spaces of analytic functions and the Sobolev spaces.
First we recall the basic facts from the theory of Sobolev spaces on torus.
\subsection{Preliminaries. Sobolev spaces}
For every $s\in \mathbb R$, we denote by $H_s$ the Hilbert space which consists of all distributions $u$ on the tori $\mathbb T^{n-1}$ such that
\begin{equation}\label{figa1}
    |u|_s=\Big(\sum_{m\in \mathbb Z^{n-1}}(1+|m|^2)^s|\hat{u}(m)|^2\Big)^{1/2},
\end{equation}
where $\hat{u}$ is the Fourier transform of $u$. If $u$ is an integrable function, then
$$
\hat{u}(m)=(2\pi)^{-(n-1)/2}\int_{\mathbb T^{n-1}}e^{-i m\cdot \boldsymbol \xi} \,u(\boldsymbol\xi)\, d
\boldsymbol{\xi}.
$$
It is clear that $H_0=L^2(\mathbb T^{n-1})$ and
$$
 |u|_s=\|(-\Delta+1)^{s/2} u\|_{L^2(\mathbb T^{n-1})}.
$$
For every $s'<s''$ the embedding $H_{s''}\hookrightarrow H_{s'}$ is compact. For every integer $l\geq 0$
the embedding  $H_{n-1+l}\hookrightarrow C^l(\mathbb T^{n-1})$ is compact. Next, we have
\begin{equation}\label{figa2}
    |uv|_s\leq c(s)|u|_s\|v\|_s \text{~~for~~}s\geq n-1.
\end{equation}
For all $-\infty<r\leq s<\infty$ we have the interpolation inequality
\begin{equation}\label{figa3}
    \|u\|_{(s+r)/2}\leq c(s,r)\|u\|_{s}^{1/2}\|u\|_r^{1/2}.
\end{equation}
Recall the estimate $|\hat{ u}(m)|\leq \|u\|_{\sigma, 0} \exp(-\sigma|m|)$. It follows that
\begin{equation}\label{figa4}
    |u|_s\leq c(s, \sigma)\|u\|_{\sigma, 0}\text{~~for all ~~} s\in \mathbb R\text{~~and~~}
\sigma>0.
\end{equation}
\subsection{Basic linear problem.} In this section we prove the solvability of  linear problems
\eqref{kira28} and \eqref{kira29}. We investigate the qualitative properties of their solutions.
First we consider the general boundary value problem which includes problems
\eqref{kira28} and \eqref{kira29} as  particular cases.
\begin{subequations}\label{figa4}
\begin{gather}\label{figa4a}\boldsymbol\partial\, \boldsymbol\mu\,=\,
-p\nabla w_1 -  \nabla g_1,\\
\label{figa4b}
\mathbf J\boldsymbol\partial\, \boldsymbol\lambda\,+\, \boldsymbol\Omega\, \boldsymbol\lambda\,+\,
\mathbf T\, \boldsymbol\mu+p\mathbf W^\top \mathbf e_1=\mathbf g_2\\
\label{figa4c}
-\boldsymbol\partial\, \boldsymbol\lambda\,+\,\mathbf S\boldsymbol\mu\, +\, \mathbf T^\top
\boldsymbol\lambda=0
\end{gather}
\begin{gather}\label{figa4d}
\overline{\{W\boldsymbol\lambda\}\cdot \mathbf e_1}\,\,+\,\,\overline{\{\boldsymbol\chi\cdot \nabla w_1\}}
=\gamma, \\\label{figa4e}
\overline{\{\mathbf V^{-\top}\boldsymbol\chi\}}\,=\, 0.
\end{gather}
\end{subequations}
Assume that $w_1$ and $\mathbf W$ meet all requirements of Theorem \ref{kira2}, i.e.
\begin{equation}\label{kira5extra}\begin{split}
 \|\mathbf V^{-1}-\mathbf I\|_{\sigma/2,0}+\|(w_1-\alpha)\|_{\sigma/2,d}+ \|\mathbf W-\mathbf I
\|_{\sigma/2,d}\leq c|\varepsilon|.
\end{split}\end{equation}
We also assume that the matrix -valued functions $\mathbf S$ and $\mathbf T$ meet
all requirements of Lemma \ref{kira13}, i.e.
\begin{equation}\label{kira16extra}
    \|\mathbf S-\mathbf S_0\|_{\sigma/2, d-1}+
    \|\mathbf T-\mathbf T_0\|_{\sigma/2, d-1}
\leq c|\varepsilon|,
\end{equation}
where  the constant
matrices $\mathbf S_0$, $\mathbf T_0$ are given by
\begin{equation}\label{kira17extra}
\mathbf S_0=\frac{\partial^2 H_0}{\partial\mathbf  y^2}\big (0,0),
\quad \mathbf T_0=\frac{\partial^2 H_0}{\partial\mathbf
z\partial\mathbf  y}\big (0,0)
\end{equation}
Recall that  $d\geq 2$ is a fixed number.
The following theorem is the main result of this section.
\begin{theorem}\label{figa5} Let $s\in \mathbb R^1$ be an arbitrary number and $\sigma\in (1/2,1]$.
Then
  there is $\varepsilon_0> 0$ independent on $\alpha$ and $k$  with the following properties. For every
   $|\varepsilon|\leq \varepsilon_0$, $g_1,\mathbf g_2\in \mathcal A_{\sigma/2,0}$
   and $\gamma\in \mathbb R^1$, problem \eqref{figa4} has a unique solution
$(\boldsymbol\mu, \boldsymbol\lambda, \boldsymbol \chi)\in \mathcal A_{\sigma/4,0}$, $p\in \mathbb R^1$.
This solution admits the estimates
\begin{equation}\label{figa6}
    \|\boldsymbol\mu\|_{\sigma/4,0}+\|\boldsymbol\lambda\|_{\sigma/4,0}+\|\boldsymbol\chi\|_{\sigma/4,0}
+|p|\leq c\big(\|g_1\|_{\sigma/2,0}+\|\boldsymbol g_2\|_{\sigma/2,0}
+|\gamma|\big),
\end{equation}
\begin{equation}\label{figa7}
    |\boldsymbol\mu|_{s}+|\boldsymbol\lambda|_{s}+|\boldsymbol\chi|_{s}
+|p|\leq c\big(|g_1|_{s+4n+6}+|\boldsymbol g_2|_{s+4n+6}
+|\gamma|\big).
\end{equation}
\begin{equation}\label{figa7overline}
    |\overline{\boldsymbol\mu}|\leq c |\overline{\mathbf g_2}|+\varepsilon_0c\big(|g_1|_{s+4n+6}+|\boldsymbol g_2|_{s+4n+6}
+|\gamma|\big).
\end{equation}

\end{theorem}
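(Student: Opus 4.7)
The plan is to invert the system \eqref{figa4} by perturbation around the fully explicit case $\varepsilon=0$, where $\mathbf V=\mathbf W=\mathbf I$, $w_1=\alpha$, $\mathbf S=\mathbf S_0$, $\mathbf T=\mathbf T_0$, and to treat the $\varepsilon$-dependent deviations of the coefficients, all of size $O(\varepsilon)$ by \eqref{kira5extra}--\eqref{kira16extra}, as a multiplicative (non-differentiating) perturbation. The natural decomposition is to split each periodic unknown into mean and zero-mean parts $f=\overline f+f^*$, reducing the problem to a finite-dimensional algebraic system for the means together with three small-divisor linear PDEs for the oscillatory parts.

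For the mean block, taking the mean of \eqref{figa4a}, of \eqref{figa4b} and of $\boldsymbol\partial\boldsymbol\chi=\mathbf S\boldsymbol\mu+\mathbf T^\top\boldsymbol\lambda$ (which follows from \eqref{figa4c}) kills every $\boldsymbol\partial(\cdot)$ and leaves an algebraic system. At $\varepsilon=0$ the matrix $\mathbf T_0$ has a zero first row, because $H_0$ is independent of $z_1$, and a second row $\mathbf t_0^\top$; thus the mean of the $\boldsymbol\chi$-equation yields $\overline{\boldsymbol\mu}=-\overline{\lambda}_2\mathbf S_0^{-1}\mathbf t_0$, and substituting into the mean of \eqref{figa4b} gives $(1-\mathbf t_0^\top\mathbf S_0^{-1}\mathbf t_0)\overline{\lambda}_2=\overline{g_{2,2}}$ and $-k\overline{\lambda}_1+p=\overline{g_{2,1}}$. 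The matrix--determinant lemma identifies the scalar $1-\mathbf t_0^\top\mathbf S_0^{-1}\mathbf t_0$ with $\det\mathbf K_0/\det\mathbf S_0$, nonzero by hypothesis \eqref{ee1.06}; the orthogonality \eqref{figa4d} fixes $\overline{\lambda}_1=\gamma$ and hence $p=\overline{g_{2,1}}+k\gamma$, while \eqref{figa4e} fixes $\overline{\boldsymbol\chi}=0$. Observe that $\overline{\boldsymbol\mu}$ is driven at leading order only by $\overline{g_{2,2}}$, which accounts for the sharper estimate \eqref{figa7overline}; all dependencies on $\gamma$ and on $g_1$ appear in $\overline{\boldsymbol\mu}$ only through $O(\varepsilon)$ coefficient corrections.

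For the oscillatory block, the Diophantine condition \eqref{diophantine} is exploited mode by mode. The symbol of $\boldsymbol\partial$ on the $m$th Fourier mode is $i(m\cdot\boldsymbol\omega)$, whose inverse is bounded by $c_0|m|^n$; the symbol of $\mathbf J\boldsymbol\partial+\boldsymbol\Omega$ is $i(m\cdot\boldsymbol\omega)\mathbf J+\boldsymbol\Omega$, whose determinant equals $-k-(m\cdot\boldsymbol\omega)^2$ and is bounded below in modulus by $c_0^{-2}|m|^{-2n}$ uniformly in $k\in[0,1]$. Accordingly, \eqref{figa4a} inverts for $\boldsymbol\mu^*$ with an $n$-derivative loss, \eqref{figa4b} for $\boldsymbol\lambda^*$ with a $2n$-derivative loss, and \eqref{figa4c} for $\boldsymbol\chi^*$ with one more $n$-derivative loss (its solvability condition is precisely the mean equation from step two). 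Composing these bounded inverses and tallying the losses in the Sobolev or analytic scale yields the stated exponent $4n+6$, the extra summands coming from polynomial factors in the resolvent estimates and from the loss incurred in passing between strips.

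The main obstacle lies in combining these small-divisor losses with the $\varepsilon$-perturbation of the coefficients. Because the perturbation $\mathcal L_1=\mathcal L-\mathcal L_0$ is \emph{multiplicative}, of size $O(\varepsilon)$ on each $\mathcal A_{\sigma',d}$, and does not differentiate the unknowns, the Neumann expansion $\mathcal L^{-1}=\mathcal L_0^{-1}\sum_{j\ge 0}(-\mathcal L_1\mathcal L_0^{-1})^j$ can be carried out on a geometric sequence of shrinking strips $\sigma/2=\sigma^{(0)}>\sigma^{(1)}>\cdots>\sigma/4$; the product of the resulting loss factors is bounded by $c(\sigma/2-\sigma/4)^{-4n-6}$ times a convergent geometric sum as soon as $\varepsilon_0$ is small enough. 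In the Sobolev scale, the interpolation inequality \eqref{figa3} plays the analogous role in closing the estimate with loss $s+4n+6$. Uniqueness follows from invertibility of $\mathcal L_0$ on the zero-mean subspace combined with the nondegeneracy of the mean system, and the sharper bound \eqref{figa7overline} is read off by tracking $\overline{\boldsymbol\mu}$ through the iteration: the leading term depends only on $\overline{\mathbf g_2}$, while all other inputs appear only multiplied by at least one factor of $\varepsilon$.
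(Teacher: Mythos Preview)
Your overall architecture---split each unknown into mean plus zero-mean, solve a finite algebraic system for the means, invert $\boldsymbol\partial$ and $\mathbf J\boldsymbol\partial+\boldsymbol\Omega$ Fourier-mode by Fourier-mode on the oscillatory parts, and treat the $O(\varepsilon)$ variable-coefficient pieces as a perturbation---is exactly the scheme the paper follows. Two points, one minor and one substantive, separate your sketch from a working proof.

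\textbf{The mean block.} You solve the algebraic system by inverting $\mathbf S_0$ and then using the matrix--determinant identity to reduce to $\det\mathbf K_0/\det\mathbf S_0$. But the standing hypothesis is only $\det\mathbf K_0\neq0$; nothing prevents $\mathbf S_0$ from being singular (e.g.\ $\mathbf S_0=0$, $\mathbf t_0\neq0$ gives $\mathbf K_0=-\mathbf t_0\otimes\mathbf t_0$ negative semidefinite, and in higher dimension one can make $\mathbf K_0$ definite with $\mathbf S_0$ singular). The paper eliminates in the other order: from the second row of the averaged $\boldsymbol\lambda$-equation one reads $\overline{\lambda}_2=-\mathbf t_0\cdot\overline{\boldsymbol\mu}+\cdots$, and substituting into the averaged $\boldsymbol\chi$-equation yields $\mathbf K_0\overline{\boldsymbol\mu}=\cdots$, requiring only $\mathbf K_0^{-1}$. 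This is an easy reordering, but as written your argument needs an unjustified hypothesis.

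\textbf{The perturbation step.} The Neumann expansion on a geometrically shrinking family of strips does not converge. Each application of $\mathcal L_0^{-1}$ costs a factor $(\sigma^{(j-1)}-\sigma^{(j)})^{-(4n+6)}$, and with $\sum_j(\sigma^{(j-1)}-\sigma^{(j)})=\sigma/4$ fixed the product $\prod_j(\sigma^{(j-1)}-\sigma^{(j)})^{-(4n+6)}$ diverges no matter how small $\varepsilon$ is; the factor $\varepsilon^j$ cannot beat a super-exponentially growing product. This is the standard obstruction to naive iteration with small divisors. What makes the argument close in the paper is a structural observation you do not mention: the only \emph{feedback} in the system (from $\boldsymbol\lambda^*,\boldsymbol\chi^*$ back into the equations for $\boldsymbol\mu,\boldsymbol\lambda$) passes through the scalars $p,\overline{\boldsymbol\mu},\overline{\boldsymbol\lambda}$, which are integral functionals of the oscillatory parts and are then multiplied by fixed analytic coefficients. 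Thus the feedback operator maps continuous periodic functions into the full analyticity strip $\mathcal A_{\sigma,0}$, and the contraction can be run on a single space $\mathcal A_{0,0}$ with no strip shrinking at all; analyticity is recovered afterwards in one step. The forward variable-coefficient terms $(\mathbf T-\mathbf T_0)\boldsymbol\mu$, $(\mathbf S-\mathbf S_0)\boldsymbol\mu$, $(\mathbf T^\top-\mathbf T_0^\top)\boldsymbol\lambda$ cause no trouble because the system is triangular in that direction. The Sobolev estimate \eqref{figa7} is then obtained by a single closure (not interpolation): the mean quantities pick up a factor $\varepsilon_0$ from the oscillatory parts, and the oscillatory parts pick up the means with no loss, so one absorbs and is done.
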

\begin{proof} The proof is in Appendix \ref{proothfiga5}.\end{proof}

We employ Theorem \ref{figa5} in order to investigate the structure of solutions to problems
\eqref{kira28} and \eqref{kira29} and to establish robust estimates of these solutions. We start
with detailed analysis of problem \eqref{kira29}. The corresponding results is given by the following theorem
 which is the second main statement of this section.

\begin{theorem}\label{figa8} Under the assumptions of of Theorem \ref{figa5},
there exists $\varepsilon_0>0$ with the following property. For every $|\varepsilon|\leq\varepsilon_0$,
problem \eqref{kira29} has a unique analytic solution $(\boldsymbol{\mu}^{(2)},
\boldsymbol{\lambda}^{(2)}, \boldsymbol{\chi}^{(2)})\in \mathcal A_{\sigma/4,0}$, $p^{(2)}\in \mathbb C$.
This solution admits the estimate
\begin{equation}\label{figa9}
  \|\boldsymbol{\mu}^{(2)}\|_{\sigma/4,0}+  \|\boldsymbol{\lambda}^{(2)}\|_{\sigma/4,0}+
  \|\boldsymbol{\chi}^{(2)}\|_{\sigma/4,0}+|p^{(2)} |\leq c \|w_1^*\|_{\sigma/2,0}\leq c|\varepsilon|,
\end{equation}
\begin{equation}\label{figa10}
 |\boldsymbol{\mu}^{(2)}|_0\,\leq\,  c\varepsilon_0\,\,|w_1^*|_{-1}, \quad
     |{\lambda}^{(2)}_2|_0+ |k|\,\,|{\lambda}^{(2)}_1|_0\geq c^{-1}\,|w_1^*|_{-1}.
\end{equation}
Here the strictly positive constant $c$ is independent of $\alpha$, $k$, and $\varepsilon$.
\end{theorem}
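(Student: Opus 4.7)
The plan is to realize problem \eqref{kira29} as a special instance of the general linear problem \eqref{figa4} and leverage the estimates of Theorem \ref{figa5}, then refine them by exploiting the specific structure of the source. Concretely, setting $g_1 = \tfrac{1}{2}(w_1^*)^2$, $\mathbf g_2 = -w_1^*\mathbf W^\top \mathbf e_1$, and $\gamma = 0$ brings \eqref{kira29} exactly into the form \eqref{figa4}. The coefficients $w_1, \mathbf W, \mathbf V, \mathbf S, \mathbf T$ associated with the modified problem satisfy \eqref{kira5extra}-\eqref{kira16extra} by Theorem \ref{kira2} and Lemma \ref{kira13}, so Theorem \ref{figa5} applies for $|\varepsilon| \leq \varepsilon_0$ and delivers existence, uniqueness, and the estimates \eqref{figa6}-\eqref{figa7overline} at once. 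The analytic bound \eqref{figa9} then follows by inserting the Banach-algebra estimates $\|g_1\|_{\sigma/2, 0} \leq c\|w_1^*\|_{\sigma/2, 0}^2$ and $\|\mathbf g_2\|_{\sigma/2, 0} \leq c\|w_1^*\|_{\sigma/2, 0}$ into \eqref{figa6} and using $\|w_1^*\|_{\sigma/2, 0} \leq c|\varepsilon|$.

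The upper bound $|\boldsymbol\mu^{(2)}|_0 \leq c\varepsilon_0 |w_1^*|_{-1}$ of \eqref{figa10} will be the main obstacle. Rearranging the first equation of \eqref{kira29} as $\boldsymbol\partial \boldsymbol\mu^{(2)} = -\nabla F$ with $F = (p^{(2)} + \tfrac{1}{2}w_1^*)\,w_1^*$, one sees that the source for $\boldsymbol\mu^{(2)}$ is a product whose prefactor has $L^\infty$-norm of order $\varepsilon$. Similarly, the decomposition $\mathbf g_2 = -w_1^*\mathbf e_1 - w_1^*(\mathbf W^\top - \mathbf I)\mathbf e_1$ isolates the principal piece $-w_1^*\mathbf e_1$, which drives $\boldsymbol\lambda^{(2)}$ directly but only feeds into the $\boldsymbol\mu^{(2)}$ equation through the back-coupling $\mathbf T\boldsymbol\mu^{(2)}$ in the second equation, itself contributing an $\varepsilon$-small multiplier. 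Combining these structural observations with \eqref{figa7} and \eqref{figa7overline}, factoring one $w_1^*$ into the $\varepsilon$-small prefactor and the other into an $|w_1^*|_{-1}$-duality pairing, produces the refined $L^2$ estimate. A direct application of \eqref{figa7} yields only $|\boldsymbol\mu^{(2)}|_0 \leq c|w_1^*|_{4n+6}$, so the crux of the argument is extracting the $H_{-1}$ scaling from the product structure of the source.

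For the lower bound $|\lambda^{(2)}_2|_0 + |k|\,|\lambda^{(2)}_1|_0 \geq c^{-1}|w_1^*|_{-1}$, I would use a duality argument applied to the first component of the second equation of \eqref{kira29},
\[
\boldsymbol\partial \lambda^{(2)}_2 - k\lambda^{(2)}_1 + (\mathbf T\boldsymbol\mu^{(2)})_1 + p^{(2)} W_{11} = -w_1^* W_{11}.
\]
Pairing both sides against an arbitrary zero-mean test function $\phi$ and integrating by parts gives
\[
\bigl|\langle w_1^* W_{11}, \phi\rangle\bigr| \leq c\bigl(|\lambda^{(2)}_2|_0 + |k|\,|\lambda^{(2)}_1|_0 + |\boldsymbol\mu^{(2)}|_0 + \varepsilon |p^{(2)}|\bigr)|\phi|_1,
\]
where the $\varepsilon|p^{(2)}|$ factor exploits $|W_{11} - 1|_0 = O(\varepsilon)$ together with $\overline{\phi} = 0$. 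Choosing $\phi$ near-extremal for $|w_1^*|_{-1}$ with $|\phi|_1 = 1$, and using $W_{11} = 1 + O(\varepsilon)$, gives $|\langle w_1^* W_{11}, \phi\rangle| \geq (1 - c\varepsilon)|w_1^*|_{-1}$. Absorbing $|\boldsymbol\mu^{(2)}|_0$ via the upper bound from the previous paragraph and shrinking $\varepsilon_0$ if necessary closes the argument.
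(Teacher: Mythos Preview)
Your proposal is correct and follows essentially the same route as the paper: realize \eqref{kira29} as an instance of \eqref{figa4}, apply Theorem~\ref{figa5} for existence and \eqref{figa9}, use the product/interpolation structure of $(w_1^*)^2$ together with \eqref{figa7}--\eqref{figa7overline} at negative Sobolev index to obtain $|\boldsymbol\mu^{(2)}|_0 \le c\varepsilon_0|w_1^*|_{-1}$, and read off the lower bound from the first component of \eqref{kira29b} by duality. One point to make explicit: your absorption step in the lower bound needs $|p^{(2)}| \le c|w_1^*|_{-1}$ (not merely $|p^{(2)}|\le c\varepsilon$, which would fail when $|w_1^*|_{-1}\ll\varepsilon$); this does drop out of \eqref{figa7} with $s=-4n-7$ once you verify $|g_1|_{-1}+|\mathbf g_2|_{-1}\le c|w_1^*|_{-1}$, so record it alongside the $|\boldsymbol\mu^{(2)}|_0$ bound.
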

\begin{proof}
Notice that problem \eqref{kira29} is a particular case of problem \eqref{figa4} with the right hand sides
\begin{equation}\label{figa11}
    g_1*=(w_1^*)^{2}, \quad  \mathbf g_2=- w_1^* \mathbf W\mathbf e_1, \quad \gamma=0.
\end{equation}
It follows from inequalities \eqref{kira5extra} and the identity $w_1^*=w_1-\alpha$ that
\begin{equation*}
    \|g_1^*\|_{\sigma/2,0}\leq c \|w_1^*\|_{\sigma/2,0}^2, \quad
    \|\mathbf g_2\|_{\sigma/2,0}\leq c \|w_1^*\|_{\sigma/2,0} .
\end{equation*}
Applying Theorem \ref{figa5} we conclude that problem \eqref{kira29} has a unique solution,
satisfying inequality \eqref{figa9}. It remains to prove estimates \eqref{figa10}
Choose an arbitrary $t>n$. Since the space $H_{t+1}$ is a Banach algebra, we have
$$
|\nabla(w_1^*)^2|_{t}\leq |(w_1^*)^2|_{t+1}\leq c |w_1^*|_{t+1}^2.
$$
On the other hand, the interpolation inequality implies
\begin{equation*}
    |w_1^*|_{t+1}\leq c|w_1^*|_{-1}^{1/2}\, \, |w_1^*|_{2t+3}^{1/2}\leq c
    |w_1^*|_{-1}^{1/2}\, \, \|w_1^*\|_{\sigma/2,0}^{1/2}\leq c \varepsilon_0^{1/2}\, |w_1^*|_{-1}^{1/2}.
\end{equation*}
We thus get
\begin{equation}\label{figa12}
    |\nabla(w_1^*)^2|_{t}\leq c |\varepsilon|\, |w_1^*|_{-1}.
\end{equation}
Obviously, this inequality holds true for all $t\in \mathbb R$. Next we have
\begin{equation}\label{figa13}
    |w_1^*\mathbf W\mathbf e_1|_{t}\leq c \|\mathbf W\|_{\sigma/2,0}\, |w_1^*|_{t} \leq
    c|w_1^*|_{t}.
\end{equation}
Now set $s=-1-4n-6$ and $t=-1$. Apply Theorem \ref{figa5} to problem \eqref{kira29}. It follows
from the estimate \eqref{figa7} in this theorem and relations \eqref{figa10}-\eqref{figa13}
that
\begin{equation}\label{figa14}
  |\boldsymbol{\mu}^{(2)}|_{-4n-7}+  |\boldsymbol{\lambda}^{(2)}|_{-4n-7}+
  |\boldsymbol{\chi}^{(2)}|_{-4n-7}+|p^{(2)}| \leq c|\mathbf w_1^*|_{-1}.
\end{equation}
Since $s+4n+6=-1$, estimate \eqref{figa7overline} implies
$$
|\overline{\boldsymbol\mu^{(2)}}|\leq |\overline{\mathbf g_2}|+c \varepsilon_0 |\mathbf w_1^*|_{-1}.
$$
We have
\begin{equation*}
 |\overline{\mathbf g_2}|=|\overline{w_1^* \mathbf W\mathbf e_1}|=
|\overline{w_1^* (\mathbf W-\mathbf I)\mathbf e_1}| \leq |w_1^*|_{-1}\, |\mathbf W-\mathbf I|_1
\leq c\varepsilon_0  |w_1^*|_{-1}.
\end{equation*}
Combining the obtained results we arrive at the estimate
\begin{equation}\label{figa15}
 |\overline{\boldsymbol\mu^{(2)}}|\leq c \varepsilon_0 |\mathbf w_1^*|_{-1}
\end{equation}
Let us estimate ${\boldsymbol \mu^{(2)}}^*$.  The first equation \eqref{kira29a} in
system \eqref{kira29} reads
$$
\boldsymbol\partial\, {\boldsymbol\mu^{(2)}}^*\,=\,
-p^{(2)}\nabla w_1 -\frac{1}{2} \, \nabla (w_1^*)^2
$$
Applying Lemma \ref{nisa6} to this equation we obtain
\begin{equation*}
    | {\boldsymbol\mu^{(2)}}^*|_0\leq c |p^{(2)}|\, |w_1^*|_{4n+7}+ c|\nabla (w_1^*)^2|_{4n+6}
\end{equation*}
It follows from inequality \eqref{figa12}  with $t=4n+6$ that
$$
 |w_1^*|_{4n+7}\leq c\|w_1^*\|_{\sigma/2,0}\leq c \varepsilon_0, \quad |\nabla (w_1^*)^2|_{4n+6}
\leq c\varepsilon_0 |w_1^*|_{-1}.
$$
It follows that
$$
 | {\boldsymbol\mu^{(2)}}^*|_0\leq c\varepsilon_0 |w_1^*|_{-1}.
$$
Combining this result with \eqref{figa15} we obtain the first estimate in \eqref{figa10}.
Let us prove the second. Notice that $|\boldsymbol\partial\lambda_2^{(2)}|_{-1}\leq c
|\boldsymbol\lambda_2^{(2)}|_{0}$. It follows that
\begin{gather}\nonumber
   |{\lambda}^{(2)}_2|_0+ |k|\,\,|{\lambda}^{(2)}_1|_0\geq
   c^{-1}(|\boldsymbol\partial\lambda_2^{(2)}|_{-1} + |k|\,\,|{\lambda}^{(2)}_1|_{-1})\geq\\\label{figa16}
   c^{-1}(|\boldsymbol\partial\lambda_2^{(2)}-k\,{\lambda}^{(2)}_1|_{-1})\geq
    c^{-1}(|
    \boldsymbol\partial{\lambda_2^{(2)}}^*-k\,{{\lambda}^{(2)}_1}^*
|_{-1}).
\end{gather}
On the other hand, the second  equation \eqref{kira29b} in
system \eqref{kira29} implies
\begin{equation*}
    \boldsymbol\partial{\lambda_2^{(2)}}^*-k\,{{\lambda}^{(2)}_1}^*=
    -w_1^* -(\mathbf T\boldsymbol\mu^{(2)})^*
    -p^{(2)}(\mathbf W^\top-\mathbf I)^*\mathbf e_1-(w_1^*(\mathbf W^\top-\mathbf I)\mathbf e_1)^*.
\end{equation*}
We have
\begin{gather*}
|(\mathbf W^\top-\mathbf I)^*|_{-1}\leq \|(\mathbf W^\top-\mathbf I)^*\|_{\sigma/2,0}
\leq c\varepsilon_0, \\ |w_1^*(\mathbf W^\top-\mathbf I)\mathbf e_1|_{-1}\leq
c |w_1^*|_{-1}\|(\mathbf W^\top-\mathbf I)^*\|_{\sigma/2,0}\leq  c\varepsilon_0 |w_1^*|_{-1}.
\end{gather*}
Next, the first estimate in \eqref{figa10} implies
$$
|\mathbf T\boldsymbol{\mu}^{(2)}|_0\,\leq\,  c\varepsilon_0\,\,|w_1^*|_{-1}.
$$
Combining obtained inequalities we arrive at the estimate
$$
|w_1^* +(\mathbf T\boldsymbol\mu^{(2)})^*
    +p^{(2)}(\mathbf W^\top-\mathbf I)^*\mathbf e_1+(w_1^*(\mathbf W^\top-\mathbf I)\mathbf e_1)^*|_{-1}
    \geq |w_1^*|_{-1}-c\varepsilon_0 |w_1^*|_{-1},
 $$
 which yields
$$
 | \boldsymbol\partial{\lambda_2^{(2)}}^*-k\,{{\lambda}^{(2)}_1}^*|_{-1}
 \geq |w_1^*|_{-1}-c\varepsilon_0 |w_1^*|_{-1}.
$$
This estimate along with \eqref{figa16} implies the estimate
$$
 |{\lambda}^{(2)}_2|_0+ |k|\,\,|{\lambda}^{(2)}_1|_0\geq c^{-1}(|w_1^*|_{-1}-c\varepsilon_0 |w_1^*|_{-1}).
$$
Choosing $\varepsilon_0$ sufficiently small we obtain the second estimate in \eqref{figa10}
and the theorem follows.
\end{proof}

Now turn to problem \eqref{kira28}.  Our goal is to prove the existence and uniqueness of solutions
to this problem and to obtain the asymptotic expansion of its solutions near $\varepsilon=0$.
In order to formulate the results we introduce the auxiliary denotations. Recall  denotations
\eqref{kira15}-\eqref{kira17} for the matrix $\mathbf T$. Denote by $\mathbf t_i^\top$, $i=1,2,$
the rows of  $\mathbf T$. They can be regarded al analytic periodic  functions
$\mathbf t_i:\mathbb T^{n-1}\to \mathbb C^{n-1}$.  Notice that for $\varepsilon=0$,  we have
$\mathbf t_1=0$ and $\mathbf t_2=\mathbf t_0$, where $\mathbf t_0$ is the only nonzero row of the matrix
$\mathbf T_0$. It follows from this and estimate \eqref{kira16} that
\begin{equation}\label{figa17}
    \|\mathbf t_1\|_{\sigma/2, d-1}+ \|\mathbf t_2-\mathbf t_0\|_{\sigma/2, d-1}\leq c|\varepsilon|.
\end{equation}
Recall the denotation
$$
\overline{\mathbf t_1}=\frac{1}{(2\pi)^{n-1}}\int_{\mathbb T^{n-1}} \mathbf t_1\, d\boldsymbol\xi.
$$
Introduce the constant vector
\begin{equation}\label{figa25}
    \boldsymbol\mu_0\,=\, \mathbf K_0^{-1}\, \overline{\mathbf t_1}.
\end{equation}
Our considerations are based on the following algebraic  lemma.

\begin{lemma}\label{figa26}
Let $k=0$. There is $\varepsilon_0>0$ with the following properties. For every $|\varepsilon|\leq \varepsilon_0$,
the system of linear equations
\begin{equation}\label{figa27}\begin{split}
 \boldsymbol\Omega  {\boldsymbol\lambda}_1+
 \overline{\mathbf T}\,\,\boldsymbol \mu_1+p_1\,\,\mathbf e_1=0,\\
\overline{\mathbf T}^\top{\boldsymbol\lambda}_1+
\overline{\mathbf S}\,\,\boldsymbol\mu_1
=0, \\
 \boldsymbol{\lambda}_1\mathbf e_1=1.
\end{split}\end{equation}
 has a unique solution $\boldsymbol\lambda_1\in \mathbb C^2$, $\boldsymbol \mu_1\in \mathbb C^{n-1}$, $p_1
 \in \mathbb C$ such that
\begin{equation}\label{figa28x}
 \boldsymbol\mu_1=\boldsymbol\mu_0+\boldsymbol\mu_2, \quad \boldsymbol\lambda_1=
 \mathbf e_1-(\mathbf t_0\cdot \boldsymbol\mu_0)\mathbf e_2+\boldsymbol\lambda_2
\end{equation}
\begin{equation}\label{figa28}\begin{split}
    |\boldsymbol\mu_2|\leq c |\varepsilon|\,\, |\boldsymbol \mu_0|,\quad
    |p_1|\leq  c |\varepsilon|\,\, |\boldsymbol \mu_0|,\quad
|\boldsymbol\lambda_2|\leq  c |\varepsilon|\,\, |\boldsymbol \mu_0|.
\end{split}\end{equation}

\end{lemma}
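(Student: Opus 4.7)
The plan is to reduce the $n+2$ scalar equations in \eqref{figa27} to a single $(n-1)$-dimensional linear system for $\boldsymbol\mu_1$, for which the coefficient matrix is a small perturbation of $\mathbf K_0$, and then back-substitute.

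First I would use the normalization $\boldsymbol\lambda_1\cdot\mathbf e_1=1$ (i.e.\ $\lambda_{1,1}=1$) together with the two components of the first equation of \eqref{figa27} at $k=0$, where $\boldsymbol\Omega=\mathrm{diag}(0,1)$. The first component yields $p_1=-\overline{\mathbf t_1}\cdot\boldsymbol\mu_1$, while the second component yields $\lambda_{1,2}=-\overline{\mathbf t_2}\cdot\boldsymbol\mu_1$. Substituting both into the second equation $\overline{\mathbf T}^\top\boldsymbol\lambda_1+\overline{\mathbf S}\boldsymbol\mu_1=0$ and using $\overline{\mathbf T}^\top\boldsymbol\lambda_1=\lambda_{1,1}\overline{\mathbf t_1}+\lambda_{1,2}\overline{\mathbf t_2}$ produces the reduced equation
\begin{equation*}
\bigl[\,\overline{\mathbf S}-\overline{\mathbf t_2}\otimes\overline{\mathbf t_2}\,\bigr]\boldsymbol\mu_1\;=\;-\overline{\mathbf t_1}.
\end{equation*}

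Next I would show this reduced operator is invertible for small $\varepsilon$. By the averaging bound \eqref{kira16} we have $\|\overline{\mathbf S}-\mathbf S_0\|\leq c|\varepsilon|$, and by \eqref{figa17} we have $\|\overline{\mathbf t_2}-\mathbf t_0\|\leq c|\varepsilon|$ and $\|\overline{\mathbf t_1}\|\leq c|\varepsilon|$. Hence
\begin{equation*}
\overline{\mathbf S}-\overline{\mathbf t_2}\otimes\overline{\mathbf t_2}\;=\;\mathbf K_0+\mathbf E,\qquad \|\mathbf E\|\leq c|\varepsilon|.
\end{equation*}
Since $\mathbf K_0$ is nondegenerate by hypothesis (in fact $\mathbf K_0\boldsymbol\eta\cdot\boldsymbol\eta<0$ by \eqref{ee1.06}), the Neumann series gives $(\mathbf K_0+\mathbf E)^{-1}=\mathbf K_0^{-1}+\mathcal O(|\varepsilon|)$ for $\varepsilon_0$ small enough. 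Thus
\begin{equation*}
\boldsymbol\mu_1\;=\;-(\mathbf K_0+\mathbf E)^{-1}\overline{\mathbf t_1}\;=\;-\mathbf K_0^{-1}\overline{\mathbf t_1}+\mathcal O(|\varepsilon|\,|\mathbf K_0^{-1}\overline{\mathbf t_1}|),
\end{equation*}
which (modulo a sign convention in the statement) is exactly the decomposition $\boldsymbol\mu_1=\boldsymbol\mu_0+\boldsymbol\mu_2$ with $|\boldsymbol\mu_2|\leq c|\varepsilon|\,|\boldsymbol\mu_0|$.

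Finally I would back-substitute to obtain the remaining estimates. For $\lambda_{1,2}$, write $\overline{\mathbf t_2}=\mathbf t_0+(\overline{\mathbf t_2}-\mathbf t_0)$ and $\boldsymbol\mu_1=\boldsymbol\mu_0+\boldsymbol\mu_2$; multiplying out and absorbing the cross terms of size $|\varepsilon|\,|\boldsymbol\mu_0|$ into $\boldsymbol\lambda_2$ gives $\boldsymbol\lambda_1=\mathbf e_1-(\mathbf t_0\cdot\boldsymbol\mu_0)\mathbf e_2+\boldsymbol\lambda_2$ with $|\boldsymbol\lambda_2|\leq c|\varepsilon|\,|\boldsymbol\mu_0|$. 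For the scalar $p_1$, the identity $p_1=-\overline{\mathbf t_1}\cdot\boldsymbol\mu_1$ together with $|\overline{\mathbf t_1}|\leq c|\varepsilon|$ immediately gives $|p_1|\leq c|\varepsilon|\,|\boldsymbol\mu_0|\cdot(1+c|\varepsilon|)\leq c|\varepsilon|\,|\boldsymbol\mu_0|$. Uniqueness is built into the reduction, since the triangular substitution $\boldsymbol\mu_1\mapsto(\lambda_{1,2},p_1,\boldsymbol\lambda_1)$ is bijective. There is no genuine obstacle here; the argument is a finite-dimensional perturbation around the nondegenerate matrix $\mathbf K_0$, and the only care required is in the bookkeeping to make sure every error term is bounded by $c|\varepsilon|\,|\boldsymbol\mu_0|$ rather than merely by $c|\varepsilon|$.
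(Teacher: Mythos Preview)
Your proof is correct and follows essentially the same route as the paper: eliminate $p_1$ and $\lambda_{1,2}$ from the first block, substitute into the second block to obtain a single $(n-1)\times(n-1)$ system for $\boldsymbol\mu_1$ whose matrix is an $O(|\varepsilon|)$ perturbation of $\mathbf K_0$, invert by Neumann series, and back-substitute. The paper carries this out slightly more indirectly (writing the perturbation as an operator $\mathbf A$ on the $\mathbf K_0^{-1}$ side rather than identifying the reduced matrix as $\overline{\mathbf S}-\overline{\mathbf t_2}\otimes\overline{\mathbf t_2}$ directly), but the content is identical; your observation about the sign convention is also present in the paper's own proof, where the equation solved is $\boldsymbol\mu_1-\mathbf A\boldsymbol\mu_1=-\boldsymbol\mu_0$.
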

\begin{proof} Rewrite the first two equations  in the form
\begin{equation*}\begin{split}
   \boldsymbol\Omega  {\boldsymbol\lambda}_1+{\mathbf T_0}\,\,
   \boldsymbol\mu_1+ p_1\,\,\mathbf e_1=
(\mathbf T_0-\overline{\mathbf T})\boldsymbol\mu_1,\\
{\mathbf T_0}^\top\,\,{\boldsymbol\lambda}_1+{\mathbf S_0}\,\,
\boldsymbol\mu_1 =(\mathbf T_0^\top-\overline{\mathbf
T}^\top)\,{\boldsymbol\lambda}_1+
(\mathbf S_0-\overline{\mathbf S})\,\,\boldsymbol \mu_1.
\end{split}\end{equation*}
Notice that
$
{\boldsymbol\lambda}_1={\lambda}_{1,2}\mathbf e_2+
\mathbf e_1.
$
 Thus we  get the linear system of the
equations for $\lambda_{1,2}$ and $\boldsymbol\mu_1$
\begin{equation}\label{figa29}\begin{split}
\lambda_{1,2}+\mathbf t_0\cdot
\boldsymbol\mu_1=\{(\mathbf T_0-
\overline{\mathbf T})\boldsymbol\mu_1\}\cdot \mathbf e_2\\
\mathbf S_0\boldsymbol\mu_1+\lambda_{1,2}\, \mathbf t_0=
(\mathbf T_0^\top-\overline{\mathbf T}^\top)\mathbf e_1
+\\\lambda_{1,2}(\mathbf T_0^\top-\overline{\mathbf T}^\top)
\mathbf e_2+(\mathbf S_0-\overline{\mathbf
S})\boldsymbol\mu_1.
\end{split}\end{equation}
  Express
$\lambda_{1,2}$ in terms of $\boldsymbol\mu_1$ using the
first equation in \eqref{figa29}. Substituting the result into the second equation in
\eqref{figa29} we obtain the following equation for
$\boldsymbol\mu_1$
\begin{equation*}\begin{split}
   \mathbf K_0\boldsymbol\mu_1=(\mathbf S_0-\overline{\mathbf S})
    \boldsymbol\mu_1+
    \{(\mathbf T_0-\overline{\mathbf T})\boldsymbol\mu_1\}_2
    \big((\mathbf T_0^\top-\overline{\mathbf T}^\top){\boldsymbol e}_2- \mathbf t_0\big)\\-
   ( \boldsymbol\mu_1\cdot \mathbf t_0)
   (\mathbf T_0^\top-\overline{\mathbf T}^\top){\mathbf e}_2
    +(\mathbf T_0^\top-\overline{\mathbf T}^\top){\boldsymbol e}_1,
\end{split}\end{equation*}
where the matrix $\mathbf K_0= \mathbf S_0-\mathbf t_0\otimes
\mathbf t_0$ has a
 bounded inverse. Using the identities
 $(\mathbf T_0^\top-\overline{\mathbf T}^\top){\boldsymbol e}_2- \mathbf t_0=-
 \overline{\mathbf t}_2$ and
 $(\mathbf T_0^\top-\overline{\mathbf T}^\top){\boldsymbol e}_1=\overline{\mathbf t}_1$, we can
 rewrite this equation in the equivalent form
\begin{equation*}\begin{split}
   \mathbf K_0\boldsymbol\mu_1=(\mathbf S_0-\overline{\mathbf S})
    \boldsymbol\mu_1-
    \{(\mathbf T_0-\overline{\mathbf T})\boldsymbol\mu_1\}_2\, \overline{\mathbf t}_2
    \\-
   ( \boldsymbol\mu_1\cdot \mathbf t_0)
   (\mathbf T_0^\top-\overline{\mathbf T}^\top){\mathbf e}_2
    -\overline{\mathbf t}_1
\end{split}\end{equation*}
Thus we get the following equation for $\boldsymbol\mu_0$
\begin{equation}\label{figa30}
\boldsymbol\mu_1-\mathbf A\boldsymbol\mu_1=-\boldsymbol\mu_0,\
\end{equation}
where the linear mapping $\mathbf A: \mathbb C^2\to \mathbb C^2$ is
given by
$$
\mathbf A:\boldsymbol\mu_1\mapsto \mathbf K_0^{-1}
\Big\{(\mathbf S_0-\overline{\mathbf S})\boldsymbol\mu_1-
    \{(\mathbf T_0-\overline{\mathbf T})\boldsymbol\mu\}_2\, \overline{\mathbf t}_2
    -(\boldsymbol\mu_1\cdot \mathbf t_0)
   \big((\mathbf T_0^\top-\overline{\mathbf T}^\top){\boldsymbol e}_2 \Big\}.
$$
In view  of estimates \eqref{kira15}, the
mapping $\mathbf A$
 admits the estimate
\begin{equation}\label{figa30}
|\mathbf A\boldsymbol\mu_1|\leq
c|\varepsilon|\, |\boldsymbol\mu_1|.
\end{equation}
 Choosing $\varepsilon_0$ sufficiently small  we obtain that equation \eqref{figa30} has a
 unique  solution which admits decomposition estimate \eqref{figa28x} with
 the reminder
 $$
 \boldsymbol \mu_2=(\mathbf I-(\mathbf I-\mathbf A)^{-1}) \boldsymbol\mu_0.
 $$
 When equation  \eqref{figa30} is solved the vector ${\boldsymbol\lambda}_1$ and the scalar $p_1$ are restored
 by the relations
\begin{equation*}
{\boldsymbol\lambda}_1= \mathbf e_1 -(\mathbf t_0\cdot \boldsymbol\mu_1)\mathbf e_2
\end{equation*}
\begin{equation}\label{nisa33}
p_1=(\mathbf T_0-\overline{\mathbf T})\boldsymbol\mu_1\cdot \mathbf e_1
\end{equation}
which gives decomposition \eqref{figa28x} with the reminder
$$
\boldsymbol \lambda_2=-(\mathbf t_0\cdot \boldsymbol\mu_1)\mathbf e_2
$$
Estimate \eqref{figa28} for $\boldsymbol\lambda_2$ and $p_1$ obviously follows
from estimate \eqref{figa28} for $\boldsymbol\mu_2$ and the
decomposition \eqref{figa28x} for $\boldsymbol\mu_1$ .
\end{proof}

The following theorem, which is the third main result of this section, constitutes the
 properties of solutions to problem \eqref{kira29}
\begin{theorem}\label{figa18} Under the assumptions of of Theorem \ref{figa5},
there exists $\varepsilon_0>0$ with the following property. For every $|\varepsilon|\leq\varepsilon_0$,
problem \eqref{kira28} has a unique analytic solution $(\boldsymbol{\mu}^{(1)},
\boldsymbol{\lambda}^{(1)}, \boldsymbol{\chi}^{(1)})\in \mathcal A_{\sigma/4,0}$, $p^{(1)}\in \mathbb C$.
This solution admits the estimate
\begin{equation}\label{figa19}
  \|\boldsymbol{\mu}^{(1)}\|_{\sigma/4,0}+  \|\boldsymbol{\lambda}^{(1)}\|_{\sigma/4,0}+
  \|\boldsymbol{\chi}^{(1)}\|_{\sigma/4,0}+|p^{(1)} |\leq c.
\end{equation}
If $k=0$, then this solution has the decomposition
\begin{equation}\label{figa20}
 \boldsymbol{\mu}^{(1)}=-\boldsymbol\mu_0+\boldsymbol\mu_\varepsilon, \quad
 {\boldsymbol\lambda}^{(1)}= \mathcal C\mathbf e_1-(\boldsymbol \mu_0\cdot{\mathbf t_0})\mathbf e_2
 +\boldsymbol\lambda_\varepsilon.
\end{equation}
Here the  the constant $\mathcal C$ and functions $\boldsymbol\mu_\varepsilon$,
$\boldsymbol\lambda_\varepsilon$  admit the estimates
\begin{equation}\label{figa21}
|\mathcal C-1|\leq c |\varepsilon|,\quad\|\boldsymbol{\mu}_\varepsilon\|_{\sigma/4,0}+
    \|\boldsymbol{\lambda}_\varepsilon\|_{\sigma/4,0}\leq c |\varepsilon||\boldsymbol\mu_0|.
\end{equation}
The constant $c$ is independent of $\alpha$, $k$, and $\varepsilon$.
\end{theorem}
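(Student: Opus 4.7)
The plan is to combine an existence-and-uniqueness appeal to Theorem \ref{figa5} with an asymptotic mean/fluctuation analysis of the solution at $k = 0$, using Lemma \ref{figa26} to identify the leading-order constants.

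I would first observe that problem \eqref{kira28} is precisely the instance of the general linear problem \eqref{figa4} in which $g_1 = 0$, $\mathbf{g}_2 = 0$ and $\gamma = 1$. Thus Theorem \ref{figa5}, under the smallness hypotheses \eqref{kira5extra}--\eqref{kira16extra}, gives at once a unique solution $(\boldsymbol\mu^{(1)},\boldsymbol\lambda^{(1)},\boldsymbol\chi^{(1)},p^{(1)})\in\mathcal A_{\sigma/4,0}\times\mathbb C$ and the bound \eqref{figa19} directly from estimate \eqref{figa6}.

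For the decomposition at $k = 0$, I would split each unknown into its mean and its zero-mean fluctuation: $\boldsymbol\mu^{(1)} = \overline{\boldsymbol\mu^{(1)}} + (\boldsymbol\mu^{(1)})^*$, and similarly for $\boldsymbol\lambda^{(1)}$. Averaging equations \eqref{kira28a}--\eqref{kira28c} over $\mathbb T^{n-1}$ and using the identity $\overline{\boldsymbol\partial f} = 0$ produces a constant-coefficient algebraic system for $\overline{\boldsymbol\mu^{(1)}}$, $\overline{\boldsymbol\lambda^{(1)}}$ and $p^{(1)}$, of exactly the form \eqref{figa27} of Lemma \ref{figa26} up to forcing terms given by the correlations $\overline{(\mathbf{T}-\overline{\mathbf T})(\boldsymbol\mu^{(1)})^*}$, $\overline{(\mathbf{S}-\overline{\mathbf S})(\boldsymbol\mu^{(1)})^*}$, $p^{(1)}\,\overline{(\mathbf W-\mathbf I)^{\top}\mathbf e_1}$, while the normalisation \eqref{kira28d} gives $\overline{\boldsymbol\lambda^{(1)}}\cdot\mathbf e_1 = 1 + O(\varepsilon)$. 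The zero-mean fluctuating equations are linear in $(\boldsymbol\mu^{(1)})^*$, $(\boldsymbol\lambda^{(1)})^*$ with $O(\varepsilon)$ non-constant coefficients, so inverting $\boldsymbol\partial$ via the small-divisor estimates already built into Theorem \ref{figa5} and iterating shows that $\|(\boldsymbol\mu^{(1)})^*\|_{\sigma/4,0}$ and $\|(\boldsymbol\lambda^{(1)})^*\|_{\sigma/4,0}$ are $O(\varepsilon\,|\boldsymbol\mu_0|)$.

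With the correlation terms thereby bounded by $O(\varepsilon)$ times the mean unknowns, the averaged system becomes an $O(\varepsilon)$ perturbation of \eqref{figa27}. Invoking Lemma \ref{figa26} (with appropriate normalisation and sign of the forcing $\overline{\mathbf t_1}$) identifies the leading constants $-\boldsymbol\mu_0$ and $-(\boldsymbol\mu_0\cdot\mathbf t_0)\mathbf e_2$ and a coefficient $\mathcal C = 1 + O(\varepsilon)$ for $\mathbf e_1$, while the perturbative Neumann-series argument behind \eqref{figa30} controls the mean-part remainders by $c|\varepsilon||\boldsymbol\mu_0|$. Combining the mean and fluctuation estimates yields \eqref{figa20}--\eqref{figa21}. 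The main obstacle is a careful accounting of smallness: since $\boldsymbol\mu_0$ itself is $O(\varepsilon)$ by \eqref{figa17}, the claim $\|\boldsymbol\mu_\varepsilon\|\le c|\varepsilon|\,|\boldsymbol\mu_0|$ is genuinely an $O(\varepsilon^2)$ bound, so every intermediate estimate must carry the explicit $|\boldsymbol\mu_0|$ factor rather than be absorbed into the cruder a priori bound \eqref{figa19}.
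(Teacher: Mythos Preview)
Your proposal is correct and reaches the same conclusion as the paper, but the organisation differs in a way worth noting. For the existence part you and the paper agree exactly: problem \eqref{kira28} is \eqref{figa4} with $g_1=0$, $\mathbf g_2=0$, $\gamma=1$, and Theorem~\ref{figa5} gives \eqref{figa19} immediately. For the decomposition at $k=0$, the paper does not split the unknown solution into mean and fluctuation. Instead it builds an explicit constant ansatz $\mathcal C(\boldsymbol\mu_1,\boldsymbol\lambda_1,p_1)$ directly from Lemma~\ref{figa26}, adjoins a $\boldsymbol\chi_1$ solving $\boldsymbol\partial\boldsymbol\chi_1^*=\mathbf S\boldsymbol\mu_1+\mathbf T^\top\boldsymbol\lambda_1$, and chooses the scalar $\mathcal C$ so that the ansatz already satisfies the normalisation \eqref{kira28d} exactly. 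Substituting this ansatz into \eqref{kira28} leaves a remainder $(\boldsymbol\mu,\boldsymbol\lambda,\boldsymbol\chi,p)$ that satisfies a \eqref{figa4}-type problem with data $g_1=\mathcal C p_1 w_1^*$, $\mathbf g_2=-\mathcal C p_1(\mathbf W^\top-\mathbf I)\mathbf e_1$, $\gamma=0$, all of size $O(|\varepsilon|\,|\boldsymbol\mu_0|)$ since $|p_1|\le c|\varepsilon||\boldsymbol\mu_0|$ by \eqref{figa28}. A second application of Theorem~\ref{figa5} then controls the remainder in one stroke. Your mean/fluctuation split accomplishes the same thing but couples the averaged algebraic system to the fluctuation estimates through correlation terms, so you need an iteration to close; the paper's ansatz-plus-correction route avoids that iteration by reusing the black box of Theorem~\ref{figa5} for the correction. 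Both approaches hinge on the same two ingredients (Lemma~\ref{figa26} and Theorem~\ref{figa5}), and your remark that every intermediate estimate must carry the explicit $|\boldsymbol\mu_0|$ factor is exactly the point that makes either argument work.
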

\begin{proof}
Notice that problem \eqref{kira29} is a particular case of problem \eqref{figa4} with the right hand sides
$g_1*=0$, $  \mathbf g_2=0$ , and $ \gamma=1$. Hence, the existence of a solution and estimate
\eqref{figa19} is a straightforward consequence of Theorem \ref{figa5}. It remains to prove  decompositiion
\eqref{figa21}.

Now our task is to  justify decomposition \eqref{figa20}. Let $\boldsymbol\mu_1$, $\boldsymbol \lambda_1$,
and $p_1$ be given by Lemma \ref{figa26}. Set
$$
\boldsymbol\chi_1=\overline{\boldsymbol{\chi}_1}+\boldsymbol{\chi}_1^*,
$$
Here   $\boldsymbol \chi_1^*$ is a solution solution to the equations
\begin{equation}\label{figa31}
    \boldsymbol \partial \boldsymbol\chi_1^*=\mathbf S\boldsymbol\mu_1+\mathbf T^\top \boldsymbol
    \lambda_1, \quad \overline{\boldsymbol\chi_1^*}=0.
\end{equation}
By virtue of \eqref{figa27}, the mean value of the right hand side of
equation \eqref{figa31} over the torus $\mathbb T^{n-1}$ is equal to zero.
Hence this problem have an analytic periodic solution. The constant vector $\overline{\boldsymbol\chi_1}$
 is defined by
 \begin{equation}\label{figa32}
    \overline{\boldsymbol \chi_1}= -\overline{(\mathbf V^{-\top}-\mathbf I)\boldsymbol\chi^*}
 \end{equation}
 Represent the solution $(\boldsymbol{\mu}^{(1)},
\boldsymbol{\lambda}^{(1)}, \boldsymbol{\chi}^{(1)},p^{(1)})$ to problem \eqref{kira28} in the form
\begin{equation}\label{figa33}
 \boldsymbol{\mu}^{(1)}=\mathcal C\boldsymbol{\mu}_1+\boldsymbol{\mu} ,\quad
 \boldsymbol{\lambda}^{(1)}=\mathcal C\boldsymbol{\lambda}_1+\boldsymbol{\lambda} ,\quad
  \boldsymbol{\chi}^{(1)}=\mathcal C\boldsymbol{\chi}_1+\boldsymbol{\chi} , \quad
  {p}^{(1)}=\mathcal C\boldsymbol{p}_1+\boldsymbol{p} ,
\end{equation}
where the constant $\mathcal C$ will be specified below.
The calculations show that
\begin{subequations}\label{figa4rt}\begin{gather}\label{figa4art}\boldsymbol\partial\, \boldsymbol\mu_1\,=\,0
,\\
\label{figa4brt}
\mathbf J\boldsymbol\partial\, \boldsymbol\lambda_1\,+\, \boldsymbol\Omega\, \boldsymbol\lambda_1\,+\,
\mathbf T\, \boldsymbol\mu_1+p_1\mathbf W^\top \mathbf e_1=p_1(\mathbf W^\top-\mathbf I) \mathbf e_1\\
\label{figa4crt}
-\boldsymbol\partial\, \boldsymbol\chi_1\,+\,\mathbf S\boldsymbol\mu_1\, +\, \mathbf T^\top
\boldsymbol\lambda_1=0
\end{gather}
Next, notice that $\mathbf V^{-\top}-\mathbf I=\mathbf u'$, which yields
$$
\overline{\mathbf V^{-\top}-\mathbf I}=\overline{\mathbf u'}=0, \quad
\overline{(\mathbf V^{-\top}-\mathbf I)\boldsymbol\chi_1}=
\overline{(\mathbf V^{-\top}-\mathbf I)\boldsymbol\chi_1^*}
$$
It follows from this and \eqref{figa32} that
\begin{gather}\label{figa4drt}
\overline{\{\mathbf V^{-\top}\boldsymbol\chi_1\}}\,=\, 0.
\end{gather}
Finally set
\begin{gather}\label{figa4ert}
\mathcal C=\Big\{\overline{\{\mathbf W\boldsymbol\lambda_1\}\cdot \mathbf e_1}\,\,+\,\,
\overline{\{\boldsymbol\chi_1\cdot \nabla w_1\}}
\Big\}^{-1},
\end{gather}
\end{subequations}
Substituting decomposition \eqref{figa33} in equations \eqref{kira28} and using relations \eqref{figa4rt}
we conclude that the quantities $(\boldsymbol\mu, \boldsymbol\lambda, \boldsymbol \chi, p)$ in \eqref{figa33}
satisfy equations \eqref{figa4} with the right hand sides
\begin{equation}\label{figa34}
    g_1=\mathcal C p_1 w_1^*, \quad\boldsymbol g_2=-\mathcal C p_1(\mathbf W^\top-\mathbf I) \mathbf e_1,\quad
    \gamma=0.
\end{equation}
In order to apply Theorem \eqref{figa5} to the obtained problem \eqref{figa4}, we have to estimate
the the constant $\mathcal C$ and functions $g_1$, $\mathbf g_2$.  In order to estimate $\mathcal C$, we
substitute decomposition \eqref{figa28x} into the right hand side of relation \eqref{figa4ert}. We get
\begin{equation*}
   \{\mathbf W\boldsymbol\lambda_1\}\cdot \mathbf e_1=1+\boldsymbol\lambda_2+
    ( \{(\mathbf W-\mathbf I)\boldsymbol\lambda_1\}\cdot \mathbf e_1).
\end{equation*}
Notice that
$$
|\boldsymbol\lambda_2|\leq c|\boldsymbol\mu_0|\leq c |\varepsilon|, \quad
|\boldsymbol\lambda_1|\leq c, \quad |\mathbf W-\mathbf I|\leq c |\varepsilon|,
$$
which yields the estimate
\begin{equation}\label{figa35}
    |\overline{\{\mathbf W\boldsymbol\lambda_1\}\cdot \mathbf e_1}-1|\leq c|\varepsilon|
\end{equation}
Next, estimate \eqref{kira16} and estimates \eqref{kira28} imply the inequality
$\| \mathbf S\boldsymbol\mu_1+\mathbf T^\top \boldsymbol
    \lambda_1\|_{\sigma/2,0}\leq c$. Hence we can apply Lemma \ref{nisa6} to equation \eqref{figa31}
 to obtain $\|\boldsymbol\chi_1^*\|_{\sigma/4,0}\leq c$.
    On the other hand, we have
    $|\nabla w_1*|\leq c|\varepsilon|$. It follows that
\begin{equation}\label{figa36}
|\overline{\boldsymbol\chi_1\cdot \nabla w_1^*}|=
|\overline{\boldsymbol\chi_1^*\cdot \nabla w_1^*}|\leq c|\varepsilon|.
\end{equation}
Combining \eqref{figa35}, \eqref{figa36} and recalling expression \eqref{figa4ert} we arrive at the estimate
$$
|\mathcal C^{-1}-1|\,\leq\, c|\varepsilon|
$$
Choosing $|\varepsilon|\leq \varepsilon_0$ sufficiently small we obtain the desired estimate \eqref{figa21}
for $\mathcal C$. We are now in a position to estimate the functions $g_1$ and $\mathbf g_2$ in
\eqref{figa34}. It follows from estimate \eqref{figa28} for $p_1$ and estimate \eqref{figa21} for $\mathcal C$ that
\begin{equation*}
    \|g_1\|_{\sigma/2,0}\leq c |p_1|\|w_1^*\|_{\sigma/2,0}\leq c |\boldsymbol\mu_0| |\varepsilon|, \quad
 \|g_2\|_{\sigma/2,0}\leq c |p_1|\|\mathbf W-\mathbf I\|_{\sigma/2,0}\leq c |\boldsymbol\mu_0| |\varepsilon|
\end{equation*}
Applying Theorem \ref{figa5} to the problem \eqref{figa4} with the right hand sides
\eqref{figa34} we arrive at the estimate
 \begin{equation}\label{figa37}
    \|\boldsymbol{\mu}\|_{\sigma/4,0}+
    \|\boldsymbol{\lambda}\|_{\sigma/4,0}\leq c |\varepsilon||\boldsymbol\mu_0|.
 \end{equation}
 Finally set
 $$
\boldsymbol{\mu}_\varepsilon= \boldsymbol\mu +(\mathcal C-1)\boldsymbol\mu_1+\boldsymbol\mu_2,\quad
\boldsymbol{\lambda}_\varepsilon=\boldsymbol\lambda -
(\mathcal C-1)(\mathbf t_0\cdot \boldsymbol\mu_0)\mathbf e_2+\mathcal C\boldsymbol\lambda_2.
$$
It remains to note that decomposition \eqref{figa20} follows from decompositions \eqref{figa28x} and \eqref{figa33},
and estimates \eqref{figa21} for $\boldsymbol{\mu}_\varepsilon$ and
$\boldsymbol{\lambda}_\varepsilon$ follow from estimate \eqref{figa21}\ for $\mathcal C$,
estimate \eqref{figa37} for $\boldsymbol{\mu}$ and $\boldsymbol\lambda$ and estimates \eqref{figa28}
for $\boldsymbol{\mu}_i$ and $\boldsymbol\lambda_i$.

\end{proof}

\section{Quadratic form}\label{zara}
Let  $\boldsymbol\mu^{(i)}$,  $\boldsymbol\lambda^{(i)}$ ,
given by Theorems \ref{figa8} and \ref{figa18}, be
solutions to problems \eqref{kira28} and \eqref{kira29}.
We define the quadratic form $L=(L_{ij})$, $1\leq i,j\leq 2$, as follows
\begin{equation}\label{zara1}
    L_{ij}=\int_{\mathbb T^{n-1}}\big\{\,\,\mathbf S\boldsymbol\mu^{(i)}\cdot\boldsymbol\mu^{(j)}-
    (\mathbf J\boldsymbol\partial
    \boldsymbol\lambda^{(i)}+\boldsymbol\Omega\boldsymbol\lambda^{(i)})\cdot
    \boldsymbol\lambda^{(j)}\,\,\big\}\, d \boldsymbol\xi.
\end{equation}
This form  plays the key role in our
analysis of the bifurcation equation \eqref{lunabifurcation}. As it will be shown in the next section,
the derivatives of the action functional are represented in the terms of this form. The following theorem, which
is the main result of this section, describe the properties of the quadratic form \eqref{zara1}.
\begin{theorem}\label{zara2}
Under the assumptions of of Theorem \ref{figa5},
there exists $\varepsilon_0>0$ with the following property. For every $|\varepsilon|\leq\varepsilon_0$,
the elements of the quadratic form $L$ satisfy the inequalities
\begin{equation}\label{zara3}\begin{split}
|L_{12}|\leq c |\varepsilon|, \quad
 |L_{11}|\leq c, \\ c^{-1} |w_1^*|_{-1}^2 \,\leq \,|L_{22}|\,\leq\, c\|w_1^*\|_{\sigma/2,0}^2\leq c |\varepsilon|^2.
\end{split}\end{equation}
If $k=0$, then
\begin{equation}\label{zara4}
  |L_{11}- (2\pi)^{n-1} \mathbf K_0\boldsymbol\mu_0\cdot \boldsymbol \mu_0|\leq c |\varepsilon|
  |\boldsymbol \mu_0|^2, \quad  |L_{12}|\leq c |\varepsilon|\,|\boldsymbol \mu_0| .
\end{equation}
Here the strictly positive constant $c$ is independent of $\alpha$, $k$, and $\varepsilon$,
the vector $\boldsymbol\mu_0$ is given by \eqref{figa25}.
\end{theorem}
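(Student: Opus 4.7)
The approach is to reduce each entry of $L$ to a closed-form integral expression by Green's-identity manipulations on the systems \eqref{kira28}--\eqref{kira29}, and then to estimate the resulting integrals using the analytical and Sobolev bounds from Theorems \ref{figa8} and \ref{figa18}.

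The first step is to derive a representation formula for $L_{ij}$. Testing equation \eqref{kira28b} (respectively \eqref{kira29b}) against $\boldsymbol{\lambda}^{(j)}$ and integrating over $\mathbb T^{n-1}$, then rewriting $\int\mathbf T\boldsymbol\mu^{(i)}\cdot\boldsymbol\lambda^{(j)} = \int\boldsymbol\mu^{(i)}\cdot\mathbf T^\top\boldsymbol\lambda^{(j)}$ and substituting $\mathbf T^\top\boldsymbol\lambda^{(j)} = \boldsymbol\partial\boldsymbol\chi^{(j)} - \mathbf S\boldsymbol\mu^{(j)}$ from the third equation of the respective system, an integration by parts against $\boldsymbol\partial\boldsymbol\mu^{(j)} = -p^{(j)}\nabla w_1 - \delta_{j2}\tfrac12\nabla(w_1^*)^2$ together with the normalizations \eqref{kira28d}--\eqref{kira29d} will yield
$$L_{ij} = (2\pi)^{n-1}p^{(i)}\delta_{j1} + \delta_{i2}\!\int_{\mathbb T^{n-1}}\! w_1^*(\mathbf W\boldsymbol\lambda^{(j)})_1\, d\boldsymbol\xi + \tfrac12\delta_{i2}\!\int_{\mathbb T^{n-1}}\!\boldsymbol\chi^{(j)}\cdot\nabla(w_1^*)^2\, d\boldsymbol\xi.$$
In particular $L_{11} = (2\pi)^{n-1}p^{(1)}$, while $L_{22}$ is the sum of the two $i=j=2$ integrals and $L_{12}$ vanishes identically. (Carrying out the same manipulation with $i$ and $j$ interchanged, combined with the symmetry $L_{ij}=L_{ji}$ that follows from $\mathbf S^\top=\mathbf S$, the antisymmetry $\mathbf J^\top=-\mathbf J$ and integration by parts in $\int\mathbf J\boldsymbol\partial\boldsymbol\lambda^{(i)}\cdot\boldsymbol\lambda^{(j)}$, furnishes as a byproduct the solvability identity $(2\pi)^{n-1}p^{(2)} + \int w_1^*(\mathbf W\boldsymbol\lambda^{(1)})_1 + \tfrac12\int\boldsymbol\chi^{(1)}\cdot\nabla(w_1^*)^2 = 0$, which is consistent with $L_{21}=L_{12}=0$.) The upper bounds on $|L_{11}|$, $|L_{12}|$ and $|L_{22}|$ are then immediate: $|p^{(1)}|\leq c$ from \eqref{figa19} gives $|L_{11}|\leq c$; the bound $|L_{12}|\leq c|\varepsilon|$ is trivially satisfied; and $\|\boldsymbol\lambda^{(2)}\|_{\sigma/4,0}+\|\boldsymbol\chi^{(2)}\|_{\sigma/4,0}\leq c\|w_1^*\|_{\sigma/2,0}$ from \eqref{figa9} combined with $\|\mathbf W\|_{\sigma/2,0}\leq c$ and the Banach-algebra bound $\|\nabla(w_1^*)^2\|_{\sigma/2,0}\leq c\|w_1^*\|_{\sigma/2,0}^2$ yields $|L_{22}|\leq c\|w_1^*\|_{\sigma/2,0}^2\leq c\varepsilon^2$.

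The delicate step, which I expect to be the main obstacle, is the \emph{lower} bound $|L_{22}|\geq c^{-1}|w_1^*|_{-1}^2$. My strategy is to extract the first component of equation \eqref{kira29b}, namely $-W_{11}(w_1^* + p^{(2)}) = \boldsymbol\partial\lambda^{(2)}_2 - k\lambda^{(2)}_1 + (\mathbf T\boldsymbol\mu^{(2)})_1$, and to pair it against $W_{11}^{-1}w_1^*$. Using $\overline{w_1^*}=0$ to drop the $p^{(2)}$ term, expanding $\mathbf W=\mathbf I + O(\varepsilon)$ and $\mathbf T=\mathbf T_0 + O(\varepsilon)$, integrating by parts on the $\boldsymbol\partial\lambda^{(2)}_2$ term, and invoking the crucial estimate $|\boldsymbol\mu^{(2)}|_0\leq c\varepsilon_0|w_1^*|_{-1}$ from \eqref{figa10} to absorb $(\mathbf T\boldsymbol\mu^{(2)})_1$, should reduce the resulting identity to the form $|w_1^*|_0^2 = -L_{22} + O(\varepsilon_0)\bigl(|w_1^*|_0^2+|w_1^*|_{-1}^2\bigr)$. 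For $\varepsilon_0$ small enough this yields $|L_{22}|\geq c^{-1}|w_1^*|_0^2$, and since $|w_1^*|_{-1}\leq c|w_1^*|_0$ the required bound follows. The subtle point to track is that the ``smoothing'' $\lambda^{(2)}_2\sim \boldsymbol\partial^{-1}w_1^*$ encoded in the equation is precisely what converts the $L^2$ coercivity into an $H^{-1}$-type lower bound, and the $O(\varepsilon_0)$ corrections from the $\mathbf T$ and $\mathbf W$ expansions must be controlled in the correct Sobolev scale.

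Finally, for the $k=0$ refinement of \eqref{zara4}, I would average the first component of equation \eqref{kira28b} over $\mathbb T^{n-1}$. Since $\overline{\boldsymbol\partial\lambda^{(1)}_2}=0$ and $k=0$, this reduces to $p^{(1)}\,\overline{W_{11}} = -\overline{\mathbf t_1\cdot\boldsymbol\mu^{(1)}}$. Substituting the decomposition $\boldsymbol\mu^{(1)} = -\boldsymbol\mu_0 + \boldsymbol\mu_\varepsilon$ from \eqref{figa20} with the remainder bound \eqref{figa21}, and using $\overline{\mathbf t_1} = \mathbf K_0\boldsymbol\mu_0$ from definition \eqref{figa25} together with $\overline{W_{11}} = 1+O(\varepsilon)$, yields $p^{(1)} = \mathbf K_0\boldsymbol\mu_0\cdot\boldsymbol\mu_0 + O(|\varepsilon||\boldsymbol\mu_0|^2)$, which through $L_{11}=(2\pi)^{n-1}p^{(1)}$ gives the refined bound $|L_{11}-(2\pi)^{n-1}\mathbf K_0\boldsymbol\mu_0\cdot\boldsymbol\mu_0|\leq c|\varepsilon||\boldsymbol\mu_0|^2$. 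The refined bound $|L_{12}|\leq c|\varepsilon||\boldsymbol\mu_0|$ follows trivially from $L_{12}=0$.
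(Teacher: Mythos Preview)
Your Green's-identity manipulation is correct and yields the clean representation
\[
L_{ij} = (2\pi)^{n-1}p^{(i)}\delta_{j1} + \delta_{i2}\!\int_{\mathbb T^{n-1}} w_1^*(\mathbf W\boldsymbol\lambda^{(j)})_1\,d\boldsymbol\xi + \tfrac12\delta_{i2}\!\int_{\mathbb T^{n-1}}\boldsymbol\chi^{(j)}\cdot\nabla(w_1^*)^2\,d\boldsymbol\xi,
\]
so in particular $L_{12}=0$ identically --- stronger than what the paper proves. This is a genuinely different route for the upper bounds: the paper simply estimates $|L_{ij}|\le c(|\boldsymbol\mu^{(i)}|_0|\boldsymbol\mu^{(j)}|_0 + |\boldsymbol\lambda^{(i)}|_1|\boldsymbol\lambda^{(j)}|_0)$ directly from the definition and then invokes \eqref{figa9}, \eqref{figa19}.

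Your strategy for the \emph{lower} bound on $L_{22}$, however, does not go through as stated. Pairing the first component of \eqref{kira29b} against $W_{11}^{-1}w_1^*$ gives
\[
|w_1^*|_0^2 = -\!\int W_{11}^{-1}w_1^*\boldsymbol\partial\lambda_2^{(2)}\,d\boldsymbol\xi + k\!\int W_{11}^{-1}w_1^*\lambda_1^{(2)}\,d\boldsymbol\xi + (\text{small}),
\]
and there is no mechanism that turns the right side into $-L_{22}$: your own formula shows $L_{22}\approx\int w_1^*\lambda_1^{(2)}$ to leading order, which matches neither the $\boldsymbol\partial\lambda_2^{(2)}$ integral nor the $k\lambda_1^{(2)}$ integral (and the sign is wrong --- the paper's argument shows $L_{22}>0$). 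The paper instead works from the \emph{definition} of $L_{22}$: it uses the second component of \eqref{kira29b} to replace $2\lambda_2^{(2)}\boldsymbol\partial\lambda_1^{(2)}-(\lambda_2^{(2)})^2$, thereby flipping the sign of $(\lambda_2^{(2)})^2$, and arrives at
\[
L_{22}=\int_{\mathbb T^{n-1}}\bigl\{k(\lambda_1^{(2)})^2+(\lambda_2^{(2)})^2\bigr\}\,d\boldsymbol\xi + (\text{remainder}).
\]
The remainder is controlled via $|\boldsymbol\mu^{(2)}|_0\le c\varepsilon_0|w_1^*|_{-1}$, and the coercivity $|\lambda_2^{(2)}|_0+k|\lambda_1^{(2)}|_0\ge c^{-1}|w_1^*|_{-1}$ from \eqref{figa10} then yields the lower bound.

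A smaller point: in your $k=0$ argument for $L_{11}$, the error term $\overline{\mathbf t_1^*\cdot\boldsymbol\mu_\varepsilon^*}$ is a priori only $O(|\varepsilon|^2|\boldsymbol\mu_0|)$ from \eqref{figa17} and \eqref{figa21}, not $O(|\varepsilon||\boldsymbol\mu_0|^2)$ (and $|\boldsymbol\mu_0|$ can be much smaller than $|\varepsilon|$). This is fixable --- use $\boldsymbol\mu^{(1)*}=\nabla\psi_0$ with $\boldsymbol\partial\psi_0=-p^{(1)}w_1^*$ to get $|\boldsymbol\mu^{(1)*}|_0\le c|\varepsilon||p^{(1)}|$ and absorb the resulting $O(|\varepsilon|^2|p^{(1)}|)$ term into the left side --- but it needs to be said. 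The paper avoids this by substituting \eqref{figa20} directly into the definition of $L_{11}$, where every remainder term carries an explicit $|\boldsymbol\mu_0|^2$ factor.
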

\begin{proof}
We begin with the observation that
\begin{multline*}
    |L_{ij}|\leq c
    ( |\boldsymbol\mu^{(i)}|_0|\boldsymbol\mu^{(j)}|_0+ |\boldsymbol\lambda^{(i)}|_1|\boldsymbol\lambda^{(j)}|_0)\\
    \leq c( \|\boldsymbol\mu^{(i)}\|_{\sigma/4,0}\|\boldsymbol\mu^{(j)}\|_{\sigma/4,0}+
    \|\boldsymbol\lambda^{(i)}\|_{\sigma/4,0}|\boldsymbol\lambda^{(j)}|_{\sigma/4,0}).
\end{multline*}
Therefore, estimates of $|L_{ij}|$ from above in \eqref{zara4} obviously follows from estimates
\eqref{figa9} and   \eqref{figa19}   in Theorems \ref{figa8} and \ref{figa18}. Let us
estimate $L_{22}$ from below.
It follows from the expressions \eqref{ee1.02} and \eqref{zabyli} for $\mathbf J$ and $\boldsymbol\Omega$
that
\begin{equation*}
    L_{22}= \int_{\mathbb T^{n-1}} \big\{\,\,\mathbf S\boldsymbol\mu^{(2)}\cdot\boldsymbol\mu^{(2)}
  +k{\lambda_1^{(2)}}^2- {\lambda_2^{(2)}}^2
  -\lambda_1^{(2)}\boldsymbol\partial
\lambda_2^{(2)}+\lambda_2^{(2)}\boldsymbol\partial \lambda_1^{(2)}\,\big\}\, d \boldsymbol\xi.
\end{equation*}
Integrating by parts gives
\begin{equation}\label{zara5}
    L_{22}= \int_{\mathbb T^{n-1}} \big\{\,\,\mathbf S\boldsymbol\mu^{(2)}\cdot\boldsymbol\mu^{(2)}
  +k{\lambda_1^{(2)}}^2- {\lambda_2^{(2)}}^2
  +2\lambda_2^{(2)}\boldsymbol\partial \lambda_1^{(2)}\,\big\}\, d \boldsymbol\xi.
\end{equation}
Next,  multiplying both sides of equation \eqref{kira29b} by $\mathbf e_2$ we obtain
\begin{equation*}
-\boldsymbol\partial \lambda_1^{(2)}+ \lambda_2^{(2)}+\mathbf t_2\cdot \boldsymbol\mu^{(2)}=
-(p^{(2)}+w_1^*) \mathbf W^\top \mathbf e_1\cdot \mathbf e_2.
\end{equation*}
It follows that
\begin{equation*}
    2\boldsymbol\partial \lambda_1^{(2)}-\lambda_2^{(2)}=\lambda_2^{(2)}+
    2\mathbf t_2\cdot \boldsymbol\mu^{(2)}+2(p^{(2)}+w_1^*) \mathbf W^\top \mathbf e_1\cdot \mathbf e_2.
\end{equation*}
Substituting this expression into \eqref{zara5} and noting that $\mathbf W^\top\mathbf e_1\cdot \mathbf e_2
=W_{12}$  we arrive at the identity
\begin{multline}\label{zara6}
    L_{22}= \int_{\mathbb T^{n-1}} \big\{\,\,\mathbf S\boldsymbol\mu^{(2)}\cdot\boldsymbol\mu^{(2)}
  +k{\lambda_1^{(2)}}^2+ {\lambda_2^{(2)}}^2\\
  +2\lambda_2^{(2)}(\mathbf t_2\cdot \boldsymbol\mu^{(2)}+2(p^{(2)}+w_1^*))
   W_{12}\big\}\, d \boldsymbol\xi.
\end{multline}
Introduce the denotation $\varkappa=W_{12} W_{11}^{-1}$. Multiplying both side of \eqref{kira29b} by
$\varkappa\mathbf e_1$ we arrive at
\begin{equation*}
    -(p^{(2)}+w_1^*)W_{12}=-\varkappa(p^{(2)}+w_1^*)W_{11}=\varkappa(\boldsymbol\partial \lambda_2^{(2)}
    -k\lambda_1^{(2)} +\mathbf t_1\cdot\boldsymbol\mu^{(2)})
\end{equation*}
Thus we get
\begin{multline*}
   \int_{\mathbb T^{n-1}} 2\lambda_2^{(2)}(\mathbf t_2\cdot \boldsymbol\mu^{(2)}+2(p^{(2)}+w_1^*))
   W_{12}\, d \boldsymbol\xi=\\
  \int_{\mathbb T^{n-1}} \{\,2\lambda_2^{(2)}(\mathbf t_2\cdot \boldsymbol\mu^{(2)}) -
  \varkappa\lambda_2^{(2)}\boldsymbol\partial \lambda_2^{(2)} +\varkappa k\lambda_2^{(2)}\lambda_1^{(2)}
-\varkappa \lambda_2^{(2)}(\mathbf t_1\cdot\boldsymbol\mu^{(2)})\}\, d \boldsymbol\xi=\\
\int_{\mathbb T^{n-1}} \{\,2\lambda_2^{(2)}(\mathbf t_2\cdot \boldsymbol\mu^{(2)}) +
 \frac{1}{2}\boldsymbol\partial \varkappa(\lambda_2^{(2)})^2 +\varkappa k\lambda_2^{(2)}\lambda_1^{(2)}
-\varkappa \lambda_2^{(2)}(\mathbf t_1\cdot\boldsymbol\mu^{(2)})\}\, d \boldsymbol\xi:=\mathbf I_0.
\end{multline*}
Let us estimate the quantity $\mathbf I_0$. It follows from estimates \eqref{kira5} for the matrix $\mathbf W$
in Theorem \ref{kira2} and estimate \eqref{kira16}  for the matrix
$\mathbf T^\top=[\mathbf t_1, \mathbf t_2]$ in Lemma \ref{kira13} that
\begin{gather}\label{zara7}
    |\varkappa|+|\boldsymbol\partial\varkappa|\leq c|\varepsilon|, \quad |\mathbf t_2|\leq c, \quad
    |\mathbf t_1|\leq c|\varepsilon|.
\end{gather}
Thus we get
\begin{multline}\label{zara8}
    |\mathbf I_0|\leq c \int_{\mathbb T^{n-1}} |\lambda_2^{(2)}|| \boldsymbol\mu^{(2)}|
    d \boldsymbol\xi+c|\varepsilon| \int_{\mathbb T^{n-1}}( |\lambda_2^{(2)}|^2+
    k|\lambda_1^{(2)}|^2)d \boldsymbol\xi\leq\\
   c|\varepsilon| \int_{\mathbb T^{n-1}}( |\lambda_2^{(2)}|^2+
    k|\lambda_1^{(2)}|^2)d \boldsymbol\xi+
    |\varepsilon|^{-1} \int_{\mathbb T^{n-1}}| \boldsymbol\mu^{(2)}|^2d \boldsymbol\xi
\end{multline}
On the other hand, we have
\begin{equation*}
    L_{22}= \int_{\mathbb T^{n-1}} \big\{\,\,\mathbf S\boldsymbol\mu^{(2)}\cdot\boldsymbol\mu^{(2)}
  +k{\lambda_1^{(2)}}^2+ {\lambda_2^{(2)}}^2\,\,\big\}+ \mathbf I_0.
\end{equation*}
It follows from this and \eqref{zara8} that
\begin{equation*}
    L_{22}\geq (1-c|\varepsilon|)|\lambda_2^{(2)}|_0^2+k|\lambda_1^{(2)}|^2-c|\varepsilon|^{-1}
    |\boldsymbol\mu^{(2)}|_0^2
\end{equation*}
Applying estimates \eqref{figa9} and \eqref{figa10} in Theorem \ref{figa8} we arrive at the inequality
\begin{equation*}
     L_{22}\geq c^{-1}(1-c|\varepsilon|) |w_1^*|_{-1}^2- c |\varepsilon|  |w_1^*|_{-1}^2.
\end{equation*}
Choosing $\varepsilon$ sufficiently small we obtain the desired estimate \eqref{zara3}.
It remains to prove inequalities \eqref{zara4}.
Notice that for  $k=0$, the equality $\boldsymbol\Omega \mathbf a=a_2\mathbf e_2$
holds for every vector $\mathbf a$. Substituting decomposition \eqref{figa20} into the expression
\eqref{zara1} we arrive at the identity
\begin{equation}\label{zara9}
    L_{11}=\int_{\mathbb T^{n-1}} \big\{\,\,\mathbf S_0\boldsymbol\mu_0\cdot\boldsymbol\mu_0
-(\boldsymbol\mu_0\cdot \mathbf t_0)^2\,\big\}\, d\boldsymbol\xi+\mathbf I_1,
\end{equation}
where
\begin{multline}\label{zara10}
    \mathbf I_1=\int_{\mathbb T^{n-1}}(\mathbf S-\mathbf S_0)\boldsymbol\mu_0\cdot\boldsymbol\mu_0
\, d\boldsymbol\xi+\int_{\mathbb T^{n-1}}\mathbf S\boldsymbol\mu_\varepsilon\cdot
(2\boldsymbol\mu_0+\boldsymbol \mu_\varepsilon)\, d\boldsymbol\xi\\
-\int_{\mathbb T^{n-1}}\big\{(\mathbf J\boldsymbol\partial\boldsymbol\lambda _\varepsilon +
\boldsymbol\Omega\boldsymbol\lambda _\varepsilon)\cdot \boldsymbol\lambda _\varepsilon+2(\boldsymbol\mu_0\cdot \mathbf t_0)
\lambda_{\varepsilon, 2}\big\}, d\boldsymbol\xi
\end{multline}
Estimates \eqref{kira16}  for the matrices $\mathbf S$ and
$\mathbf T^\top=[\mathbf t_1, \mathbf t_2]$ in Lemma \ref{kira13} imply
\begin{equation*}
    |\mathbf S-\mathbf S_0|\leq c|\varepsilon|, \quad |\mathbf t_2|\leq c.
\end{equation*}
In its turn, estimates \eqref{figa21} in Theorem \ref{figa18} imply
\begin{equation*}
|\boldsymbol{\mu}_\varepsilon|+|\boldsymbol\partial\boldsymbol{\mu}_\varepsilon+|
    |\boldsymbol{\lambda}_\varepsilon|\leq c |\varepsilon||\boldsymbol\mu_0|.
\end{equation*}
Combining these estimates we finally obtain
\begin{equation}\label{zara11}
|\mathbf I_1|\leq c |\varepsilon| |\boldsymbol\mu_0|^2.
\end{equation}
Next notice that
$$
\int_{\mathbb T^{n-1}} \big\{\,\,\mathbf S_0\boldsymbol\mu_0\cdot\boldsymbol\mu_0
-(\boldsymbol\mu_0\cdot \mathbf t_0)^2\,\big\}\, d\boldsymbol\xi=(2\pi)^{n-1}
\mathbf K_0\boldsymbol\mu_0\cdot \boldsymbol\mu_0.
$$
Substituting this equality into \eqref{zara9} and using inequality  \eqref{zara11} we obtain estimate \eqref{zara4}
for $L_{11}$. It remains to estimate $L_{12}$.
Since $k=0$ we have
\begin{equation}\label{zara12}
    L_{12}=\int_{\mathbb T^{n-1}}\big\{\,\,\mathbf S\boldsymbol\mu^{(1)}\cdot\boldsymbol\mu^{(2)}-
    (\mathbf J\boldsymbol\partial
    \boldsymbol\lambda^{(2)}+\boldsymbol\Omega\boldsymbol\lambda^{(2)})\cdot
    (\boldsymbol\lambda^{(1)}-\mathcal C\mathbf e_1)\,\,\big\}\, d \boldsymbol\xi.
\end{equation}
In view  of estimates \eqref{figa21} in Theorem \ref{figa18}, we have
\begin{equation*}
|\boldsymbol\mu^{(1)}|\leq c |\boldsymbol\mu_0, \quad
|\boldsymbol\lambda^{(1)}-\mathcal C\mathbf e_1||\leq c |\boldsymbol\mu_0|, \quad
|\boldsymbol\partial \boldsymbol\lambda^{(1)}|\leq c |\boldsymbol\mu_0|
\end{equation*}
On the other hand, estimates \eqref{figa9} in Theorem \ref{figa18} yield
\begin{equation*}
 \|\boldsymbol{\lambda}^{(2)}\|_{\sigma/4,0}+
  \|\boldsymbol{mu}^{(2)}\|_{\sigma/4,0}+|p^{(2)} |\leq c|\varepsilon|.
\end{equation*}
This result and the expression \eqref{zara12} for $L_{12}$ lead to the estimate $|L_{12}|\leq
 c |\varepsilon||\boldsymbol\mu_0|$. This completes the proof of Theorem
\end{proof}

\section{Action functional. Proof of Theorem \ref{theo1.4}}\label{sveta}
Theorem \ref{kira2} constitutes the existence and uniqueness of the solution
 \begin{equation}\label{sveta1}\begin{split}
\mathfrak u=\big(\boldsymbol\varphi(\alpha,k), e(\alpha,k)\big),
m(\alpha,k), M(\alpha,k),\\
\boldsymbol\varphi(\alpha,k)= (\boldsymbol\beta, \varphi_0, \mathbf
u, \mathbf w, W_{11},W_{12},W_{21}).
\end{split}\end{equation}
of the main operator equation \eqref{luna8x} for every $(\alpha, k)\in \mathbb T\times [0,1]$
and for all sufficiently small $\varepsilon$. This means  that
the vector
\begin{equation}\label{sveta2}
 \boldsymbol\Theta(\boldsymbol\varphi)=   \big(\mathbf u, \mathbf v, \mathbf w, \mathbf V,\boldsymbol \Lambda, \mathbf W,
    \mathbf R_i\big),
\end{equation}
with the components $\mathbf v$, $\mathbf V$, $\mathbf W$, and $\mathbf R_i$ defined
by relations \eqref{anna17},
determines the canonical mapping $\boldsymbol\vartheta$ given by \eqref{anna1}.
By the definition of the operator $\Phi$, this mapping puts the modified Hamiltonian $ H_m$
into the normal form
\eqref{normalform0}. By virtue of Definition \ref{def1.1} of the normal form, the modified Hamiltonian
has a weakly hyperbolic invariant torus. In the phase  space, these torus has the
the parametric representation
\begin{equation}\label{sveta3}
    \mathbf x(\boldsymbol\xi)=\boldsymbol\xi+\mathbf u(\boldsymbol\xi), \quad
    \mathbf y(\boldsymbol\xi)=\mathbf v(\boldsymbol\xi),\quad \mathbf z=\mathbf w(\boldsymbol\xi),\quad
  \boldsymbol\xi\in \mathbb T^{n-1} ,
\end{equation}
in which $\mathbf u$ , $\mathbf w$ are the components of the vector $\boldsymbol\varphi(\alpha,k)$, and
the component $\mathbf v$ of the vector $\boldsymbol\Theta$ is defined  by the relation \eqref{anna16}, i.e.,
\begin{equation}\label{sveta4}\begin{split}
\mathbf v=\boldsymbol \beta +
 \mathbf V(\nabla\varphi_0-w_2\nabla w_1), \quad \mathbf V=(\mathbf I+\mathbf u')^{-\top}.
 \end{split}\end{equation}
By virtue of Theorem \ref{ira18}, the main operator equation \eqref{luna8x}  has an analytic
periodic solution for all  $(\alpha, k)\in \mathbb T\times [0,1]$
and for all sufficiently small $\varepsilon$. In other words, the modified Hamiltonian has
the two-parametric family of weakly hyperbolic invariant tori labeled by $(\alpha,k)$.
Notice that $H_m(\mathbf x,\mathbf y,\mathbf z)= H(\mathbf x,\mathbf y,\mathbf z)+
mz_1+2^{-1} M z_1^2$, where $m=m(\alpha,k)$,  $M=M(\alpha,k)$ are the components of the vector
$\mathfrak u$ satisfying \eqref{luna8x}. Therefore, the hamiltonian $H$ has a weakly
hyperbolic invariant torus if and only if
\begin{equation}\label{sveta5}
m(\alpha,k)=0,  M(\alpha,k)=0
\end{equation}
Relations \eqref{sveta5} give the system of two scalar bifurcation equations
for $(\alpha,k)\in \mathbb T^1\times [0,1]$. The difficulty is that the scalars
$(m,M)$ are  the integral part of a solution to the complicated operator equation,
and we know nothing about their properties.
In order to cope with this problem, we notice  that every  Hamiltonian system has
a variational formulation, and its solutions are critical points of the action functional.
For quasi-periodic solutions,  the action functional can be written as the Perceval functional,
see \cite{Percival1979},
\begin{equation}\label{sveta6}
    \Psi=\int_{\mathbf T^{n-1}}\big( (\boldsymbol\omega+\boldsymbol\partial \mathbf u)\mathbf v+
    w_2\boldsymbol\partial w_1-H(\boldsymbol \xi+\mathbf u, \mathbf v, \mathbf w)\big)\,
d\boldsymbol\xi.
\end{equation}
In view of Theorem \ref{kira2}, the mappings
\begin{equation}\label{sveta7}\begin{split}
\mathbb R\times [0,1]\ni(\alpha, k)\to \mathbf u(\alpha,k)\in
\mathcal A_{\sigma/4,d},\\
\mathbb R\times [0,1]\ni(\alpha, k)\to \mathbf v(\alpha,k)\in
\mathcal A_{\sigma/4,d},\\ \mathbb R\times [0,1]\ni(\alpha, k)\to
\mathbf w(\alpha,k)-\alpha\mathbf e_1\in \mathcal A_{\sigma/2,d},\\ \mathbb R\times [0,1]\ni(\alpha, k)\to
\mathbf W(\alpha,k)\in \mathcal A_{\sigma/2,d},
\\
 \mathbb R\times [0,1]\ni(\alpha, k)\to M(\alpha,k)\in
\mathbb R^1,
\\
\mathbb R\times [0,1]\ni(\alpha, k)\to m(\alpha,k)+\alpha
M(\alpha,k)\in \mathbb R^1,
\end{split}\end{equation}
are continuously differentiable and $2\pi$-periodic in $\alpha$.
Moreover, they are analytic in $\mathbb R\times (0,1)$ and satisfy
the inequalities
\begin{equation}\label{sveta8}\begin{split}
    \|\partial_\alpha^r\mathbf u\|_{\sigma/2,d}+
    \|\partial_\alpha^r\mathbf v\|_{\sigma/2,d}\leq
    c(r)|\varepsilon|,\\
\|\partial_\alpha^r(\mathbf w-\alpha\mathbf e_1)\|_{\sigma/2,d}++\|\partial_\alpha^r(\mathbf W-\mathbf I)
\|_{\sigma/2,d}\leq c(r)|\varepsilon|,\\
|\partial_\alpha^r\boldsymbol\beta|+|\partial_\alpha^r(m+\alpha M)|+
|\partial_\alpha^r (M+k)|\leq c(r)|\varepsilon|,
\end{split}\end{equation}
and
\begin{equation}\label{sveta9}\begin{split}
    \|\partial_\alpha^r\partial_k\mathbf u\|_{\sigma/2,d}+
    \|\partial_\alpha^r\partial_k\mathbf v\|_{\sigma/2,d}\leq
    c|\varepsilon|,\\
\|\partial_\alpha^r\partial_k(\mathbf w-\alpha\mathbf e_1)\|_{\sigma/2,d}+
\|\partial_\alpha^r\partial_k(\mathbf W-\mathbf I)
\|_{\sigma/2,d}\leq c|\varepsilon|,\\|\partial_\alpha^r\partial_k(m+\alpha M)|+
|\partial_\alpha^r\partial_k(M+k)|\leq c(r)|\varepsilon|,
\end{split}\end{equation}
where $r\geq 0$ is an arbitrary integer, the constant $c$ is
independent of $\alpha, k$, and $\varepsilon$. It follows from this that
the mapping $\Psi$ is a function of the variables $(\alpha, k)$, which belong to the Banach space
$C^1([0,1]; \mathcal A_{\sigma/2,0})$. In particular, $\Psi$ and $\partial_k \Psi$ are continuous in
$k$ and analytic in $\alpha$ "uniformly" with respect to $k\in[0,1]$.

The following Theorem constitutes the relations between $\Psi$, $m$, $M$ and the quadratic form $(L_{ij})$.
\begin{theorem}\label{sveta10} Under the above assumptions, there is $\varepsilon_0>0$  with the properties.
For every $|\varepsilon|\leq \varepsilon_0$ the derivatives of the function $\Psi$
admit the representation
\begin{gather}\label{sveta11}
    \partial_\alpha\Psi(\alpha,k)=(2\pi)^{n-1} (m+\alpha M)+\varsigma_1 M,
    \\\label{sveta12}
     \partial_k\Psi(\alpha,k)= M\, \partial_k M\, L_{22},
    \\\label{sveta13}
\partial_\alpha^2\Psi(\alpha,k)=(2\pi)^{n-1}  M+\varsigma_2 M +L_{11}+2L_{12} \partial_\alpha M+
L_{22}( \partial_\alpha M)^2.
\end{gather}
Here $L_{ij}$ are given by \eqref{zara1}, the quantities $\varsigma_i(\alpha,k)$ satisfy the inequalities
\begin{equation}\label{sveta14}
    |\varsigma_i|+|\partial_\alpha\varsigma_i|\leq c |\varepsilon|^2.
\end{equation}
\end{theorem}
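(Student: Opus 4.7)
\smallskip
\noindent\textbf{Proof plan.} The plan is to differentiate the Percival functional under the integral sign and repeatedly apply the Hamiltonian identities of Lemma~\ref{luna11}. For $\tau\in\{\alpha,k\}$ I would write
\[
\partial_\tau\Psi=\int_{\mathbb T^{n-1}}\Big\{\boldsymbol\partial(\partial_\tau\mathbf u)\cdot\mathbf v
+(\boldsymbol\omega+\boldsymbol\partial\mathbf u)\cdot\partial_\tau\mathbf v
+\partial_\tau w_2\,\boldsymbol\partial w_1+w_2\boldsymbol\partial(\partial_\tau w_1)
-\nabla H\cdot\partial_\tau(\boldsymbol\xi+\mathbf u,\mathbf v,\mathbf w)\Big\}\,d\xi,
\]
and then substitute $\nabla_x H=\nabla_x H_m=-\boldsymbol\partial\mathbf v$, $\nabla_y H=\boldsymbol\omega+\boldsymbol\partial\mathbf u$, and $\nabla_z H=-\mathbf J\boldsymbol\partial\mathbf w-(m+Mw_1)\mathbf e_1$. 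The first three pairings combine into total derivatives $\boldsymbol\partial(\partial_\tau\mathbf u\cdot\mathbf v)+\boldsymbol\partial(w_2\partial_\tau w_1)$ which vanish upon integration, the $\mathbf v$-terms cancel outright, and the $\mathbf J$-antisymmetry disposes of the remaining $\mathbf w$-terms. What survives is the compact formula
\[
\partial_\tau\Psi\,=\,\int_{\mathbb T^{n-1}}(m+Mw_1)\,\partial_\tau w_1\,d\xi,
\]
from which both \eqref{sveta11} and \eqref{sveta12} follow by direct expansion using $w_1=\alpha+w_1^*$, $\overline{w_1^*}=0$, and $\partial_k\alpha=0$. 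In particular $\varsigma_1=\tfrac12\partial_\alpha\!\int(w_1^*)^2d\xi$ automatically satisfies \eqref{sveta14} because $w_1-\alpha=O(|\varepsilon|)$ in $\mathcal A_{\sigma/2,d}$ uniformly by Theorem~\ref{kira2}.

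\smallskip
To establish \eqref{sveta12} I will then identify $\int w_1^*\partial_k w_1\,d\xi$ with $\partial_k M\cdot L_{22}$. Using the Jacobi decomposition $\partial_k\mathbf w=\mathbf W\boldsymbol\lambda^{(k)}+\chi_i^{(k)}\partial_{\xi_i}\mathbf w$ and Corollary~\ref{kira26}, the integral equals $\partial_k M\int w_1^*\bigl((\mathbf W\boldsymbol\lambda^{(2)})_1+\boldsymbol\chi^{(2)}\!\cdot\!\nabla w_1\bigr)d\xi$. The crucial algebraic identity
\[
L_{22}\,=\,\int_{\mathbb T^{n-1}}w_1^*\bigl((\mathbf W\boldsymbol\lambda^{(2)})_1+\boldsymbol\chi^{(2)}\!\cdot\!\nabla w_1\bigr)\,d\xi
\]
is proved by systematically substituting the Jacobi equations of \eqref{kira29}: eliminate $\mathbf J\boldsymbol\partial\boldsymbol\lambda^{(2)}+\boldsymbol\Omega\boldsymbol\lambda^{(2)}$ via \eqref{kira29b}, replace $\mathbf S\boldsymbol\mu^{(2)}$ by $\boldsymbol\partial\boldsymbol\chi^{(2)}-\mathbf T^{\!\top}\!\boldsymbol\lambda^{(2)}$ from \eqref{kira29c}, integrate by parts against $\boldsymbol\partial\boldsymbol\mu^{(2)}=-p^{(2)}\nabla w_1-w_1^*\nabla w_1$ from \eqref{kira29a}, and kill the $p^{(2)}$-contributions with the orthogonality relation \eqref{kira29d}. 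This is the main technical step.

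\smallskip
For \eqref{sveta13} I would differentiate $\partial_\alpha\Psi=\int(m+Mw_1)\partial_\alpha w_1\,d\xi$ once more. Separating out the constant part of $m+Mw_1=m+M\alpha+Mw_1^*$ and using $\int\partial_\alpha w_1\,d\xi=(2\pi)^{n-1}$, one obtains
\[
\partial_\alpha^2\Psi=(2\pi)^{n-1}M+(2\pi)^{n-1}p^{(\alpha)}+\partial_\alpha M\cdot\varsigma_1
+M\!\!\int\!(\partial_\alpha w_1^*)^2d\xi+M\!\!\int\! w_1^*\partial_\alpha^2 w_1^*\,d\xi,
\]
where $p^{(\alpha)}=\partial_\alpha m+\alpha\partial_\alpha M$. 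The last two $M$-factors are grouped into $\varsigma_2=\partial_\alpha\varsigma_1=\tfrac12\partial_\alpha^2\!\int(w_1^*)^2d\xi$, which satisfies \eqref{sveta14}. The identification of the remaining terms with $L_{11}+2L_{12}\partial_\alpha M+L_{22}(\partial_\alpha M)^2$ rests on two companion identities, analogous to the one proved above: (i) $L_{11}=(2\pi)^{n-1}p^{(1)}$, obtained by the same substitution scheme applied to system \eqref{kira28} together with the normalization \eqref{kira28d}; and (ii) $2L_{12}=(2\pi)^{n-1}p^{(2)}+\int w_1^*\bigl((\mathbf W\boldsymbol\lambda^{(1)})_1+\boldsymbol\chi^{(1)}\!\cdot\!\nabla w_1\bigr)d\xi$, obtained by exploiting the symmetry $L_{12}=L_{21}$ (which follows from $\mathbf J^{\!\top}=-\mathbf J$, symmetry of $\mathbf S,\boldsymbol\Omega$, and integration by parts) to average the two representations: the $\mathbf T$-cross-terms then cancel pairwise, leaving only integrals that collapse via \eqref{kira28d}--\eqref{kira29d}. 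Combining (i) and (ii) with the splitting $p^{(\alpha)}=p^{(1)}+\partial_\alpha M\,p^{(2)}$ from Corollary~\ref{kira26} and with the previously established relation $\varsigma_1=\int w_1^*A\,d\xi+\partial_\alpha M\cdot L_{22}$, the $\varsigma_1\partial_\alpha M$ contribution is absorbed exactly, and \eqref{sveta13} emerges.

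\smallskip
\noindent\textbf{Main obstacle.} The routine part is the differentiation and cancellation leading to $\partial_\tau\Psi=\int(m+Mw_1)\partial_\tau w_1\,d\xi$; the delicate part is the identity $2L_{12}=(2\pi)^{n-1}p^{(2)}+\int w_1^*A\,d\xi$, because a naive expansion leaves two $\mathbf T$-cross-terms that must be seen to cancel only after symmetrizing $L_{12}$ with $L_{21}$ and then applying the two orthogonality relations \eqref{kira28d} and \eqref{kira29d} with the correct normalization. Once this algebraic miracle is verified, the estimates \eqref{sveta14} for $\varsigma_i=\tfrac12\partial_\alpha^{i}\!\int(w_1^*)^2d\xi$ are an immediate consequence of $\|w-\alpha\mathbf e_1\|_{\sigma/2,d}\leq c|\varepsilon|$ and the analogous bounds for $\partial_\alpha$-derivatives in Theorem~\ref{kira2}.
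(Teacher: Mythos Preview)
Your plan for \eqref{sveta11} and \eqref{sveta12} matches the paper's proof exactly: differentiate under the integral, use the identities of Lemma~\ref{luna11} together with $\nabla_zH=\nabla_zH_m-(m+Mw_1)\mathbf e_1$ to reduce to $\partial_\tau\Psi=\int(m+Mw_1)\partial_\tau w_1\,d\xi$, then expand with $w_1=\alpha+w_1^*$. Your identification of $\int w_1^*\partial_k w_1^*\,d\xi$ with $\partial_kM\cdot L_{22}$ via the Jacobi decomposition and the substitutions from \eqref{kira29} is precisely the paper's argument.

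For \eqref{sveta13} your route is correct but more roundabout than the paper's. You first expand $R:=\int(\partial_\alpha m+w_1\partial_\alpha M)\partial_\alpha w_1\,d\xi$ as $(2\pi)^{n-1}p^{(\alpha)}+\partial_\alpha M\,\varsigma_1$ and then have to prove three separate pairing identities---$(2\pi)^{n-1}p^{(1)}=L_{11}$, your identity (ii), and the $\varsigma_1$-decomposition---and reassemble them. The paper bypasses all of this: it keeps $R$ intact, writes $\partial_\alpha w_1=(\mathbf W\boldsymbol\lambda^{(\alpha)})_1+\boldsymbol\chi^{(\alpha)}\!\cdot\!\nabla w_1$, substitutes the $(\alpha)$-Jacobi equations \eqref{kira14} directly (so that $(p^{(\alpha)}+\partial_\alpha M\,w_1^*)\mathbf W^\top\mathbf e_1=-(\mathbf J\boldsymbol\partial\boldsymbol\lambda^{(\alpha)}+\boldsymbol\Omega\boldsymbol\lambda^{(\alpha)}+\mathbf T\boldsymbol\mu^{(\alpha)})$ and $(p^{(\alpha)}+\partial_\alpha M\,w_1^*)\nabla w_1=-\boldsymbol\partial\boldsymbol\mu^{(\alpha)}$), and after one integration by parts against $\boldsymbol\partial\boldsymbol\chi^{(\alpha)}$ the $\mathbf T$-cross terms cancel in a single stroke, yielding
\[
R=\int\Bigl\{\mathbf S\boldsymbol\mu^{(\alpha)}\!\cdot\!\boldsymbol\mu^{(\alpha)}
-(\mathbf J\boldsymbol\partial\boldsymbol\lambda^{(\alpha)}+\boldsymbol\Omega\boldsymbol\lambda^{(\alpha)})\!\cdot\!\boldsymbol\lambda^{(\alpha)}\Bigr\}\,d\xi.
\]
The decomposition $(\boldsymbol\mu^{(\alpha)},\boldsymbol\lambda^{(\alpha)})=(\boldsymbol\mu^{(1)},\boldsymbol\lambda^{(1)})+\partial_\alpha M\,(\boldsymbol\mu^{(2)},\boldsymbol\lambda^{(2)})$ from Corollary~\ref{kira26} then gives $R=L_{11}+2L_{12}\partial_\alpha M+L_{22}(\partial_\alpha M)^2$ by pure bilinearity, with no further identities needed. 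In short, the ``algebraic miracle'' you anticipate for $L_{12}$ never needs to be confronted if you work with the undecomposed $(\alpha)$-fields; your symmetrization argument for (ii) does work, but it is a detour that the paper avoids entirely.
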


\begin{proof}
Differentiation \eqref{sveta6} with respect to the variable $\tau$,
$\tau=\alpha, k$, and integrating by parts gives
\begin{gather}\nonumber
    \partial_\tau \Psi(\alpha,k)=\int_{\mathbf
    T^{n-1}}\big((\boldsymbol\omega +\boldsymbol\partial\mathbf
    u-\nabla_y H(\boldsymbol \xi+\mathbf u, \mathbf v, \mathbf
    w))\cdot \partial_\tau \mathbf v\, d\boldsymbol \xi\\\nonumber-
\int_{\mathbf
    T^{n-1}}\big(\boldsymbol\partial\mathbf
    v+\nabla_x H(\boldsymbol \xi+\mathbf u, \mathbf v, \mathbf
    w))\cdot \partial_\tau \mathbf u\, d\boldsymbol \xi+\\\nonumber \int_{\mathbf
    T^{n-1}}\big(\boldsymbol\partial w_1-
    \partial_{z_2} H(\boldsymbol \xi+\mathbf u, \mathbf v, \mathbf
    w))\cdot \partial_\tau w_2\, d\boldsymbol \xi\\\label{sveta15}
-\int_{\mathbf
    T^{n-1}}\big(\boldsymbol\partial w_2+
    \partial_{z_1} H(\boldsymbol \xi+\mathbf u, \mathbf v, \mathbf
    w))\cdot \partial_\tau w_1\, d\boldsymbol \xi
\end{gather}
Next notice that
\begin{gather}\nonumber
\nabla_y H=\nabla_y H_m, \quad \nabla_x H=\nabla_x H_m, \quad
\partial_{z_2} H=\partial_{z_2} H_m, \\\label{sveta16}
\partial_{z_1} H(\boldsymbol \xi+\mathbf u, \mathbf v, \mathbf
    w))+m +Mw_1= \partial_{z_1} H_m(\boldsymbol \xi+\mathbf u, \mathbf v, \mathbf
    w)).
\end{gather}
Recall that the vector $\boldsymbol\varphi$ given by \eqref{sveta1}
serves as a solution to operator equation \eqref{luna8x}. From this
and Lemma \ref{luna11} we conclude that $\mathbf u$, $\mathbf v$,
and $\mathbf w$ satisfy equations \eqref{luna12}, i.e.,
\begin{equation*}\begin{split}
    \boldsymbol\omega +\boldsymbol\partial \mathbf u=\nabla_y H_m(\boldsymbol\xi+
    \mathbf u, \mathbf v, \mathbf w), \\ \boldsymbol\partial \mathbf v=-
    \nabla_x H_m(\boldsymbol\xi+\mathbf u, \mathbf v, \mathbf w),\quad
    \boldsymbol \partial\mathbf w=\mathbf J\nabla_z H_m(\boldsymbol\xi+\mathbf u,
    \mathbf v, \mathbf w).
\end{split} \end{equation*}
Substituting these equality along with \eqref{sveta16} into
\eqref{sveta15} we arrive at the identity
\begin{equation}\label{sveta17}
     \partial_\tau \Psi(\alpha,k)=\int_{\mathbf
    T^{n-1}}(m +w_1 M)\partial_\tau w_1\, d\boldsymbol \xi, \quad
    \tau=\alpha,k.
\end{equation}
Recall the denotations
$$
\overline{w_1}=(2\pi)^{-n+1}\int_{\mathbb T^{n-1}} w_1\,
d\boldsymbol \xi, \quad w_1^*=w_1-\overline{w_1}. $$ Notice that
$$
w_1=\alpha+w_1^*, \quad \partial_\alpha w_1=1+\partial_\alpha w_1^*,
\quad
\partial_k w_1=\partial_k w_1^*.
$$
From this and \eqref{sveta17} we obtain
\begin{equation}\label{sveta18}\begin{split}
\partial_\alpha\Psi(\alpha,k)=(2\pi)^{n-1} (m+\alpha M)+\varsigma_1 M,
\\ \partial_k\Psi(\alpha,k)=M\,\int_{\mathbb T^{n-1}}
w_1^*\partial_k w_1^*\, d\boldsymbol \xi,
\end{split}\end{equation}
where
$$
\varsigma_1=\int_{\mathbb T^{n-1}}w_1^*\partial_\alpha w_1^*\,
d\boldsymbol \xi.
$$
It obviously follows from estimates \eqref{sveta8} that
$\varsigma_1$ satisfies inequality \eqref{sveta14}. This leads to
representation \eqref{sveta11}.

Let us prove representation
\eqref{sveta12}. Recall  formulae \eqref{kira11} which constitutes
the linear algebraic relation between the vector field
$(\partial_\tau \mathbf u,\partial_\tau \mathbf v, \partial_\tau
\mathbf w)$ and the vector field  $(\boldsymbol \mu^{(\tau)},
\boldsymbol \lambda^{(\tau)}, \boldsymbol \chi^{(\tau)})$. In view
of the Second Structure Theorem \eqref{sima12} this relation has the
inverse given by \eqref{sima13}. In particular,  we have
\begin{equation}\label{sveta19}
    \partial_\tau \mathbf w\,=\, \mathbf W \boldsymbol \lambda^{(\tau)}+\chi^{(\tau)}_i\,
\frac{\partial}{\partial \xi_i} \mathbf w.
\end{equation}
On the other hand,  relation \eqref{kira27} in Corollary
\ref{kira26} yields
\begin{equation*}
(\boldsymbol \mu^{(k)}, \boldsymbol \lambda^{(k)}, \boldsymbol
\chi^{(k)})=\partial_k M(\boldsymbol \mu^{(2)}, \boldsymbol
\lambda^{(2)}, \boldsymbol \chi^{(2)})
\end{equation*}
Substituting this relation into identity \eqref{sveta19} with $\tau$
replaced by $k$, we obtain
\begin{equation}\label{sveta20}
    \partial_k w_1^*\,=\, \partial_k \mathbf w\cdot \mathbf e_1 \,=\,\partial_k M\big\{( \mathbf W^\top \mathbf
    e_1)\cdot\boldsymbol\lambda^{(2)}+\nabla w_1^*\cdot
    \boldsymbol\chi^{(2)}\big\}.
\end{equation}
It follows that
\begin{gather*}
    \int_{\mathbb T^{n-1}}w_1^*\partial_k w_1^*\, d
    \boldsymbol\xi= \int_{\mathbb T^{n-1}}(p^{(2)}+w_1^*)\partial_k w_1^*\, d
    \boldsymbol\xi=\\
\partial_k M\int_{\mathbb
    T^{n-1}}\big\{(p^{(2)}+w_1^*) \mathbf W^\top \mathbf
    e_1\big\}\cdot\boldsymbol\lambda^{(2)}\, d
    \boldsymbol\xi+\partial_k M\int_{\mathbb
    T^{n-1}}\big\{(p^{(2)}+w_1^*)\nabla w_1^*\big\}\cdot
    \boldsymbol\chi^{(2)}\, d
    \boldsymbol\xi
\end{gather*}
Next, equations \eqref{kira29} imply the equalities
\begin{gather*}
 (p^{(2)}+ w_1^*) \nabla w_1^*=-   \boldsymbol\partial\, \boldsymbol\mu^{(2)},\\
( p^{(2)}+w_1^*)\mathbf W^\top\boldsymbol e_1 =-
 \mathbf J\boldsymbol\partial\,
\boldsymbol\lambda^{(2)}\,-\, \boldsymbol\Omega\,
\boldsymbol\lambda^{(2)}\,-\, \mathbf T\boldsymbol\mu^{(2)}.
\end{gather*}
Combining the obtained results we arrive at the identity
\begin{multline}\label{sveta23}
\int_{\mathbb T^{n-1}}w_1^*\partial_k w_1^*\, d
    \boldsymbol\xi=-\partial_k M\int_{\mathbb
    T^{n-1}}\Big( (\mathbf J\boldsymbol\partial\,
\boldsymbol\lambda^{(2)}\,\\+\, \boldsymbol\Omega\,
\boldsymbol\lambda^{(2)}\,+\, \mathbf
T\boldsymbol\mu^{(2)})\cdot\boldsymbol\lambda^{(2)}+\boldsymbol\partial\,
\boldsymbol\mu^{(2)}\cdot \boldsymbol\chi^{(2)}\Big)\, d
    \boldsymbol\xi.
\end{multline}
Equation \eqref{kira29c} yields the identity
\begin{gather*}
 -\partial_k M\int_{\mathbb
    T^{n-1}}\boldsymbol\partial\,
\boldsymbol\mu^{(2)}\cdot \boldsymbol\chi^{(2)}\, d
    \boldsymbol\xi=\partial_k M\int_{\mathbb
    T^{n-1}}\boldsymbol\partial\,
\boldsymbol\mu^{(2)}\cdot \boldsymbol\chi^{(2)}\, d
    \boldsymbol\xi\\=
\partial_k M\int_{\mathbb
    T^{n-1}}\big(\mathbf S\boldsymbol\mu^{(2)}\, +\, \mathbf T^\top
\boldsymbol\lambda^{(2)}\big)\cdot\boldsymbol\mu^{(2)}\, d
    \boldsymbol\xi
\end{gather*}
Substituting this equality into \eqref{sveta23} we finally obtain
\begin{multline}\label{sveta24}
    \int_{\mathbb T^{n-1}}w_1^*\partial_k w_1^*\, d
    \boldsymbol\xi=\partial_k M\int_{\mathbb
    T^{n-1}}\Big(\mathbf
    S\boldsymbol\mu^{(2)}\cdot\boldsymbol\mu^{(2)}\\-(\mathbf J\boldsymbol\partial\,
\boldsymbol\lambda^{(2)}\,+\, \boldsymbol\Omega\,
\boldsymbol\lambda^{(2)})\cdot\boldsymbol\lambda^{(2)}\Big)\, d
    \boldsymbol\xi=\partial_k M\,\, L_{22}.
\end{multline}
It remains to notice that desired identity \eqref{sveta12} obviously
follows from \eqref{sveta24} and identity \eqref{sveta18}.

Our next task is to prove identity \eqref{sveta13}. Differentiating
identity \eqref{sveta11} with respect to $\alpha$ and noting that
$\overline{\partial_\alpha^2 w_1}=0$ we obtain
\begin{equation}\label{sveta25}
    \partial_\alpha^2\Psi(\alpha,k)= M\int_{\mathbb
    T^{n-1}}((\partial_\alpha w_1)^2+w_1^* \partial_\alpha^2 w_1^*)\, d
    \boldsymbol\xi +R,
\end{equation}
where
\begin{equation}\label{sveta26}
    R= \int_{\mathbb
    T^{n-1}}(\partial_\alpha m +w_1\partial_\alpha M)\, d
    \boldsymbol\xi.
\end{equation}
Since $(\partial_\alpha w_1)^2=1+2\partial_\alpha
w_1^*+(\partial_\alpha w_1^*)^2$, we have
\begin{equation}\label{sveta27}
\int_{\mathbb
    T^{n-1}}((\partial_\alpha w_1)^2+w_1^* \partial_\alpha^2 w_1^*)\, d
    \boldsymbol\xi=(2\pi)^{n-1} +\varsigma_2,
\end{equation}
where
\begin{equation}\label{sveta28}
\varsigma_2= \int_{\mathbb
   T^{n-1}}((\partial_\alpha w_1^*)^2+w_1^* \partial_\alpha^2 w_1^*)\, d
    \boldsymbol\xi.
\end{equation}
Estimate \eqref{sveta8} implies that $\varsigma_2$ satisfies
inequality \eqref{sveta14}. It remains to calculate $R$. Equality
\eqref{kira12} yields
\begin{equation*}
\partial_\alpha m +w_1\partial_\alpha M= \partial_\alpha m +\alpha \partial_\alpha M
+\partial_\alpha M w_1^*= p^{(\alpha)}+\partial_\alpha M w_1^*.
\end{equation*}
Thus we get
\begin{equation}\label{sveta29}
    R=\int_{\mathbb
   T^{n-1}}(p^{(\alpha)}+\partial_\alpha M w_1^*)\partial_\alpha w_1\, d
    \boldsymbol\xi.
\end{equation}
Next, relation \eqref{sveta19} with $\tau=\alpha$ gives the identity
\begin{equation}\label{sveta30}
 \partial_\alpha  w_1  =  \partial_\alpha \mathbf w\cdot
 \mathbf e_1
    \,=\, \mathbf W^\top \mathbf e_1\cdot  \boldsymbol \lambda^{(\alpha)}+\boldsymbol
    \chi^{(\alpha)}\cdot\nabla \mathbf w_1^*.
\end{equation}
Substituting this result into \eqref{sveta29} we arrive at the
identity
\begin{multline}\label{sveta32}
    R=\int_{\mathbb
   T^{n-1}}(p^{(\alpha)}+\partial_\alpha M w_1^*)\mathbf
   W^{\top}\mathbf e_1\cdot \boldsymbol\lambda^{(\alpha)}\,d
    \boldsymbol\xi\\+\int_{\mathbb
   T^{n-1}}(p^{(\alpha)}+\partial_\alpha M w_1^*)\nabla w_1^* \cdot \boldsymbol\chi^{(\alpha)}
 \,d\boldsymbol\xi  .
\end{multline}
Next, equations \eqref{kira14a} and \eqref{kira14b} in Lemma
\ref{kira13} imply the equalities
\begin{gather*}
(p^{(\alpha)}+ w_1^*) \nabla w_1^*=-   \boldsymbol\partial\, \boldsymbol\mu^{(\alpha)},\\
 (p^{(\alpha)}+ w_1^*) \mathbf W^\top \mathbf e_1 =-
 \mathbf J\boldsymbol\partial\,
\boldsymbol\lambda^{(\alpha)}\,-\, \boldsymbol\Omega\,
\boldsymbol\lambda^{(\alpha)}\,-\, \mathbf
T\boldsymbol\mu^{(\alpha)},\\
\end{gather*}
Substituting these equalities into \eqref{sveta29} we arrive at the
identity
\begin{equation}\label{sveta31}
R=-\int_{\mathbb
    T^{n-1}}\Big( (\mathbf J\boldsymbol\partial\,
\boldsymbol\lambda^{(\alpha)}\,+\, \boldsymbol\Omega\,
\boldsymbol\lambda^{(\alpha)}\,+\, \mathbf
T\boldsymbol\mu^{(2)})\cdot\boldsymbol\lambda^{(\alpha)}+\boldsymbol\partial\,
\boldsymbol\mu^{(\alpha)}\cdot \boldsymbol\chi^{(\alpha)}\Big)\, d
    \boldsymbol\xi.
\end{equation}
On the other hand, equation \eqref{kira14c} with $\tau=\alpha$ implies
\begin{gather*}
 -\int_{\mathbb
    T^{n-1}}\boldsymbol\partial\,
\boldsymbol\mu^{(\alpha)}\cdot \boldsymbol\chi^{(\alpha)}\, d
    \boldsymbol\xi=\int_{\mathbb
    T^{n-1}}\boldsymbol\partial\,
\boldsymbol\mu^{(\alpha)}\cdot \boldsymbol\chi^{(\alpha)}\, d
    \boldsymbol\xi\\=
\int_{\mathbb
    T^{n-1}}\big(\mathbf S\boldsymbol\mu^{(\alpha)}\, +\, \mathbf T^\top
\boldsymbol\lambda^{(\alpha)}\big)\cdot\boldsymbol\mu^{(2)}\, d
    \boldsymbol\xi
\end{gather*}
Substituting this equality into \eqref{sveta31} we  obtain
\begin{equation}\label{sveta33}
 R=\int_{\mathbb
    T^{n-1}}\Big(\mathbf
    S\boldsymbol\mu^{(\alpha)}\cdot\boldsymbol\mu^{(\alpha)}-(\mathbf J\boldsymbol\partial\,
\boldsymbol\lambda^{(\alpha)}\,+\, \boldsymbol\Omega\,
\boldsymbol\lambda^{(\alpha)})\cdot\boldsymbol\lambda^{(\alpha)}\Big)\,
d \boldsymbol\xi= L_{22}.
\end{equation}
Combining this relation with the equality
$$
(\boldsymbol\mu^{(\alpha)},\boldsymbol\lambda^{(\alpha)})=
(\boldsymbol\mu^{(1)},\boldsymbol\lambda^{(1)}) +\partial_\alpha
M\,(\boldsymbol\mu^{(2)},\boldsymbol\lambda^{(2)})
$$
and recalling formula \eqref{zara1} for $L_{ij}$ we arrive at the
expression for $R$,
\begin{equation*}
    R= L_{11}+\partial_\alpha M 2L_{12}+ (\partial_\alpha M)^2
    L_{22},
\end{equation*}
Substituting this expression and equality \eqref{sveta27} into
\eqref{sveta25} we obtain desired identity \eqref{sveta14}. This
completes the proof of Theorem \ref{sveta10}.
\end{proof}

The following proposition is a direct consequence of this theorem.
\begin{proposition}\label{sveta100}
Let all assumptions of Theorem \ref{sveta10} be satisfied and $k=0$. Then the third
derivative of $\Psi$ admits the estimate
\begin{equation}\label{sveta101}
    |\partial_\alpha^3\Psi(\alpha,0)-(2\pi)^{n-1}\partial_\alpha M|\leq c
     |\varepsilon|(|M|+|\boldsymbol\mu_0|+|\partial_\alpha M|).
\end{equation}
\end{proposition}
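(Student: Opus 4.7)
The starting point is the identity \eqref{sveta13} of Theorem~\ref{sveta10}, specialized to $k=0$:
\begin{equation*}
\partial_\alpha^2\Psi(\alpha,0)=(2\pi)^{n-1}M+\varsigma_2 M+L_{11}+2L_{12}\partial_\alpha M+L_{22}(\partial_\alpha M)^2.
\end{equation*}
The plan is to differentiate this identity once more in $\alpha$ and to show that every term other than $(2\pi)^{n-1}\partial_\alpha M$ is controlled by $c|\varepsilon|(|M|+|\boldsymbol\mu_0|+|\partial_\alpha M|)$. Applying $\partial_\alpha$ gives
\begin{equation*}
\partial_\alpha^3\Psi=(2\pi)^{n-1}\partial_\alpha M+(\partial_\alpha\varsigma_2)M+\varsigma_2\partial_\alpha M+\partial_\alpha L_{11}+2(\partial_\alpha L_{12})\partial_\alpha M+2L_{12}\partial_\alpha^2 M+(\partial_\alpha L_{22})(\partial_\alpha M)^2+2L_{22}(\partial_\alpha M)(\partial_\alpha^2 M).
\end{equation*}
Thus the proposition reduces to bounding each of the seven correction terms.

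For the first three correction terms I would use estimate \eqref{sveta14} giving $|\varsigma_2|+|\partial_\alpha\varsigma_2|\le c|\varepsilon|^2$ so that $|(\partial_\alpha\varsigma_2)M|\le c|\varepsilon|^2|M|\le c|\varepsilon||M|$ and $|\varsigma_2\partial_\alpha M|\le c|\varepsilon||\partial_\alpha M|$. For the terms involving $L_{12}$ and $L_{22}$ I would invoke Theorem~\ref{zara2} at $k=0$, which gives $|L_{12}|\le c|\varepsilon||\boldsymbol\mu_0|$ and $|L_{22}|\le c|\varepsilon|^2$, together with the uniform bound $|\partial_\alpha^r M|\le c(r)|\varepsilon|$ from Theorem~\ref{kira2}. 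This immediately yields $|L_{12}\partial_\alpha^2 M|\le c|\varepsilon|^2|\boldsymbol\mu_0|\le c|\varepsilon||\boldsymbol\mu_0|$ and $|L_{22}(\partial_\alpha M)(\partial_\alpha^2 M)|\le c|\varepsilon|^3|\partial_\alpha M|$.

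The remaining three terms require estimates on the $\alpha$-derivatives $\partial_\alpha L_{ij}$, and this is the only nontrivial step. The key observation is that Theorem~\ref{kira2} gives analytic dependence of all the building blocks $(\mathbf{u},\mathbf{v},\mathbf{w},\mathbf{W})$ on $\alpha$ with uniform norm estimates, so Cauchy estimates on a slightly smaller strip give $\partial_\alpha$-derivative bounds of the same order as the function bounds. Applying this to the solutions of the Jacobi problems \eqref{kira28}--\eqref{kira29}, together with the decompositions of Theorems~\ref{figa8} and~\ref{figa18}, yields: $|\partial_\alpha\boldsymbol\mu_0|\le c|\varepsilon|$ (since $\boldsymbol\mu_0=\mathbf{K}_0^{-1}\overline{\mathbf{t}_1}$ and $\|\mathbf{t}_1\|\le c|\varepsilon|$ is analytic in $\alpha$), whence from \eqref{zara4}, $|\partial_\alpha L_{11}|\le 2(2\pi)^{n-1}|\mathbf{K}_0\boldsymbol\mu_0\cdot\partial_\alpha\boldsymbol\mu_0|+c|\varepsilon||\boldsymbol\mu_0|\le c|\varepsilon||\boldsymbol\mu_0|$. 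Similarly, differentiating the decomposition arguments for $L_{12}$ and $L_{22}$ gives $|\partial_\alpha L_{12}|\le c|\varepsilon|$ and $|\partial_\alpha L_{22}|\le c|\varepsilon|^2$, so $|(\partial_\alpha L_{12})\partial_\alpha M|\le c|\varepsilon||\partial_\alpha M|$ and $|(\partial_\alpha L_{22})(\partial_\alpha M)^2|\le c|\varepsilon|^3|\partial_\alpha M|$.

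Combining the seven estimates term by term yields
\begin{equation*}
|\partial_\alpha^3\Psi(\alpha,0)-(2\pi)^{n-1}\partial_\alpha M|\le c|\varepsilon|\bigl(|M|+|\boldsymbol\mu_0|+|\partial_\alpha M|\bigr),
\end{equation*}
which is the desired inequality. The main obstacle is of a bookkeeping nature rather than a conceptual one: one must justify passage of $\partial_\alpha$ through the quadratic form \eqref{zara1}, which is done by applying Cauchy estimates on the complex strip $\Sigma_\varrho$ provided by Theorem~\ref{ira18}, so that analyticity in $\alpha$ together with the uniform bounds on $\boldsymbol\mu^{(i)},\boldsymbol\lambda^{(i)}$ from Theorems~\ref{figa8}--\ref{figa18} automatically transfers to bounds on their $\alpha$-derivatives of the same order.
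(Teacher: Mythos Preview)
Your overall strategy---differentiate \eqref{sveta13} once more in $\alpha$ and bound each of the resulting correction terms---is exactly the paper's approach, and your handling of the $\varsigma_2$, $L_{12}\partial_\alpha^2 M$ and $L_{22}(\partial_\alpha M)(\partial_\alpha^2 M)$ terms is correct. The only substantive step is indeed the bound on $\partial_\alpha L_{ij}$, and here your write-up contains a gap.

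You argue ``from \eqref{zara4}'' that $|\partial_\alpha L_{11}|\le 2(2\pi)^{n-1}|\mathbf K_0\boldsymbol\mu_0\cdot\partial_\alpha\boldsymbol\mu_0|+c|\varepsilon||\boldsymbol\mu_0|$. But \eqref{zara4} is an inequality, not an identity; it says $L_{11}=(2\pi)^{n-1}\mathbf K_0\boldsymbol\mu_0\cdot\boldsymbol\mu_0+R$ with $|R(\alpha)|\le c|\varepsilon||\boldsymbol\mu_0(\alpha)|^2$. A Cauchy estimate on $R$ over a disc of fixed radius gives only $|\partial_\alpha R|\le c|\varepsilon|\sup_{\alpha'}|\boldsymbol\mu_0(\alpha')|^2\le c|\varepsilon|^3$, which is \emph{not} dominated by $c|\varepsilon||\boldsymbol\mu_0(\alpha)|$ at points where $|\boldsymbol\mu_0(\alpha)|$ happens to be $o(|\varepsilon|^2)$. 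Since the proposition asserts a pointwise bound in $|\boldsymbol\mu_0|$ (and this form is genuinely used downstream to obtain $|\partial_\alpha^3\Psi|\ge|\partial_\alpha M|$ in Lemma~\ref{mara5}), the Cauchy shortcut does not close.

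The paper's remedy is to differentiate not the estimate but the bilinear integral \eqref{zara1} itself (this is the content of Lemma~\ref{sveta105}). One first expresses $(\boldsymbol\mu^{(i)},\boldsymbol\lambda^{(i)})$ through the Jacobi fields $(\boldsymbol\mu^{(\tau)},\boldsymbol\lambda^{(\tau)})$ via \eqref{kira27}, and those in turn through $\partial_\tau\mathbf u,\partial_\tau\mathbf v,\partial_\tau\mathbf w,\partial_\tau\mathbf W$ via \eqref{kira11}; the estimates \eqref{sveta8}--\eqref{sveta9} then give $\|\partial_\alpha(\boldsymbol\mu^{(i)},\boldsymbol\lambda^{(i)})\|\le c|\varepsilon|$ for $i=1,2$. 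Differentiating \eqref{zara1} produces cross terms of the type $\int\mathbf S\boldsymbol\mu^{(1)}\cdot\partial_\alpha\boldsymbol\mu^{(1)}$ and $\int(\mathbf J\boldsymbol\partial\boldsymbol\lambda^{(1)}+\boldsymbol\Omega\boldsymbol\lambda^{(1)})\cdot\partial_\alpha\boldsymbol\lambda^{(1)}$; pairing $|\partial_\alpha\boldsymbol\mu^{(1)}|\le c|\varepsilon|$ with the pointwise bounds $|\boldsymbol\mu^{(1)}|\le c|\boldsymbol\mu_0|$ and $|\mathbf J\boldsymbol\partial\boldsymbol\lambda^{(1)}+\boldsymbol\Omega\boldsymbol\lambda^{(1)}|\le c|\boldsymbol\mu_0|$ (both from the decomposition \eqref{figa20} at $k=0$) yields the required $|\partial_\alpha L_{11}|\le c|\varepsilon||\boldsymbol\mu_0|$. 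Your phrase ``differentiating the decomposition arguments'' points in this direction, but the actual mechanism is differentiating the explicit quadratic form, not the final inequality.
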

\begin{proof} It follows from
representation \eqref{sveta13} in Theorem \ref{sveta10} that
\begin{equation}\label{sveta102}
    \partial^3_\alpha\Psi=(2\pi)^{n-1}  \partial_\alpha M+\varsigma_2 \partial_\alpha M
    +\partial_\alpha \varsigma_2 M
 +\mathbf I_1+\mathbf I_2,
\end{equation}
where
\begin{gather}\label{sveta103}
    \mathbf I_1= \partial_\alpha L_{11}+2\partial_\alpha M\partial_\alpha L_{12}+
(\partial_\alpha M)^2\partial_\alpha L_{22}, \\\label{sveta104}
 \mathbf I_2= 2\partial_\alpha^2 M L_{12}+
2(\partial_\alpha M)\partial_\alpha^2 M L_{22}
\end{gather}
The rest of the proof is based on the following
\begin{lemma}\label{sveta105}
Under the assumptions of Theorem \ref{sveta100},
\begin{equation}\label{sveta106}
    |\partial_\alpha L_{11}|\leq c|\varepsilon||\boldsymbol\mu_0|, \quad
   |\partial_\alpha L_{12}|+|\partial_\alpha L_{22}|\leq c|\varepsilon| .
\end{equation}

\end{lemma}
\begin{proof} First we estimates the vector fields $\boldsymbol\mu^{(i)}$ and
$\boldsymbol\lambda^{(i)}$, $i=1,2$. We begin with the observation that relations
in view of the identities \eqref{kira27},
\begin{equation}\label{sveta107}\begin{split}
(\boldsymbol\mu^{(2)},\boldsymbol\lambda^{(2)})=\partial_k M^{-1}\,\,
( \boldsymbol\mu^{(k)},\boldsymbol\lambda^{(k)}), \\
(\boldsymbol\mu^{(1)},\boldsymbol\lambda^{(1)})=
  ( \boldsymbol\mu^{(\alpha)},\boldsymbol\lambda^{(\alpha)})-
  \partial_\alpha M\,\,\,(\boldsymbol\mu^{(2)},\boldsymbol\lambda^{(2)}).
\end{split}\end{equation}
In its turn, equalities \eqref{kira11} establish the following relations between the vector fields
$\boldsymbol\mu^{(\tau)}$,
$\boldsymbol\lambda^{(\tau)}$ and the derivatives of the vector
fields $\mathbf u$, $\mathbf v$, and $\mathbf w$, i.e.,
\begin{gather*}
\boldsymbol\chi^{(\tau)}\,= \,\mathbf V^\top \,\partial_\tau\mathbf
u\\\boldsymbol\lambda^{(\tau)}\,=\, \mathbf
(W^{-1}\partial_\tau\mathbf w-\chi_i^{(\tau)}\,\mathbf W^{-1}
\frac{\partial}{\partial \xi_i} \mathbf w\\
\boldsymbol\mu^{(\tau)}\,=\, \mathbf V^{-1}\Big(\partial_\tau\mathbf v+\chi_i^{(\tau)}\,
\frac{\partial}{\partial \xi_i} \mathbf v-
\boldsymbol\Lambda\boldsymbol\lambda^{(\tau)}\Big).
\end{gather*}
These identities along with estimates
\eqref{sveta8} and \eqref{sveta9} imply
\begin{equation}\label{sveta109}
    \|\partial_\alpha  \boldsymbol\mu^{(\tau)}\|_{\sigma/4,0}+
     \|\partial_\alpha  \boldsymbol\lambda^{(\tau)}\|_{\sigma/4,0}\leq c|\varepsilon|, \quad\tau=\alpha, k.
\end{equation}
On the other hand, inequalities \eqref{sveta9} yield
\begin{equation}\label{sveta110}
    |\partial_k M^{-1}|\leq (1-c|\varepsilon|)^{-1}\leq c,\quad  |\partial _k\partial_\alpha M|\leq
    c|\varepsilon|.
\end{equation}
Inequalities \eqref{sveta109}-\eqref{sveta110} and
 the identity
$$
\partial_\alpha(\boldsymbol\mu^{(2)},\boldsymbol\lambda^{(2)})=-(\partial_k M)^{-2}
\partial_\alpha\partial_k M\,\,
( \boldsymbol\mu^{(k)},\boldsymbol\lambda^{(k)})+
\partial_k M^{-1}
\partial_\alpha
( \boldsymbol\mu^{(k)},\boldsymbol\lambda^{(k)})
$$
lead to  the estimate
\begin{equation}\label{sveta111}
    |\partial_\alpha(\boldsymbol\mu^{(2)},\boldsymbol\lambda^{(2)})|\leq c|\varepsilon|.
\end{equation}
Next, it follows from \eqref{sveta107} that
\begin{multline*}
 \partial_\alpha (\boldsymbol\mu^{(1)},\boldsymbol\lambda^{(1)})=
  \partial_\alpha( \boldsymbol\mu^{(\alpha)},\boldsymbol\lambda^{(\alpha)})
  -\partial_\alpha^2 M\,
(\boldsymbol\mu^{(2)},\boldsymbol\lambda^{(2)})\\-
  \partial_\alpha M\partial_\alpha(\boldsymbol\mu^{(2)},\boldsymbol\lambda^{(2)}).
\end{multline*}
This relation along with inequalities \eqref{sveta9} and  \eqref{sveta111}
yields the estimate
\begin{equation}\label{sveta112}
      |\partial_\alpha(\boldsymbol\mu^{(1)},\boldsymbol\lambda^{(1)})|\leq c|\varepsilon|.
\end{equation}
The differentiation both sides of equality \eqref{zara1} with respect to $\alpha$
leads to the representation
\begin{multline}\label{sveta113}
   \partial_\alpha L_{ij}=\sigma_{ij}+
   \int_{\mathbb T^{n-1}}\big\{\,\,\mathbf S\mu^{(i)}\cdot\partial_\alpha\boldsymbol\mu^{(j)}-
    (\mathbf J\boldsymbol\partial
    \boldsymbol\lambda^{(i)}+\boldsymbol\Omega\boldsymbol\lambda^{(i)})\cdot
    \partial_\alpha\boldsymbol\lambda^{(j)}\,\,\big\}\, d \boldsymbol\xi\\+.
\int_{\mathbb T^{n-1}}\big\{\,\,\mathbf S\partial_\alpha\boldsymbol\mu^{(i)}\cdot\boldsymbol\mu^{(j)}-
    (\mathbf J\boldsymbol\partial
    \boldsymbol\lambda^{(j)}+\boldsymbol\Omega\boldsymbol\lambda^{(j)})\cdot
    \partial_\alpha\boldsymbol\lambda^{(i)}\,\,\big\}\, d \boldsymbol\xi,
\end{multline}
where
$$
\sigma_{ij}=\int_{\mathbb T^{n-1}}\partial_\alpha \mathbf S\boldsymbol\mu^{(i)}\cdot \boldsymbol\mu^{(i)}
\, d \boldsymbol\xi.
$$
Estimate \eqref{figa9} in Theorem \ref{figa8} and estimate \eqref{figa21}
in Theorem\ref{figa18} yield the inequality.
that
\begin{equation}\label{sveta114}
    |\sigma_{12}|+|\sigma_{22}|\leq c |\boldsymbol\mu^{(1)}|_0|\boldsymbol\mu^{(2)}|_0
   + |\boldsymbol\mu^{(2)}|_0^2\leq c|\varepsilon^2, \quad |\sigma_{11}|\leq c|\boldsymbol\mu^{(1)}|_0^2
   \leq c|\varepsilon| |\boldsymbol\mu_0|.
\end{equation}
On the other hand, we have
\begin{multline*}
\Big|\int_{\mathbb T^{n-1}}\big\{\,\,\mathbf S\mu^{(i)}\cdot\partial_\alpha\boldsymbol\mu^{(j)}-
    (\mathbf J\boldsymbol\partial
    \boldsymbol\lambda^{(i)}+\boldsymbol\Omega\boldsymbol\lambda^{(i)})\cdot
    \partial_\alpha\boldsymbol\lambda^{(j)}\,\,\big\}\, d \boldsymbol\xi\Big|\\\leq
c( |\boldsymbol\mu^{(i)}|_0|\partial_\alpha\boldsymbol\mu^{(j)}|_0
+|\boldsymbol\lambda^{(i)}|_1|\partial_\alpha\boldsymbol\lambda^{(j)}|_0).
\end{multline*}
Combining this inequality with estimates  \eqref{sveta111}-\eqref{sveta112} we arrive at
the estimate
\begin{multline*}
\Big|\int_{\mathbb T^{n-1}}\big\{\,\,\mathbf S\mu^{(i)}\cdot\partial_\alpha\boldsymbol\mu^{(j)}-
    (\mathbf J\boldsymbol\partial
    \boldsymbol\lambda^{(i)}+\boldsymbol\Omega\boldsymbol\lambda^{(i)})\cdot
    \partial_\alpha\boldsymbol\lambda^{(j)}\,\,\big\}\, d \boldsymbol\xi\Big|\\\leq
c|\varepsilon|( |\boldsymbol\mu^{(i)}|_0
+|\boldsymbol\lambda^{(i)}|_1)\leq c|\varepsilon|,
\end{multline*}
which along with \eqref{sveta114} yields the estimate \eqref{sveta106} for $\partial_\alpha L_{12}$
and $\partial_\alpha L_{22}$.

It remains  to estimate $\partial_\alpha L_{11}$.
Since  $k=0$, we have
$$
\mathbf J\boldsymbol\partial
    \boldsymbol\lambda^{(1)}+\boldsymbol\Omega\boldsymbol\lambda^{(1)}=
  \mathbf J\boldsymbol\partial
    \boldsymbol\lambda^{(1)}+ \boldsymbol\lambda^{(1)}\cdot \mathbf e_2.
 $$
Substituting  decomposition \eqref{figa20} into this relation and using estimate \eqref{figa21}
we obtain
\begin{equation*}
    |\mathbf J\boldsymbol\partial
    \boldsymbol\lambda^{(1)}+\boldsymbol\Omega\boldsymbol\lambda^{(1)}|=
    |\mathbf J\boldsymbol\partial
    \boldsymbol\lambda_\varepsilon+(\boldsymbol \mu_0\cdot \mathbf t_0)+
    \boldsymbol\lambda_\varepsilon \cdot \mathbf e_2|\leq
c|\boldsymbol\mu_0|.
\end{equation*}
Combining this result with the estimate $|\boldsymbol\mu^{(1)}|\leq c |\boldsymbol\mu_0|$ and estimates
\eqref{sveta111}-\eqref{sveta112} we arrive at the inequality
\begin{equation*}
\Big|\int_{\mathbb T^{n-1}}\big\{\,\,\mathbf S\mu^{(1)}\cdot\partial_\alpha\boldsymbol\mu^{(1)}-
    (\mathbf J\boldsymbol\partial
    \boldsymbol\lambda^{(1)}+\boldsymbol\Omega\boldsymbol\lambda^{(1)})\cdot
    \partial_\alpha\boldsymbol\lambda^{(1)}\,\,\big\}\, d \boldsymbol\xi\Big|\leq
c|\varepsilon| |\boldsymbol\mu_0|.
\end{equation*}
Combing this inequality with estimate \eqref{sveta114} for $\sigma_{11}$ and identity \eqref{sveta113}
we finally obtain that $|\partial_\alpha L_{11}|\leq c |\varepsilon\ |\boldsymbol\mu_0|$.
This completes the proof of the lemma.
\end{proof}
Let us turn to the proof of Proposition \ref{sveta105}.  Applying Lemma \ref{sveta106} we
obtain that the quantity $\mathbf I_1$ given by \eqref{sveta103} satisfies the inequality
\begin{equation}\label{sveta117}
    |\mathbf I_1|\leq c |\partial_\alpha L_{11}| +c |\partial_\alpha M|(|\partial_\alpha L_{12}|+
    |\partial_\alpha L_{22}|)\leq c |\varepsilon||\boldsymbol \mu_0|+c |\varepsilon| |\partial_\alpha M|.
\end{equation}
On the other hand, estimates \eqref{zara3} and \eqref{zara4} in Theorem \ref{zara5} imply the
following estimate for the quantity $\boldsymbol I_2$ given by  \eqref{sveta104}
\begin{equation}\label{sveta118}
     |\mathbf I_2|\leq c |L_{12}|+c |\partial_\alpha M||L_{22}|\leq  c
     |\varepsilon||\boldsymbol \mu_0|+c |\varepsilon| |\partial_\alpha M|.
\end{equation}
Substituting  \eqref{sveta117} and \eqref{sveta118} into identity \eqref{sveta102} we arrive at
the estimate
\begin{multline*}
    |\partial_\alpha^3\Psi(\alpha,0)-(2\pi)^{n-1}\partial_\alpha M|\\\leq
    c |\varepsilon ||\varsigma_2||\partial_\alpha M|+c |\varepsilon ||\partial_\alpha\varsigma_2|
    | M|
 +c   |\varepsilon|(|\partial_\alpha M|+|\boldsymbol\mu_0|)
\end{multline*}
Recalling estimate \eqref{sveta14} in Theorem \ref{sveta10} for  the quantity
$\varsigma_2$ we finally obtain the desired inequality
\begin{equation*}
    |\partial_\alpha^3\Psi(\alpha,0)-(2\pi)^{n-1}\partial_\alpha M|\leq
   c   |\varepsilon|(|M|+|\partial_\alpha M|+|\boldsymbol\mu_0|),
\end{equation*}
and the proposition follows.

\end{proof}

\subsection{Proof of Theorem \ref{theo1.4}}\label{mara}
We are now in a position to prove  Theorem \ref{theo1.4} which is the main result of this work.
We begin with the observation that by virtue of Theorem \ref{ira18} the main operator equation
\eqref{luna8x} has an analytic periodic solution $(\boldsymbol\varphi, e, m, M)$ for all
$(\alpha, k)\in \mathbb T^1\times [0,1]$ and all $|\varepsilon|\leq \varepsilon _0$. This solution
define the vector $\boldsymbol \Theta(\varphi)=(\mathbf u, \mathbf v, \mathbf w, \mathbf V,
\mathbf W, \boldsymbol\Lambda, \mathbf R)$ such that the corresponding canonical
mapping $\boldsymbol\vartheta$
put the modified Hamiltonian $H_m=H+m z_1+2^{-1} Mz_1^2$ into the normal form \eqref{normalform0}.
In particular, the modified Hamiltonian has the weakly hyperbolic invariant torus for every
$(\alpha, k)\in \mathbb T^1\times [0,1]$. Recall that $m$ and $M$ are  functions of the variables
$\alpha$ and $k$. Hence Theorem \ref{theo1.4} will be proved if we prove that the bifurcation equations
\begin{equation}\label{mara1}
    m(\alpha, k)= M(\alpha,k)=0
\end{equation}
have a solution $(\alpha_0, k_0)\in \mathbb T^1\times [0,1]$. We claim that such a solution can be
defined as a minimizer of the action functional $\Psi(\alpha,k)$ given by \eqref{sveta6}, i.e.,
\begin{equation}\label{mara2}
    \Psi(\alpha_0, k_0)=\min\limits_{(\alpha, k)\in \mathbb T^1\times [0,1]} \Psi(\alpha,k).
\end{equation}
The proof of this fact falls into the sequence of  lemmas.
The first lemma shows that every minimizer $(\alpha_0, k_0)$, $k_0>0$,  of the action functional
serves as a solution to the bifurcation equations.
\begin{lemma}\label{mara3} Let all assumptions of Theorem \eqref{sveta10} be satisfied. Furthermore, assume that
\begin{equation}\label{mara4}
  \Psi(\alpha_0, k_0)=\min\limits_{(\alpha, k)\in \mathbb T^1\times [0,1]} \Psi(\alpha,k)\text{~~and~~}
  \Psi(\alpha_0, 0)> \Psi(\alpha_0, k_0).
\end{equation}
Then $m(\alpha_0, k_0)=M(\alpha_0, k_0)=0.$
\end{lemma}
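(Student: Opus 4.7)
The plan is to convert the minimization hypothesis into first-order conditions on $\Psi$ and then read off the conclusion $m = M = 0$ from the identities \eqref{sveta11}--\eqref{sveta12} in Theorem \ref{sveta10}. The strict inequality $\Psi(\alpha_0,0) > \Psi(\alpha_0, k_0)$ immediately rules out $k_0 = 0$, so $k_0 \in (0,1]$. Since $\Psi$ is $C^1$ and $2\pi$-periodic in $\alpha$, the minimizer is an interior critical point in $\alpha$, which gives $\partial_\alpha \Psi(\alpha_0, k_0) = 0$. Combining this with \eqref{sveta11} yields the basic relation
\[
(2\pi)^{n-1}(m + \alpha_0 M) + \varsigma_1(\alpha_0, k_0)\, M = 0, \qquad (\star)
\]
and by \eqref{sveta14} the coefficient $\varsigma_1$ is of order $\varepsilon^2$, so $(\star)$ will reduce to $m = 0$ as soon as $M = 0$ is established.

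The main step is to extract $M(\alpha_0, k_0) = 0$ from the first-order condition in $k$. If $k_0 \in (0,1)$ then $\partial_k \Psi(\alpha_0, k_0) = 0$; if $k_0 = 1$ then $\partial_k \Psi(\alpha_0, k_0) \leq 0$. By \eqref{sveta12} this reads $M\,\partial_k M\, L_{22} = 0$ (resp.\ $\leq 0$). The estimate $|\partial_k(M+k)| \leq c|\varepsilon|$ in Theorem \ref{kira2} gives $\partial_k M = -1 + O(\varepsilon) < 0$ for small $\varepsilon$, and Theorem \ref{zara2} gives $L_{22} \geq c^{-1}|w_1^*|_{-1}^2 \geq 0$. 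Consequently the sign of $\partial_k \Psi$ is opposite to the sign of $M\cdot L_{22}$.

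From here I distinguish the sign of $M(\alpha_0, k_0)$. If $M < 0$ at the minimizer, then by continuity $M < 0$ on a neighborhood, whence $\partial_k \Psi \geq 0$ on an interval of $k$-values surrounding $k_0$; integrating this sign along the segment $\{\alpha_0\}\times[0,k_0]$ and using the non-negativity of $L_{22}$ produces $\Psi(\alpha_0,0) \leq \Psi(\alpha_0,k_0)$, contradicting the strict hypothesis. Thus $M(\alpha_0,k_0) \geq 0$. The case $M > 0$ is handled analogously: $\partial_k\Psi \leq 0$ nearby, so $k \mapsto \Psi(\alpha_0,k)$ is non-increasing in a neighborhood of $k_0$, and combined with optimality at $k_0$ this forces $L_{22}(\alpha_0,k_0) = 0$. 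Once $M = 0$ is known, substituting into $(\star)$ yields $m(\alpha_0,k_0) = 0$.

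The main obstacle is precisely this last subcase: ruling out the degenerate configuration in which $M > 0$ and $L_{22}(\alpha_0,k_0) = 0$ (equivalently $w_1^*(\alpha_0,k_0) = 0$ by Theorem \ref{zara2}) simultaneously. In this scenario the second Jacobi field vanishes by uniqueness in problem \eqref{kira29}, and the first-order information in $k$ becomes vacuous. To close the argument I would invoke either the second-order necessary condition $\partial_k^2 \Psi(\alpha_0,k_0) \geq 0$ with interior $k_0$, or the fact that the monotonicity $\Psi(\alpha_0, \cdot)$ inherits from the sign of $\partial_k M\cdot L_{22}$ on a neighborhood of the whole segment $\{\alpha_0\}\times[0,k_0]$ is inconsistent with the strict drop $\Psi(\alpha_0,0) > \Psi(\alpha_0,k_0)$ unless $M$ actually vanishes at $k_0$. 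The delicate bookkeeping of the sign of $M$, the near-degeneracy of $L_{22}$, and the uniform estimate $\partial_k M = -1 + O(\varepsilon)$ along $\{\alpha_0\}\times[0,1]$ constitutes the technical heart of the proof.
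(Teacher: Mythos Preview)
Your outline has the right skeleton---use \eqref{sveta11} to get $m=0$ once $M=0$, and use \eqref{sveta12} together with $\partial_k M=-1+O(\varepsilon)<0$ to pin down $M$---but the case analysis on the sign of $M(\alpha_0,k_0)$ does not close, and you have correctly identified where it fails.

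The gap is the possible degeneracy $L_{22}(\alpha_0,k_0)=0$. Your treatment of $M<0$ only gives $\partial_k\Psi\ge 0$ on a \emph{neighborhood} of $k_0$, yet you then integrate ``along the segment $\{\alpha_0\}\times[0,k_0]$''; that step is not justified, since $M$ may change sign on $[0,k_0]$. If instead $L_{22}(\alpha_0,k_0)>0$ you would get $\partial_k\Psi(\alpha_0,k_0)>0$ strictly and an immediate contradiction with minimality---no integration needed---so the real enemy in \emph{both} sign cases is $L_{22}(\alpha_0,k_0)=0$. You flag this as the ``main obstacle'' but the two suggested fixes (second-order condition in $k$, or monotonicity along the whole segment) are not worked out, and the second one runs into the same issue: you do not control the sign of $M$ globally on $[0,k_0]$. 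You also leave the endpoint $k_0=1$ open.

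The paper disposes of all of this with one observation you are missing: by Theorem~\ref{kira2} the map $k\mapsto w_1^*(\alpha_0,k)$ is analytic on $(0,1)$, so either $w_1^*(\alpha_0,\cdot)\equiv 0$ or its zero set is discrete. The first alternative forces $L_{22}(\alpha_0,\cdot)\equiv 0$ by \eqref{zara3}, hence $\partial_k\Psi(\alpha_0,\cdot)\equiv 0$ by \eqref{sveta12}, contradicting $\Psi(\alpha_0,0)>\Psi(\alpha_0,k_0)$. Therefore $L_{22}(\alpha_0,k)>0$ for all $k\in(0,1)$ outside a countable discrete set, and $\partial_k M\,L_{22}(\alpha_0,k)<0$ almost everywhere on $(0,1)$. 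This single fact (i) rules out $k_0=1$, since $M<-1/3$ for $k\ge 1/2$ makes $\Psi(\alpha_0,\cdot)$ strictly increasing on $(1/2,1]$; and (ii) for interior $k_0$, makes $\Psi(\alpha_0,\cdot)$ strictly monotone near $k_0$ whenever $M(\alpha_0,k_0)\neq 0$, contradicting minimality. The analyticity-in-$k$ argument is the missing ingredient; once you insert it, your $(\star)$ finishes the proof.
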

\begin{proof} Let us prove that $L_{22}(\alpha_0,k)>0$ everywhere on $(0,1)$ but countable discrete set.
 Notice that the mapping
$[0,1]\ni k \to w_1^*(\alpha_0, k)\in \mathcal A_{\sigma/2,0}$ is continuously differentiable on $[0,1]$
an is analytic
on $(0,1)$. If $w_1^*$ is not identically equal to zero on $(0,1)$, then there is a countable discrete
set $\{k_i\}\subset (0,1)$ such that $|w_1^*|_{-1}>0$ outside of this set. It follows
from this and estimate \eqref{zara3} in Theorem \ref{zara2} that $L_{22}(\alpha_0,k)>0$ for $k\neq k_i$.

Let us show that the case when $w_1^*(\alpha_0, \cdot)$ identically equals zero on (0,1)
is impossible. In view of \eqref{zara3}, the identity  $w_1^*(\alpha_0,\cdot)\equiv 0$ on $(0,1)$ implies
the identity  $L_{22}(\alpha_0, \cdot)\equiv 0$ on $(0,1)$, and hence implies the identity $\partial_k\Psi(\alpha_0,\cdot)
=M \partial_k M L_{22}(\alpha_0,\cdot)\equiv 0$ on $(0,1)$. The latter is impossible since
$\Psi(\alpha_0,0)> \Psi(\alpha_0, k_0)$.
Therefore, $L_{22}(\alpha_0,k)>0$ for all $k\neq k_i$.

Let us prove that $k_0<1$. It follows from inequalities \eqref{sveta9} and \eqref{sveta10} that
$$
|\partial_k M+1|+|M+k| \leq c|\varepsilon|.
$$
Hence for all sufficiently small $\varepsilon$ we have
$$
\partial_k M\leq -1/3, \quad M<-1/3 \text{~~for~~} k\geq 1/2.
$$
Since $L_{22}(\alpha_0,k)$ is strictly positive a.e. in $(0,1)$, it follows that $\Psi(\alpha_0,k)$
strictly increases on $(1/2,1]$. Hence $k_0<1$.  From this and the conditions of the lemma we conclude
that $0<k_0<1$.

Let us prove that $M(\alpha_0, k_0)=0$. Notice that for all  sufficiently small $|\varepsilon|$, we have
$\partial_k M\geq -1+c|\varepsilon| <0$. Hence $\partial_k M L_{22}(\alpha_0,k)<0$ almost everywhere on $(0,1)$.
  Next, identity \eqref{sveta12} in Theorem \ref{sveta10} implies
  $\partial_k\Psi(\alpha_0,k)= M\partial_k M L_{22}(\alpha_0,k)$. If
  $M(\alpha_0, k_0)\neq 0$, then the function
 $\Psi(\alpha_0,k)$ is strictly monotone in a neighborhood of $k_0$. This contradicts to the assumption
 that $(\alpha_0,k_0)$ is a minimizer of $\Psi$. Hence $M(\alpha_0,k_0)$ equals zero.

 It remains to prove
 that $m(\alpha_0, k_0)=0$. To this end, notice that identity \eqref{sveta11} in Theorem \ref{sveta10} along with the equality
 $M(\alpha_0, k_0)=0$ yields
$$
 0=   \partial_\alpha\Psi(\alpha_0,k_0)=(2\pi)^{n-1} (m+\alpha M)+\varsigma_1 M=
 (2\pi)^{n-1}m(\alpha_0, k_0),
$$
which completes the proof.
\end{proof}

\begin{lemma}\label{mara5} Let all assumptions of Theorem \eqref{sveta10} be satisfied. Furthermore, assume that
\begin{equation}\label{mara6}
  \Psi(\alpha_0, 0)=\min\limits_{(\alpha, k)\in \mathbb T^1\times [0,1]} \Psi(\alpha,k)\text{~~and~~}
  \mathbf K_0 \boldsymbol\mu_0\cdot \boldsymbol\mu_0<0.
\end{equation}
Then $m(\alpha_0, 0)=M(\alpha_0, 0)=0.$
\end{lemma}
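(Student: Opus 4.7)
The strategy is to apply the necessary first- and second-order optimality conditions at the boundary minimizer $(\alpha_0,0)$, combining the derivative formulas for $\Psi$ given in Theorem~\ref{sveta10} with the sign and size estimates on the quadratic form $L$ from Theorem~\ref{zara2}. Because $\alpha_0$ is an interior critical point of the circle map $\alpha \mapsto \Psi(\alpha,0)$, I have $\partial_\alpha\Psi(\alpha_0,0) = 0$ and $\partial_\alpha^2\Psi(\alpha_0,0) \geq 0$; because $k=0$ is the left endpoint of $[0,1]$ where the minimum is attained, $\partial_k\Psi(\alpha_0,0) \geq 0$. Identity \eqref{sveta11} shows that once $M(\alpha_0,0)=0$ is established, $m(\alpha_0,0)=0$ follows immediately, so the whole argument reduces to proving $M(\alpha_0,0)=0$.

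The first step extracts sign information from the boundary condition in $k$. Substituting \eqref{sveta12} into $\partial_k\Psi(\alpha_0,0) \geq 0$ and using $\partial_k M = -1 + O(\varepsilon) < 0$ from \eqref{sveta9} together with the lower bound $L_{22}\geq 0$ from \eqref{zara3} yields $M(\alpha_0,0)\,L_{22}(\alpha_0,0) \leq 0$. The second step uses the second-order condition in $\alpha$. Formula \eqref{sveta13} combined with the $k=0$ estimates \eqref{zara4}, which under the hypothesis $\mathbf{K}_0\boldsymbol{\mu}_0\cdot\boldsymbol{\mu}_0<0$ give $L_{11}(\alpha_0,0) = (2\pi)^{n-1}\mathbf{K}_0\boldsymbol{\mu}_0\cdot\boldsymbol{\mu}_0 + O(\varepsilon|\boldsymbol{\mu}_0|^2) < 0$, together with $|L_{12}|\leq c\varepsilon|\boldsymbol{\mu}_0|$, $|L_{22}|\leq c\varepsilon^2$, $|\partial_\alpha M|\leq c\varepsilon$ and $|\varsigma_2|\leq c\varepsilon^2$, rearranges $\partial_\alpha^2\Psi(\alpha_0,0)\geq 0$ into
\begin{equation*}
(2\pi)^{n-1}(1+O(\varepsilon^2))\,M(\alpha_0,0) \;\geq\; -L_{11} - 2L_{12}\partial_\alpha M - L_{22}(\partial_\alpha M)^2.
\end{equation*}
A Young-type estimate $2|L_{12}\partial_\alpha M|\leq \tfrac{1}{2}|L_{11}| + C\varepsilon^4$ absorbs the cross term, and for $\varepsilon$ sufficiently small this yields $M(\alpha_0,0) \geq c\,|\mathbf{K}_0\boldsymbol{\mu}_0\cdot\boldsymbol{\mu}_0| > 0$. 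Combined with the first step, $L_{22}(\alpha_0,0)=0$, and the sharp lower bound in \eqref{zara3} identifies $w_1^*(\alpha_0,0)\equiv 0$ on $\mathbb{T}^{n-1}$.

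The final step derives a contradiction from the coexistence of $M(\alpha_0,0)>0$ and $w_1^*(\alpha_0,0)\equiv 0$. With $w_1^*\equiv 0$ equation \eqref{kira29a} collapses to $\boldsymbol{\partial}\boldsymbol{\mu}^{(2)}=0$, and by the Diophantine condition \eqref{diophantine} the vector $\boldsymbol{\mu}^{(2)}$ is forced to be constant in $\boldsymbol{\xi}$; the remaining equations \eqref{kira29b}--\eqref{kira29c} together with the orthogonality conditions \eqref{kira29d} then determine $\boldsymbol{\lambda}^{(2)}$, $\boldsymbol{\chi}^{(2)}$ and $p^{(2)}$ algebraically, in the same vein as the reduction carried out in Lemma~\ref{figa26}. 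Inserting these explicit degenerate vectors into \eqref{zara1} re-evaluates $L_{22}$ and produces an identity that is incompatible with the strictly positive lower bound for $M(\alpha_0,0)$ derived above, yielding the desired contradiction and thus $M(\alpha_0,0)=0$; equation \eqref{sveta11} then gives $m(\alpha_0,0)=0$. The main obstacle is precisely this last algebraic reduction, since one must track with care how the structure of the Jacobi system \eqref{kira29} collapses in the degenerate regime $w_1^*\equiv 0$ and exploit the negative sign of $\mathbf{K}_0\boldsymbol{\mu}_0\cdot\boldsymbol{\mu}_0$ to close the contradiction; all the necessary structural ingredients, however, are already assembled in Sections~\ref{kira}--\ref{zara}.
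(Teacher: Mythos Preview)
Your Steps 1--3 share the same general spirit as the paper: you combine the boundary condition $\partial_k\Psi(\alpha_0,0)\ge 0$ from \eqref{sveta12} with the second-order condition $\partial_\alpha^2\Psi(\alpha_0,0)\ge 0$ from \eqref{sveta13} and the sign information in Theorem~\ref{zara2}. However, your Step~4 contains a genuine gap: there is \emph{no} contradiction to be extracted from the pair of facts $M(\alpha_0,0)>0$ and $w_1^*(\alpha_0,0)\equiv 0$ using only local data at the single point $(\alpha_0,0)$. When $w_1^*(\alpha_0,0)=0$, estimate \eqref{figa9} already forces $\boldsymbol\mu^{(2)}=\boldsymbol\lambda^{(2)}=\boldsymbol\chi^{(2)}=p^{(2)}=0$, so $L_{12}(\alpha_0,0)=L_{22}(\alpha_0,0)=0$; re-inserting these into \eqref{zara1} simply reproduces the vanishing you started from and does not conflict with any lower bound on $M$. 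The claimed ``identity that is incompatible with the strictly positive lower bound for $M(\alpha_0,0)$'' does not exist.

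The paper closes the argument by bringing in the behavior at \emph{other} values of $k$, via a dichotomy you have not introduced. If $\|w_1^*(\alpha_0,k)\|>0$ for a.e.\ $k\in(0,1)$ (Step~1 of the paper), then $M(\alpha_0,0)>0$ is ruled out directly because $\partial_k\Psi(\alpha_0,k)=M\,\partial_kM\,L_{22}<0$ for $k$ near $0$, contradicting minimality; the remaining case $M(\alpha_0,0)<0$ is then excluded by a third-order Taylor argument based on Proposition~\ref{sveta100}, which you never invoke. If instead $w_1^*(\alpha_0,k)\equiv 0$ for all $k$ (Step~2), then $\partial_k\Psi(\alpha_0,\cdot)\equiv 0$, the whole segment $\{\alpha_0\}\times[0,1]$ consists of minimizers, and one uses the intermediate value theorem (since $M(\alpha_0,1)<0$) to find $k_0\in[0,1]$ with $M(\alpha_0,k_0)=0$; the conclusion is obtained at $(\alpha_0,k_0)$, not necessarily at $k_0=0$. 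Your proposal must incorporate this case analysis and exploit the $k$-variation; the attempt to finish at $k=0$ alone cannot succeed. A secondary issue: your inequality $M(\alpha_0,0)\ge c|\mathbf K_0\boldsymbol\mu_0\cdot\boldsymbol\mu_0|>0$ requires $|\boldsymbol\mu_0|^2$ to dominate the residual $C\varepsilon^4$ term, but the hypothesis $\mathbf K_0\boldsymbol\mu_0\cdot\boldsymbol\mu_0<0$ only gives $\boldsymbol\mu_0\neq 0$, with no quantitative lower bound.
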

\begin{proof} We split the proof into two steps.

\noindent
{\bf Step 1.} First we prove the lemma under the additions assumptions that
\begin{equation}\label{mara7}
   \| w_1^*(\alpha_0, k)\|_{\sigma/2, 0}>0 \text{~~for a.e. ~~} k\in (0,1).
\end{equation}
Let us prove that $M(\alpha_0,0)=0$. Suppose, contrary to our claim, that $M(\alpha_0,0)\neq 0$.
Notice that $\partial_k M\leq -1+c|\varepsilon| <0$. On the other hand, estimate \eqref{zara3}
in Theorem \ref{zara2} yields
$L_{22}\geq c^{-1} |w_1^*|_{-1}$.
Hence $\partial_k M L_{22}(\alpha_0,k)<0$ almost everywhere on $(0,1)$. If $M(\alpha_0,0)>0$, then $\Psi(\alpha_0,k)$
strictly decreases in a neighborhood of $k=0$. This contradicts to the assumptions that
$(\alpha_0,0)$ is a minimizer of $\Psi$. Hence $M(\alpha_0,0)\leq 0$. Next, representation \eqref{sveta13}
in Theorem \ref{sveta10} yields the inequality
\begin{multline}\label{mara8x}
0\leq
\partial_\alpha^2\Psi(\alpha_0,0)=(2\pi)^{n-1}  M(\alpha_0,0)+
\varsigma_2 M(\alpha_0,0)\\ +L_{11}(\alpha_0,0)+2L_{12} \partial_\alpha M(\alpha_0,0)+
L_{22} \partial_\alpha M(\alpha_0,0)^2
\end{multline}
In view of Theorem \ref{sveta10}, we have $|\varsigma_2|\leq c|\varepsilon|$. Hence the inequality
\begin{equation}\label{mara8}
    |M(\alpha_0,0)|=-M(\alpha_0,0)\leq L_{11}(\alpha_0,0)+2L_{12} \partial_\alpha M(\alpha_0,0)+
L_{22} \partial_\alpha M(\alpha_0,0)^2
\end{equation}
holds true for all sufficiently small $\varepsilon$. Next, estimate \eqref{zara4} in Theorem
\eqref{zara2} implies the inequality
\begin{multline*}\label{mara9}
    L_{11}+ 2|\partial_\alpha M ||L_{12}|\leq (2\pi)^{n-1}
    \mathbf K_0\boldsymbol\mu_0\cdot\boldsymbol\mu_0 +c|\varepsilon| |\boldsymbol\mu_0|^2+
c|\varepsilon| |\boldsymbol\mu_0||\partial_\alpha M|\\\leq
(2\pi)^{n-1}
    \mathbf K_0\boldsymbol\mu_0\cdot\boldsymbol\mu_0 +c|\varepsilon| |\boldsymbol\mu_0|^2+
c|\varepsilon||\partial_\alpha M|^2.
\end{multline*}
Recall that
$$
-c^{-1}|\boldsymbol\mu_0|^2 \leq |K_0\boldsymbol\mu_0\cdot\boldsymbol\mu_0|\leq -c|\boldsymbol\mu_0|^2
$$
for some positive constant $c$ independent of $\varepsilon$. From this  we conclude that
\begin{equation}\label{mara10}
   L_{11}+ 2|\partial_\alpha M ||L_{12}|\leq
    \mathbf K_0\boldsymbol\mu_0\cdot\boldsymbol\mu_0 +
c|\varepsilon||\partial_\alpha M|^2
\end{equation}
for all sufficiently small $\varepsilon$. Substituting this inequality into \eqref{mara8}
and noting that $L_{22}\leq c\varepsilon^2$ we conclude that
\begin{equation*}
    |M(\alpha_0,0)|-\mathbf K_0\boldsymbol\mu_0\cdot\boldsymbol\mu_0\leq c |\varepsilon|
    |\partial_\alpha M|^2,
   \end{equation*}
which gives
\begin{equation}\label{mara11}
    |M(\alpha_0,0)|+|\boldsymbol\mu_0|^2\leq c |\varepsilon|
    |\partial_\alpha M|^2,
   \end{equation}
It follows from representation \eqref{mara8x} and estimates  \eqref{zara3}, \eqref{zara4} in
Theorem \ref{zara2}
that
\begin{multline*}
0\leq \partial_\alpha^2\Psi(\alpha_0,0)\leq c| M(\alpha_0,0)|+
|L_{11}(\alpha_0,0)|+\\2|L_{12}| |\partial_\alpha M(\alpha_0,0)|+
|L_{22} |\partial_\alpha M(\alpha_0,0)^2\leq c| M(\alpha_0,0)|+c |\boldsymbol\mu_0|^2\\+
c |\varepsilon||\boldsymbol\mu_0||\partial_\alpha M(\alpha_0,0)|+c
|\varepsilon||\partial_\alpha M(\alpha_0,0)|^2
\end{multline*}
Substituting estimate \eqref{mara11} into the right hand side of this inequality we obtain
\begin{equation}\label{mara12}
0\leq \partial_\alpha^2\Psi(\alpha_0,0)\,\leq\,
c|\varepsilon|\,|\partial_\alpha M(\alpha_0,0)|^2.
\end{equation}
Let us estimate the third derivatives of $\Psi$. Combining inequality \eqref{sveta101} in Proposition
\ref{sveta100} and inequality \eqref{mara11} we obtain
\begin{equation*}
    |\partial_\alpha^3\Psi(\alpha_0,0)-(2\pi)^{n-1}\partial_\alpha M(\alpha_0,0)|\leq c
     |\varepsilon||\partial_\alpha M(\alpha_0,0)|.
\end{equation*}
It follows that the inequality
\begin{equation}\label{mara13}
|\partial_\alpha^3\Psi(\alpha_0,0)|\geq |\partial_\alpha M(\alpha_0,0)|
\end{equation}
holds for all small $\varepsilon$. Since $\Psi_0(\alpha_0, 0)$ is analytic and takes the
 minimum at point $\alpha_0$
we have the Taylor expansion
\begin{equation}\label{mara14}
    0\leq \Psi(\alpha_0+t, 0)-\Psi(\alpha_0,0)= A t^2+Bt^3+ Ct^4,
\end{equation}
where
$$ A=2^{-1}\partial_\alpha^2\Psi(\alpha_0,0), \quad B =6^{-1}\partial_\alpha^2\Psi(\alpha_0,0), \quad,
|C|\leq c_4,
$$
where $c_4$ is independent of $\varepsilon$. Notice that this relation holds true for all
$t\in \mathbb R^1$.  Estimates \eqref{mara12} and \eqref{mara13} imply the inequalities
$$
|A|\leq c_2|\varepsilon| |\partial_\alpha M|^2, \quad |B|\geq 6^{-1}|\partial_\alpha M|.
$$
Now set $t=- \delta B$, where $\delta$ are an arbitrary positive number.
We have
\begin{multline}\label{mara15}
 0\leq \Psi(\alpha_0+t, 0)-\Psi(\alpha_0,0)= A t^2+Bt^3+ Ct^4= \\B^4\delta^3( A B^{-2}\delta^{-1}
 -1+\delta C)
\end{multline}
Notice that $|AB^{-2}|\leq c|\varepsilon|$.  Obviously,
 the
right hand side of \eqref{mara15} is negative for $\delta=|\varepsilon |^{1/2}$ and small $\varepsilon$ .
This contradiction proves the equality
$M(\alpha_0,0)=0$. It remains to note that the equality $m(\alpha_0,k)=0$
obviously follows from the relations
$$
 0=   \partial_\alpha\Psi(\alpha_0,0)=(2\pi)^{n-1} (m+\alpha M)+\varsigma_1 M=
 (2\pi)^{n-1}m(\alpha_0,0).
$$

\noindent
{\bf Step 2.} It remains to consider the degenerate case when $w_1^*(\alpha_0,k)=0$ for all
$k\in [0,1]$. In view of estimate \eqref{figa9} we have in this case
$$
\boldsymbol\lambda^{(2)}(\alpha_0,k)=0, \quad \boldsymbol\mu^{(2)}(\alpha_0,k)=0\text{~~for all~~}
k\in [0,1].
$$
Recalling representation \eqref{zara1} for $L_{ij}$ we conclude that
\begin{equation}\label{mara20}
    L_{12}(\alpha_0,k)=0, \quad  L_{22}(\alpha_0,k)=0 \text{~~for all~~}
k\in [0,1].
\end{equation}
It follows from this and formula \eqref{sveta12}  that
$\partial_k \Psi(\alpha_0,k)=0$  and hence
\begin{equation}\label{mara21}
  \Psi(\alpha_0, k)=\min\limits_{(\alpha, l)\in \mathbb T^1\times [0,1]} \Psi(\alpha,l)\text{~~for all~~}
k\in [0,1].
\end{equation}
In other words, $\Psi$,  the whole  segment $\{\alpha_0\}\times [0,1]$
consists of the minimizers of $\Psi$.

Let us prove that there is $k_0\in [0,1]$ such that
$M(\alpha_0, k_0)=0$.  We begin with the observation that $|M(\alpha_0,1)+1|\leq c|\varepsilon|$.
Hence, $M(\alpha_0,1)<0$  for all sufficiently small $\varepsilon$. Therefore,
it suffices to prove that $M(\alpha_0,0)\geq 0$. Suppose, contrary to our claim, that $M(\alpha_0,0)<0$
Since $(\alpha_0,0)$ is a minimizer of $\Psi$ and $L_{12}=L_{22}=0$ for $\alpha=\alpha_0$,
representation \eqref{sveta13} in Theorem \ref{sveta10} implies the inequality
\begin{equation*}
0\leq
\partial_\alpha^2\Psi(\alpha_0,0)=(2\pi)^{n-1}  M(\alpha_0,0)+
\varsigma_2 M(\alpha_0,0) +L_{11}(\alpha_0,0).
\end{equation*}
It follows from this, estimate \eqref{zara4} in Theorem \ref{zara2}, and
the inequality $|\varsigma_2|\leq c|\varepsilon|$
that
\begin{multline}\label{mara22}
0\leq
\partial_\alpha^2\Psi(\alpha_0,0)=(2\pi)^{n-1}  M(\alpha_0,0)+\\
\mathbf K_0\boldsymbol\mu_0\cdot \boldsymbol\mu_0+c|\varepsilon||M(\alpha_0,0)|+
c |\varepsilon| |\boldsymbol\mu_0|^2.
\end{multline}
Recall that $ M(\alpha_0,0)<0$ and $\mathbf K_0\boldsymbol\mu_0\cdot \boldsymbol\mu_0\leq -c
|\boldsymbol\mu_0|^2$. It follows from this that the right hand side of \eqref{mara22} is negative
for all sufficiently small $\varepsilon$. The contradiction prove the inequality $M(\alpha_0,0)\geq 0$.
Since $M(\alpha_0,1)$ is negative, there is $k_0$ such that $M(\alpha_0,k_0)=0$. On the other hand,
$\Psi$ takes the minimum at the point $(\alpha_0,k_0)$. It follows from this that $\partial_\alpha
\Psi(\alpha_0,k_0)= (2\pi)^{n-1} m(\alpha_0,k_0)=0$.  This completes the proof of the lemma.
\end{proof}

It remains to note that the statement of Theorem \ref{theo1.4}
is a straightforward consequence of Lemmas \ref{mara3} and \ref{mara5}.


\begin{appendix}

\setcounter{equation}{0}
\renewcommand{\theequation}{\arabic{equation} A}

\section{Proof of Theorem \ref{anna4}}\label{lena}
Let $\boldsymbol  \vartheta$ is defined by \eqref{anna1}.
  It follows that
\begin{equation}\label{lena2}
\boldsymbol \vartheta'=\left(
\begin{array}{ccc}
A_{11}&0&0\\
A_{21}&A_{22}&A_{23}
\\A_{31}&0& A_{33}
\end{array}
\right), \quad {\boldsymbol \vartheta'}^\top=\left(
\begin{array}{ccc}
A_{11}^\top&A_{21}^\top&A_{31}^\top\\
0&A_{22}^\top&0
\\0&A_{23}^\top& A_{33}^\top
\end{array}
\right),
\end{equation}
where
\begin{equation}\label{lena3}\begin{split}
  A_{11}=\mathbf I+\mathbf u' ,\quad A_{22}=\mathbf V, \quad A_{33}=\mathbf W,\\
  A_{21}=\mathbf v'_{\xi} +(\mathbf V\boldsymbol\eta)_\xi'+
  (\boldsymbol \Lambda \boldsymbol\zeta)_\xi'
  +\frac{1}{2}(\boldsymbol\zeta^\top \mathbf R\boldsymbol\zeta)_\xi',\\
  A_{31}=\mathbf w'+(\mathbf W\boldsymbol\zeta)_\xi',\quad A_{23}=
  \boldsymbol\Lambda+\mathbf R\boldsymbol\zeta.
  \end{split}\end{equation}
Here notation $\mathbf R\boldsymbol\zeta$ stands for $(n-1)\times 2$
matrix with the entries
\begin{equation}\label{lena4}
    \{\mathbf R\boldsymbol\zeta\}_{ip}=\{\mathbf R_i\}_{ pq}\zeta_q,
    \quad 1\leq i\leq n-1, \quad
    p=1,2.
\end{equation}
Substituting \eqref{lena2} into the equation
${\boldsymbol\vartheta'}^\top \mathbf
J_{2n}\boldsymbol\vartheta'=\mathbf J_{2n}$ we obtain nine matrix
equations. Four of those are nontrivial:
\begin{gather}\label{lena5}
A_{11}^\top A_{22}=\mathbf I, \quad A_{33}^\top\mathbf JA_{33}=\mathbf J,\\
\label{lena6}
A_{11}^\top A_{23}+A_{31}^\top \mathbf J A_{33}=0, \\
\label{lena7} A_{11}^\top A_{21}-A_{21}^\top
A_{11}+A_{31}^\top\mathbf J A_{31}=0.
\end{gather}

The remaining five equations are either trivial or can be obtained
from \eqref{lena5}-\eqref{lena7} by transposition. Equations
\eqref{lena5} along with \eqref{lena3} give
\begin{equation}\label{lena8}\begin{split}
    \mathbf V=(\mathbf I_{n-1}+\mathbf u' )^{-\top},\\
\mathbf W^\top\mathbf J\mathbf W=\mathbf J\text{~or
equivalently~~}\text{det~}\mathbf W=1.
\end{split}\end{equation}
In view of \eqref{lena3} we can rewrite equation \eqref{lena6} in
the form
$$
\boldsymbol\Lambda +\mathbf R\, \boldsymbol\zeta= -\boldsymbol
V\big(\, \mathbf w_{\xi}'+ (\mathbf
W\boldsymbol\zeta)_\xi'\,\big)^\top\, \mathbf J\mathbf W\text{~~for
all~~} \boldsymbol\zeta\in \mathbb R^2.
$$
It follows that
\begin{equation}\label{lena9}
 \boldsymbol\Lambda =-\boldsymbol V\big(\, \mathbf w_{\xi}'\,\big)^\top\, \mathbf J\mathbf W,
\end{equation}
and
$$
\mathbf R\, \boldsymbol\zeta=-\boldsymbol V\,{ (\mathbf
W\boldsymbol\zeta)_\xi'}^\top\, \mathbf J \mathbf W\text{~~for
all~~} \boldsymbol\zeta\in \mathbb R^2.
$$
Recalling the identities
$$
\{\mathbf R\, \boldsymbol\zeta\}_{ij}=R_{i,jq}\,\, \zeta_q, \quad
\{{(\mathbf
W\boldsymbol\zeta)_\xi'}^\top\}_{kp}=\frac{\partial}{\partial\xi_k}
(W_{pq}\,\zeta_q)
$$
we obtain
$$
R_{i,qj} \zeta_q=-V_{ik}\,\,\frac{\partial}{\partial\xi_k}
(W_{pq}\zeta_q) \{\mathbf J\mathbf W\}_{pj} \text{~~for all~~}
\boldsymbol\zeta\in \mathbb R^2.
$$
Since the matrix $\mathbf R_i$ is symmetric, we obtain
\begin{equation}\label{lena10}
    \mathbf R_i=-V_{ik}\,\frac{\partial}{\partial\xi_k} (\mathbf W^\top)\,\mathbf J\,\mathbf W.
\end{equation}
Let us turn to equation \eqref{lena7}. It is equivalent to the
system of four matrix equations
\begin{gather}\label{lena11}
\frac{1}{2}\mathbf V^{-1}(\boldsymbol\zeta ^\top\mathbf
R\boldsymbol\zeta)_\xi'- \frac{1}{2}{(\boldsymbol\zeta ^\top\mathbf
R\boldsymbol\zeta)_\xi'}^\top\mathbf V^{-\top}+ {\big(\mathbf
W\boldsymbol\zeta\big)_\xi'}^\top \mathbf J\big(\mathbf
W\boldsymbol\zeta\big)_\xi' =0,\\\label{lena12} \mathbf
V^{-1}(\boldsymbol\Lambda\boldsymbol\zeta)_\xi'-
{(\boldsymbol\Lambda\boldsymbol\zeta)_\xi'}^\top\mathbf V^{-\top} +
{\big(\mathbf w\big)_\xi'}^\top \mathbf J\big(\mathbf
W\boldsymbol\zeta\big)_\xi' +
{\big(\mathbf W\boldsymbol\zeta\big)_\xi'}^\top \mathbf J\big(\mathbf w\big)_\xi'=0\\
\label{lena13} \mathbf V^{-1}(\mathbf V\boldsymbol\eta)_\xi'-
({\mathbf V\boldsymbol\eta)_\xi'}^\top\mathbf V^{-\top}=0,\\
\label{lena14} \mathbf V^{-1}(\mathbf v)_\xi'-{(\mathbf v)'_\xi
}^\top\mathbf V^{-\top}+ {(\mathbf w)_\xi'}^\top\mathbf J(\mathbf
w)_\xi'=0,
\end{gather}
which hold true for all $\boldsymbol\zeta\in \mathbb R^2$ and for all
$\boldsymbol\eta\in \mathbb R^{n-1}$. Let us proof that equations
\eqref{lena11}- \eqref{lena13} are consequence of
\eqref{lena8}-\eqref{lena10}.
 We start with equation \eqref{lena11}.  It follows from \eqref{lena8} that
$$
\mathbf x=\boldsymbol\xi+\mathbf u(\boldsymbol\xi), \quad \mathbf V=
(\mathbf I+\mathbf u')^{-\top}\equiv
(\mathbf x'_\xi)^{-\top}=(\boldsymbol \xi_x')^\top,
$$
and hence
\begin{equation}\label{lena15}
    \frac{\partial}{\partial x_i}=V_{ik}\frac{\partial}{\partial \xi_k}, \quad
    \nabla_x=\mathbf V\nabla_\xi.
\end{equation}
Multiplying both sides of equation\eqref{lena11} by $\mathbf V^\top$
from the right and by $\mathbf V$ from the left we can rewrite this
equation in the equivalent form
\begin{equation}\label{lena16}
  \frac{1}{2}(\boldsymbol\zeta ^\top\mathbf R\boldsymbol\zeta)_x'-
\frac{1}{2}{(\boldsymbol\zeta ^\top\mathbf
R\boldsymbol\zeta)_x'}^\top+ {\big(\mathbf
W\boldsymbol\zeta\big)_x'}^\top \mathbf J \big(\mathbf
W\boldsymbol\zeta\big)_x' =0
\end{equation}
In view of \eqref{lena10}, we have
$$
\boldsymbol \zeta^\top\mathbf
R_i\boldsymbol\zeta=-\boldsymbol\zeta^\top
V_{ik}\frac{\partial}{\partial\xi_k}(\mathbf W^\top) \mathbf J
\mathbf W\boldsymbol\zeta=-\boldsymbol\zeta^\top
\frac{\partial}{\partial x_i}(\mathbf W^\top) \mathbf J \mathbf
W\boldsymbol\zeta,
$$
which implies
$$
\{(\boldsymbol \zeta^\top\mathbf R_i\boldsymbol\zeta)_x'\}_{ij}=
-\boldsymbol\zeta^{\top}\frac{\partial}{\partial x_j}
\Big(\frac{\partial\mathbf W^\top}{\partial x_i}\mathbf J\mathbf
W\Big)\boldsymbol\zeta.
$$
Thus we get
\begin{equation}\begin{split}\label{lena17}
\frac{1}{2}\Big\{(\boldsymbol\zeta ^\top\mathbf
R\boldsymbol\zeta)_x'- {(\boldsymbol\zeta ^\top\mathbf
R\boldsymbol\zeta)_x'}^\top\Big\}_{ij}=
-\frac{1}{2}\boldsymbol\zeta^{\top}\frac{\partial}{\partial x_j}
\Big(\frac{\partial\mathbf W^\top}{\partial x_i}\mathbf J\mathbf W\Big)\boldsymbol\zeta+\\
\frac{1}{2}\boldsymbol\zeta^{\top}\frac{\partial}{\partial x_i}
\Big(\frac{\partial\mathbf W^\top}{\partial x_j}\mathbf J\mathbf
W\Big)\boldsymbol\zeta
=\frac{1}{2}\boldsymbol\zeta^{\top}\Big(\frac{\partial\mathbf
W^\top}{\partial x_j}\mathbf J \frac{\partial\mathbf W}{\partial
x_i}-\frac{\partial\mathbf W^\top}{\partial x_i}\mathbf J
\frac{\partial\mathbf W}{\partial x_j}\Big)\boldsymbol\zeta.
\end{split}\end{equation}
On the other hand, we have
$$
\{{\big(\mathbf W\boldsymbol\zeta\big)_x'}^\top \mathbf J
\big(\mathbf W\boldsymbol\zeta\big)_x'\}_{ij}
=\boldsymbol\zeta^\top\Big(\frac{\partial\mathbf W^\top}{\partial
x_i}\mathbf J \frac{\partial\mathbf W}{\partial
x_j}\Big)\boldsymbol\zeta.
$$
Noting that  $\boldsymbol\zeta^{\top}\mathbf A\boldsymbol\zeta=
\boldsymbol\zeta^{\top}\mathbf A^\top\boldsymbol\zeta$ for every
matrix $\mathbf A$ and setting
$$
\mathbf A=\frac{\partial\mathbf W^\top}{\partial x_i}\mathbf J
\frac{\partial\mathbf W}{\partial x_j}, \quad \mathbf A^\top=
-\frac{\partial\mathbf W^\top}{\partial x_j}\mathbf J
\frac{\partial\mathbf W}{\partial x_i},
$$
we arrive at the identity
\begin{equation}\label{lena18}
\{{\big(\mathbf W\boldsymbol\zeta\big)_x'}^\top \mathbf J
\big(\mathbf W\boldsymbol\zeta\big)_x'\}_{ij}=
\frac{1}{2}\boldsymbol\zeta^\top\Big(\frac{\partial\mathbf
W^\top}{\partial x_i}\mathbf J \frac{\partial\mathbf W}{\partial
x_j}-\frac{\partial\mathbf W^\top}{\partial x_j}\mathbf J
\frac{\partial\mathbf W}{\partial x_i}\Big)\boldsymbol\zeta.
\end{equation}
Combining \eqref{lena17} and \eqref{lena18} gives \eqref{lena16}.
 Hence \eqref{lena8} and \eqref{lena10} imply \eqref{lena11}.
  Let us turn to equation \eqref{lena12}. Arguing as before we can
  rewrite it in the equivalent form
\begin{equation}\label{lena19}
(\boldsymbol\Lambda\boldsymbol\zeta)_x'-
{(\boldsymbol\Lambda\boldsymbol\zeta)_x'}^\top + {\big(\mathbf
w\big)_x'}^\top \mathbf J\big(\mathbf W\boldsymbol\zeta\big)_x' +
{\big(\mathbf W\boldsymbol\zeta\big)_x'}^\top \mathbf J\big(\mathbf
w\big)_x'=0
\end{equation}
Next, equality \eqref{lena9} yields
$$
\boldsymbol\Lambda=-\mathbf V\nabla_\xi \mathbf w\mathbf J\mathbf
W=- \nabla_x \mathbf w\mathbf J\mathbf W
$$
which leads to
$$
\{(\boldsymbol\Lambda\boldsymbol\zeta)_x'\}_{ij}=
-\frac{\partial}{\partial x_j}\Big(\frac{\partial \mathbf
w^\top}{\partial x_i}\, (\mathbf J\, \mathbf
W)\,\boldsymbol\zeta\,\Big),
$$
and hence
\begin{equation*}
   \{(\boldsymbol\Lambda\boldsymbol\zeta)_x'-
{(\boldsymbol\Lambda\boldsymbol\zeta)_x'}^\top    \}_{ij}=
\frac{\partial \mathbf w^{\top}}{\partial x_j}\, \mathbf
J\,\frac{\partial}{\partial x_i}\Big( \mathbf
W\,\boldsymbol\zeta)\,\Big)- \frac{\partial \mathbf w^\top}{\partial
x_i}\,\mathbf J\, \frac{\partial}{\partial x_j} \Big( \mathbf
W\,\boldsymbol\zeta)\,\Big).
\end{equation*}
This relations can be rewritten in the matrix form
\begin{equation}\label{lena20}
(\boldsymbol\Lambda\boldsymbol\zeta)_x'-
{(\boldsymbol\Lambda\boldsymbol\zeta)_x'}^\top=-(\mathbf w_x')^\top
\mathbf J(\mathbf W\boldsymbol\zeta)_x' +\big((\mathbf w_x')^\top
\mathbf J(\mathbf W\boldsymbol\zeta)_x'\big)^\top.
\end{equation}
Notice that
$$
\big((\mathbf w_x')^\top J(\mathbf W\boldsymbol\zeta)_x'\big)^\top
={(\mathbf W\boldsymbol\zeta)_x'}^\top \mathbf J^\top w_x'=-
{(\mathbf W\boldsymbol\zeta)_x'}^\top \mathbf J^\top w_x'
$$
Combining this result with \eqref{lena20} we arrive at equation
\eqref{lena19}. Since the latter is equivalent to \eqref{lena12}, we
obtain that \eqref{lena12} follows from \eqref{lena8}-\eqref{lena9}.
Now consider the equation \eqref{lena13}.  In view of identities
\eqref{lena15} it can be rewritten in the equivalent form
\begin{equation}\label{lena21}
    (\mathbf V\boldsymbol\eta)_\xi' \mathbf V^\top-
    \mathbf V{(\mathbf V\boldsymbol\eta)_\xi'}^\top
\equiv (\mathbf V\boldsymbol\eta)_x' -{(\mathbf
V\boldsymbol\eta)_x'}^\top=0.
\end{equation}
On the other hand, formula \eqref{lena8} yields
$$
\{\mathbf V \boldsymbol \eta\}_i= \frac{\partial\boldsymbol
\xi^\top}{\partial x_i}\cdot\boldsymbol \eta.
$$
Hence
$$
\{ (\mathbf V\boldsymbol\eta)_x' \}_{ij}=
\frac{\partial^2\boldsymbol \xi^\top}{\partial x_i\partial
x_j}\cdot\boldsymbol \eta.
$$
Hence the matrix $(\mathbf V\boldsymbol\eta)_x'$ is symmetric, and
\eqref{lena21} is a straightforward consequence of \eqref{lena8}.
Thus we show that equations \eqref{lena11}-\eqref{lena13} directly
follows from \eqref{lena8}-\eqref{lena10}.  Let us consider equation
\eqref{lena14}.
 We prove that it is not trivial and leads to the desired representation  $\mathbf v$. It  follows from \eqref{lena8} that equation
 \eqref{lena14} can be written in the form
\begin{equation*}
\frac{\partial}{\partial \xi_i}(\xi_k+u_k)\frac{\partial
v_k}{\partial \xi_j}- \frac{\partial}{\partial
\xi_j}(\xi_k+u_k)\frac{\partial v_k}{\partial \xi_i}+ \frac{\partial
w_1}{\partial \xi_i}\frac{\partial w_2}{\partial \xi_j}-
\frac{\partial w_1}{\partial \xi_j}\frac{\partial w_2}{\partial
\xi_i}=0, \quad 1\leq i,j\leq n-1,
\end{equation*}
which  is equivalent to
\begin{equation*}
    d(\xi_k+u_k)\wedge d v_k+d w_1\wedge w_2=0.
\end{equation*}
Next, multiplying \eqref{lena14} by $\mathbf V$ from the left and
by
 $\mathbf V^\top$ from the right  we obtain
$$
\mathbf v_\xi'\mathbf V^\top-\mathbf V{\mathbf v_\xi'}^\top+ \mathbf
V(\mathbf w_\xi')^\top\mathbf J \mathbf w_\xi'\, \mathbf
V^\top\equiv \mathbf v_x-{\mathbf v_x'}^\top+(\mathbf
w_x')^\top\mathbf J \mathbf w_x'=0.
$$
This means that
\begin{equation*}\begin{split}\frac{
    \partial v_i}{\partial x_j}- \frac{\partial v_j}{\partial x_i}
    +\frac{\partial w_1}{\partial x_i}\frac{\partial w_2}{\partial x_j}-
\frac{\partial w_1}{\partial x_j}\frac{\partial w_2}{\partial x_i}=\\
\frac{ \partial }{\partial x_j}\Big(v_i+w_2\frac{\partial
w_1}{\partial x_i}\Big) -\frac{ \partial }{\partial
x_i}\Big(v_j+w_2\frac{\partial w_1}{\partial x_j}\Big)=0
\end{split}\end{equation*}
Since $\mathbf v$ and $\mathbf w$ are analytic an $2\pi$-periodic,
 it follows that there exist analytic $2\pi$-periodic function $\varphi_0$ with
  zero mean value and a constant $\boldsymbol \beta\in \mathbb R^{n-1}$ such that
$$
\beta_i+\frac{
    \partial \varphi_0}{\partial x_i}=v_i+w_2\frac{
    \partial w_1}{\partial x_i},
$$
this leads to the desired representation for $\mathbf v$
\begin{equation}\label{lena22}
    \mathbf v=\beta+\nabla_x\varphi_0-w_2\nabla_x w_2=\beta +\mathbf V\, \big(\nabla_\xi\varphi_0-w_2\nabla_\xi w_2\big).
\end{equation}
Formulae \eqref{lena8}-\eqref{lena10} and \eqref{lena22} give
general solution to equation ${\boldsymbol\vartheta'}^\top \mathbf
J_{2n}\boldsymbol\vartheta'=\mathbf J_{2n}$ and completely determine
the totality of canonical mappings $\boldsymbol\vartheta$. $\square$

\setcounter{equation}{0}
\renewcommand{\theequation}{\arabic{equation}B}

\section{Proof of Theorem \ref{sima12}}\label{proofsima12}

\subsection{Proof of $(\mathbf i)$.} Choose an arbitrary analytic
\begin{equation}\label{rita1}
    \boldsymbol\varphi=(\beta,\varphi_0, \mathbf u, \mathbf w, W_{11}, W_{12},
     W_{21})
\end{equation}
and consider the vector field
\begin{equation}\label{rita2}
    \boldsymbol \Theta(\boldsymbol\varphi)=\big(\mathbf u, \mathbf v, \mathbf w,
    \mathbf V,\boldsymbol \Lambda, \mathbf W,
    \mathbf R_i\big),
\end{equation}
defined by \eqref{anna17}.   Next,
choose an arbitrary
\begin{equation}\label{rita3}
    \boldsymbol\Upsilon=(\boldsymbol\nu,  \psi_0,  \boldsymbol\chi,\boldsymbol\lambda,
    \Gamma_{11},
    \Gamma_{12}, \Gamma_{21})\in X_{\sigma, d-1}(r),
\end{equation}
and set  $\boldsymbol\mu=\boldsymbol\nu+\nabla\psi_0$, $
\Gamma_{22}=-\Gamma_{11}$.
Our task is to find the vector field
\begin{equation}\label{rita5}
\delta\boldsymbol\varphi=(\delta\mathbf \beta, \delta\mathbf
\varphi_0,
 \delta\mathbf u, \delta\mathbf
w, \delta W_{11},\delta W_{12},\delta W_{21})
\end{equation}
such that the corresponding mapping
\begin{equation}\label{rita5x}
    \delta\boldsymbol\Theta=\big(\, \delta\mathbf u, \delta\mathbf v,
    \delta\mathbf w, \delta\mathbf V, \delta\mathbf W, \delta\boldsymbol\Lambda,
    \delta\mathbf R_i\,\big).
\end{equation}
given by \eqref{gala3} and \eqref{gala2}, satisfies the equations
\begin{subequations}\label{rita8}
\begin{gather}\label{rita8a}
\delta \mathbf
u(\boldsymbol\xi)=\chi_i(\boldsymbol\xi)\frac{\partial}{\partial\xi_i}\,
\big(\boldsymbol \xi+\mathbf u(\boldsymbol\xi)\big)\Leftrightarrow
\boldsymbol\chi=
\mathbf V^\top\delta\mathbf u,\\
\label{rita8b} \delta \mathbf
v=\chi_i\,\frac{\partial}{\partial\xi_i}\mathbf v+\mathbf
V\boldsymbol\mu+
\boldsymbol\Lambda\boldsymbol\lambda,\\
\label{rita8c}
 \delta \mathbf w=\chi_i\,\frac{\partial}{\partial\xi_i}\mathbf w+
 \mathbf W\boldsymbol\lambda,\\
\label{rita8d}\delta \mathbf
W=\chi_i\,\frac{\partial}{\partial\xi_i}\mathbf W+ \mathbf
W\boldsymbol\Gamma,
\end{gather}
\end{subequations}
and
\begin{subequations}\label{rita9}
\begin{gather}\label{rita9a}
\delta \mathbf V=\chi_i\,\frac{\partial}{\partial\xi_i}\mathbf V-
\mathbf V\nabla_\xi\boldsymbol\chi,\\
\label{rita9b} \delta
\boldsymbol\Lambda=\chi_i\,\frac{\partial}{\partial\xi_i}\boldsymbol\Lambda
+\mathbf V\nabla_\xi(\mathbf
J\boldsymbol\lambda)+\boldsymbol\lambda^\top\mathbf R-
\boldsymbol\Lambda\boldsymbol\Gamma,
\\
\label{rita9c} \delta\mathbf
R_i=\chi_p\frac{\partial}{\partial\xi_p}\mathbf R_i+\mathbf R_i
\boldsymbol\Gamma+(\mathbf R_i \boldsymbol\Gamma)^\top+
V_{ik}\frac{\partial}{\partial\xi_k}(\mathbf J\boldsymbol\Gamma),
\end{gather}
where $\boldsymbol\lambda^\top\mathbf R$  is
$(n-1)\times 2$ matrix with the entries
$(\boldsymbol\lambda^\top\mathbf R)_{ij}=R_{i, jp}\,\lambda_p$.
Recall that by definition of $\boldsymbol \Upsilon,$
\begin{equation}\label{rita9d}
    \boldsymbol\mu =\boldsymbol\nu+\nabla\psi_0
\end{equation}
\end{subequations}
Notice that relation \eqref{rita8}-\eqref{rita9} coincide with
desired relations \eqref{sima13a}- \eqref{sima13g} in Theorem
\ref{sima12}.  In order to obtain the full system of relations
\eqref{sima13} we have supplement  \eqref{rita8}-\eqref{rita9} with
the expression \eqref{sima13h} for $\delta\boldsymbol\beta$ and
$\delta\varphi_0$:
\begin{equation}
\label{rita9x} \delta\boldsymbol \beta=\boldsymbol \nu, \quad \delta
\varphi_0=\psi_0 +w_2\delta w_1+\chi_i\, \frac{\partial}{\partial
\xi_i} \varphi_0-w_2\chi_i\, \frac{\partial}{\partial \xi_i}
w_1-\boldsymbol \nu \cdot \mathbf u,
\end{equation}
where $\boldsymbol \nu$ and $\psi_0$ are the components of the given
vector $\boldsymbol\Upsilon$. The system of equalities
\eqref{rita8}-\eqref{rita9x} is equivalent to \eqref{sima13}. It is
important to note that the left hand sides of these equalities are
the components of vector fields $\delta\boldsymbol\varphi$ and
$\delta\boldsymbol\Theta$. They are not independent and should
satisfy relations \eqref{gala2} which constitutes the connection
between
$\delta\boldsymbol\Theta$
 $\delta\boldsymbol\varphi$.
Hence,  relations  \eqref{gala2}  give  the extra five equations
\begin{subequations}\label{rita10}
\begin{gather}
\label{rita10a} \delta\mathbf V=-\mathbf V\, \nabla_\xi\delta\mathbf
u\,\mathbf V,
\\
\label{rita10b} \delta W_{22}=\frac{1}{W_{11}}\big(W_{12}\delta
W_{21}+W_{21}\delta W_{12}-
W_{22}\delta W_{11}\big),\\
\label{rita10c} \delta\mathbf v=\delta\beta+\mathbf
V(\nabla\delta\varphi_0-w_2\nabla\delta w_1- \delta w_2 \nabla w_1)+
\delta \mathbf V\,(\nabla \varphi_0-w_2\nabla w_1),\\\label{rita10d}
\delta\boldsymbol\Lambda=-\delta \mathbf V \, \nabla \mathbf w\,
\mathbf J\mathbf W- \mathbf V\, \nabla(\delta\mathbf w)\mathbf
J\mathbf W-\mathbf W\, \nabla\mathbf w\,
 \mathbf J\, \delta\mathbf W,\\\label{rita10e}
\delta\mathbf R_i=-\delta V_{ik}
\frac{\partial}{\partial\xi_k}(\mathbf W^{\top}) \mathbf J\mathbf W-
 V_{ik} \frac{\partial}{\partial\xi_k}(\delta\mathbf W^{\top})\mathbf J\mathbf W-
 V_{ik} \frac{\partial}{\partial\xi_k}(\mathbf W^{\top})\mathbf J\, \delta\mathbf W.
\end{gather}
\end{subequations}
The obtained extended system \eqref{rita8}-\eqref{rita10} is
overdetermined and we have to prove that it  satisfies compatibility
conditions. The assertion $(\mathbf i)$ of Theorem \ref{sima12} will
be proved if we prove that
 system  \eqref{rita10} is a consequence of equations \eqref{rita8}-
 \eqref{rita9x}. The proof falls into five steps.
\paragraph{Step 1.} Let us show that equalities \eqref{rita8}
and  \eqref{rita9} yield \eqref{rita10a}. We begin with the
observation that equality \eqref{rita8a} along with relation
\eqref{anna6} in Theorem \ref{anna4} implies
$\boldsymbol\chi=\mathbf V^\top \delta\mathbf u$. From this we
obtain
\begin{equation}\label{rita11}
    \chi_k\,\frac{\partial}{\partial\xi_k}\mathbf V=V_{nk}\,
    \delta u_n \,\frac{\partial}{\partial\xi_k}\mathbf V
\end{equation}
Next we have
$$
\{\mathbf V\nabla\boldsymbol \chi\}_{ij}=V_{ik}
\frac{\partial}{\partial \xi_k}\big( V_{nj}\,\delta u_n\big)=
V_{ik}\,\frac{\partial V_{nj}}{\partial \xi_k}\,\delta
u_n+V_{ik}V_{nj}\, \frac{\partial\,\delta u_n}{\partial \xi_k}
$$
Combining this result with \eqref{rita11} we arrive at
\begin{equation}\label{rita12}
\Big\{\chi_k\frac{\partial \mathbf V}{\partial \xi_k}- \mathbf
V\nabla\boldsymbol\chi\Big\}_{ij}= \Big(\frac{V_{nk}\partial
V_{ij}}{\partial \xi_k}-V_{ik} \frac{\partial V_{nj}}{\partial
\xi_k}\Big) -V_{ik}V_{nj}\frac{\partial \delta u_n}{\partial \xi_k}.
\end{equation}
Next notice that $V_{nk}\partial/\partial\xi_k=\partial /\partial
x_n$ and $V_{ij}=\partial \xi_j/\partial x_i$. It follows that
$$
V_{nk}\frac{\partial V_{ij}}{\partial \xi_k}-V_{ik}\frac{\partial
V_{nj}}{\partial \xi_k} =\frac{\partial}{\partial
x_n}\Big(\frac{\partial \xi_j}{\partial x_i}\Big)-
\frac{\partial}{\partial x_i}\Big(\frac{\partial \xi_j}{\partial
x_n}\Big)=0
$$
Substituting this equality into \eqref{rita12} we arrive at the
equality
$$
\chi_k\frac{\partial \mathbf V}{\partial \xi_k}-\mathbf
V\nabla\boldsymbol\xi=- \mathbf V\, \nabla\delta\mathbf u\, \mathbf
V,
$$
which along with \eqref{rita9a} implies \eqref{rita10a}.
\paragraph{Step 2.} Let us prove that \eqref{rita8} and \eqref{rita9} imply
\eqref{rita10b}. Notice that \eqref{rita10b} is equivalent to the
equality
\begin{equation}\label{rita13}
    \delta\mathbf W^\top\mathbf J\mathbf W+\mathbf W^\top\mathbf J\delta\mathbf W=0.
\end{equation}
It follows from \eqref{rita8d} that
\begin{equation}\label{rita14}\begin{split}
     \delta\mathbf W^\top\mathbf J\mathbf W+\mathbf W^\top\mathbf J\delta\mathbf W=
     \chi_i\Big(\frac{\partial}{\partial \xi_i}\mathbf W^\top\mathbf J\mathbf W+
     \mathbf W^\top\mathbf J\frac{\partial}{\partial \xi_i}\mathbf W\Big)+\\
     (\mathbf W\boldsymbol \Gamma)^\top\mathbf J\mathbf W+\mathbf W^{\top}\mathbf J
     \mathbf W\boldsymbol\Gamma.
\end{split}\end{equation}
Next, relation \eqref{anna7} in Theorem \ref{anna4} yields $\mathbf
W^\top\mathbf J\mathbf W=\mathbf J$. It follows that
$$
\frac{\partial}{\partial \xi_i}\mathbf W^\top\mathbf J\mathbf W+
     \mathbf W^\top\mathbf J\frac{\partial}{\partial \xi_i}\mathbf W=0
$$
and
$$
(\mathbf W\boldsymbol \Gamma)^\top\mathbf J\mathbf W+\mathbf
W^{\top}\mathbf J
     \mathbf W\boldsymbol\Gamma=\boldsymbol\Gamma^\top\mathbf J+\mathbf J\boldsymbol\Gamma
     =\mathbf J\boldsymbol\Gamma-(\mathbf J\boldsymbol\Gamma)^\top=0
 $$
 since the matrix $\mathbf J\boldsymbol\Gamma$ is symmetric and
 $\mathbf J^\top=-\mathbf J.$
Substituting these relations into \eqref{rita14} we obtain
\eqref{rita13} and the assertion follows.
\paragraph{Step 3.} Let us prove that equalities \eqref{rita8}-\eqref{rita9x} imply
\eqref{rita10c}. It is necessary to prove that the right hand side
of \eqref{rita8b} coincide with the right hand side of
\eqref{rita10c}. Let us calculate step by step all terms in the
right hand side of \eqref{rita8b}.  Relation \eqref{rita8a} and the
identity
 $V_{ik}\partial/ \partial \xi_k=\partial/\partial x_i$  imply
\begin{equation}\label{rita15e}
 \chi_i\frac{\partial}{\partial \xi_i}\mathbf v=
 \delta u_i\frac{\partial }{\partial x_i}\mathbf v,
 \end{equation}
On the other hand, relation  \eqref{anna11} in Theorem \ref{anna4}
yields
$$
\mathbf v=\boldsymbol\beta+\nabla_x\varphi_0-w_2\nabla_x w_1.
$$
Combining these results we arrive at
\begin{equation}\label{rita15}
   \chi_i\frac{\partial}{\partial \xi_i}\mathbf v=\delta u_i
    \frac{\partial }{\partial x_i}(\nabla_x\varphi_0-w_2\nabla_x w_1).
\end{equation}
Next, relation \eqref{anna8} in Theorem \ref{anna4} implies
$$
\boldsymbol\Lambda=-\mathbf V\,\nabla_\xi\mathbf w\, \mathbf J\,
 \mathbf W=-\nabla_x \mathbf w\, \mathbf J\mathbf W.
$$
In its turn, it follows from  \eqref{rita8c} that
\begin{equation}\label{rita16e}
\boldsymbol\lambda=\mathbf W^{-1}\delta \mathbf w- \chi_i\mathbf
W^{-1}\frac{\partial}{\partial \xi_i}\mathbf w= \mathbf W^{-1}\delta
\mathbf w-\delta u_i\mathbf W^{-1}\frac{\partial}{\partial
x_i}\mathbf w.
\end{equation}
Thus we get
\begin{equation*}\begin{split}
 \boldsymbol\Lambda\boldsymbol\lambda =\nabla_x\mathbf w\, \mathbf J
  \Big(\delta u_i\frac{\partial}{\partial x_i}\mathbf w\Big)-
  \nabla_x\mathbf w\, \mathbf J\, \delta\mathbf w=
\nabla_x(w_2\delta w_1)-\\(\delta w_2\nabla_x w_1+w_2\nabla_x \delta
w_1)+ \Big(\delta u_i\frac{\partial}{\partial
x_i}w_2\Big)\nabla_xw_1- \Big(\delta u_i\frac{\partial}{\partial
x_i}w_1\Big)\nabla_xw_2
\end{split}\end{equation*}
Combining this result with \eqref{rita15} we arrive at
\begin{equation}\label{rita16}\begin{split}
\chi_i\frac{\partial}{\partial \xi_i}\mathbf
v+\boldsymbol\Lambda\boldsymbol\lambda
=\nabla_x(w_2\delta w_1)-(\delta w_2\nabla_x w_1+w_2\nabla_x \delta w_1)+\\
\delta u_i\,\nabla_x\big(\frac{\partial}{\partial x_i}\varphi_0- w_2
\frac{\partial}{\partial x_i}w_1\big).
\end{split}\end{equation}
Next notice that
\begin{gather*}
\delta u_i\,\nabla_x\big(\frac{\partial}{\partial x_i}\varphi_0- w_2
\frac{\partial}{\partial x_i}w_1\big) =\\\nabla_x\Big(\delta
u_i\big(\frac{\partial}{\partial x_i}\varphi_0- w_2
\frac{\partial}{\partial x_i}w_1\big)\Big)-\nabla_x(\delta \mathbf
u)(\nabla_x\varphi_0-w_2\nabla_x w_1).
\end{gather*}
Hence we can rewrite \eqref{rita16} in the form
\begin{equation}\label{rita17}\begin{split}
\chi_i\frac{\partial}{\partial \xi_i}\mathbf
v+\boldsymbol\Lambda\boldsymbol\lambda =\nabla_x\Big(w_2\delta
w_1+\delta u_i\frac{\partial}{\partial x_i}\varphi_0-
\delta u_i w_2\frac{\partial}{\partial x_i}w_1\Big)-\\
(\delta w_2\nabla_x w_1+w_2\nabla_x \delta w_1)-\nabla_x(\delta
\mathbf u) (\nabla_x\varphi_0-w_2\nabla_x w_1).
\end{split}\end{equation}
On the other hand, the identity $\nabla_x=\mathbf V\nabla_\xi$ and
relation \eqref{rita10a}, which  we have been proved in Step 1, imply
$$
\nabla_x(\delta \mathbf u)(\nabla_x\varphi_0-w_2\nabla_x
w_1)=-\delta\mathbf V (\nabla_\xi\varphi_0-w_2\nabla_\xi w_1)
$$
Moreover, as it was mentioned above, we have $\delta
u_k\partial/\partial x_k= \chi_k\partial/\partial\xi_k$. It follows
from this and \eqref{rita17} that
\begin{equation}\label{rita18}\begin{split}
\chi_i\frac{\partial}{\partial \xi_i}\mathbf
v+\boldsymbol\Lambda\boldsymbol\lambda =\mathbf
V\nabla_\xi\Big(w_2\delta w_1+\chi_i\frac{\partial}{\partial
\xi_i}\varphi_0-
\chi_i w_2\frac{\partial}{\partial \xi_i}w_1\Big)-\\
\mathbf V(\delta w_2\nabla_\xi w_1+w_2\nabla_\xi \delta w_1)+ \delta
\mathbf V(\nabla_\xi\varphi_0-w_2\nabla_\xi w_1).
\end{split}\end{equation}
Let us consider the term $\mathbf V\boldsymbol \mu$ in expression
\eqref{rita8b} for $\mathbf v$. It follows from \eqref{rita9d} that
$$
\mathbf V\boldsymbol \mu=\mathbf V\boldsymbol \nu+\mathbf
V\nabla_\xi\psi_0.
$$
Next notice that
$$
\mathbf V\boldsymbol\nu=\mathbf V\nabla_\xi(\boldsymbol \nu\cdot
\boldsymbol\xi)= \nabla_x(\boldsymbol\nu\cdot
\boldsymbol\xi)=\nabla_x(\boldsymbol\nu\mathbf x- \boldsymbol \nu
\cdot \mathbf u)=\boldsymbol \nu-\nabla_x(\boldsymbol \nu \cdot
\mathbf u)= \boldsymbol\nu-\mathbf V\nabla_\xi(\boldsymbol\nu \cdot
\mathbf u).
$$
Thus we get
$$
\mathbf V\boldsymbol \mu= \boldsymbol \nu+\mathbf V\nabla_\xi(\psi_0-\boldsymbol
\nu \cdot \mathbf u).
$$
Combining this result with \eqref{rita18} and recalling formulae
\eqref{rita8b} for $\delta\mathbf v$ and \eqref{rita9x} for
$\varphi_0$ we finally obtain
\begin{equation*}\begin{split}
    \delta \mathbf v=\chi_i\frac{\partial}{\partial \xi_i}\mathbf v+
    \boldsymbol\Lambda\boldsymbol\lambda
    +\mathbf V\boldsymbol\mu=\boldsymbol\nu+\mathbf V\nabla\delta\varphi_0-\\
   \mathbf V(\delta w_2\nabla_\xi w_1+w_2\nabla_\xi \delta w_1)+
   \delta \mathbf V(\nabla_\xi\varphi_0-w_2\nabla_\xi w_1),
\end{split}\end{equation*}
which obviously yields \eqref{rita10c}.
\paragraph{Step 4.} Let us prove that equalities \eqref{rita8}-\eqref{rita9} imply
\eqref{rita10d}. It follows from \eqref{rita9b} that
\begin{equation}\label{rita19}
\delta
\boldsymbol\Lambda=\chi_i\,\frac{\partial\boldsymbol\Lambda}{\partial\xi_i}
+\mathbf V\nabla_\xi(\mathbf
J\boldsymbol\lambda)+\boldsymbol\lambda^\top\mathbf R-
\boldsymbol\Lambda\boldsymbol\Gamma.
\end{equation}
Arguing as in the proof of \eqref{rita15e} and using  the identity
 $V_{ik}\partial/ \partial \xi_k=\partial/\partial x_i$  we obtain
\begin{equation}\label{rita20}
 \chi_i\frac{\partial \boldsymbol\Lambda}{\partial \xi_i}=
 \delta u_i\frac{\partial \boldsymbol\Lambda}{\partial x_i},\quad
 \mathbf V\nabla_\xi(\mathbf J\boldsymbol\lambda)=
 \nabla_x(\mathbf J\boldsymbol\lambda).
 \end{equation}
Thus we get
\begin{equation}\label{rita19-0}
\delta \boldsymbol\Lambda=\delta
u_i\,\frac{\partial\boldsymbol\Lambda}{\partial x_i}+
\nabla_x(\mathbf J\boldsymbol\lambda)+\boldsymbol\lambda^\top\mathbf
R- \boldsymbol\Lambda\boldsymbol\Gamma.
\end{equation}
Next, it follows from the relation \eqref{anna9} in Theorem
\ref{anna4} that
\begin{equation}\label{rita21}
    \mathbf R_i=-V_{ik}\frac{\partial\mathbf W^\top}{\partial \xi_k}\,
    \mathbf J\, \mathbf W=
    -\frac{\partial\mathbf W^\top}{\partial x_i}\, \mathbf J\, \mathbf W
\end{equation}
Here $\boldsymbol \lambda^\top \mathbf R$
is $(n-1)\times 2$ -matrix with the rows
$\boldsymbol\lambda^\top\mathbf R_i$,
 $1\leq i\leq n-1$. It follows from \eqref{rita21} that
$$
\boldsymbol\lambda^\top \mathbf R_i=-\boldsymbol\lambda^\top
\frac{\partial\mathbf W^\top}{\partial x_i}\, \mathbf J\, \mathbf W=
-\frac{\partial}{\partial x_i}(\boldsymbol\lambda^\top \mathbf
W^\top)\mathbf J\mathbf
W+\frac{\partial\boldsymbol\lambda^\top}{\partial x_i} \mathbf
W^\top\mathbf J\mathbf W.
$$
Noting that in view of relation \eqref{anna7} in Theorem
\ref{anna4}, we have $ \mathbf W^\top\mathbf J\mathbf W=\mathbf
J=-\mathbf J^\top$, we obtain the expression for the rows
$\boldsymbol\lambda^\top\mathbf R_i$ of the matrix
$\boldsymbol\lambda^\top\mathbf R$
$$
\boldsymbol\lambda^\top\mathbf R_i=-\frac{\partial}{\partial
x_i}(\mathbf W\boldsymbol\lambda )^\top\mathbf J\mathbf
W-\frac{\partial}{\partial x_i}(\mathbf J\boldsymbol\lambda)^\top.
$$
which leads to
\begin{equation*}
   \boldsymbol\lambda^\top \mathbf R
   =-\nabla_x(\mathbf W\boldsymbol\lambda)\mathbf J\mathbf W-
   \nabla_x(\mathbf J\boldsymbol\lambda).
\end{equation*}
On the other hand, equality \eqref{rita8c} yields
$$
\boldsymbol\lambda=\mathbf W^{-1}\Big(\delta\mathbf w-
\chi_i\frac{\partial \mathbf w}{\partial \xi_i}\Big)= \mathbf
W^{-1}\Big(\delta\mathbf w-\delta u_i\frac{\partial \mathbf
w}{\partial x_i}\Big)
$$
Substituting this relation into the latter identity we arrive at
\begin{equation}\label{rita12-00}
    \boldsymbol\lambda^\top \mathbf R=-\nabla_x(\delta\mathbf w)\mathbf J\mathbf W
    +\nabla_x\big(\delta u_i\frac{\partial \mathbf w}{\partial x_i}\big)\mathbf J\mathbf W
    -\nabla_x(\mathbf J\boldsymbol\lambda).
\end{equation}
Let us calculate $\boldsymbol \Lambda\boldsymbol\Gamma$. Equalities
\eqref{rita8d}, the identity $\mathbf V\nabla_\xi=\nabla_x$, and
equality \eqref{rita10a}, which we proved in Step 1, implies
\begin{equation}\label{rita23e}
\boldsymbol\Gamma=\mathbf W^{-1}\delta\mathbf W-\chi_i \mathbf
W^{-1} \frac{\partial\mathbf W}{\partial\xi_i} =\mathbf
W^{-1}\delta\mathbf W-\delta u_i\mathbf W^{-1} \frac{\partial\mathbf
W}{\partial x_i} .
\end{equation}
On the other hand, equality \eqref{anna8} yields
$\boldsymbol\Lambda= -\nabla_x(\mathbf w)\mathbf J\mathbf W$. Thus
we get
$$
\boldsymbol\Lambda\boldsymbol\Gamma=-\nabla_x(\mathbf w)\mathbf
J\delta\mathbf W+ \delta u_i(\nabla_x \mathbf w)\mathbf J
\frac{\partial\mathbf W}{\partial x_i} .
$$
Substituting this result along with \eqref{rita12-00} into
\eqref{rita19-0} we arrive at
\begin{equation}\label{rita23}\begin{split}
 \delta\boldsymbol\Lambda=\delta u_i\Big( \frac{\partial \boldsymbol\Lambda}{\partial x_i}+
\nabla_x \mathbf w\mathbf J \frac{\partial\mathbf W}{\partial x_i}
\Big)+\nabla_x\Big(\delta u_i\frac{\partial\mathbf w}{\partial
x_i}\Big) \mathbf J\mathbf W-\\\nabla_x (\delta\mathbf w)\mathbf
J\mathbf W- \nabla_x \mathbf w \,\mathbf J\,(\delta\mathbf W).
\end{split}\end{equation}
Let us calculate separately the term containing $\delta u_i$. We begin
with the observation that
$$
\nabla_x\Big(\delta u_i\frac{\partial\mathbf w}{\partial
x_i}\Big)=\delta u_i \frac{\partial}{\partial x_i}\nabla_x\mathbf
w+\nabla_x(\delta\mathbf u)\, \nabla_x\mathbf w.
$$
Thus we get
\begin{equation}\label{rita24}
\nabla_x\Big(\delta u_i\frac{\partial\mathbf w}{\partial x_i}\Big)
\mathbf J\mathbf W=\delta u_i \frac{\partial}{\partial
x_i}(\nabla_x\mathbf w)\mathbf J\mathbf W+ \nabla_x(\delta\mathbf
u)\, \nabla_x\mathbf w\mathbf J\mathbf W.
\end{equation}
Next, the identity $\boldsymbol\Lambda=-\nabla_x \mathbf w \,
\mathbf J\, \mathbf W$ implies
$$
\delta u_i \frac{\partial \boldsymbol\Lambda}{\partial x_i}=-\delta
u_i \frac{\partial}{\partial x_i}(\nabla_x\mathbf w)\mathbf J\mathbf
W- \delta u_i\nabla_x\mathbf w\, \mathbf J\frac{\partial\mathbf
W}{\partial x_i}
$$
Combining this result with \eqref{rita24} we obtain
\begin{equation*}
   \delta u_i\Big( \frac{\partial }{\partial x_i}\boldsymbol\Lambda+
\nabla_x \mathbf w\mathbf J \frac{\partial\mathbf W}{\partial x_i}
\Big)+\nabla_x\Big(\delta u_i\frac{\partial\mathbf w}{\partial
x_i}\Big) \mathbf J\mathbf W=\nabla_x(\delta\mathbf
u)\nabla_x\mathbf w\, \mathbf J\mathbf W.
\end{equation*}
Substituting this result in \eqref{rita23} we obtain
\begin{equation}\label{rita26}
 \delta\boldsymbol\Lambda=-\nabla_x(\delta\mathbf w)\mathbf J\mathbf W-
 \nabla_x\mathbf w\, \mathbf J\,  \delta \mathbf W+\nabla_x(\delta\mathbf u)
 \nabla_x\mathbf w\, \mathbf J\mathbf W.
\end{equation}
Next, relation \eqref{rita10a}, which was proved in Step 1, and the
identity $\mathbf V\nabla_\xi=\nabla_x$ imply
$$
\nabla_x(\delta\mathbf u)\nabla_x\mathbf w\, \mathbf J\mathbf W=
(\mathbf V\nabla_\xi \delta\mathbf u) \nabla_x \mathbf w\, \mathbf
J\mathbf W=-\delta\mathbf V\, \nabla_x\mathbf w\, \mathbf J\,\mathbf
W,
$$
$$
\nabla_x(\delta \mathbf w)=\mathbf V\nabla_\xi(\delta\mathbf w),
\quad \nabla_x\mathbf w=\mathbf V\nabla_\xi \mathbf w.
$$
Substituting these equalities into \eqref{rita26} gives desired
relation \eqref{rita10d}
 and the assertion follows.
\paragraph{Step 5.} It remains to prove that equalities \eqref{rita8}-\eqref{rita9}
imply \eqref{rita10e}. We proved yet that $\delta\mathbf V$
satisfies relation \eqref{rita10a}, i.e.,
$$
\delta V_{ik}=-V_{in}\Big(\frac{\partial}{\partial \xi_n}\delta
u_p\Big)V_{pk}.
$$
It follows from this \eqref{rita10a} and the identity $\mathbf
V\nabla_\xi=\nabla_x$ that
$$
\delta V_{ik}\partial/ \partial\xi_k=-V_{in}\Big(
\frac{\partial}{\partial \xi_n}\delta
u_p\Big)V_{pk}\partial/\partial \xi_k=-
\Big(\frac{\partial}{\partial x_i}\delta u_p\Big)\partial/\partial
x_p.
$$
It follows from this that relation \eqref{rita10e} can be rewritten
in the equivalent form
\begin{equation}\label{rita27}
    \delta\mathbf R_i=\Big(\frac{\partial}{\partial x_i}\delta u_k\Big)
    \frac{\partial}{\partial x_k}
\big( \mathbf W^\top\big)\mathbf J\mathbf W-\frac{\partial}{\partial
x_i} \big(\delta \mathbf W^\top\big)\mathbf J\mathbf W -
\frac{\partial}{\partial x_i} \big(\mathbf W^\top\big)\mathbf
J\delta \mathbf W   .
\end{equation}
Hence it suffices to prove that relations
\eqref{rita8}-\eqref{rita9} yields \eqref{rita27}. To this end
notice that relation \eqref{rita8a} and the identity
 $V_{ik}\partial/ \partial \xi_k=\partial/\partial x_i$  imply
\begin{equation*}
 \chi_k\frac{\partial}{\partial \xi_k}\mathbf R_i=\delta u_k
 \frac{\partial }{\partial x_k}\mathbf R_i,
\quad V_{ik}\frac{\partial}{\partial \xi_k}(\mathbf J\,
\boldsymbol\Gamma)= \frac{\partial}{\partial x_i}(\mathbf J\,
\boldsymbol\Gamma).
 \end{equation*}
Using these identities we can rewrite relation \eqref{rita9c} in the
form
\begin{equation}\label{rita28}
    \delta\mathbf R_i=\delta u_k\frac{\partial }{\partial x_k}\mathbf R_i+
    \frac{\partial}{\partial x_i}(\mathbf J\, \boldsymbol\Gamma)+
    \mathbf R_i\boldsymbol\Gamma+(\mathbf R_i\boldsymbol\Gamma)^\top.
\end{equation}
Let us calculate all terms in the right hand side. It follows from
\eqref{rita8d}
 and \eqref{rita8a}
that
$$
\delta \mathbf W=\chi_i\,\frac{\partial}{\partial\xi_i}\mathbf W+
\mathbf W\boldsymbol\Gamma =\delta u_k\frac{\partial}{\partial
x_k}\mathbf W+\mathbf W\boldsymbol\Gamma
$$
or equivalently
$$
\boldsymbol\Gamma=\mathbf W^{-1}\delta\mathbf W-\delta u_k\mathbf
W^{-1} \frac{\partial}{\partial x_k}\mathbf W
$$
Since $\mathbf J\mathbf W^{-1}=\mathbf W^\top\mathbf J$ we conclude
from this that
$$
\mathbf J\boldsymbol\Gamma=\mathbf W^\top\mathbf J
 \delta \mathbf W-\delta u_k\mathbf W^{\top}\frac{\partial}{\partial x_k}\mathbf W,
$$
which leads to the equality
\begin{equation}\label{rita29}\begin{split}
\frac{\partial}{\partial x_i}(\mathbf J\boldsymbol\Gamma)=
\frac{\partial \mathbf W^\top}{\partial x_i} \mathbf J\delta\mathbf
W+\mathbf W^\top\mathbf J \frac{\partial \delta\mathbf W}{\partial
x_i} -\\\delta u_k\Big(\frac{\partial \mathbf W^\top}{\partial x_i}
\mathbf J\frac{\partial \mathbf W}{\partial x_k}+\mathbf W^\top
\mathbf J\frac{\partial^2\mathbf W}{\partial x_i\partial x_k}\Big)
-\frac{\partial \delta u_k}{\partial x_i}\mathbf W^\top\mathbf J
\frac{\partial\mathbf W}{\partial x_k}.
\end{split}\end{equation}
Next, equality \eqref{rita21} implies
\begin{equation}\label{rita30}
    \delta u_k\frac{\partial}{\partial x_k}\mathbf R_i=-\delta u_k
    \Big(\frac{\partial^2\mathbf W^\top}{
    \partial x_i\partial x_k}\, \mathbf J \mathbf W+
    \frac{\partial\mathbf W^\top}{
    \partial x_i}\,\mathbf J\, \frac{\partial\mathbf W}{
\partial x_k}\Big)
\end{equation}
Notice that relations \eqref{rita21} and \eqref{rita23e} imply
\begin{equation}\label{rita31}\begin{split}
  \mathbf R_i\boldsymbol\Gamma+(\mathbf R_i\boldsymbol\Gamma)^\top=
  \delta \mathbf W^\top \mathbf J\frac{\partial \mathbf W}{\partial x_i} -
   \frac{\partial \mathbf W^\top}{\partial x_i}\,\mathbf J\delta \mathbf W^\top+\\
   \delta u_k\Big( \frac{\partial \mathbf W^\top}{\partial x_i}\mathbf J
   \frac{\partial \mathbf  W}{\partial x_k}-
   \frac{\partial \mathbf W^\top}{\partial x_k}
   \mathbf J \frac{\partial \mathbf W}{\partial x_i}\Big).
\end{split}\end{equation}
Substituting  \eqref{rita29}-\eqref{rita31} into \eqref{rita28} we
obtain
\begin{equation*}
    \delta\mathbf R_i=-\frac{\partial \delta u_k}{\partial x_i}
    \mathbf W^\top \mathbf J\frac{\partial \mathbf W}{\partial x_k}+
    \mathbf W^\top\mathbf J\frac{\partial \delta\mathbf W}{\partial x_i}
+\delta\mathbf W^\top\mathbf J \frac{\partial \mathbf W}{\partial
x_i}-\delta u_k\frac{\partial^2}{\partial x_i\partial
x_k}\big(\mathbf W^\top\mathbf J\mathbf W\big).
\end{equation*}
On the other hand, the identity $\mathbf W^\top\mathbf J\mathbf
W=\mathbf J$ implies
$$
\frac{\partial^2}{\partial x_i\partial x_k} \big(\mathbf
W^\top\mathbf J\mathbf W\big)=0, \quad \mathbf W^\top \mathbf J
\frac{\partial \mathbf W}{\partial x_k}+ \frac{\partial \mathbf
W^\top}{\partial x_k}\mathbf J\mathbf W=0.
$$
Thus we get
\begin{equation}\label{rita31}
 \delta\mathbf R_i=\frac{\partial \delta u_k}{\partial x_i}
 \frac{\partial \mathbf W^\top}{\partial x_k}\mathbf J\mathbf W
+\mathbf W^\top\mathbf J\frac{\partial \delta\mathbf W}{\partial
x_i} +\delta\mathbf W^\top\mathbf J\frac{\partial \mathbf
W}{\partial x_i}
\end{equation}
Relation \eqref{rita10b}, which was proved in Step 2, yields
$\delta\mathbf W^\top\mathbf J\mathbf W+\mathbf W^\top\mathbf
J\delta\mathbf W=0$ differentiation both sides of this equality
gives
$$
\mathbf W^\top\mathbf J\frac{\partial \delta\mathbf W}{\partial
x_i}+ \delta\mathbf W^\top\mathbf J\frac{\partial \mathbf
W}{\partial x_i}= -\frac{\partial}{\partial x_i} \big(\delta \mathbf
W^\top\big)\mathbf J\mathbf W - \frac{\partial}{\partial x_i}
\big(\mathbf W^\top\big)\mathbf J\delta \mathbf W .
$$
Substituting this relation into \eqref{rita31} we obtain desired
identity \eqref{rita27}, and the assertion follows.

\subsection{Proof of $(\mathbf i\mathbf i)$.} Choose an arbitrary
analytic
\begin{equation}\label{rita33}
    \boldsymbol\varphi=(\beta,\varphi_0, \mathbf u, \mathbf w,
    W_{11}, W_{12}, W_{21})
\end{equation}
and consider the vector field
\begin{equation}\label{rita34}
    \Theta(\boldsymbol\varphi)=\big(\mathbf u, \mathbf v, \mathbf w, \mathbf V,\boldsymbol \Lambda, \mathbf W,
    \mathbf R_i\big),
\end{equation}
defined by \eqref{anna17}. Let $\boldsymbol \theta$ be a
corresponding canonical mapping defined by \eqref{anna1}.  Next
choose an arbitrary vector field
\begin{equation}\label{rita35}
\delta\boldsymbol\varphi=(\delta\mathbf \beta, \delta\mathbf
\varphi_0, \delta\mathbf u, \delta\mathbf w, \delta W_{11},\delta
W_{12},\delta W_{21})\in X_{\sigma,d-1}.
\end{equation}
and the corresponding vector field
\begin{equation}\label{rita35x}
    \delta\boldsymbol\Theta=\big(\, \delta\mathbf u, \delta\mathbf v,
    \delta\mathbf w, \delta\mathbf V, \delta\mathbf W, \delta\boldsymbol\Lambda,
    \delta\mathbf R_i\,\big).
\end{equation}
Let us consider the associated mapping
\begin{equation}\label{rita36}
\delta\boldsymbol\theta=\left(
\begin{array}{c}
\delta \mathbf u(\mathbf \xi)\\ \delta \mathbf v(\boldsymbol\xi)+
\delta \mathbf V(\boldsymbol\xi)\boldsymbol\eta +\delta \boldsymbol
\Lambda(\boldsymbol\xi)
\boldsymbol\zeta+\boldsymbol\zeta^\top\delta \mathbf R(\boldsymbol \xi)\boldsymbol\zeta\\
 \delta \mathbf w(\boldsymbol \xi)+\delta \mathbf W(\boldsymbol\xi)\boldsymbol\zeta
\end{array}\right),
\end{equation}
with the coefficients given by \eqref{gala2} , i. e.,
\begin{gather}\nonumber
\delta\mathbf V=-\mathbf V\,\delta \nabla_\xi\delta\mathbf
u\,\mathbf V, \quad \delta W_{22}=\frac{1}{W_{11}}\big(W_{12}\delta
W_{21}+W_{21}\delta W_{12}-
W_{22}\delta W_{11}\big),\\
\label{rita100}\delta \mathbf W=\left(
\begin{array}{cc}
\delta W_{11}&\delta W_{12}\\
\delta W_{21}&\delta W_{22}
\end{array}
\right),\\\nonumber \delta\mathbf v=\delta\beta+\mathbf
V(\nabla\delta\varphi_0-w_2\nabla\delta w_1- \delta w_2 \nabla w_1)+
\delta \mathbf V\,(\nabla \varphi_0-w_2\nabla w_1),\\\nonumber
\delta\boldsymbol\Lambda=\delta \mathbf V \, \nabla w\, \mathbf
J\mathbf W-\mathbf V\,
 \nabla(\delta\mathbf w)\mathbf J\mathbf W-\mathbf W\, \nabla\mathbf w\, \mathbf J\,
 \delta\mathbf W,\\\nonumber
\delta\mathbf R_i=-\delta V_{ik}
\frac{\partial}{\partial\xi_k}(\mathbf W^{\top}) \mathbf J\mathbf W-
 V_{ik} \frac{\partial}{\partial\xi_k}(\delta\mathbf W^{\top})\mathbf J\mathbf W-
 V_{ik} \frac{\partial}{\partial\xi_k}(\mathbf W^{\top})\mathbf J\, \delta\mathbf W.
\end{gather}
Our task is to find to find a vector field
\begin{equation*}
    \boldsymbol\Upsilon=(\boldsymbol\nu,  \psi_0,  \boldsymbol\chi,\boldsymbol\lambda, \Gamma_{11},
    \Gamma_{12}, \Gamma_{21})\in X_{\sigma, d-1}(r).
\end{equation*}
such that the corresponding vector field
\begin{equation}\label{rita35xy}
    \mathfrak Z:=\delta\boldsymbol\Theta\equiv D_\varphi
    \boldsymbol\Theta(\boldsymbol\varphi_0)[\boldsymbol\Upsilon],
\end{equation}
\begin{equation}\label{rita35xyz}
\mathfrak Z\equiv \Big(\boldsymbol\chi,\, \boldsymbol\mu,\, \boldsymbol
\lambda,\,
 -\nabla\boldsymbol\chi,\, \boldsymbol\Gamma,\, \nabla (\mathbf J\boldsymbol\lambda),\, \partial_{\xi_i}
 (\mathbf J\, \boldsymbol\Gamma)\, \Big),\end{equation}
with
\begin{equation}\label{rita35xyza}
\boldsymbol\mu=\boldsymbol\nu+\nabla\psi_0, \quad \text{Tr~}\boldsymbol\Gamma=0.
\end{equation}
satisfies the relations \eqref{sima15}, i.e.,
\begin{subequations}\label{rita36x}
\begin{gather}\label{rita36xa}
\boldsymbol\chi\,= \,\mathbf V^\top \,\delta\mathbf
u\\\label{rita36xb} \boldsymbol\lambda\,=\, \mathbf
W^{-1}\delta\mathbf w-\chi_i\,\mathbf W^{-1}
\frac{\partial}{\partial \xi_i} \mathbf w\\\label{rita36xc}
\boldsymbol\Gamma\,=\, \mathbf W^{-1}\delta\mathbf W-\chi_i\,\mathbf
W^{-1} \frac{\partial}{\partial \xi_i} \mathbf W,\\\label{rita36xd}
\boldsymbol\mu\,=\, \mathbf V^{-1}\Big(\delta\mathbf W+\chi_i\,
\frac{\partial}{\partial \xi_i} \mathbf v-
\boldsymbol\Lambda\boldsymbol\lambda\Big)\\\label{rita36xe}
\nabla\psi_0=\boldsymbol\mu-\boldsymbol\nu,
 \quad \boldsymbol\nu=\frac{1}{(2\pi)^{n-1}}\int_{\mathbb T^{n-1}}
  \boldsymbol\mu\, d\boldsymbol\xi=\delta\boldsymbol\beta.
\end{gather}
\end{subequations}
In order to prove \eqref{rita36x}, we consider  the associated canonical mapping
\begin{equation}\label{rita101}
\boldsymbol\pi=\left(
\begin{array}{c}
\boldsymbol\chi(\boldsymbol\xi)\\
\boldsymbol\mu-(\nabla_\xi \boldsymbol\chi)\boldsymbol\eta+\nabla_\xi(\mathbf J\boldsymbol\lambda)\boldsymbol\zeta+\frac{1}{2}\nabla_\xi(\mathbf J\boldsymbol\Gamma\boldsymbol\zeta\cdot \boldsymbol\zeta)\\
\boldsymbol\lambda+\boldsymbol\Gamma\boldsymbol\zeta\end{array}\right).
\end{equation}
Notice that relations \eqref{rita36x} can be written in the form
\begin{equation}\label{rita102}
    \delta \boldsymbol\theta\,=\, \boldsymbol\theta'\,\boldsymbol \pi.
\end{equation}
Here the Jacobi matrix $\boldsymbol\theta'$ is defined by
\eqref{lena2}-\eqref{lena3}. Since $\boldsymbol \theta$ and
$\delta\boldsymbol\theta$ are given, it suffices to prove that a
solution $\boldsymbol\pi$ to \eqref{rita102} admits  representation
\eqref{rita101}. Since, in view of \eqref{anna3}, the matrix
$\boldsymbol\theta'$ is symplectic,   a solution of \eqref{rita102}
is given by
\begin{equation}\label{rita103}
    \boldsymbol\pi=-\mathbf J_{2n}  {\boldsymbol\vartheta'}^\top \mathbf J_{2n}\delta \boldsymbol\vartheta. \end{equation}
Here the matrix $\mathbf J_{2n}$ is defined by \eqref{anna3}.
Substituting \eqref{rita36} into \eqref{rita103} we conclude that
the mapping $\boldsymbol\pi$ admits the representation
\begin{equation}\label{rita104}
\boldsymbol\pi=\left(
\begin{array}{c}
\boldsymbol\chi(\boldsymbol\xi)\\
\boldsymbol\mu(\boldsymbol\xi)+\mathbf
A(\boldsymbol\xi)\boldsymbol\eta+\mathbf
B(\boldsymbol\xi)\boldsymbol\zeta+\frac{1}{2}\boldsymbol
\zeta^\top \boldsymbol C(\boldsymbol\xi)\boldsymbol\zeta\\
\boldsymbol\lambda(\boldsymbol\xi)+\boldsymbol\Gamma(\boldsymbol\xi)\boldsymbol\zeta\end{array}\right).
\end{equation}
Here $\boldsymbol\chi$, $\boldsymbol\mu$, $\boldsymbol \lambda$ are
$2\pi$- periodic vector fields, $\mathbf A$, $\mathbf B$ and
$\boldsymbol\Gamma$ are $2\pi$ periodic matrices, and a
vector-valued function $\boldsymbol\zeta^\top \boldsymbol
C(\boldsymbol\xi)\boldsymbol\zeta$  is in the form
\begin{equation}\label{rita105}
  \boldsymbol\zeta^\top \boldsymbol C(\boldsymbol\xi)\boldsymbol\zeta=\big(
  \boldsymbol\zeta^\top \mathbf C_1(\boldsymbol\xi)\boldsymbol\zeta,\dots,\boldsymbol\zeta^\top \mathbf C_{n-1}(\boldsymbol\xi)\boldsymbol\zeta\big),
\end{equation}
where $\mathbf  C_i$ are $2\pi$-periodic matrices. It follows
directly from \eqref{rita104}, \eqref{rita36}, and expression
\eqref{anna3} for the Jacobi matrix $\boldsymbol\theta'$ that
\begin{equation}\label{rita106}\begin{split}
\boldsymbol \chi=(\mathbf I_{n-1}+\mathbf u')^{-1} \delta\mathbf u,\\
\boldsymbol\lambda=\mathbf W^{-1}\delta\mathbf w-\chi_i\mathbf
W^{-1}\frac{\partial}{\partial \xi_i}
\mathbf w,\\
\boldsymbol\mu=\mathbf V^{-1}\big(\, \delta\mathbf v-
\chi_i\frac{\partial}{\partial \xi_i}
\mathbf v-\boldsymbol\Lambda\boldsymbol\lambda\,\big),\\
\boldsymbol\Gamma=\mathbf W^{-1}\delta\mathbf w-\chi_i\mathbf
W^{-1}\frac{\partial}{\partial \xi_i} \mathbf W.
\end{split}\end{equation}
Hence it remains to prove that
\begin{equation}\label{rita107}
\mathbf A=-\nabla\boldsymbol\chi, \quad \mathbf B=\nabla(\mathbf
J\boldsymbol\lambda)
\end{equation}
\begin{equation}\label{rita108}
    \mathbf C_i=\frac{\partial}{\partial \xi_i}(\mathbf J\boldsymbol\Gamma), \quad
    \mathbf J\boldsymbol\Gamma+\boldsymbol\Gamma^\top\mathbf J=0,
\end{equation}
\begin{equation}\label{rita109}
   \boldsymbol\mu= \boldsymbol\nu+\nabla\psi_0.
\end{equation}
To this end notice that ${\boldsymbol\theta'}^\top\mathbf
J_{2n}\boldsymbol\theta'=\mathbf J_{2n}$, which yields
\begin{equation}\label{rita110}
    {(\delta\boldsymbol\theta)'}^\top\,\,\mathbf J_{2n}\,\,\boldsymbol \theta'+
    {\boldsymbol\theta'}^\top\,\,\mathbf J_{2n}\,\,(\delta\boldsymbol \theta)'=0.
\end{equation}
Now set $\boldsymbol\sigma=(\boldsymbol\xi, \boldsymbol\eta,
\boldsymbol\zeta)\in R^{2n}$ and notice that the vector-valued
function $\boldsymbol \pi$ takes it value in $\mathbb R^{2n}$.
Denote by $\pi_k$ the components of $\boldsymbol\pi$.
Differentiation of the equality $\delta\boldsymbol
\theta=\boldsymbol \theta'\, \boldsymbol\pi$ leads to the identity
$$
\delta\boldsymbol \theta'=\boldsymbol \theta'\boldsymbol\pi'+
\pi_k\frac{\partial}{\partial\sigma_k} \boldsymbol\theta'.
$$
Substituting this result in \eqref{rita11} we arrive at
$$
{\boldsymbol\pi'}^\top\, {\boldsymbol\theta'}^\top\,\mathbf J_{2n}
\boldsymbol\theta'+ {\boldsymbol\theta'}^\top\, \mathbf J_{2n}\,
\boldsymbol\theta'\,\boldsymbol\pi'+
\pi_k\Big\{{\boldsymbol\theta'}^\top\, \mathbf
J_{2n}\frac{\partial}{\partial\sigma_k} \boldsymbol\theta'
+\frac{\partial}{\partial\sigma_k}
{\boldsymbol\theta'}^\top\,\mathbf J_{2n}\,
\boldsymbol\theta'\Big\}. =0.$$ Notice that
$$
{\boldsymbol\theta'}^\top\, \mathbf
J_{2n}\frac{\partial}{\partial\sigma_k} \boldsymbol\theta'
+\frac{\partial}{\partial\sigma_k}
{\boldsymbol\theta'}^\top\,\mathbf J_{2n}\,
\boldsymbol\theta'=\frac{\partial}{\partial\sigma_k}
\big({\boldsymbol\theta'}^\top\, \mathbf J_{2n}\,\boldsymbol\theta'
\big)\equiv\frac{\partial}{\partial\sigma_k} \mathbf J_{2n}=0
$$
Recalling that ${\boldsymbol\theta'}^\top\, \mathbf
J_{2n}\,\boldsymbol\theta'=\mathbf J_{2n}$ we obtain
\begin{equation}\label{rita111}
  {\boldsymbol\pi'}^\top\,\mathbf J_{2n} +
\mathbf J_{2n}\,\boldsymbol\pi' =0.
\end{equation}
It follows from \eqref{rita104} that
\begin{equation}\label{rita112}
\boldsymbol\pi'=\left(
\begin{array}{ccc}
P_{11},&0,&0\\
P_{21},&P_{22},&P_{23}\\
P_{31},&0,& P_{33}\end{array}\right), \quad\mathbf{J}_{2n}=\left(
\begin{array}{ccc}
0&\mathbf{I}_{n-1}&0\\
-\mathbf{I}_{n-1}&0&0
\\0&0&\mathbf{J}
\end{array}
\right),
\end{equation}
where
$$
P_{11}=\boldsymbol\chi', \quad P_{21}\big(\,\boldsymbol\mu+\mathbf
A\boldsymbol\eta+\mathbf B\boldsymbol\zeta+\frac{1}{2}\boldsymbol
\zeta^\top \boldsymbol C\boldsymbol\zeta\,\big)'_\xi
$$
$$
P_{22}=\mathbf A\quad P_{23}=\mathbf B+\mathbf C\boldsymbol\zeta,
$$
$$
P_{31}=\boldsymbol\lambda'_\xi+(\boldsymbol\Gamma\boldsymbol\zeta)'_\xi,
\quad P_{33}=\boldsymbol\Gamma,
$$
Here $\mathbf C\boldsymbol \lambda$ is $(n-1)\times 2$ matrix with
the entries $(\mathbf C\boldsymbol\lambda)_{ij}=C_{i,
jp}\,\lambda_p$. Substituting \eqref{rita112} into \eqref{rita111}
we obtain four nontrivial matrix equations
\begin{equation*}\begin{split}
 P_{22}=-P_{11}^\top, \quad P_{23}=(\mathbf J P_{31}^\top,\\
 P_{33}^\top\mathbf J+\mathbf J P_{33} =0, \quad P_{21}=P_{21}^\top.
\end{split}\end{equation*}
The first equation gives $\mathbf A=-\nabla\boldsymbol\chi$. The
second gives two nontrivial relation
$$
\mathbf B={(\mathbf J\boldsymbol\lambda)'_\xi}^\top\equiv \nabla
(\mathbf J\boldsymbol\lambda), \quad \mathbf C\boldsymbol
\zeta=\nabla_\xi(\mathbf J\boldsymbol\Gamma\boldsymbol\zeta).
$$
The latter equality is equivalent to relations $\mathbf
C_i=\partial(\mathbf J\boldsymbol\Gamma)/\partial\xi_i$. Finally
notice that relation $P_{21}=P_{21}^\top$ gives
$\boldsymbol\mu'_\xi=(\boldsymbol\mu'_\xi)^\top$. In other words,
the Jacobi matrix of the vector-field $\boldsymbol\mu$ is symmetric.
It follows that the vector field $\boldsymbol\mu$ is potential.
Hence it has the representation
$\boldsymbol\mu=\boldsymbol\nu+\nabla \psi_0$, where
$\boldsymbol\nu$ is a constant vector and $\psi_0$ is
$2\pi$-periodic function with zero mean. Therefore the matrices
$\mathbf A$, $\mathbf B$, $\boldsymbol\Gamma$ and the vector field
$\boldsymbol\mu$ satisfy conditions \eqref{rita107}-\eqref{rita109}.
This completes the proof of Theorem \ref{sima12}.

\setcounter{equation}{0}
\renewcommand{\theequation}{\arabic{equation}C}

\section{Proof of Lemma \ref{luna11}}\label{proofluna11}
 Equation \eqref{luna8b} along with the equality $\mathbf V^{-1}=\nabla(\boldsymbol\xi
 +\mathbf u)$ imply
 $$
 \boldsymbol\omega^\top\nabla (\boldsymbol\xi+\mathbf u)=\nabla_y H_m(\boldsymbol\xi+\mathbf u, \mathbf v, \mathbf w),
 $$
 which yields the first relation in \eqref{luna12}. Next, recalling formula \eqref{luna9} for $\boldsymbol\Lambda$  , we obtain
 $$
 \Big(\frac{\partial H_m}{\partial \mathbf y}(\boldsymbol\xi+\mathbf u, \mathbf v, \mathbf w) \boldsymbol\Lambda\Big)_q=-\frac{\partial H_m}{\partial  y_i}(\boldsymbol\xi+\mathbf u, \mathbf v, \mathbf w)V_{ij}\frac{\partial w_k}{\partial\xi_j} J_{kp}W_{pq},
 $$
 We also have
 $$
  \Big(\frac{\partial H_m}{\partial \mathbf z}(\boldsymbol\xi+\mathbf u, \mathbf v, \mathbf w) W\Big)_q
 =\frac{\partial H_m}{\partial  z_p}(\boldsymbol\xi+\mathbf u, \mathbf v, \mathbf w)W_{pq}
 $$
 From this and \eqref{luna8c} we obtain
 $$
 \Big\{-\frac{\partial H_m}{\partial  y_i}(\boldsymbol\xi+\mathbf u, \mathbf v, \mathbf w)V_{ij}\frac{\partial w_k}{\partial\xi_j} J_{kp}+\frac{\partial H_m}{\partial  z_p}(\boldsymbol\xi+\mathbf u, \mathbf v, \mathbf w)\Big\}W_{pq}=0.
 $$
Notice that in view of \eqref{luna8b}, we have $ {\partial
H_m}/{\partial  y_i}V_{ij}=\omega_j$. Thus we get
$$
 \Big\{-\boldsymbol\partial \mathbf w^\top\mathbf J
 +\frac{\partial H_m}{\partial \mathbf z}\Big\} \mathbf W=0.
 $$
 which yields the third inequality in \eqref{luna12}.
 In order to derive the second equality in \eqref{luna12} we differentiate both sides
 of \eqref{luna8a} with respect to $x_i$ to obtain
\begin{equation}\label{luna13}
\frac{\partial H_m}{\partial x_i}(\xi+\mathbf u, \mathbf v,\mathbf
w)+\frac{\partial  H_m}{\partial y_n}(\xi+\mathbf u, \mathbf
v,\mathbf w)\frac{\partial v_n}{\partial x_i}+\frac{\partial
H_m}{\partial z_p}(\xi+\mathbf u,\mathbf v,\mathbf w)\frac{\partial
w_p}{\partial x_i}=0.
\end{equation}
Since $\mathbf V=(\mathbf I+\mathbf u')^{-\top}$, we have $\nabla_x=
\mathbf V\nabla_\xi$. It follows that
$$
\frac{\partial}{\partial y_n}H_m(\xi+\mathbf u, \mathbf v,\mathbf
w)\frac{\partial}{\partial x_n} =\frac{\partial}{\partial
y_n}H_m(\xi+\mathbf u, \mathbf v,\mathbf
w)V_{nj}\frac{\partial}{\partial \xi_j}.
$$
 From this and \eqref{luna8b} we obtain
 \begin{equation}\label{luna14}
 \frac{\partial}{\partial y_n}H_m(\xi+\mathbf u, \mathbf v,\mathbf w)\frac{\partial}{\partial x_n}=
 \omega_i\frac{\partial}{\partial \xi_i}=\boldsymbol\partial.
 \end{equation}
 On the other hand, relation \eqref{luna9} implies
 $\mathbf v=\boldsymbol\beta+\nabla_x\varphi_0-w_1\nabla_x w_2$. Thus we get
 $$
 \frac{\partial v_n}{\partial x_i}=\frac{\partial^2 \varphi_0}{\partial x_n\partial x_i}
 -w_1\frac{\partial^2 w_2}{\partial x_n\partial x_i}-\frac{\partial w_1}{\partial x_n}
 \frac{\partial w_2}{\partial x_i}.
 $$
 Combining this identity with \eqref{luna14}
 we arrive at
 \begin{equation*}\begin{split}
  \frac{\partial}{\partial y_n}H_m(\xi+\mathbf u, \mathbf v,\mathbf w)\frac{\partial v_n}{\partial x_i}
 = \boldsymbol\partial \frac{\partial \varphi_0}{\partial x_i} \\- w_2\boldsymbol\partial \frac{\partial w_1}{\partial x_i}-\boldsymbol\partial w_1 \frac{\partial w_2}{\partial x_i}=\\
  \boldsymbol\partial v_i-\boldsymbol\partial w_1\frac{\partial w_2}{\partial x_i}+
 \boldsymbol\partial w_2 \frac{\partial w_1}{\partial x_i}
 \end{split}\end{equation*}
 From this and the latter relation in \eqref{luna12} we obtain
 \begin{equation*}\begin{split}
 \frac{\partial H_m}{\partial y_n}(\xi+\mathbf u, \mathbf v,\mathbf w)\frac{\partial v_n}{\partial x_i}
 = \\
  \boldsymbol\partial v_i-\frac{\partial H_m}{\partial z_2} \frac{\partial w_2}{\partial x_i}-
 \frac{\partial H_m}{\partial z_1} \frac{\partial w_1}{\partial x_i}
 \end{split}\end{equation*}
 Substituting this equality into \eqref{luna13} we obtain the second equality in \eqref{luna12}.
 $\square$

\setcounter{equation}{0}
\renewcommand{\theequation}{\arabic{equation}D}

\section{Proof of Theorem \ref{structural3}}\label{astructural3}
Direct calculations lead to the following formulae
\begin{subequations}\label{luna21}
\begin{equation}\label{luna21a}\begin{split}
\delta\Phi_1=\frac{\partial H_m}{\partial\mathbf  x}
(\boldsymbol\xi+\mathbf u, \mathbf v, \mathbf w)\delta\mathbf u
+\\\frac{\partial H_m}{\partial\mathbf  y} (\boldsymbol\xi+\mathbf
u, \mathbf v, \mathbf w)\delta\mathbf v+ \frac{\partial
H_m}{\partial\mathbf  z} (\boldsymbol\xi+\mathbf u, \mathbf v,
\mathbf w)\delta\mathbf w,
\end{split}\end{equation}

\begin{equation}\label{luna21b}\begin{split}
\delta\Phi_2=\Big\{\,\frac{\partial H_m}{\partial\mathbf  y}
(\boldsymbol\xi+\mathbf u, \mathbf v, \mathbf
w)\delta\boldsymbol\Lambda+ \frac{\partial H_m}{\partial\mathbf  z}
(\boldsymbol\xi+\mathbf u, \mathbf v, \mathbf w)\delta\mathbf W\,\Big\}^\top+\\
\Big\{\, (\delta\mathbf u)^\top\big(\frac{\partial^2
H_m}{\partial\mathbf x\partial\mathbf  y} (\boldsymbol\xi+\mathbf u,
\mathbf v, \mathbf w)\boldsymbol\Lambda+ \frac{\partial^2
H_m}{\partial\mathbf x\partial\mathbf  z} (\boldsymbol\xi+\mathbf u,
\mathbf v, \mathbf w)\mathbf W
\big)\,\Big\}^\top+\\
\Big\{\,(\delta\mathbf v)^\top\big(\frac{\partial^2
H_m}{\partial\mathbf \mathbf  y^2} (\boldsymbol\xi+\mathbf u,
\mathbf v, \mathbf w)\boldsymbol\Lambda+ \frac{\partial^2
H_m}{\partial\mathbf y\partial\mathbf  z} (\boldsymbol\xi+\mathbf u,
\mathbf v, \mathbf w)\mathbf W \big)\,\Big\}^\top
+\\
\Big\{\,(\delta\mathbf w)^\top \big(\frac{\partial^2
H_m}{\partial\mathbf z\partial\mathbf  y} (\boldsymbol\xi+\mathbf u,
\mathbf v, \mathbf w)\boldsymbol\Lambda+ \frac{\partial^2
H_m}{\partial\mathbf  z^2} (\boldsymbol\xi+\mathbf u, \mathbf v,
\mathbf w)\mathbf W \big)\,\Big\}^\top,
\end{split}\end{equation}
\begin{equation}\label{luna21c}\begin{split}
\delta\Phi_3=\big\{\frac{\partial H_m}{\partial\mathbf  y}
(\boldsymbol\xi+\mathbf u, \mathbf v, \mathbf w)\delta\mathbf V
+(\delta\mathbf u)^\top\frac{\partial^2 H_m}{\partial\mathbf
x\partial\mathbf  y}\big
(\boldsymbol\xi+\mathbf u, \mathbf v, \mathbf w)\}^\top+\\
\big\{(\delta\mathbf v)^\top\frac{\partial^2 H_m}{\partial\mathbf
y^2} (\boldsymbol\xi+\mathbf u, \mathbf v, \mathbf w)\mathbf V+
(\delta\mathbf w)^\top\frac{\partial^2 H_m}{\partial\mathbf
z\partial\mathbf  y} (\boldsymbol\xi+\mathbf u, \mathbf v, \mathbf
w)\mathbf V\}^\top,
\end{split}\end{equation}
\begin{equation}\label{luna21d}\begin{split}
\delta\Phi_4=\frac{\partial H_m}{\partial y_i}
(\boldsymbol\xi+\mathbf u, \mathbf v, \mathbf w)\delta\mathbf
R_i+(\mathbf A+ \mathbf B+\mathbf C)+
(\mathbf A+\mathbf B+\mathbf C)^\top+\\
(\delta\mathbf u)^\top\mathbf P^{(x)}+ (\delta\mathbf v)^\top
\mathbf P^{(y)}+(\delta\mathbf w)^\top \mathbf P^{(z)},
\end{split}\end{equation}
\begin{equation}\label{luna21e}
    \delta\Phi_5=\frac{1}{(2\pi)^{n-1}}\int_{\mathbb T^{n-1}}\delta w_1\, d\boldsymbol\xi,
\end{equation}
where
\begin{equation}\label{luna21e}\begin{split}
\mathbf A=\delta\boldsymbol\Lambda^\top\frac{\partial^2
H_m}{\partial\mathbf  y^2}
(\boldsymbol\xi+\mathbf u, \mathbf v, \mathbf w)\boldsymbol\Lambda,\\
\mathbf B=\delta\mathbf W^\top\frac{\partial^2 H_m}{\partial\mathbf
z\partial\mathbf  y} (\boldsymbol\xi+\mathbf u, \mathbf v, \mathbf
w)\boldsymbol\Lambda +\mathbf W^\top\frac{\partial^2
H_m}{\partial\mathbf z\partial\mathbf  y}
(\boldsymbol\xi+\mathbf u, \mathbf v, \mathbf w)\delta\boldsymbol\Lambda,\\
\mathbf C=\delta\mathbf W^\top\frac{\partial^2 H_m}{\partial\mathbf
z^2} (\boldsymbol\xi+\mathbf u, \mathbf v, \mathbf w)\mathbf W,
\end{split}\end{equation}
\begin{equation}\label{luna21f}\begin{split}
\mathbf P^{(\tau)}=\boldsymbol\Lambda^\top \frac{\partial^3
H_m}{\partial \boldsymbol\tau\partial\mathbf  y^2}
(\boldsymbol\xi+\mathbf u, \mathbf v, \mathbf w)\boldsymbol\Lambda+
\boldsymbol\Lambda^\top \frac{\partial^3 H_m}{\partial
\boldsymbol\tau\partial\mathbf  y\partial\mathbf z}
(\boldsymbol\xi+\mathbf u, \mathbf v, \mathbf w)\mathbf W+
\\\mathbf W^\top\frac{\partial^3 H_m}{\partial \tau\partial\mathbf  z\partial\mathbf y}
(\boldsymbol\xi+\mathbf u, \mathbf v, \mathbf w)\boldsymbol\Lambda+
\mathbf W^\top\frac{\partial^3 H_m}{\partial
\boldsymbol\tau\partial\mathbf  z^2} (\boldsymbol\xi+\mathbf u,
\mathbf v, \mathbf w)\mathbf W,
\end{split}\end{equation}
\end{subequations}
where $\boldsymbol\tau=\mathbf x, \mathbf y, \mathbf z$. The
variations $\delta\mathbf V$, $\delta\mathbf W$,
$\delta\boldsymbol\Lambda$, and $\delta\mathbf v$ are defined in
terms of the components of $\delta\boldsymbol\varphi$ by relations
\eqref{gala2}.

In view of the Second Structure Theorem \eqref{sima12} the mapping
\eqref{luna27} has the inverse given by formulae \eqref{sima13}. In
particular, we have
\begin{gather}\nonumber
\delta\mathbf u\,=\, \chi_i\frac{\partial}{\partial \xi_i}
\,(\boldsymbol\xi +\mathbf u), \quad\delta\mathbf v\,=\, \mathbf V
\boldsymbol\mu+\boldsymbol\Lambda \boldsymbol\lambda+\chi_i\,
\frac{\partial}{\partial \xi_i} \mathbf v,\\\label{mina1}
\delta\mathbf w\,=\, \mathbf W \boldsymbol \lambda+\chi_i\,
\frac{\partial}{\partial \xi_i} \mathbf w.
\end{gather}
Substituting these equalities in representation \eqref{luna21a} for
$\delta\Phi_1$ we arrive at
\begin{equation}\begin{split}\label{mina2}
    \delta\Phi_1= \frac{\partial H_m}{\partial\mathbf y}\mathbf V\boldsymbol\mu+\Big(
\frac{\partial H_m}{\partial\mathbf x}\frac{\partial}{\partial
\xi_i}(\text{Id}+\mathbf u)+ \frac{\partial H_m}{\partial\mathbf
y}\frac{\partial\mathbf v}{\partial \xi_i}+ \frac{\partial
H_m}{\partial\mathbf z}\frac{\partial\mathbf w}{\partial
\xi_i}\Big)\chi_i +\\\Big(\frac{\partial H_m}{\partial\mathbf
y}\boldsymbol\Lambda+\frac{\partial H_m}{\partial\mathbf z}\mathbf
W\Big)\boldsymbol\lambda
\end{split}\end{equation}
Here the derivatives of $H_m$ are calculated at the point
$(\boldsymbol\xi+\mathbf u, \mathbf v, \mathbf w)$. It follows from
formula \eqref{luna8a} for $\Phi_1$ that
\begin{equation}\label{mina3}
    \frac{\partial H_m}{\partial\mathbf x}\frac{\partial}{\partial \xi_i}(\text{Id}+\mathbf u)+
\frac{\partial H_m}{\partial\mathbf y}\frac{\partial\mathbf
v}{\partial \xi_i}+ \frac{\partial H_m}{\partial\mathbf
z}\frac{\partial\mathbf w}{\partial
\xi_i}=\frac{\partial\Phi_1}{\partial\xi_i}.
\end{equation}
Next, formula \eqref{luna8c} for $\Phi_3$ and the equality
$\boldsymbol\mu=\nabla\psi_0+\delta\boldsymbol\beta$ imply
\begin{equation}\label{mina4}
 \frac{\partial H_m}{\partial\mathbf y}\mathbf V\boldsymbol\mu=\boldsymbol\omega^\top \boldsymbol\mu+
 \Big(\frac{\partial H_m}{\partial\mathbf y}\mathbf V-\boldsymbol\omega^\top \Big)\boldsymbol\mu=
\boldsymbol\partial\psi_0+\boldsymbol\omega^\top
\cdot\delta\boldsymbol\beta+ \Phi_3^\top\boldsymbol\mu.
\end{equation}
Formula \eqref{luna8b} for $\Phi_2$ yields
\begin{equation}\label{mina5}
    \Big(\frac{\partial H_m}{\partial\mathbf y}\boldsymbol\Lambda+\frac{\partial H_m}{\partial\mathbf z}\mathbf W\Big)\lambda=\Phi_2^\top\boldsymbol\lambda.
\end{equation}
Substituting \eqref{mina3}-\eqref{mina5} into \eqref{mina2} we
obtain the desired representation \eqref{luna26a} for
$\delta\Phi_1$. Let us turn to the representation for
$\delta\Phi_2$. Equalities \eqref{sima13d}, \eqref{sima13f} in the
second structural theorem imply
\begin{gather}\nonumber
\delta\boldsymbol\Lambda\,=\, \mathbf V\nabla_\xi(\mathbf
J\boldsymbol \lambda)+\chi_i\, \frac{\partial}{\partial \xi_i}
\boldsymbol \Lambda+\boldsymbol\lambda^\top\mathbf
R+\boldsymbol\Lambda\boldsymbol\Gamma,\\\label{mina6} \delta\mathbf
W\,=\, \mathbf W \boldsymbol \Gamma+\chi_i\,
\frac{\partial}{\partial \xi_i} \mathbf W.
\end{gather}
Substituting these relations along with equalities \eqref{mina1}
into \eqref{luna21b} we obtain
\begin{gather}\nonumber
\delta\Phi_2^\top=\frac{\partial H_m}{\partial\mathbf y}\mathbf
V\nabla_\xi(\mathbf J\boldsymbol \lambda) +\frac{\partial
H_m}{\partial \mathbf y}(\boldsymbol\lambda^\top\mathbf
R)+\\\nonumber \chi_i\frac{\partial H_m}{\partial\mathbf y}\,
\frac{\partial  \boldsymbol \Lambda}{\partial \xi_i}+ \frac{\partial
H_m}{\partial\mathbf y} \boldsymbol\Lambda\boldsymbol\Gamma
+\chi_i\frac{\partial H_m}{\partial\mathbf z}\frac{\partial\mathbf
W}{\partial \xi_i} +\frac{\partial H_m}{\partial\mathbf z}\mathbf
W\boldsymbol\Gamma+\\\nonumber
\chi_i\frac{\partial}{\partial\xi_i}(\boldsymbol\xi+\mathbf
u)^\top\Big(\frac{\partial^2H_m}{\partial\boldsymbol
x\partial\boldsymbol y}\boldsymbol\Lambda+
\frac{\partial^2H_m}{\partial\boldsymbol x\partial\boldsymbol
z}\mathbf W\Big)+\\\nonumber \chi_i\frac{\partial \mathbf
v^\top}{\partial\xi_i}\Big(\frac{\partial^2H_m}{ \partial\boldsymbol
y^2}\boldsymbol\Lambda+ \frac{\partial^2H_m}{\partial\boldsymbol
y\partial\boldsymbol z}\mathbf W\Big)+\chi_i\frac{\partial\mathbf
w^\top}{\partial\xi_i}\Big(\frac{\partial^2H_m}{\partial\boldsymbol
z\partial\boldsymbol y}\boldsymbol\Lambda+
\frac{\partial^2H_m}{\partial\boldsymbol z^2}\mathbf
W\Big)+\\\nonumber
\boldsymbol\lambda^\top\boldsymbol\Lambda^\top\Big(\frac{\partial^2H_m}{
\partial\boldsymbol y^2}\boldsymbol\Lambda+
\frac{\partial^2H_m}{\partial\boldsymbol y\partial\boldsymbol
z}\mathbf W\Big)+\boldsymbol\lambda^\top \mathbf
W^\top\Big(\frac{\partial^2H_m}{\partial\boldsymbol
z\partial\boldsymbol y}\boldsymbol\Lambda+
\frac{\partial^2H_m}{\partial\boldsymbol z^2}\mathbf
W\Big)+\\\label{mina7} \boldsymbol\mu^\top\mathbf
V^\top\Big(\frac{\partial^2H_m}{ \partial\boldsymbol
y^2}\boldsymbol\Lambda+ \frac{\partial^2H_m}{\partial\boldsymbol
y\partial\boldsymbol z}\mathbf W\Big).
\end{gather}
Since $\partial H_m/\partial \mathbf y$ is the row vector with the
components $\partial H_m/\partial y_i$, we have
$$
\frac{\partial H_m}{\partial \mathbf
y}(\boldsymbol\lambda^\top\mathbf R)=
\boldsymbol\lambda^\top(\frac{\partial H_m}{\partial  y_i}\mathbf
R_i).
$$
Recall that the matrices $\mathbf R_i$and  $\Phi_4$ are symmetric. From this and formula \eqref{luna8d}, we conclude that
\begin{gather}\nonumber
\frac{\partial H_m}{\partial \mathbf
y}(\boldsymbol\lambda^\top\mathbf R)+
\boldsymbol\lambda^\top\boldsymbol\Lambda^\top\Big(\frac{\partial^2H_m}{
\partial\boldsymbol y^2}\boldsymbol\Lambda+
\frac{\partial^2H_m}{\partial\boldsymbol y\partial\boldsymbol
z}\mathbf W\Big)
\\\label{mina8}+\boldsymbol\lambda^\top
\mathbf W^\top\Big(\frac{\partial^2H_m}{\partial\boldsymbol
z\partial\boldsymbol y}\boldsymbol\Lambda+
\frac{\partial^2H_m}{\partial\boldsymbol z^2}\mathbf
W\Big)=\boldsymbol\lambda^\top \Phi_4^\top
\end{gather}
Next, it follows from \eqref{luna8b} that
\begin{gather}\label{mina9}
\frac{\partial H_m}{\partial\mathbf y}\, \frac{\partial  \boldsymbol
\Lambda}{\partial \xi_i} +\frac{\partial H_m}{\partial\mathbf
z}\frac{\partial\mathbf W}{\partial \xi_i} +
\frac{\partial}{\partial\xi_i}(\boldsymbol\xi+\mathbf
u)^\top\Big(\frac{\partial^2H_m}{\partial\boldsymbol
x\partial\boldsymbol y}\boldsymbol\Lambda+
\frac{\partial^2H_m}{\partial\boldsymbol x\partial\boldsymbol
z}\mathbf W\Big)+\\\nonumber \frac{\partial \mathbf
v^\top}{\partial\xi_i}\Big(\frac{\partial^2H_m}{ \partial\boldsymbol
y^2}\boldsymbol\Lambda+ \frac{\partial^2H_m}{\partial\boldsymbol
y\partial\boldsymbol z}\mathbf W\Big)+\frac{\partial\mathbf
w^\top}{\partial\xi_i}\Big(\frac{\partial^2H_m}{\partial\boldsymbol
z\partial\boldsymbol y}\boldsymbol\Lambda+
\frac{\partial^2H_m}{\partial\boldsymbol z^2}\mathbf
W\Big)=\frac{\partial\Phi_2^\top}{\partial\xi_i}.
\end{gather}
We also have
\begin{equation}\label{mina10}
\frac{\partial H_m}{\partial\mathbf
y}\boldsymbol\Lambda\boldsymbol\Gamma+\frac{\partial
H_m}{\partial\mathbf z}\mathbf W\boldsymbol\Gamma
=\Phi_2^\top\boldsymbol\Gamma.
\end{equation}
Substituting \eqref{mina8}-\eqref{mina10} into \eqref{mina7} we
obtain
\begin{equation}\label{mina11}\begin{split}
    \delta\Phi_2^\top=\frac{\partial H_m}{\partial\mathbf y}\mathbf V\nabla_\xi(\mathbf J\boldsymbol \lambda)+\boldsymbol\mu^\top\mathbf V^\top\Big(\frac{\partial^2H_m}{ \partial\boldsymbol y^2}\boldsymbol\Lambda+
\frac{\partial^2H_m}{\partial\boldsymbol y\partial\boldsymbol z}\mathbf W\Big)+\\
     \chi_i\frac{\partial\Phi_2^\top}{\partial\xi_i}+\Phi_2^\top\boldsymbol\Gamma+
    \boldsymbol\lambda^\top \Phi_4^\top
\end{split}\end{equation}
Expression  \eqref{luna29} for the matrix $\mathbf T$ implies
\begin{equation}\label{mina12}
    \boldsymbol\mu^\top\mathbf V^\top\Big(\frac{\partial^2H_m}{ \partial\boldsymbol y^2}\boldsymbol\Lambda+
\frac{\partial^2H_m}{\partial\boldsymbol y\partial\boldsymbol
z}\mathbf W\Big)=\boldsymbol\mu^\top \boldsymbol T^\top.
\end{equation}
On the other hand, formula \eqref{luna8c} for the operator $\Phi_3$
yields
\begin{equation*}\begin{split}
   \frac{\partial H_m}{\partial\mathbf y}\mathbf V\nabla_\xi(\mathbf J\boldsymbol \lambda)=
   \boldsymbol\omega^\top\nabla_\xi(\mathbf J\boldsymbol \lambda) +\Phi_3^\top \nabla_\xi(\mathbf J\boldsymbol \lambda)=\\(\boldsymbol \partial (\mathbf J\boldsymbol\lambda))^\top+\Big(
  (\mathbf J\boldsymbol \lambda)'_\xi \Phi_3\Big)^\top =(\mathbf J\boldsymbol \partial \boldsymbol\lambda)^\top+\Big(
  (\mathbf J\boldsymbol \lambda)'_\xi \Phi_3\Big)^\top.
\end{split}\end{equation*}
Substituting this identity along with \eqref{mina12} into
\eqref{mina11} we arrive at desired representation \eqref{luna26b}
for $\delta\Phi_2$.

Our  next task is to prove identity
\eqref{luna26c} for $\delta \Phi_3$. It follows from identity
\eqref{sima13e} in the Second Structural Theorem that
\begin{equation*}
    \delta\mathbf V\,=\, -\mathbf V\nabla_\xi\boldsymbol \chi+\chi_i\,
\frac{\partial \mathbf V}{\partial \xi_i}
\end{equation*}
Substituting this identity along with \eqref{mina1} into formula
\eqref{luna21c} for $\delta\Phi_3$ we obtain
\begin{gather}\nonumber
\delta\Phi_3^\top =-\frac{\partial H_m}{\partial \mathbf y}\mathbf
V\nabla_\xi\boldsymbol\chi
+\chi_i\frac{\partial H_m}{\partial \mathbf y}\frac{\partial \mathbf V}{\partial \xi_i} +\\
\nonumber \chi_i\frac{\partial}{\partial \xi_i}(\boldsymbol
\xi+\mathbf u)^\top\frac{\partial^2 H_m}{\partial\mathbf x
\partial\mathbf y}+\chi_i\frac{\partial \mathbf v^\top}{\partial
\xi_i}\frac{\partial^2 H_m}{\partial\mathbf
y^2}+\chi_i\frac{\partial \mathbf w^\top}{\partial
\xi_i}\frac{\partial^2 H_m}{\partial\mathbf z\partial\mathbf
y}+\\\label{mina13} \boldsymbol\mu^\top \mathbf
V^\top\frac{\partial^2 H_m}{\partial\mathbf y^2}\mathbf V+
\boldsymbol\lambda^\top \boldsymbol\Lambda^\top\frac{\partial^2
H_m}{\partial\mathbf y^2} \mathbf V+\boldsymbol\lambda^\top\mathbf
W^\top\frac{\partial^2 H_m}{\partial\mathbf z\partial\mathbf
y}\mathbf V.
\end{gather}
It follows from formula \eqref{luna8c} for $\Phi_3$ that
\begin{equation}\label{mina14}\begin{split}
-\frac{\partial H_m}{\partial \mathbf y}\mathbf
V\nabla_\xi\boldsymbol\chi
=-\boldsymbol\omega^\top\nabla_\xi\boldsymbol\chi-\Phi_3^\top\nabla_\xi\boldsymbol\chi
=-(\boldsymbol \partial
\chi)^\top-(\boldsymbol\chi'_\xi\Phi_3)^\top.
\end{split}\end{equation}
Next, we have
\begin{gather}\nonumber
\chi_i\frac{\partial H_m}{\partial \mathbf y}\frac{\partial \mathbf
V}{\partial \xi_i} + \nonumber \chi_i\frac{\partial}{\partial
\xi_i}(\boldsymbol \xi+\mathbf u)^\top\frac{\partial^2
H_m}{\partial\mathbf x \partial\mathbf
y}+\\\label{mina15}\chi_i\frac{\partial \mathbf v^\top}{\partial
\xi_i}\frac{\partial^2 H_m}{\partial\mathbf
y^2}+\chi_i\frac{\partial \mathbf w^\top}{\partial
\xi_i}\frac{\partial^2 H_m}{\partial\mathbf z\partial\mathbf
y}=\chi_i\frac{\partial\Phi_3^\top}{\partial \xi_i}
\end{gather}
On the other hand, formulae \eqref{luna29} for $\mathbf S$ and
$\mathbf T$ yield
\begin{equation}\label{mina16}
\boldsymbol\mu^\top \mathbf V^\top\frac{\partial^2
H_m}{\partial\mathbf y^2}\mathbf V+ \boldsymbol\lambda^\top
\boldsymbol\Lambda^\top\frac{\partial^2 H_m}{\partial\mathbf y^2}
\mathbf V+\boldsymbol\lambda^\top\mathbf W^\top\frac{\partial^2
H_m}{\partial\mathbf z\partial\mathbf y}\mathbf V=(\mathbf S
\boldsymbol\mu)^\top +(\mathbf T^\top\boldsymbol\lambda)^\top.
\end{equation}
Substituting \eqref{mina14}-\eqref{mina16} into \eqref{mina13} we
arrive at the desired identity \eqref{luna21c}.

It remains to prove representation \eqref{luna21d} for
$\delta\Phi_4$. Recall that this relation
  includes the matrices $\mathbf A$, $\mathbf B$, $\mathbf C$, given by \eqref{luna21e}, and the matrices $\mathbf P^{(\tau)}$, $\boldsymbol\tau=\mathbf x, \mathbf y, \mathbf z$
given by \eqref{luna21f}. Substituting representations
\eqref{mina6} into \eqref{luna21e} we obtain
\begin{equation}\label{mina17}
    \mathbf A=\boldsymbol\Lambda^\top\frac{\partial^2 H_m}{\partial \mathbf y^2}\boldsymbol\Lambda+\chi_i\frac{\partial \boldsymbol\Lambda^\top}{\partial\xi_i}\frac{\partial^2 H_m}{\partial \mathbf y^2}\boldsymbol\Lambda+
    (\mathbf J\boldsymbol\lambda)'_\xi\mathbf V^\top\frac{\partial^2 H_m}{\partial \mathbf y^2}\boldsymbol\Lambda+(\boldsymbol\lambda^\top \mathbf R)^\top\frac{\partial^2 H_m}{\partial \mathbf y^2}\boldsymbol\Lambda,
\end{equation}
\begin{gather}\label{mina18}
 \mathbf B=\chi_i\frac{\partial \mathbf W^\top}{\partial\xi_i}\frac{\partial^2 H_m}{\partial \mathbf z\mathbf y}\boldsymbol\Lambda+\boldsymbol \Gamma^\top\frac{\partial^2 H_m}{\partial \mathbf z\mathbf y}\boldsymbol\Lambda+\\\nonumber
\mathbf W^\top \frac{\partial^2 H_m}{\partial \mathbf z\mathbf
y}\mathbf V\nabla_\xi(\mathbf J\boldsymbol\lambda) +\chi_i\mathbf
W^\top \frac{\partial^2 H_m}{\partial \mathbf z\mathbf
y}\frac{\partial\boldsymbol\Lambda}{\partial\xi_i} +\mathbf W^\top
\frac{\partial^2 H_m}{\partial \mathbf z\mathbf
y}(\boldsymbol\lambda^\top\mathbf R)+ \mathbf W^\top
\frac{\partial^2 H_m}{\partial \mathbf z\mathbf
y}\boldsymbol\Lambda\boldsymbol\Gamma,
\end{gather}
\begin{equation}\label{mina19}
    \mathbf C=\chi_i\frac{\partial \mathbf W^\top}{\partial\xi_i}\frac{\partial^2 H_m}{\partial \mathbf z\mathbf y}\mathbf W+\boldsymbol\Gamma^\top W^\top\frac{\partial^2 H_m}{\partial \mathbf z\mathbf y}\mathbf W
\end{equation}
Next, expression \eqref{luna21f} for $\mathbf P^{(\tau)}$ and
representations \eqref{mina1} implies
\begin{gather}\nonumber
\delta\mathbf u^\top \mathbf P^{(x)}+\delta\mathbf v^\top \mathbf
P^{(y)}+ \delta\mathbf w^\top \mathbf
P^{(z)}=\chi_i\Big(\frac{\partial\mathbf u^\top}{\partial\xi_i}
\mathbf P^{(x)}+\frac{\partial\mathbf v^\top}{\partial\xi_i} \mathbf
P^{(y)}+
\frac{\partial\mathbf w^\top}{\partial\xi_i} \mathbf P^{(z)}\Big)+\\
\label{mina20} \boldsymbol\mu^\top \mathbf
P^{(y)}+\boldsymbol\lambda^\top(\boldsymbol\Lambda^\top\mathbf
P^{(y)}+ \mathbf W^\top\mathbf P^{(z)})
\end{gather}
\end{appendix}
Substituting \eqref{mina17} \eqref{mina20} into \eqref{luna21d} and
recalling the representation
$$
\delta\mathbf R_i=\frac{\partial}{\partial \xi_i}(\mathbf
J\boldsymbol \Gamma)+\mathbf R_i \boldsymbol\Gamma+(\mathbf R_i
\boldsymbol\Gamma)^\top+\chi_i\, \frac{\partial}{\partial \xi_i}
\mathbf R_i
$$
in the second structural Theorem \ref{sima12} we arrive at the
identity
\begin{gather}\nonumber
\delta\Phi_4=\frac{\partial H_m}{\partial y_i}V_{ik}\frac{\partial}{\partial \xi_k}(\mathbf J\boldsymbol\Gamma)+\mathbf Q\boldsymbol\Gamma+\boldsymbol\Gamma^\top\mathbf Q^\top+\\
\label{mina21} \chi_k \mathbf N_k+\mathbf U+\mathbf U^\top +\mathbf
K+\mathbf K^\top +\boldsymbol\Sigma.
\end{gather}
Here
\begin{gather}\nonumber
\mathbf Q=\frac{\partial H_m}{\partial y_i}\mathbf R_i+
\boldsymbol\Lambda^\top \frac{\partial^2 H_m}{\partial \mathbf y^2}
\boldsymbol\Lambda+\\\label{mina22} \mathbf W^\top \frac{\partial^2
H_m}{\partial\mathbf z\partial \mathbf y} \boldsymbol\Lambda
+\Big(\mathbf W^\top \frac{\partial^2 H_m}{\partial\mathbf z\partial
\mathbf y} \boldsymbol\Lambda\Big)^\top+\mathbf W^\top
\frac{\partial^2 H_m}{\partial\mathbf z^2} \mathbf
W=\boldsymbol\Omega+\Phi_4,
\end{gather}
\begin{gather}\nonumber
\mathbf N_k=\frac{\partial H_m}{\partial y_i}\frac{\partial\mathbf
R_i}{\partial\xi_k} +\boldsymbol\Lambda^\top \frac{\partial^2
H_m}{\partial \mathbf y^2}
\frac{\partial\boldsymbol\Lambda}{\partial\xi_k}+\frac{\partial\boldsymbol\Lambda^\top}{\partial\xi_k}
\frac{\partial^2 H_m}{\partial \mathbf y^2}
\boldsymbol\Lambda+\\\nonumber \mathbf W^\top \frac{\partial^2
H_m}{\mathbf z\partial \mathbf y}
\frac{\partial\boldsymbol\Lambda}{\partial\xi_k}+\frac{\partial\mathbf
W^\top}{\partial\xi_k} \frac{\partial^2 H_m}{\partial \mathbf
z\partial \mathbf y}
\boldsymbol\Lambda+\\
\Big(\mathbf W^\top \frac{\partial^2 H_m}{\partial\mathbf
z\partial\mathbf y}
\frac{\partial\boldsymbol\Lambda}{\partial\xi_k}+\frac{\partial\mathbf
W^\top}{\partial\xi_k} \frac{\partial^2 H_m}{\partial \mathbf
z\mathbf y} \boldsymbol\Lambda\Big)^\top+\\\nonumber \mathbf W^\top
\frac{\partial^2 H_m}{\partial\mathbf z^2} \frac{\partial\mathbf
W}{\partial\xi_k}+\frac{\partial\mathbf W^\top}{\partial\xi_k}
\frac{\partial^2 H_m}{\partial \mathbf z^2} \mathbf
W+\\\label{mina23} +\frac{\partial\mathbf u^\top}{\partial\xi_i}
\mathbf P^{(x)}+\frac{\partial\mathbf v^\top}{\partial\xi_i} \mathbf
P^{(y)}+ \frac{\partial\mathbf w^\top}{\partial\xi_i} \mathbf
P^{(z)}=\frac{\partial\Phi_4}{\partial\xi_k}.
\end{gather}
Next notice that in view of \eqref{luna8c},
$$
\frac{\partial}{\partial \xi_i}(\mathbf J\boldsymbol
\Gamma)=\boldsymbol\partial (\mathbf
J\boldsymbol\Gamma)+\Phi_{3,k}\frac{\partial}{\partial\xi_k}(\mathbf
J\boldsymbol\Gamma).
$$
Substituting this relation along with \eqref{mina22}, \eqref{mina23}
into \eqref{mina21} we arrive at the identity
\begin{gather}\nonumber
\delta\Phi_4=\boldsymbol\partial (\mathbf J\boldsymbol\Gamma)
+\boldsymbol\Omega\boldsymbol\Gamma+\boldsymbol\Gamma^\top\boldsymbol\Omega^\top
+\\\nonumber
\Phi_4\boldsymbol\Gamma+\boldsymbol\Gamma^\top\Phi_4^\top+\chi_k\frac{\partial\Phi_4}{\partial\xi_k}+
\Phi_{3,k}\frac{\partial}{\partial\xi_k}(\mathbf J\boldsymbol\Gamma)+\\
\label{mina24} \mathbf U+\mathbf U^\top +\mathbf K+\mathbf K^\top
+\boldsymbol\Sigma.
\end{gather}
Notice that $\mathbf U$ is the matrix-valued linear form of
$\partial\lambda_i/ \partial\xi_j$. Hence it admits the
representation
\begin{equation}\label{mina25}
  \mathbf U+\mathbf U^\top=\frac{\partial\lambda_i}{\partial\xi_j}\mathbf U_{ij},
\end{equation}
where $\mathbf U_{ij}$ are symmetric matrix-valued functions. Next,
$\mathbf K$ and $\boldsymbol \Sigma$ are matrix-valued linear form
of  $\lambda_i$ and $\mu_j$. Hence there are  symmetric matrix-valued
functions $\mathbf E_i$ and $\mathbf K_i$ such that
\begin{equation}\label{mina26}
    \mathbf K+\mathbf K^\top +\boldsymbol\Sigma=\mu_i\boldsymbol E_i+\lambda_i\mathbf K_i.
\end{equation}
Substituting \eqref{mina25} and \eqref{mina26} into \eqref{mina24}
we obtain the desired representation \eqref{luna28d}.

\setcounter{equation}{0}
\renewcommand{\theequation}{\arabic{equation}E}

\section{Proof of Theorems \ref{lita210} and \ref{figa5}}
\subsection{Proof of Theorem \ref{lita210}}\label{prooflita205}

This section is devoted to the proof of solvability of the following
problem
\begin{subequations}\label{nisa4}
\begin{gather}\label{nisa4a}
 \boldsymbol\partial \psi_0+\delta q+ \delta p\cdot ( w_1-\alpha)+
    \frac{1}{2} \delta M (w_1-\alpha)^2=F_1,\\
    \label{lita4b}
\mathbf J\boldsymbol\partial\boldsymbol\lambda +\mathbf
T\boldsymbol\mu
    +\mathbf W^\top\delta\mathbf p+
    \delta M(w_1-\alpha)\mathbf W^\top\mathbf e_1 =F_2,\quad
    \boldsymbol\mu =\nabla \psi_0+\delta\boldsymbol\beta\\
\label{nisa4c}
 -\boldsymbol\partial \boldsymbol\chi+\mathbf S\boldsymbol\mu+
 \mathbf T^\top \boldsymbol\lambda =F_3,
\\\label{nisa4d}
\boldsymbol\partial(\mathbf J\boldsymbol\Gamma)
+\boldsymbol\Omega\boldsymbol\Gamma+
(\boldsymbol\Omega\boldsymbol\Gamma)^\top +\mathbf U_{ij}
\frac{\partial \lambda_i}{\partial\xi_j}+ \mu_i\mathbf
E_i+\lambda_i\mathbf K_i+   \\\nonumber +\mathbf W^\top\delta\mathbf
M\mathbf W=F_4,\quad \Gamma_{11}=-\Gamma_{22},
\end{gather}
\begin{gather}\label{nisa4g}
 \frac{1}{(2\pi)^{n-1}}\int_{\mathbb T^{n-1}}\big[ \psi_0-\delta \boldsymbol
\beta\cdot\mathbf u+
w_2\big(\mathbf W\boldsymbol\lambda\big)_1\big]\, d \xi=f_1\\
\label{nisa4h} \frac{1}{(2\pi)^{n-1}} \int_{\mathbb T^{n-1}}\mathbf V^{-\top}\boldsymbol\chi\,
d\xi=f_3,\quad  \frac{1}{(2\pi)^{n-1}}\int_{\mathbb T^{n-1}}(\mathbf
W\boldsymbol\Gamma)_{12}\, d\xi=f_4\\
\label{nisa4f}
 \frac{1}{(2\pi)^{n-1}}\int_{\mathbb T^{n-1}}\Big((\mathbf W\boldsymbol\lambda)_1+
    \boldsymbol\chi\cdot \nabla w_1\Big) \, d
    \boldsymbol\xi=f_5.
\end{gather}
\end{subequations}
The following theorem constitutes the existence and uniqueness of
solutions to problem \eqref{nisa4}
\begin{theorem}\label{nisa210} Let a fixed $\sigma\in [1/2,1]$, $d\geq 2$,
and the matrix $\mathbf K_0=\mathbf S_0-\mathbf t_0\otimes \mathbf
t_0$ given by \eqref{sandt}, satisfies the condition $\text{~det~}
\mathbf K_0\neq 0$.  Furthermore, assume that
\begin{equation}\label{nisa6xx}
\|w_1-\alpha\|_{\sigma,0}+ \|\mathbf W-\mathbf I\|_{\sigma_0}+ \|\mathbf u'-\mathbf I\|_{\sigma,0}\leq c r
\end{equation}
and
\begin{equation}\label{nisa6x}
    \|\mathbf S-\mathbf S_0\|_{\sigma,0}+ \|\mathbf T-\mathbf T_0\|_{\sigma,0}\leq c(r+|\varepsilon|),
\end{equation}
\begin{equation}\label{nisa7x}
    \|\mathbf U_{ij}\|_{\sigma,0}+
\|\mathbf
E_i\|_{\sigma,0}+\|\mathbf K_i\|_{\sigma,0} \leq c.
\end{equation}
Then there are $\varepsilon_0>0$ and  $r_0>0$ with
the following properties. For every
$$
r\leq r_0 \quad
|\varepsilon|\leq \varepsilon_0, \quad ,
$$
$$
0\leq \sigma_0<\sigma_1<\sigma,\quad 1/4\geq\sigma_1, \sigma\in [1/4,1),
$$
and for all
$$
\mathbf F=(F_1, F_2, F_3, F_4,f_1,f_3,f_4,
f_5)\in \mathcal A_{\sigma_1,0}\times \mathcal A_{\sigma_1,0}^2
\times \mathcal A_{\sigma_1,0}^{n-1}\times \mathcal A_{\sigma_1,0}^4\times\mathbb C\times \mathbb
C^{n-1}\mathbb C\times \mathbb C,
$$
with $F_4=F_4^\top$,
 problem \eqref{nisa4} has a unique solution $$
(\psi_0, \boldsymbol\lambda, \boldsymbol\chi, \boldsymbol\Gamma,
\delta\boldsymbol \beta, q,  p, \delta M)\in
\mathcal A_{\sigma_0, 0}\times \mathcal A_{\sigma_0,0}^{2}\times
\mathcal A_{\sigma,0}^{n-1}\times \mathcal A_{\sigma_0,0}^4\times
\mathbb C^{n-1}\times \mathbb C^{3}.
$$
This solution admits the estimate
\begin{equation}\label{lita213}
\|(\psi_0, \boldsymbol\lambda, \boldsymbol\chi,
\boldsymbol\Gamma)\|_{\sigma_0,0}+ |(\delta\boldsymbol \beta,
q,  p, \delta M)| \leq
{c}{(\sigma_1-\sigma_0)^{-8n-12}} \|\mathbf F\|_{\sigma_1,0}
\end{equation}
where  the constant $c$ is independent of  $\varepsilon_0$, $r_0$,
and $\sigma_i$.
\end{theorem}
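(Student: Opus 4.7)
The plan is to exploit the nearly-triangular structure of the system \eqref{nisa4a}-\eqref{nisa4d}: each equation carries the constant-coefficient transport operator $\boldsymbol\partial=\boldsymbol\omega\cdot\nabla_\xi$ in its principal part, while the zero-order couplings through $\mathbf S,\mathbf T,\mathbf U_{ij},\mathbf E_i,\mathbf K_i$ are small perturbations of their $\varepsilon=0$, $r=0$ counterparts. The argument splits naturally into (i) solving the zero-mean (oscillating) components $\psi_0^\ast,\boldsymbol\lambda^\ast,\boldsymbol\chi^\ast,\boldsymbol\Gamma^\ast$ by inverting $\boldsymbol\partial$ via the Diophantine small-divisor estimate, and (ii) solving the averaged equations, which form a finite-dimensional algebraic system for the scalar parameters $\delta q,\delta\mathbf p,\delta M,\delta\boldsymbol\beta$ together with the mean values $\overline{\boldsymbol\lambda},\overline{\boldsymbol\Gamma},\overline{\psi_0}$ and the three integrals from \eqref{nisa4g}-\eqref{nisa4f}.

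First I would record the standard small-divisor lemma: for every $g\in\mathcal A_{\sigma_1,0}$ with $\overline g=0$, the equation $\boldsymbol\partial u=g$ has a unique zero-mean analytic solution satisfying an estimate of the form $\|u\|_{\sigma_0,0}\le c(\sigma_1-\sigma_0)^{-\tau}\|g\|_{\sigma_1,0}$, with $\tau$ determined by the Diophantine exponent $n$ in \eqref{diophantine}. Next, decompose every unknown into mean plus zero-mean part and average each of \eqref{nisa4a}-\eqref{nisa4d} over $\mathbb T^{n-1}$; since $\overline{\boldsymbol\partial X}=0$, the principal part drops out and what remains is an algebraic system whose leading-order coefficient, after using $\overline{w_1^\ast}=0$ together with \eqref{nisa6xx}-\eqref{nisa7x}, is block-triangular and governed by $\mathbf S_0,\mathbf T_0,\boldsymbol\Omega,\mathbf I$. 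The crucial Schur-complement block is exactly $\mathbf K_0=\mathbf S_0-\mathbf t_0\otimes\mathbf t_0$, so the hypothesis $\det\mathbf K_0\ne 0$ together with the smallness of $r+|\varepsilon|$ guarantees invertibility by a Neumann-series perturbation argument, yielding uniform bounds for all parameters and mean values in terms of $\|\mathbf F\|_{\sigma_1,0}$.

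With the parameters and means determined, I would solve the zero-mean equations sequentially. Equation \eqref{nisa4a} gives $\psi_0^\ast$ upon one inversion of $\boldsymbol\partial$; feeding $\boldsymbol\mu=\nabla\psi_0+\delta\boldsymbol\beta$ into \eqref{nisa4b} yields $\boldsymbol\lambda^\ast$ after a second inversion (using that $\mathbf J\boldsymbol\partial$ is essentially $\boldsymbol\partial$ after multiplication by $\mathbf J$, together with the mean-removal argument to restore solvability); then \eqref{nisa4c} produces $\boldsymbol\chi^\ast$ by a third inversion; finally \eqref{nisa4d} produces $\boldsymbol\Gamma^\ast$ by a fourth. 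At each step the right-hand side is controlled by the previously solved quantities via the algebra property of $\mathcal A_{\sigma,0}$, and each inversion costs a factor $(\sigma_1-\sigma_0)^{-\tau}$; the exponent $8n+12$ in \eqref{lita213} arises from combining four such inversions with the intermediate Cauchy-type estimates needed when taking gradients of $\psi_0$ to form $\boldsymbol\mu$.

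The main obstacle will be the algebraic reduction of the averaged equations. It is not immediate that the mean system decouples neatly: the matrix $\mathbf W$ is not the identity, $\mathbf T\ne \mathbf T_0$, and the constraints \eqref{nisa4g}-\eqref{nisa4f} couple $\overline{\psi_0},\delta\boldsymbol\beta,\overline{\boldsymbol\lambda},\overline{\boldsymbol\chi},\overline{\boldsymbol\Gamma}$ in a way that must be resolved so that $\mathbf K_0$ becomes the only obstruction to invertibility. Handling this requires organising the unknowns into groups whose leading-order decoupling mirrors the structure $\boldsymbol\Omega=\mathrm{diag}(-k,1)$ vs.\ $\mathbf K_0$, so that perturbations of size $r+|\varepsilon|$ stay controlled by a single Neumann series. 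A secondary technical difficulty is to orchestrate the four sequential small-divisor inversions so that no spurious accumulation of losses occurs beyond the stated $(\sigma_1-\sigma_0)^{-8n-12}$; this is done by choosing intermediate radii $\sigma_0<\sigma_0'<\sigma_0''<\sigma_0'''<\sigma_1$ and applying the lemma on each subinterval with equal fraction of the available gap.
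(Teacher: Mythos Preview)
Your plan is essentially the paper's own proof: decompose into mean and zero-mean parts, invert the averaged algebraic system via the nondegeneracy of $\mathbf K_0$, solve the oscillating parts by successive small-divisor inversions, and absorb the variable-coefficient remainders ($\mathbf W-\mathbf I$, $w_1-\alpha$, $\mathbf V^{-\top}-\mathbf I$, etc.) by a contraction argument.

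One point deserves care. In \eqref{nisa4b} and \eqref{nisa4d} the zero-order term $\boldsymbol\Omega=\mathrm{diag}(-k,1)$ is \emph{not} small, so you cannot reduce $\mathbf J\boldsymbol\partial\boldsymbol\lambda+\boldsymbol\Omega\boldsymbol\lambda$ to a pure $\boldsymbol\partial$-inversion plus perturbation as your phrase ``$\mathbf J\boldsymbol\partial$ is essentially $\boldsymbol\partial$ after multiplication by $\mathbf J$'' suggests. The paper treats $\mathbf J\boldsymbol\partial+\boldsymbol\Omega$ (and its symmetrized matrix analogue for $\boldsymbol\Gamma$) as a separate constant-coefficient model operator and inverts it directly in Fourier space, obtaining the denominators $k+(\boldsymbol\omega\cdot\mathbf s)^2$ and $4k+(\boldsymbol\omega\cdot\mathbf s)^2$; these are bounded below via the Diophantine condition but cost twice (resp.\ three times) the loss of a single $\boldsymbol\partial^{-1}$, which is precisely how the exponents $2n+3$ and $3n+2$ arise and combine to $8n+12$. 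A second, smaller point: the averaged system cannot be solved strictly \emph{before} the zero-mean parts, since $\overline{\mathbf T\nabla\psi_0^\ast}$ and $\overline{\mathbf T^\top\boldsymbol\lambda^\ast}$ feed back into the equations for $\delta\boldsymbol\beta,\overline{\boldsymbol\lambda},p$. The paper closes this loop by writing the means as bounded functionals of $(\psi_0^\ast,\boldsymbol\lambda^\ast)$ with operator norm $O(r_0+\varepsilon_0)$ and then running a contraction on the full zero-mean system; your Neumann-series remark would cover this once the circular dependence is made explicit.
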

Notice that identity $\mathbf V=(\mathbf I+\mathbf u')^{-\top}$ implies  that
$$
\|\mathbf V^{-\top}-\mathbf I\|_{\sigma_0}\leq cr.
$$
The rest of the section is devoted to the proof of this theorem. Our strategy
 is the following. First we prove the existence and
uniqueness of solutions to a triangle truncated problem.  Then we
prove the solvability of equations \eqref{lita4} using the
contraction mapping principle.
Our considerations are based on the following existence and uniqueness results for
model differential equations with constant coefficients
\paragraph{Equations with constant coefficients}
Recall the denotations
$$
\overline{\boldsymbol\mu}= \frac{1}{(2\pi)^{n-1}}\int_{\mathbb T^{n-1}}\boldsymbol\mu(\boldsymbol\xi)\, d\boldsymbol\xi,\quad
{\boldsymbol\mu}^*={\boldsymbol\mu}-\overline{\boldsymbol\mu}
$$
\begin{lemma}\label{nisa6} For every $g\in \mathcal A_{\sigma_1,0}$ with $\overline{g}=0$, the
equations
\begin{gather}\label{nisa7}
 \boldsymbol\partial \psi_0=g, \quad \overline{\psi}_0=0
\end{gather} have a unique solution
$\psi_0\in \mathcal A_{\sigma_0,0}$  such that
\begin{equation}\label{nisa8}
    \|\psi_0\|_{\sigma_0,0}\leq {c}{(\sigma_1-\sigma_0)^{-\tau}}
\| g\|_{\sigma_1,0}, \quad |\psi_0|_{s}\leq {c}
| g|_{s+\tau}.
\end{equation}
where $\tau=n+1$ and $0\leq \sigma_0 <\sigma_1$ are  arbitrary numbers.
\end{lemma}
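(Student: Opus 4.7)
The plan is to solve the homological equation $\boldsymbol\partial \psi_0 = g$ explicitly by Fourier series on $\mathbb T^{n-1}$ and to control the small divisors by means of the Diophantine condition \eqref{diophantine}. Since $\overline g = 0$, expanding $g(\boldsymbol\xi) = \sum_{m \in \mathbb Z^{n-1}\setminus\{0\}} \hat g(m)\, e^{i m\cdot \boldsymbol\xi}$ and imposing $\boldsymbol\partial \psi_0 = g$ together with $\overline{\psi_0}=0$ leaves only the candidate
\[
\psi_0(\boldsymbol\xi) = \sum_{m\neq 0} \frac{\hat g(m)}{i\,\boldsymbol\omega^\top\!\cdot m}\, e^{i m \cdot \boldsymbol\xi}.
\]
Rational independence of the components of $\boldsymbol\omega$ makes every denominator nonzero, so uniqueness is automatic; existence reduces to showing that this series converges in $\mathcal A_{\sigma_0,0}$ and obeys the required bounds.

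For the analytic estimate I would combine the Paley--Wiener decay $|\hat g(m)| \le \|g\|_{\sigma_1,0}\, e^{-\sigma_1\varrho |m|}$ with the small-divisor bound $|\boldsymbol\omega^\top\!\cdot m|^{-1}\le c_0 |m|^n$ furnished by \eqref{diophantine}. For $\boldsymbol\xi\in \Re_{\sigma_0\varrho}$ this yields
\[
|\psi_0(\boldsymbol\xi)| \le c_0 \|g\|_{\sigma_1,0} \sum_{m\neq 0} |m|^{n}\, e^{-(\sigma_1-\sigma_0)\varrho|m|}.
\]
Grouping lattice points by $|m|=k$ and using that $\mathbb Z^{n-1}$ contains $O(k^{n-2})$ such points, the elementary inequality $\sum_{m\neq 0}|m|^n e^{-\delta|m|} \le c\,\delta^{-\tau}$ reduces to a $\Gamma$-integral and produces the first bound with the stated $\tau$. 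For the Sobolev estimate one works Fourier coefficient-wise: $\hat{\psi_0}(m) = \hat g(m)/(i\,\boldsymbol\omega^\top\!\cdot m)$, so that by \eqref{figa1} and \eqref{diophantine},
\[
|\psi_0|_s^2 = \sum_{m\neq 0}(1+|m|^2)^s |\hat g(m)|^2 |\boldsymbol\omega^\top\!\cdot m|^{-2} \le c_0^2 \sum_{m\neq 0}(1+|m|^2)^{s+n}|\hat g(m)|^2 \le c_0^2 |g|_{s+n}^2.
\]

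The proof is essentially a quantitative form of the standard KAM homological lemma, and no genuine obstacle arises: the principal operator $\boldsymbol\partial$ has constant coefficients and the Diophantine condition is postulated outright. The only minor bookkeeping point is harmonizing the two natural exponents (the one coming from the strip-shrinking sum above and the one coming from the Sobolev computation) into a single value of $\tau$, which is done simply by taking the larger of the two.
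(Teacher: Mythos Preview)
Your overall strategy—solve $\boldsymbol\partial\psi_0=g$ term by term in Fourier series and control the small divisors via \eqref{diophantine}—is exactly the paper's. The Sobolev half of the argument is fine (you in fact lose only $n$ derivatives, which is stronger than the stated loss of $\tau=n+1$).

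The analytic estimate, however, has an arithmetic slip. With the lattice-point count you correctly quote ($O(k^{n-2})$ points of $\mathbb Z^{n-1}$ at radius $k$), the sum you need is
\[
\sum_{m\neq 0}|m|^{n}e^{-\delta|m|}\ \sim\ \sum_{k\ge 1}k^{n-2}\cdot k^{n}\,e^{-\delta k}
\ =\ \sum_{k\ge 1}k^{2n-2}e^{-\delta k}\ \sim\ c\,\delta^{-(2n-1)},
\]
not $c\,\delta^{-(n+1)}$. So the direct-summation route proves the lemma only with the weaker exponent $\tau=2n-1$; ``taking the larger of the two'' then gives $2n-1$, not the stated $n+1$.

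The paper avoids this loss by working with the weighted supremum of Fourier coefficients rather than summing outright. It first uses the pointwise bound $(1+|\mathbf s|)^{n}e^{-(\sigma_1-\sigma_0)|\mathbf s|/2}\le c(\sigma_1-\sigma_0)^{-n}$ to control $\sup_{\mathbf s}e^{(\sigma_0+\sigma_1)|\mathbf s|/2}|\widehat{\psi_0}(\mathbf s)|$ by $c(\sigma_1-\sigma_0)^{-n}\|g\|_{\sigma_1,0}$, and then passes back to $\|\psi_0\|_{\sigma_0,0}$ via the reverse estimate $\|\cdot\|_{\sigma',0}\le c(\sigma-\sigma')^{-1}\sup_{\mathbf s}e^{\sigma|\mathbf s|}|\widehat{(\cdot)}(\mathbf s)|$, picking up one more factor and arriving at $\tau=n+1$. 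If you want the exponent as stated, you should replace the crude $\ell^1$ summation by this two-step sup-norm argument.
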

\begin{proof}
Substituting the decomposition
$$
\psi_0(\xi)= \sum\limits_{\mathbf s\in
\mathbb Z^{n-1}\setminus\{0\}} \widehat{\psi_0}(\mathbf s) e^{i
\mathbf s\boldsymbol\xi},\quad \widehat{\psi_0}(\mathbf
s)=(2\pi)^{\frac{1-n}{2}} \int_{\mathbb T^{n-1}}\psi_0(\boldsymbol
\xi)e^{-i \mathbf s\boldsymbol\xi}\, d \boldsymbol\xi,
$$
into equation \eqref{lita6a} we can rewrite this equation into the
equivalent form
\begin{equation}\label{lita11}\begin{split}
\widehat{\psi_0}(\mathbf s)= (i\,\boldsymbol\omega\cdot \mathbf
s)^{-1} \,\,\widehat{g}(\mathbf s)\text{~~for~~} \mathbf s\in
\mathbb Z^{n-1}\setminus \{0\}, \quad \widehat{\psi_0}(0)=0
\end{split}\end{equation}
The diophantine condition implies
\begin{equation}\label{nisa10}
|(\boldsymbol\omega\cdot \mathbf s)^{-1}|\leq c |\mathbf s|^{-n},
\end{equation}
which gives
\begin{equation}\label{nisa9}
    |\widehat{\psi_0}(\mathbf s)|\leq (1+|\mathbf s|)^n |\widehat{g}(\mathbf s)|
\end{equation}
It is well-known that for every $\sigma\in [0,1)$ and $\sigma'\in
(0, \sigma)$,
 and for every measurable function $g:\mathbb T^{n-1}\to \mathbb C$, we have
$$
\sup_{\mathbf s\in \mathbb Z^{n-1}}e^{\sigma|\mathbf s|}|\widehat
g(\mathbf s)|\leq \|g\|_{\sigma,0}, \quad \|g\|_{\sigma',0}\leq
\frac{c}{\sigma-\sigma'} \sup_{\mathbf s\in \mathbb
Z^{n-1}}e^{\sigma|\mathbf s|}|\widehat g(\mathbf s)|,
$$
where $c$ is independent of $g$, $\sigma$, and $\sigma'$.
>From this and \eqref{nisa9} that
\begin{equation}\begin{split}
e^{(\sigma_0+\sigma_1)|\mathbf s|/2}|\widehat{\psi_0}(\mathbf s)| \leq
c(1+|\mathbf s|)^{n}
e^{(\sigma_0+\sigma_1)|\mathbf s|/2}||\widehat{g}(\mathbf s)|\leq\\
c(\sigma_1-\sigma_0)^{-n}e^{\sigma_1|\mathbf s|}||\widehat{g}(\mathbf
s)| \leq c(\sigma_1-\sigma_0)^{-\tau} \|g\|_{\sigma_1,0}.
\end{split}\end{equation}
It follows that
$$
\|\psi_0\|_{\sigma_0,0}\leq \frac{c}{\sigma_1-\sigma_0}\sup_{\mathbf
s\in \mathbb Z^{n-1}}e^{ (\sigma_0+\sigma_1)|\mathbf
s|/2}|\widehat{\psi_0}(\mathbf s)|\leq c(\sigma_1-\sigma_0)^{-\tau} \|g\|_{\sigma_1,0}.
$$
Thus we get the first estimate in \eqref{nisa8}. The second obviously follows from \eqref{nisa9} and the definition of the Sobolev space $H_r$.
\end{proof}

\begin{lemma}\label{lita21lemma} Let $0\leq \sigma_0<\sigma_1$. Let
 $\mathbf H\in \mathcal A_{\sigma_1,0}$ satisfies the condition $\overline{\mathbf H}=0$.
Then the equation
\begin{equation}\label{lita21}
\mathbf J \, \boldsymbol\partial{\boldsymbol\lambda}+\boldsymbol\Omega
{\boldsymbol\lambda}=\mathbf H
\end{equation}
has a unique  solution $\boldsymbol \lambda\in \mathcal
A_{\sigma_0,0}$,
 satisfying the condition $\overline{\boldsymbol\lambda}=0$. This solution admits the estimates
\begin{equation}\label{lita22}
    \|\boldsymbol\lambda\|_{\sigma_0,0}\leq {c}{(\sigma_1-\sigma_0)^{-2n-3}}
    \,\,\|\mathbf H\|_{\sigma_1,0}.
\end{equation}
\begin{equation}\label{lita22xx}
    |\boldsymbol\lambda|_{s}\leq {c}
    \,\,|\mathbf H|_{s+2n+3}.
\end{equation}

\end{lemma}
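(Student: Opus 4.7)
The plan is to diagonalize the system in Fourier space and exploit the diophantine hypothesis exactly as in the proof of Lemma~\ref{nisa6}. Writing $\boldsymbol\lambda=(\lambda_1,\lambda_2)^\top$, $\mathbf{H}=(H_1,H_2)^\top$ and recalling the definitions of $\mathbf{J}$ and $\boldsymbol\Omega=\mathrm{diag}(-k,1)$, equation \eqref{lita21} reads componentwise
\begin{equation*}
\boldsymbol\partial\lambda_2-k\lambda_1=H_1,\qquad -\boldsymbol\partial\lambda_1+\lambda_2=H_2.
\end{equation*}
First I would expand both sides in Fourier series on $\mathbb{T}^{n-1}$, turning the PDE into the algebraic $2\times 2$ system
\begin{equation*}
\begin{pmatrix}-k & i(\boldsymbol\omega\cdot\mathbf{s})\\ -i(\boldsymbol\omega\cdot\mathbf{s}) & 1\end{pmatrix}\begin{pmatrix}\hat\lambda_1(\mathbf{s})\\ \hat\lambda_2(\mathbf{s})\end{pmatrix}=\begin{pmatrix}\hat H_1(\mathbf{s})\\ \hat H_2(\mathbf{s})\end{pmatrix},\qquad \mathbf{s}\in\mathbb{Z}^{n-1}.
\end{equation*}
The determinant equals $-((\boldsymbol\omega\cdot\mathbf{s})^2+k)$, which is nonzero for every $\mathbf{s}\neq 0$ thanks to the diophantine condition and the assumption $k\ge 0$.

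For $\mathbf{s}\neq 0$, Cramer's rule then produces the explicit formulas
\begin{equation*}
\hat\lambda_1(\mathbf{s})=-\frac{\hat H_1(\mathbf{s})-i(\boldsymbol\omega\cdot\mathbf{s})\hat H_2(\mathbf{s})}{(\boldsymbol\omega\cdot\mathbf{s})^2+k},\qquad \hat\lambda_2(\mathbf{s})=\hat H_2(\mathbf{s})+i(\boldsymbol\omega\cdot\mathbf{s})\hat\lambda_1(\mathbf{s}),
\end{equation*}
while at $\mathbf{s}=0$ the vanishing mean hypothesis $\overline{\mathbf{H}}=0$ combined with the normalization $\overline{\boldsymbol\lambda}=0$ forces $\hat{\boldsymbol\lambda}(0)=0$ and yields uniqueness. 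Substituting the diophantine bound $|(\boldsymbol\omega\cdot\mathbf{s})^{-1}|\leq c|\mathbf{s}|^n$ into $((\boldsymbol\omega\cdot\mathbf{s})^2+k)^{-1}\leq (\boldsymbol\omega\cdot\mathbf{s})^{-2}\leq c|\mathbf{s}|^{2n}$ produces the pointwise Fourier estimate
\begin{equation*}
|\hat{\boldsymbol\lambda}(\mathbf{s})|\leq c(1+|\mathbf{s}|)^{2n+2}|\hat{\mathbf{H}}(\mathbf{s})|,\qquad \mathbf{s}\in\mathbb{Z}^{n-1},
\end{equation*}
uniformly in $k\in[0,1]$.

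From this pointwise estimate both norm bounds are immediate. The Sobolev estimate \eqref{lita22xx} follows by squaring and summing: $|\boldsymbol\lambda|_s^2\le c\sum_{\mathbf{s}}(1+|\mathbf{s}|^2)^s(1+|\mathbf{s}|)^{4n+4}|\hat{\mathbf{H}}(\mathbf{s})|^2\leq c|\mathbf{H}|_{s+2n+3}^2$. For the analytic estimate \eqref{lita22} I would repeat the two-step trick from Lemma~\ref{nisa6}: the elementary inequality $(1+|\mathbf{s}|)^{2n+2}e^{-(\sigma_1-\sigma_0)|\mathbf{s}|/2}\le c(\sigma_1-\sigma_0)^{-(2n+2)}$ together with the pointwise bound yields $\sup_{\mathbf{s}}e^{(\sigma_0+\sigma_1)|\mathbf{s}|/2}|\hat{\boldsymbol\lambda}(\mathbf{s})|\le c(\sigma_1-\sigma_0)^{-(2n+2)}\|\mathbf{H}\|_{\sigma_1,0}$, and then the standard inequality $\|f\|_{\sigma_0,0}\le c(\sigma_1-\sigma_0)^{-1}\sup_{\mathbf{s}}e^{(\sigma_0+\sigma_1)|\mathbf{s}|/2}|\hat f(\mathbf{s})|$ absorbs one more power of $(\sigma_1-\sigma_0)^{-1}$, producing the claimed exponent $2n+3$.

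The only subtle point is the degeneracy of $\boldsymbol\Omega$ at $k=0$: then $\boldsymbol\Omega$ has a one-dimensional kernel spanned by $\mathbf{e}_1$, which is precisely why the normalization $\overline{\boldsymbol\lambda}=0$ must be imposed to single out a unique solution and why the hypothesis $\overline{\mathbf{H}}=0$ is required for existence. Once the zero mode is treated separately, no further obstacle arises, because the denominator $(\boldsymbol\omega\cdot\mathbf{s})^2+k$ is bounded below uniformly in $k\in[0,1]$ and $\mathbf{s}\neq 0$ by the same diophantine lower bound that drove Lemma~\ref{nisa6}; the coefficient is merely squared, which accounts for doubling of the loss in regularity from $n+1$ to $2n+3$.
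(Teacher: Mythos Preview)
Your proof is correct and follows essentially the same route as the paper's: pass to Fourier coefficients, solve the resulting $2\times 2$ system explicitly to obtain the formulas for $\hat\lambda_1,\hat\lambda_2$, derive the pointwise bound $|\hat{\boldsymbol\lambda}(\mathbf s)|\le c(1+|\mathbf s|)^{2n+2}|\hat{\mathbf H}(\mathbf s)|$ from the diophantine estimate, and then run the same exponential-weight argument as in Lemma~\ref{nisa6}. Your treatment is in fact slightly more complete, since you spell out the Sobolev estimate \eqref{lita22xx}, which the paper leaves implicit.
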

\begin{proof} Equation \eqref{lita21} is equivalent to the following system of the linear
algebraic equations for the Fourier coefficients
$\widehat{\boldsymbol\lambda}(\mathbf s)$, $\mathbf s\in \mathbb
Z^{n-1}$, of  $\boldsymbol\lambda$,
\begin{equation*}
    i(\boldsymbol\omega\cdot \mathbf s)\,\mathbf J \,\,\widehat{\boldsymbol\lambda}(\mathbf s)
         +\boldsymbol\Omega \,\,\widehat{\boldsymbol\lambda}(\mathbf s)=
         \widehat{\mathbf H}(\mathbf s),
         \quad \mathbf s\in \mathbb Z^{n-1}\setminus\{0\},\quad
         \widehat{\boldsymbol\lambda}(0)=0.
\end{equation*}
This equality along with   expressions \eqref{ee1.02} and \eqref{zabyli} for the
 matrices $\mathbf J$ and $\boldsymbol\Omega$  implies
\begin{equation*}\begin{split}
 \widehat{\lambda_1}(\mathbf s)=-\big(k+(\boldsymbol\omega\cdot \mathbf s)^2\big)^{-1}
 \,\,\big(\widehat{H_1}(\mathbf s)-i(\boldsymbol\omega\cdot \mathbf s)
  \widehat{H_2}(\mathbf s)\big),\,
 \widehat{\lambda_1}(\mathbf s)= \widehat{H_2}(\mathbf s)+
 i(\boldsymbol\omega\cdot \mathbf s)
 \widehat{\lambda_1}(\mathbf s).
 \end{split}\end{equation*}
Recall that $\text{Re~} k\geq 0$. From this and the diophantine
estimate \eqref{nisa10}
we conclude that
\begin{equation*}
 | \widehat{\boldsymbol\lambda}(\mathbf s)|\leq c |\mathbf s|^{2n+2}\,\,
 |\widehat{\mathbf H}(\mathbf s)|,\quad
\mathbf s\in\mathbb Z^{n-1}.
\end{equation*}
Thus we get
\begin{gather*}
\|\boldsymbol\lambda\|_{\sigma_0,0}\leq c(\sigma_1-\sigma_0)^{-1}
\sup_{\mathbf s \in\mathbb Z^{n-1}}
\,\,e^{(\sigma_0+\sigma_1)|\mathbf s|/2}\,\,\,|\widehat{\boldsymbol \lambda}(\mathbf s)|\\
c(\sigma_1-\sigma_0)^{-2n-3}\sup_{\mathbf s\in\mathbb Z^{n-1}}\,\,
e^{(\sigma_1|\mathbf s|}\,\,\,|\widehat{\mathbf H}(\mathbf s)|\leq
c(\sigma_1-\sigma_0)^{-2n-3}\,\,\|\mathbf H\|_{\sigma_1,0},
\end{gather*}
and the lemma follows.
\end{proof}
Now we consider the matrix differential equation
\begin{gather}\label{nisa12}
    \boldsymbol\partial(\mathbf J\boldsymbol\Gamma) +
    \boldsymbol\Omega\boldsymbol\Gamma+
(\boldsymbol\Omega\boldsymbol\Gamma)^\top  + \delta\mathbf
M=\mathbf G,\\\label{nisa14}\Gamma_{11}=-\Gamma_{22},\quad \int_{\mathbb
T^{n-1}}\Gamma_{12}\, d\xi=f,
\end{gather}
 Her $\delta\mathbf M= \delta M\text{diag}(1,0)$ is the unknown matrix.
\begin{lemma}\label{nisa15}
For all $(\mathbf G, f)\in A_{\sigma_1,0}\times \mathbb
C$ and for all $0\leq \sigma_0<\sigma_1$,
 equations \eqref{nisa12}-\eqref{nisa14} have a unique solution
$(\boldsymbol\Gamma, \delta M)\in \mathcal A_{\sigma_0,0}^4\times
\mathbb C$. This solution admits the estimate
\begin{equation}\label{nisa15}
    \|\boldsymbol\Gamma\|_{\sigma_0,0}+|\delta \mathbf M|\leq c(\sigma_1-
    \sigma_0)^{-3n-2}
   (\|\mathbf G\|_{\sigma_1,0}+| f|).
\end{equation}
\end{lemma}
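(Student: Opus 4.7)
The plan is to decompose the matrix equation \eqref{nisa12} into scalar equations by exploiting the constraint $\Gamma_{22}=-\Gamma_{11}$. Writing $\boldsymbol\Gamma=\begin{pmatrix}c&a\\b&-c\end{pmatrix}$ with $a=\Gamma_{12}$, $b=\Gamma_{21}$, $c=\Gamma_{11}$, one checks that the trace-freeness of $\boldsymbol\Gamma$ makes $\mathbf J\boldsymbol\Gamma$ symmetric, so \eqref{nisa12} unfolds into three scalar equations:
\begin{gather*}
(2,2)\!:\ -\boldsymbol\partial a-2c=G_{22};\quad(1,2)\!:\ -\boldsymbol\partial c-ka+b=G_{12};\\
(1,1)\!:\ \boldsymbol\partial b-2kc+\delta M=G_{11}.
\end{gather*}
I would eliminate $c$ via the $(2,2)$ equation and $b$ via the $(1,2)$ equation, substituting both into the $(1,1)$ equation to obtain the single reduced scalar equation
\begin{equation*}
\boldsymbol\partial\bigl(2ka-\tfrac12\boldsymbol\partial^2 a\bigr)=\tilde G-\delta M,\qquad \tilde G:=G_{11}-kG_{22}-\boldsymbol\partial G_{12}+\tfrac12\boldsymbol\partial^2 G_{22}.
\end{equation*}
Averaging over $\mathbb T^{n-1}$ kills the left-hand side and forces $\delta M=\overline{\tilde G}=\overline{G_{11}-kG_{22}}$, yielding the bound $|\delta M|\leq c\|\mathbf G\|_{\sigma_1,0}$ directly.

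Having fixed $\delta M$, the right-hand side of the reduced equation has zero mean, so Lemma \ref{nisa6} applied on the intermediate strip $\sigma':=(\sigma_0+\sigma_1)/2$ produces a unique zero-mean $u^*\in\mathcal A_{\sigma',0}$ solving $\boldsymbol\partial u^*=\tilde G-\delta M$ at the cost $(\sigma_1-\sigma_0)^{-n-1}$. The algebraic-differential equation $2ka-\tfrac12\boldsymbol\partial^2 a=u^*+\overline u$ is then solved on the second half-interval by Fourier series: the constant $\overline u$ is fixed by the normalisation $\overline a=f$, which upon averaging yields $\overline u=2kf$ (automatic when $k=0$), while for $\mathbf s\neq 0$ one inverts the symbol $2k+\tfrac12(\boldsymbol\omega\cdot\mathbf s)^2$. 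Since $\mathrm{Re}\,k\geq 0$ and $\boldsymbol\omega\in\mathbb R^{n-1}$ is Diophantine, this symbol has real part at least $\tfrac12(\boldsymbol\omega\cdot\mathbf s)^2\geq c_0^{-2}|\mathbf s|^{-2n}$, so the inversion loses at most $2n$ Fourier derivatives and converts to an analytic-norm loss $(\sigma_1-\sigma_0)^{-2n-1}$. The composed factor $(\sigma_1-\sigma_0)^{-(n+1)-(2n+1)}=(\sigma_1-\sigma_0)^{-3n-2}$ matches the claimed bound \eqref{nisa15}, and $c$ and $b$ are then recovered from the $(2,2)$ and $(1,2)$ equations by Cauchy estimates on $\boldsymbol\partial a$ and $\boldsymbol\partial^2 a$.

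Uniqueness is obtained by running the same reduction with $\mathbf G=0$ and $f=0$: the mean of the reduced equation forces $\delta M=0$, Lemma \ref{nisa6} gives $u^*\equiv 0$, the constraint $\overline a=0$ forces $\overline u=0$, and the Fourier inversion yields $a\equiv 0$, hence $c=b=0$. The main technical point I expect to be obstacle is ensuring that the symbol $2k+\tfrac12(\boldsymbol\omega\cdot\mathbf s)^2$ admits a uniform lower bound for all complex $k$ in the admissible domain; this is where the hypothesis $\mathrm{Re}\,k\in[0,1]$ is essential together with the fact that $(\boldsymbol\omega\cdot\mathbf s)^2$ is real and non-negative (because $\boldsymbol\omega$ is real), so that the small imaginary part of $k$ cannot produce cancellation in the real part of the symbol. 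The remaining steps are standard analytic small-divisor manipulations in the scale $\mathcal A_{\sigma,0}$.
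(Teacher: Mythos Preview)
Your approach is essentially the paper's: both unfold the matrix equation into the three scalar equations for the $(1,1)$, $(1,2)$, $(2,2)$ entries, extract $\delta M=\overline{G_{11}}-k\overline{G_{22}}$ from the mean, and solve the zero-mean part via the Fourier symbol $(\boldsymbol\omega\cdot\mathbf s)^2+4k$. The only difference is that you perform the elimination in physical space before passing to Fourier, whereas the paper does the linear algebra directly mode-by-mode; these are the same computation.

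There is, however, a bookkeeping slip in your exponent count. Your $\tilde G$ contains $\tfrac12\boldsymbol\partial^2 G_{22}$, so $\tilde G\notin\mathcal A_{\sigma_1,0}$ and Lemma~\ref{nisa6} cannot be invoked from $\sigma_1$ at cost only $(\sigma_1-\sigma_0)^{-n-1}$; likewise, recovering $c$ and $b$ via Cauchy estimates on $\boldsymbol\partial a$, $\boldsymbol\partial^2 a$ costs additional strip. Done step by step as you describe, the losses add up to more than $3n+2$. The sharp exponent is obtained only by combining the multipliers: e.g.\ the factor $(\boldsymbol\omega\cdot\mathbf s)^2$ coming from $\boldsymbol\partial^2 G_{22}$ cancels against $(2k+\tfrac12(\boldsymbol\omega\cdot\mathbf s)^2)^{-1}$, so the net symbol acting on each $\widehat{G_{ij}^*}$ is bounded by $c|\mathbf s|^{3n+1}$. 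The paper gets this cancellation for free by solving the $3\times3$ Fourier system directly and reading off $|\widehat{\boldsymbol\Gamma^*}(\mathbf s)|\le c|\mathbf s|^{3n+1}|\widehat{\mathbf G^*}(\mathbf s)|$; in your presentation you must explicitly compose the symbols rather than estimate each intermediate object in norm.
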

\begin{proof}
Rewrite equations \eqref{nisa12}-\eqref{nisa14} in the form
\begin{equation}\label{nisa16}\begin{split}
    \boldsymbol\Omega \overline{\boldsymbol \Gamma}+\big(\boldsymbol\Omega
     \overline{\boldsymbol \Gamma}\big)^\top+\delta\mathbf M=\overline{\mathbf G},\\
   \overline{\Gamma}_{11}=- \overline{\Gamma}_{22}, \quad \overline{\Gamma}_{12}=f_4, \quad
  \delta\mathbf M=\text{diag~} (\delta M,0).
\end{split}\end{equation}
\begin{equation}\label{nisa17}\begin{split}
\boldsymbol\partial(\mathbf J\boldsymbol\Gamma^*)
+\boldsymbol\Omega\boldsymbol\Gamma^*+
(\boldsymbol\Omega\boldsymbol\Gamma^*)^\top =\mathbf G^*, \quad
\Gamma_{11}^*=-\Gamma_{22}^*.
\end{split}\end{equation}
The unique solution to equations \eqref{nisa16} is given by
\begin{equation*}
    \overline{\boldsymbol\Gamma}=\left(
\begin{array}{cc}
-\overline{G}_{22}/2, &f_4\\
k f_4+\overline{G}_{21}&\overline{G}_{22}/2
\end{array}
\right), \quad \delta \mathbf M =\left(
\begin{array}{cc}
\overline{G}_{11}-k\overline{G}_{22}, &0\\
0& 0
\end{array}\right).
\end{equation*}
It follows that
\begin{equation}\label{nisa18}
    |\overline{\boldsymbol\Gamma}| +|\delta M|\leq c |\mathbf G|.
\end{equation}
Next,  using the Fourier transform we can write equation
\eqref{nisa17} in the form of the matrix equation
$$
i(\boldsymbol \omega\cdot \boldsymbol s)\mathbf J
\widehat{{\boldsymbol \Gamma}^*}(\mathbf s) +\boldsymbol\Omega
\widehat{{\boldsymbol \Gamma}^*}(\mathbf s)+\big( \boldsymbol\Omega
\widehat{{\boldsymbol \Gamma}^*}(\mathbf s)\big)^\top=
\widehat{{\boldsymbol G}^*}(\mathbf s), \quad \widehat{
\Gamma_{11}^*}(\mathbf s)=-\widehat{ \Gamma_{22}^*}(\mathbf
s)\text{~for} \,\,\mathbf s\in \mathbb Z^2\setminus\{0\},
$$
Recall that
\begin{equation}\label{nisa19}
 \widehat{{\boldsymbol \Gamma}^*}(0)=\widehat{{\boldsymbol G}^*}(0)=0.
 \end{equation}
In it turn, this  equation is equivalent to the system
of linear algebraic equations
\begin{gather*}
i(\boldsymbol\omega\cdot \mathbf s)\,\widehat{
\Gamma_{21}^*}(\mathbf s)
+2k\,\widehat{ \Gamma_{22}^*}(\mathbf s)\,=\,\widehat{ G_{11}^*}(\mathbf s),\\
i(\boldsymbol\omega\cdot \mathbf s)\,\widehat{
\Gamma_{22}^*}(\mathbf s)\, +\,\widehat{ \Gamma_{21}^*}\,-\,k\,
\widehat{ \Gamma_{12}^*}(\mathbf s)\,=
\,\widehat{ G_{12}^*}(\mathbf s),\\
-i(\boldsymbol\omega\cdot \mathbf s)\,\widehat{
\Gamma_{12}^*}(\mathbf s)
\,+\,2\, \widehat{ \Gamma_{22}^*}(\mathbf s)\,=\,\widehat{ G_{22}^*}(\mathbf s),\\
\widehat{ \Gamma_{11}^*}(\mathbf s)=-\widehat{
\Gamma_{22}^*}(\mathbf s)\text{~for} \,\,\mathbf s\in \mathbb
Z^2\setminus\{0\}.
\end{gather*}
Straightforward calculations give
$$
 \widehat{ \Gamma_{12}^*}(\mathbf s)\,=\,\frac{2}{(\boldsymbol\omega\cdot \mathbf s)^2+4k}
 \Big(\frac{1}{i(\boldsymbol\omega\cdot \mathbf s)}\big(\widehat{ G_{11}^*}(\mathbf s)
 -k\widehat{ G_{22}^*}(\mathbf s)\big)+
i(\boldsymbol\omega\cdot \mathbf s) \widehat{ G_{22}^*}(\mathbf s)-
\widehat{ G_{12}^*}(\mathbf s)\Big)
$$
and
$$
\widehat{ \Gamma_{22}^*}(\mathbf s)=\frac{1}{2}\widehat{
G_{22}^*}(\mathbf s)+\frac{1}{2} i(\boldsymbol\omega\cdot \mathbf s)
\widehat{ \Gamma_{12}^*}(\mathbf s),
$$
$$
\widehat{ \Gamma_{21}^*}(\mathbf s)=\widehat{ G_{12}^*}(\mathbf s)+
k\widehat{ \Gamma_{12}^*}(\mathbf s) -i(\boldsymbol\omega\cdot
\mathbf s) \widehat{ \Gamma_{22}^*}(\mathbf s)
$$
$$
\widehat{ \Gamma_{11}^*}(\mathbf s)=-\widehat{
\Gamma_{22}^*}(\mathbf s).
$$
These identities   and the diophantine estimate \eqref{nisa10}
imply the estimate
$$
| \widehat{{\boldsymbol \Gamma}^*}(\mathbf s)|\leq c|\mathbf
s|^{3n+1}|
 \widehat{{\mathbf G}^*}(\mathbf s)|\text{~for}
\,\,\mathbf s\in \mathbb Z^2.
$$
Set $\varsigma=(\sigma_1-\sigma_0)/3$. Arguing as in the proof of
Lemmas  \ref{lita21lemma} and \ref{nisa15} we obtain
\begin{equation*}\begin{split}
   \| {\boldsymbol \Gamma}^*\|_{\sigma_0,0}\leq c \varsigma^{-1}\sup_{\mathbf s}
   e^{(\sigma_0+\varsigma)|\mathbf s|}|\widehat{{\boldsymbol \Gamma}^*}(\mathbf s)|\leq\\
 c\varsigma^{-1}\sup_{\mathbf s}|\mathbf s|^{3n+1}
   e^{(\sigma_0+\varsigma)|\mathbf s|}|\widehat{{G}^*}(\mathbf s)| \leq
c\varsigma^{-3n-2}\sup_{\mathbf s}
   e^{(\sigma_0+\varsigma)|\mathbf s|}|\widehat{{G}^*}(\mathbf s)|\\\leq
   c\varsigma^{-3n-2}\| {\mathbf G}^*\|_{\sigma_0+\varsigma,0}.
\end{split}\end{equation*}
Combining this estimate with \eqref{nisa18} we arrive at
\begin{equation}\label{nisa21}
    \| {\boldsymbol \Gamma}\|_{\sigma_0,0}+|\delta\mathbf M|\leq
    c\varsigma^{-3n-2}\| {\boldsymbol G}\|_{\sigma_0+\varsigma,0}.
\end{equation}
This completes the proof of the lemma.

\end{proof}

\paragraph{Truncated equations.}
Our next task is to prove the existence and uniqueness of analytic solutions to linear equations with variable coefficients which can be regarded as simplification of the main problem \eqref{nisa4}. First we consider the shortened version of this problem which can be formulated as follows.
It is necessary to find periodic functions $\psi_0$, $\boldsymbol \lambda$, $\boldsymbol\chi$ and parameters $p$, $q$ and $\beta$ satisfying the following equations.
\begin{subequations}\label{nisa23}
\begin{gather}\label{nisa23a}
 \boldsymbol\partial \psi_0+ q=F_1,\\
    \label{nisa23b}
\mathbf J\boldsymbol\partial\boldsymbol\lambda +\boldsymbol\Omega \boldsymbol\lambda+\mathbf
T\boldsymbol\mu
    + p \,\,\mathbf e_1=F_2,\quad \boldsymbol\mu=\delta\boldsymbol\beta+\nabla\psi_0\\
\label{nisa23c}
 -\boldsymbol\partial \boldsymbol\chi+
 \mathbf S\boldsymbol\mu+\mathbf T^\top \boldsymbol\lambda =F_3,
\end{gather}
\begin{gather}\label{nisa23f}
\overline{ \psi_0}=f_1, \quad \overline{\boldsymbol\chi}=f_3, \quad \overline{\lambda_1}=f_5.
\end{gather}
\end{subequations}
\begin{proposition}\label{nisa24}
Under the assumptions of Theorem \ref{nisa210}, there exist
$r_0>0$, $\varepsilon_0>0$, and $c>0$ such that for all
$
r\leq  r_0$ ,  $ |\varepsilon|\leq
\varepsilon_0,$
and for all $(f_5, f_3, f_1)\in \mathbb C^{n+1}$ ,  $F_i\in
\mathcal A_{\sigma_1,0}$,  and for all $0\leq \sigma_0<\sigma_1$, equations \eqref{nisa23} have a unique
solution analytic solution.
This solution admits the
estimate
\begin{multline}\label{nisa25}
\|\psi_0\|_{\sigma_0,0}+\|\boldsymbol\lambda\|_{\sigma_0,0}  +
 \|\boldsymbol\chi\|_{\sigma_0,0}+\\| p|+| q|+
 |\delta\boldsymbol\beta|\leq c(\sigma_1-\sigma_0)^{-6n-8}
(\|\mathbf F\|_{\sigma_1,0}+|\mathbf f|).
\end{multline}
\end{proposition}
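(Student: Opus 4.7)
The plan is to exploit the triangular structure of system \eqref{nisa23}: equation \eqref{nisa23a} decouples from the rest in its mean and oscillating parts, and the remainder of the system reduces to two transport-type equations for the oscillating parts $\boldsymbol\lambda^*$ and $\boldsymbol\chi^*$, plus an $(n+2)$-dimensional algebraic system for the constants $(\overline{\boldsymbol\lambda},\,\delta\boldsymbol\beta,\,p)$ whose leading part is controlled by the nondegenerate matrix $\mathbf K_0$.

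First I would average \eqref{nisa23a} to get $q=\overline{F_1}$, and solve the zero-mean part $\boldsymbol\partial\psi_0^*=F_1^*$ by Lemma \ref{nisa6} together with the prescribed $\overline{\psi_0}=f_1$. This determines $\psi_0$ and hence $\boldsymbol\mu^*=\nabla\psi_0^*$, leaving $\overline{\boldsymbol\mu}=\delta\boldsymbol\beta$ as the only constant unknown in $\boldsymbol\mu$. The zero-mean projection of \eqref{nisa23b} then reads
\begin{equation*}
\mathbf J\boldsymbol\partial\boldsymbol\lambda^*+\boldsymbol\Omega\boldsymbol\lambda^* \;=\; F_2^* - (\mathbf T\nabla\psi_0^*)^* - \mathbf T^*\delta\boldsymbol\beta,
\end{equation*}
whose right-hand side has zero mean for \emph{every} value of $\delta\boldsymbol\beta$ because $\mathbf T^*$ itself has zero mean; Lemma \ref{lita21lemma} therefore produces $\boldsymbol\lambda^*$ as an explicit affine function of $\delta\boldsymbol\beta$ with known coefficients. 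The same procedure applied to the zero-mean projection of \eqref{nisa23c}, using Lemma \ref{nisa6} componentwise, expresses $\boldsymbol\chi^*$ as an affine function of $(\delta\boldsymbol\beta,\overline{\boldsymbol\lambda})$, after which the constraint $\overline{\boldsymbol\chi}=f_3$ fixes the remaining integration constant.

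Averaging \eqref{nisa23b} and \eqref{nisa23c} and imposing $\overline{\lambda_1}=f_5$ produces an $(n+2)\times(n+2)$ linear system for $(\overline{\boldsymbol\lambda},\delta\boldsymbol\beta,p)$. When the coefficients are frozen at $\mathbf S=\mathbf S_0$, $\mathbf T=\mathbf T_0$ and the small couplings $\overline{\mathbf T^*\nabla\psi_0^*}$, $\overline{(\mathbf T^\top)^*\boldsymbol\lambda^*}$ are dropped, elimination of $p$ from the first row and of $\lambda_2$ from the second row collapses the system to $\mathbf K_0\,\delta\boldsymbol\beta=\text{known data}$, which is invertible by hypothesis; $p$ and $\lambda_2$ are then recovered by back-substitution. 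The discarded corrections are bounded by $c(r+|\varepsilon|)$ in operator norm uniformly in $\sigma_0,\sigma_1$, thanks to \eqref{nisa6x} and the affine dependence of $\boldsymbol\lambda^*$ on $\delta\boldsymbol\beta$ produced above. Hence for $r_0,\varepsilon_0$ small a Neumann series inverts the perturbed system, with a strip-independent operator bound.

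Finally, the norm estimate \eqref{nisa25} is obtained by tracking the shrinkage of the analyticity radius. Introducing intermediate radii $\sigma_0<\sigma'<\sigma''<\sigma_1$ of equal spacing $\varsigma\sim(\sigma_1-\sigma_0)$, one pays $\varsigma^{-(n+1)}$ for each use of Lemma \ref{nisa6}, $\varsigma^{-(2n+3)}$ for Lemma \ref{lita21lemma}, and an extra $\varsigma^{-1}$ per Cauchy-type derivative needed to pass $\nabla\psi_0^*$ and $\boldsymbol\lambda^*$ into the downstream equations. Summing these across the $\psi_0,\,\boldsymbol\lambda^*,\,\boldsymbol\chi^*$ substeps, together with the strip-free contribution from the algebraic inversion, yields the exponent $6n+8$. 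The main obstacle I expect is the algebraic step: one has to verify carefully that the small couplings entering through the non-constant parts of $\mathbf S,\mathbf T$ and through $\boldsymbol\lambda^*$ act as bounded operators on the \emph{finite-dimensional} parameter space $(\overline{\boldsymbol\lambda},\delta\boldsymbol\beta,p)$ with norms controlled purely by $r+|\varepsilon|$, so that the inversion of $\mathbf K_0$ does not smuggle in an additional strip-dependent factor that would ruin the claimed exponent.
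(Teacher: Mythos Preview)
Your proposal is correct and differs from the paper only in the way the fixed-point argument is organized. Both proofs split mean and oscillating parts. The paper expresses the constants $(\overline{\boldsymbol\lambda},\delta\boldsymbol\beta,p)$ as affine functionals of $(\psi_0^*,\boldsymbol\lambda^*)$ (this is packaged as Lemma~\ref{lita15lemma}), substitutes into the oscillating equations, and runs a contraction on the infinite-dimensional space $\mathcal Z_0$ via $\mathfrak w=\Xi_1\Xi_2\mathfrak w+\Xi_1 G$; analyticity on $\Re_{\sigma_0}$ is then recovered by a single bootstrap step. You invert the roles: first express $\boldsymbol\lambda^*,\boldsymbol\chi^*$ affinely in the constants and then run a Neumann series on the finite-dimensional block. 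Your route is slightly cleaner in that the iteration lives in $\mathbb C^{n+2}$; the paper's route avoids having to track separately the data-part and the $\delta\boldsymbol\beta$-part of $\boldsymbol\lambda^*$.

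The obstacle you flag at the end is real and is exactly the point where the lower bound $\sigma\geq 1/4$ enters; you should make this explicit. The $\delta\boldsymbol\beta$-dependent piece of $\boldsymbol\lambda^*$ is the solution of $\mathbf J\boldsymbol\partial\boldsymbol\lambda^*+\boldsymbol\Omega\boldsymbol\lambda^*=-\mathbf T^*\delta\boldsymbol\beta$, and the right-hand side lies in $\mathcal A_{\sigma,0}$ (the ambient strip of \eqref{nisa6x}), not merely in $\mathcal A_{\sigma_1,0}$. Applying Lemma~\ref{lita21lemma} from $\sigma$ down to $0$ therefore costs a fixed factor $c\sigma^{-(2n+3)}\leq c\,4^{2n+3}$, independent of $\sigma_0,\sigma_1$, and the resulting feedback into the averaged equation \eqref{nisa23c} is $\overline{(\mathbf T^\top-\mathbf T_0^\top)\boldsymbol\lambda^*}=O((r+|\varepsilon|))|\delta\boldsymbol\beta|$ as you want. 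In the paper this same mechanism appears as the smoothing estimate \eqref{nisa45} for $\Xi_2$: the functionals $\mathcal M,\mathcal L$ produce \emph{constants}, which multiplied by $\mathbf T^*,\mathbf S^*,(\mathbf T^\top)^*$ land back in $\mathcal Z_\sigma$, so that the composition $\Xi_1\Xi_2$ has strip-independent norm $c(r_0+\varepsilon_0)$. Once you record this, your Neumann series closes with no hidden $(\sigma_1-\sigma_0)$-dependence, and your exponent count (which is in fact a bit generous; the paper reaches $4n+6$) gives \eqref{nisa25}.
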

\begin{proof} We have
\begin{equation}\label{nisa26}
 \boldsymbol\lambda=   \overline{\boldsymbol\lambda}+\boldsymbol\lambda^*, \quad
  \boldsymbol\chi=f_3+ \boldsymbol\chi^*
\end{equation}
Substituting this decomposition into \eqref{nisa23} we obtain two system of equations
\begin{gather}\label{nisa27a}
 \overline{ \psi_0}=f_1,\quad\overline{\boldsymbol\chi}=f_3, \quad \overline{\lambda_1}=f_5\\
    \label{nisa27b}
\boldsymbol\Omega \overline{\boldsymbol\lambda} +\overline{\mathbf
T}\boldsymbol\beta
    + p \,\mathbf e_1=\overline{F_2}-\overline{\mathbf T \nabla\psi_0},\\
\label{nisa27c}
 \overline{\mathbf S} \boldsymbol\beta
 +\overline{\mathbf T}^\top \overline{\boldsymbol\lambda} =\overline{F_3}
 -\overline{\mathbf S\nabla\psi_0}-\overline{\mathbf T^\top \boldsymbol\lambda^*},
\end{gather}
and
\begin{gather}\label{nisa28a}
 \boldsymbol\partial \psi_0^*=F_1^*,\\
    \label{nisa28b}
\mathbf J\boldsymbol\partial\boldsymbol\lambda^* +\boldsymbol\Omega \boldsymbol\lambda^*+
(\mathbf
T\nabla\psi_0^*)^*
 =F_2^*-{\mathbf
T}^*\boldsymbol\beta,\\
\label{nisa28c}
 -\boldsymbol\partial \boldsymbol\chi^*+
 (\mathbf S\nabla\psi_0^*)^*+(\mathbf T^\top \boldsymbol\lambda^*) =F_3^*-
 \mathbf S^*\boldsymbol\beta-(\mathbf T^\top)^* \overline{ \boldsymbol\lambda},
\end{gather}
The rest of the proof is based on the following auxiliary lemma.

\begin{lemma}\label{lita15lemma}. Under the assumptions of Theorem  \ref{nisa4}
there exist $r_0>0$ and $\varepsilon_0>0$, $c>0$ such that for all
$
r\leq r_0$
and
$|\varepsilon|\leq \varepsilon_0,
$
and for given $\nabla \psi_0, \lambda^*\in H_0$, the system   \eqref{nisa27a}-\eqref{nisa27c}
has a unique solution, which admits the representation
\begin{subequations} \label{nisa29um}
\begin{gather}\label{nisa29i}
{\boldsymbol\beta}=\mathcal M(\nabla\psi_0^*,\boldsymbol\lambda^*)+\mathfrak F_\beta,\\
\label{nisa29j}
\overline{\boldsymbol\lambda}=\mathcal L(\nabla\psi_0^*,\boldsymbol\lambda^*)+\mathfrak F_\lambda,\\
\label{nisa29k}
p=\mathcal P(\nabla\psi_0^*,\boldsymbol\lambda^*)+\mathfrak F_p.
\end{gather}
\end{subequations}
Here the constant vectors $\mathfrak F_\beta$, $\mathfrak F_\lambda$, $ \mathfrak F_p$
and the linear functionals $\mathcal M$, $ \mathcal L$, $\mathcal P$
admit the estimates
\begin{equation}\label{nisa36overline}
    |\mathfrak F_\beta|\leq c (|\overline{|F}_2|+|\overline{F}_3|+|f_5|)
\end{equation}

\begin{equation}\label{nisa36}
    |\mathfrak F_\beta|+ |\mathfrak F_\lambda|+ |\mathfrak F_p|\leq c (|F_2|_{-s}+|F_3|_{-s}+|f_5|)
\end{equation}
\begin{equation}\label{nisa37}
    |\mathcal M|+ |\mathcal L|+ |\mathcal P|\leq c(\varepsilon_0+r_0) (|\psi^*|_{-s}+|\lambda^*|_{-s})
\end{equation}
where $s>0$ is an arbitrary  number and $c$ is independent on $\varepsilon $ and $r$.
\end{lemma}

\begin{proof}
Let us consider  the system of linear algebraic
equations
\begin{equation}\label{nisa30}\begin{split}
 \boldsymbol\Omega  \overline{\boldsymbol\lambda}+
 \overline{\mathbf T}\,\,\delta\boldsymbol\beta+p\,\,\mathbf e_1=
 \mathbf a,\\
\overline{\mathbf T}^\top\overline{\boldsymbol\lambda}+
\overline{\mathbf S}\,\,\delta\boldsymbol\beta
=\mathbf b, \\
 \overline{\lambda_1}=f_5.
\end{split}\end{equation}
Rewrite the first two equations  in the form
\begin{equation*}\begin{split}
   \boldsymbol\Omega  \overline{\boldsymbol\lambda}+ \overline{\mathbf T_0}\,\,
   \delta
   \boldsymbol\beta+ p\,\,\mathbf e_1=
 \mathbf a+(\mathbf T_0-\overline{\mathbf T})\delta\boldsymbol\beta,\\
{\mathbf T_0}^\top\,\,\overline{\boldsymbol\lambda}+{\mathbf S_0}\,\,
\delta\boldsymbol\beta =\mathbf b+(\mathbf T_0^\top-\overline{\mathbf
T}^\top)\,\overline{\boldsymbol\lambda}+
(\mathbf S_0-\overline{\mathbf S})\,\,\delta\boldsymbol\beta.
\end{split}\end{equation*}
Notice that
$$
\overline{\boldsymbol\lambda}=\overline{\lambda}_2\mathbf e_2+ f_5
\mathbf e_1,
$$
where $\mathbf e_i$ are the basis vectors with the components
$e_{ij}=\delta_{ij}$. Thus we  get the linear system of the
equations for $\overline{\lambda_2}$ and $\delta\boldsymbol\beta$
\begin{equation}\label{nisa30x}\begin{split}
\overline{\lambda_2}+\mathbf t_0\cdot
\delta\boldsymbol\beta=a_2+\{(\mathbf T_0-
\overline{\mathbf T})\delta\boldsymbol\beta\}_2,\\
\mathbf S_0\delta\boldsymbol \beta+{\lambda_2}\, \mathbf t_0=\mathbf
b+ f_5(\mathbf T_0^\top-\overline{\mathbf T}^\top)\mathbf e_1
+\\\overline{\lambda}_2(\mathbf T_0^\top-\overline{\mathbf T}^\top)
\mathbf e_2+(\mathbf S_0-\overline{\mathbf
S})\delta\boldsymbol\beta.
\end{split}\end{equation}
  Express
$\overline{\lambda_2}$ in terms of $\delta\boldsymbol\beta$ using the
first equation in \eqref{nisa30x}. Substituting the result into the second equation in
\eqref{nisa30x} we obtain the following equation for
$\delta\boldsymbol\beta$
\begin{equation*}\begin{split}
    \mathbf K_0\delta\boldsymbol\beta=(\mathbf S_0-\overline{\mathbf S})
    \delta\boldsymbol\beta+
    \{(\mathbf T_0-\overline{\mathbf T})\delta\boldsymbol\beta\}_2
    \big((\mathbf T_0^\top-\overline{\mathbf T}^\top){\boldsymbol e}_2- \mathbf t_0\big)-\delta\boldsymbol\beta\cdot \mathbf t_0
   \big((\mathbf T_0^\top-\overline{\mathbf T}^\top){\boldsymbol e}_2
    \\+\Big(
    \mathbf b-a_2 \mathbf t_0+a_2(\mathbf T_0^\top-
    \overline{\mathbf T}^\top){\boldsymbol e}_2
     +f_5(\mathbf T_0^\top-\overline{\mathbf T}^\top){\boldsymbol e}_1\Big),
\end{split}\end{equation*}
where the matrix $\mathbf K_0= \mathbf S_0-\mathbf t_0\otimes
\mathbf t_0$ has a
 bounded inverse.
Thus we get the following equation for $\delta\boldsymbol\beta$
\begin{equation}\label{nisa31}
    \delta\boldsymbol\beta-\mathbf A\delta\boldsymbol\beta=\mathbf K_0^{-1}(\mathbf b-
    a_2 \mathbf t_0 +a_2(\mathbf T_0^\top-
    \overline{\mathbf T}^\top){\boldsymbol e}_2
    +f_5(\mathbf T_0^\top-\overline{\mathbf T}^\top){\boldsymbol e}_1),
\end{equation}
where the linear mapping $\mathbf A: \mathbb C^2\to \mathbb C^2$ is
given by
$$
\mathbf A:\delta\boldsymbol\beta\mapsto \mathbf K_0^{-1}
\Big\{(\mathbf S_0-\overline{\mathbf S})\delta\boldsymbol\beta+
    \{(\mathbf T_0-\overline{\mathbf T})\delta\boldsymbol\beta\}_2
    \big((\mathbf T_0^\top-\overline{\mathbf T}^\top){\boldsymbol e}_2-
     \mathbf t_0\big)-\delta\boldsymbol\beta\cdot \mathbf t_0
   \big((\mathbf T_0^\top-\overline{\mathbf T}^\top){\boldsymbol e}_2 \Big\}.
$$
In view  of inequalities \eqref{nisa6x}, the
mapping $\mathbf A$
 admits the estimate
$|\mathbf A\boldsymbol\delta\beta|\leq
c(\varepsilon_0+r_0)|\delta\boldsymbol\beta|$.
 Choosing $\varepsilon_0$ and $r_0$ so small that $c(\varepsilon_0+r_0)\leq 1/2$, we obtain that equation \eqref{nisa31} has the only solution which admits the representation
 \begin{equation}\label{nisa31}
    \delta\boldsymbol\beta=(1-\mathbf A)^{-1}\mathbf K_0^{-1}\big(\mathbf b-
    a_2 \mathbf t_0+a_2(\mathbf T_0^\top-
    \overline{\mathbf T}^\top){\boldsymbol e}_2
    +f_5(\mathbf T_0^\top-\overline{\mathbf T}^\top){\boldsymbol e}_1\big)
\end{equation}
 When system \eqref{nisa31} is solved the vector $\overline{\boldsymbol\lambda}$ and the scalar $p$ are restored
 by the relations
\begin{equation}\label{nisa32}
\overline{\boldsymbol\lambda}= f_5\mathbf e_1+\{a_2+((\mathbf T_0-\overline{\mathbf
T})\boldsymbol\beta)\cdot \mathbf e_2\}\mathbf e_2 -\mathbf t_0\cdot \delta\boldsymbol\beta\mathbf e_2
\end{equation}
\begin{equation}\label{nisa33}
p=a_1+ kf_5+\{(\mathbf T_0-\overline{\mathbf T})\boldsymbol\beta\}_1
\end{equation}
In view of \eqref{nisa27b}-\eqref{nisa27c} the vectors  $\boldsymbol\beta$, $\boldsymbol\lambda$ and the constant $p$ ~ satisfy equations \eqref{nisa30} with the right hand sides
\begin{equation*}
    \mathbf a=\overline{F_2}-\overline{\mathbf T \nabla\psi_0},\quad
 \mathbf b=\overline{F_3}
 -\overline{\mathbf S\nabla\psi_0}-\overline{\mathbf T^\top \boldsymbol\lambda^*}
\end{equation*}
Substituting these expressions into \eqref{nisa31}-\eqref{nisa33} we obtain desired representation
\eqref{nisa29i}-\eqref{nisa29k} in which
\begin{gather}\label{nisa34a}
    \mathfrak F_{\beta}=(1-\mathbf A)^{-1}\mathbf K_0^{-1}\Big(\overline{F_3}-(\overline{F}_2 \cdot \mathbf e_2) \overline{\mathbf t}_0+
    (\overline{F}_2\cdot \mathbf e_2)(\mathbf T_0^\top-
    \overline{\mathbf T}^\top){\boldsymbol e}_2\\\nonumber +f_5(\mathbf T_0^\top-\overline{\mathbf T}^\top)\mathbf e_1\Big),\\\label{nisa34b}
     \mathfrak F_{\lambda}=f_5\mathbf e_1 +\overline{F}_2 \mathbf e_2+\Big(
     (\mathbf T_0 -\overline{\mathbf T}) \mathfrak F_\mu\cdot \mathbf e_2\Big)\mathbf e_2-(\mathbf t_0\cdot \mathfrak F_\mu)\mathbf e_2,\\\label{nisa34c}
 \mathfrak F_{p}=k f_5+\overline{F}_2+(\mathbf T_0 -\overline{\mathbf T}) \mathfrak F_\mu\cdot \mathbf e_1\\,
\end{gather}
and
\begin{gather}\label{nisa35a}
\mathcal M=(1-\mathbf A)^{-1}\mathbf K_0^{-1}\Big\{\big(\overline{\mathbf T\nabla \psi_0^*}\cdot \mathbf e_2\big)\mathbf t_0-\big(\overline{\mathbf T\nabla \psi_0^*}\cdot \mathbf e_2\big)
(\mathbf T_0^\top-
    \overline{\mathbf T}^\top){\boldsymbol e}_2\\\nonumber
-\overline{\mathbf S\nabla\psi_0^*}-\overline{\mathbf T\boldsymbol\lambda^*}\Big\},\\
\label{nisa35b}
\mathcal L= -(\overline{\mathbf T\nabla \psi_0^*}\cdot \mathbf e_2)\mathbf e_2-
((\mathbf T_0 -\overline{\mathbf T}) \mathcal M\cdot \mathbf e_2)\mathbf e_2
-(\mathbf t_0\cdot \mathcal M)\mathbf e_2,\\
\label{nisa35c}
\mathcal P= -(\overline{\mathbf T\nabla\psi_0^*})\cdot \mathbf e_2+
(\mathbf T_0 -\overline{\mathbf T}) \mathcal M\cdot \mathbf e_1.
\end{gather}
It remains to prove that these vectors and functionals satisfy inequalities \eqref{nisa36}-\eqref{nisa37}.
We begin with the observation that $\mathfrak F_\beta$, $\mathfrak F_{\lambda}$,
and $\mathfrak F_p$ are linear integral functionals with analytic kernels of
the linear space of functions $F_2$, $F_3$ and constants $f_5$. We have
\begin{equation*}
    |\mathfrak F_\beta|\leq c (|\overline F_2|+|\overline F_3|+ |f_5|),
\end{equation*}
which immediately gives \eqref{nisa36overline}. In order to prove \eqref{nisa36}
we note that the Cauchy inequality
$
|\overline{uv}|\leq |u|_s|v|_{-
s}
$ implies
$$
|\overline{ F}_i|\,=\,|\overline {1 \cdot F_i}|\,\leq \, |1|_{s}\,|F_i|_{-s}
\,\leq c \, |F_i|_{-s},
$$
gives \eqref{nisa36}. Next notice that the $\mathcal M$, $\mathcal L$, and $\mathcal P$
are linear integral functionals
 with analytical kernels. In particular,  they are continuous in every Banach space $H_s$.
  Hence estimate \eqref{nisa37}
 is almost trivial. We prove it for the functional $\mathcal M$.
 The same proof works for $\mathcal L$ and $\mathcal P$.
 In order to estimate $\mathcal M$, notice that
\begin{multline*}
|\,\overline{\mathbf T\nabla \psi_0^*}\,|= |\,\overline{(\mathbf T-\mathbf T_0)\nabla\psi^*}\,|\leq
  |\mathbf T-\mathbf T_0|_{s+1}|\nabla\psi^*|_{-s-1}\\\leq c  \|\mathbf T-\mathbf T_0\|_{\sigma,0}
  |\psi^*|_{-s}\leq cr_0 |\psi^*|_{-s}.
 \end{multline*}
Repeating these arguments we obtain
\begin{equation*}
   |\overline{\mathbf S\nabla \psi_0^*}|+|\overline{\mathbf T\boldsymbol\lambda^*}|\, \leq\,
   cr_0\,(|\psi^*|_{-s}+|\boldsymbol\lambda^*|_{-s}).
\end{equation*}
Combining  this result withy  the obvious inequality
$$
|\mathcal M| \leq c(|\,\overline{\mathbf T\nabla \psi_0^*}\,|+ |\overline{\mathbf S\nabla \psi_0^*}|+|\overline{\mathbf T\boldsymbol\lambda^*}|)
$$
we obtain estimate \eqref{nisa37} for $\mathcal M$.
\end{proof}

Let us turn to the proof of Proposition \ref{nisa24}. Our strategy is the following. First, we
use representations \eqref{nisa29i}-\eqref{nisa29k} in order to reduce the system \eqref{nisa27a} \eqref{nisa28c} to the closed system of equations for the deviations $\psi_0^*$, $\boldsymbol\lambda^*$,
and $\boldsymbol\chi^*$. Then we reduce the obtained system to an abstract operator equation.
We solve this equation in the space of bounded functions by using contraction mapping principle. Finally we prove that the obtained solution is analytic. Let us consider the basic system of equations \eqref{nisa28a}-\eqref{nisa28c}.
Assume that $\varepsilon_0$ and $r_0$ meet all requirements  of Lemma \ref{lita15lemma}.
It follows from representations \eqref{nisa29i}-\eqref{nisa29k} in this lemma that we can rewrite equations \eqref{nisa28a}-\eqref{nisa28c} in the equivalent form
\begin{gather}\nonumber
 \boldsymbol\partial \psi_0^*=G_1^*,\\
    \label{nisa43b}
\mathbf J\boldsymbol\partial\boldsymbol\lambda^* +\boldsymbol\Omega \boldsymbol\lambda^*+
(\mathbf
T\nabla\psi_0^*)^*
 =G_2^*-{\mathbf
T}^*\mathcal M(\psi_0^*, \boldsymbol\lambda^*),\\
\nonumber
 -\boldsymbol\partial \boldsymbol\chi^*+
 (\mathbf S\nabla\psi_0^*)^*+(\mathbf T^\top \boldsymbol\lambda^*) =G_3^*-
 \mathbf S^*\mathcal M(\psi_0^*, \boldsymbol\lambda^*)-(\mathbf T^\top)^* \mathcal L(\psi_0^*, \boldsymbol\lambda^*),
\end{gather}
where
$$
G_1^*=F_1^*, \quad G_2^*=F_2^*-{\mathbf
T}^*\mathfrak F_\beta,\quad G_3^*=F_3^*-\mathbf S^*\mathfrak F_\beta-(\mathbf T^\top)^* \mathfrak F_\lambda
$$
It is worth noting that equations \eqref{nisa43b} and relations \eqref{nisa29i}-\eqref{nisa29k}
forms the system of equations which is equivalent to  equations \eqref{nisa23a}-\eqref{nisa23f}
It follows from estimate \eqref{nisa36}
that
\begin{equation}\label{nisa44}
    \|G_1^*\|_{\sigma_1,0}+\|G_2^*\|_{\sigma_1,0}+\|G_3^*\|_{\sigma_1,0}\leq
  c( \|F_1^*\|_{\sigma_1,0}+\|F_2^*\|_{\sigma_1,0}+\|F_3^*\|_{\sigma_1,0}  +|f_5|).
\end{equation}
Now introduce the linear operator
$$
\Xi_1: (F_1^*,F_2^*, F_3^*)\, \to\, (\psi_0^*, \boldsymbol\lambda^*, \boldsymbol\chi^*)
$$
which assigns to every vector $(F_1^*,F_2^*, F_3^*)$ the solution of the following system of equations
\begin{gather}\label{nisa39a}
 \boldsymbol\partial \psi_0^*=F_1^*,\\
    \label{nisa39b}
\mathbf J\boldsymbol\partial\boldsymbol\lambda^* +\boldsymbol\Omega \boldsymbol\lambda^*+
(\mathbf
T\nabla\psi_0^*)^*
 =F_2^*,\\
\label{nisa39c}
 -\boldsymbol\partial \boldsymbol\chi^*+
 (\mathbf S\nabla\psi_0^*)^*+(\mathbf T^\top \boldsymbol\lambda^*) =F_3^*.
\end{gather}
Denote by $\mathcal Z_\sigma$ the subspace of the Banach space $\mathcal A_{\sigma_0}\times \mathcal
A_{\sigma_0}^2\times \mathcal A_{\sigma,0}^{n-1}$ which consists of all functions with with zero mean value.
Let us show that for every $0\leq \sigma_0<\sigma_1\leq \sigma$, the operator
$\Xi_1: \mathcal Z_{\sigma_1}\to \mathcal Z_{\sigma_0}$ is continuous and
\begin{equation}\label{nisa40}
    \|\Xi_1(F_1^*,F_2^*, F_3^*)\|_{Z_{\sigma_0}}\leq c(\sigma_1-\sigma_0)^{-4n-6} \|\Xi_1(F_1^*,F_2^*, F_3^*)\|_{Z_{\sigma_1}}
\end{equation}
 Choose an arbitrary $(F_1^*,F_2^*, F_3^*)\in \mathcal Z_{\sigma_1}$. Since the system \eqref{nisa39a}-\eqref{nisa39c} is triangular, the existence of solution to this system  obviously follows from Lemmas \ref{nisa6} and \ref{lita21lemma}. Next, estimate \eqref{nisa8} in Lemma \ref{nisa6} implies
\begin{equation}\label{nisa41}
    \|\psi^*_0\|_{2/3\sigma_0+1/3\sigma_1, 0} \leq c(\sigma_1-\sigma_0)^{-n-2}\|F_1^*\|_{\sigma_1,0}.
\end{equation}
Using this inequality and applying Lemma \ref{lita21lemma} to equation \eqref{nisa39b}
we arrive at the estimate
\begin{multline}\label{nisa42}
 \|\boldsymbol \lambda^*\|_{2/3\sigma_0+1/3\sigma_1, 0} \leq c(\sigma_1-\sigma_0)^{-2n-3}
   \big( \|\psi^*_0\|_{1/3\sigma_0+2/3\sigma_1, 0} +
    \|F_2^*\|_{1/3\sigma_0+2/3\sigma_1,0}\big)\\\leq
    c(\sigma_1-\sigma_0)^{-3n-5}\big(\|F_1^*\|_{\sigma_1,0}+
   \|F_2^*\|_{\sigma_1,0}\big).
\end{multline}
Finally applying Lemma \ref{nisa6} to equation \eqref{nisa39c} we obtain
\begin{multline}\label{nisa43}
 \|\boldsymbol \chi^*\|_{\sigma_0, 0} \leq c(\sigma_1-\sigma_0)^{-n-2}
   \big( \|\psi^*_0\|_{2/3\sigma_0+1/3\sigma_1, 0} + \|\boldsymbol\lambda^*\|_{2/3\sigma_0+1/3\sigma_1, 0}
  +  \|F_3^*\|_{2/3\sigma_0+1/3\sigma_1,0}\big)\\\leq
    c(\sigma_1-\sigma_0)^{-4n-6}\big(\|F_1^*\|_{\sigma_1,0}+
   \|F_2^*\|_{\sigma_1,0}+\|F_3\|_{\sigma_1,0}\big).
\end{multline}
Combining estimates \eqref{nisa41}-\eqref{nisa43} we obtain \eqref{nisa40}.
Now introduce the second linear operator
$$
\Xi_2:(\psi_0^*, \boldsymbol\lambda^*, \boldsymbol\chi^*) \to \Big(0, {\mathbf
T}^*\mathcal M(\psi_0^*, \boldsymbol\lambda^*),
 \mathbf S^*\mathcal M(\psi_0^*, \boldsymbol\lambda^*)+(\mathbf T^\top)^* \mathcal L(\psi_0^*, \boldsymbol\lambda^*)\Big)^\top
$$
Since the embedding $\mathcal A_{0, 0} \to H_{-s}$ is bounded for $s\geq 0$,  estimate
\eqref{nisa37} yields the inequality
\begin{equation}\label{nisa45}
    \|\Xi_2(\psi_0^*, \boldsymbol\lambda^*, \boldsymbol\chi^*)\|_{\mathcal Z_\sigma}\leq
    c(\varepsilon_0+r_0)\|(\psi_0^*, \boldsymbol\lambda^*, \boldsymbol\chi^*)|_{\mathcal Z_0}.
\end{equation}
Now we can  rewrite system \eqref{nisa43b} in the form of the operator equation
\begin{equation}\label{nisa46}
  (\psi_0^*, \boldsymbol\lambda^*, \boldsymbol\chi^*)=\Xi_1\Xi_2(\psi_0^*, \boldsymbol\lambda^*, \boldsymbol\chi^*)+\Xi_1(G_1^*, G_2^*, G_3^*).
\end{equation}
Estimates \eqref{nisa40}, \eqref{nisa45}, and the inequality $1/4<\sigma$ imply
\begin{equation}\label{nisa47}
  \|\Xi_1\Xi_2(\psi_0^*, \boldsymbol\lambda^*, \boldsymbol\chi^*) \|_{\mathcal Z_{0}}\leq c(\varepsilon_0+r_0)\|(\psi_0^*, \boldsymbol\lambda^*, \boldsymbol\chi^*) \|_{\mathcal Z_{0}}
\end{equation}
It follows that the norm of the operator $\Xi_1\Xi_2:\mathcal Z_0\to \mathcal Z_0$ does not exceed $c(\varepsilon_0+r_0)$.  Choosing $\varepsilon_0$ and $r_0$  sufficiently small and applying the contraction mapping principle we conclude that operator equation \eqref{nisa46} has a unique solution in the space $\mathcal Z_0$. This solution satisfies the inequality
\begin{equation}\label{nisa48}
  \|(\psi_0^*, \boldsymbol\lambda^*, \boldsymbol\chi^*)\|_{\mathcal Z_0}\leq c\|\Xi_1(G_1^*, G_2^*, G_3^*)\|_{\mathcal Z_0}.
\end{equation}
Let us prove that the obtained solution is analytic.
Since $1/4\leq \sigma_1$, estimates \eqref{nisa40} with $\sigma_0=0$ and estimate  \eqref{nisa44} imply the inequality
\begin{multline}\label{nisa49}
    \|\Xi_1(G_1^*, G_2^*, G_3^*)\|_{\mathcal Z_0}\leq  \|(G_1^*, G_2^*, G_3^*)\|_{\mathcal Z_{\sigma_1}}
\\\leq  c( \|F_1^*\|_{\sigma_1,0}+\|F_2^*\|_{\sigma_1,0}+\|F_3^*\|_{\sigma_1,0}  +|f_5|).
\end{multline}
Hence the solution to equation \eqref{nisa46} admits the estimate
\begin{equation*}
  \|(\psi_0^*, \boldsymbol\lambda^*, \boldsymbol\chi^*)\|_{\mathcal Z_0}\leq c
   ( \|F_1^*\|_{\sigma_1,0}+\|F_2^*\|_{\sigma_1,0}+\|F_3^*\|_{\sigma_1,0}  +|f_5|),
\end{equation*}
which along with \eqref{nisa45} leads to the inequality
\begin{equation}\label{nisa50}
   \|\Xi_2(\psi_0^*, \boldsymbol\lambda^*, \boldsymbol\chi^*)\|_{\mathcal Z_\sigma}\leq  c
   ( \|F_1^*\|_{\sigma_1,0}+\|F_2^*\|_{\sigma_1,0}+\|F_3^*\|_{\sigma_1,0}  +|f_5|).
\end{equation}
On the other hand, inequality \eqref{nisa40} yields
\begin{multline*}
\|\Xi_1\Big(\Xi_2(\psi_0^*, \boldsymbol\lambda^*, \boldsymbol\chi^*)+(G_1^*, G_2^*, G_3^*)\Big)\|_{\mathcal Z_{\sigma_0}}\\\leq
c(\sigma_1-\sigma_0)^{-4n-6}\Big(\|\Xi_2(\psi_0^*, \boldsymbol\lambda^*, \boldsymbol\chi^*)\|_{\mathcal Z_{\sigma_1}}+\|(G_1^*, G_2^*, G_3^*)\|_{\mathcal Z_{\sigma_1}}\Big)
\end{multline*}
Combining this result with \eqref{nisa50} and \eqref{nisa44} we finally obtain
\begin{multline*}
\|\Xi_1\Big(\Xi_2(\psi_0^*, \boldsymbol\lambda^*, \boldsymbol\chi^*)+(G_1^*, G_2^*, G_3^*)\Big)\|_{\mathcal Z_{\sigma_0}}\\\leq
c(\sigma_1-\sigma_0)^{-4n-6}
( \|F_1^*\|_{\sigma_1,0}+\|F_2^*\|_{\sigma_1,0}+\|F_3^*\|_{\sigma_1,0}  +|f_5|)
\end{multline*}
Using this result  and equation \eqref{nisa46} we finally obtain that the solution to system  \eqref{nisa43b} admits the estimate
\begin{multline}\label{nisa51}
  \|\psi_0^*\|_{\sigma_0,0}+ \|\boldsymbol \lambda^*\|_{\sigma_0,0}+\|\boldsymbol \chi^*\|_{\sigma_0,0}\\\leq c(\sigma_1-\sigma_0)^{-4n-6} c(|f_5|+ \|F_1\|_{\sigma_1,0}+\|F_2\|_{\sigma_1,0}+\|F_3\|_{\sigma_1,0} )
\end{multline}
Hence we proof that for all sufficiently small $\varepsilon_0$ and $r_0$ system \eqref{nisa43b}
has an analytic solution satisfying inequality \eqref{nisa51}. Next notice that  the mean value $\overline{\boldsymbol\chi}=f_3$ and the constant $q=f_1$.  Recall that the vector $\delta\boldsymbol\beta$, the mean value $\boldsymbol\lambda^*$, and the constant $p$ are connected with the deviations $\nabla\psi^*$, $\boldsymbol
\lambda^*$ and $\boldsymbol
\chi^*$ by relations \eqref{nisa29i}-\eqref{nisa29k}. It follows from this relations and estimates
\eqref{nisa36}-\eqref{nisa37} that
\begin{multline*}
|\overline{\boldsymbol\chi}|+|\delta\boldsymbol\beta|+|\overline{\boldsymbol\lambda}|+|p|+|q|
\leq c(|f_1|+|f_3|+|f_5|+ \|F_1\|_{\sigma_1,0}+\|F_2\|_{\sigma_1,0}+\|F_3\|_{\sigma_1,0} )\\
+c(\|\psi_0^*\|_{\sigma_0,0}+ \|\boldsymbol \lambda^*\|_{\sigma_0,0}+\|\boldsymbol \chi^*\|_{\sigma_0,0})
 \end{multline*}
Combining this result with \eqref{nisa51}
we obtain
\begin{multline*}
|\overline{\boldsymbol\chi}|+|\delta\boldsymbol\beta|+|\overline{\boldsymbol\lambda}|+|p|+|q|
\\\leq c(\sigma_1-\sigma_0)^{-4n-6}(|f_1|+|f_3|+|f_5|+ \|F_1\|_{\sigma_1,0}+\|F_2\|_{\sigma_1,0}+\|F_3\|_{\sigma_1,0} ).
 \end{multline*}
This result, estimate \eqref{nisa51}, and decomposition \eqref{nisa26} imply  desired estimate
\eqref{nisa25}. This completes the proof of Proposition \ref{nisa24}.

\end{proof}

Now we consider the extended truncated system, which includes the extra equation for the matrix-valued
function $\boldsymbol\Gamma$. This extended system is formulated as follows
\begin{subequations}\label{lita6}
\begin{gather}\label{lita6a}
 \boldsymbol\partial \psi_0+ q=F_1,\\
    \label{lita6b}
\mathbf J\boldsymbol\partial\boldsymbol\lambda +\mathbf
T\boldsymbol\mu
    +p\mathbf e_1 =F_2,\quad \boldsymbol\mu=
    \delta\boldsymbol\beta+\nabla\psi_0\\
\label{lita6c}
 -\boldsymbol\partial \boldsymbol\chi+
 \mathbf S\boldsymbol\mu+\mathbf T^\top \boldsymbol\lambda =F_3,\\
\label{lita6f}
\overline{ \psi_0}=f_1, \quad
\overline{\boldsymbol\chi}=f_3\quad \overline{\lambda_1}\,
=f_5,
\end{gather}

\begin{gather}
\label{lita6d}
\boldsymbol\partial(\mathbf J\boldsymbol\Gamma)
+\boldsymbol\Omega\boldsymbol\Gamma+
(\boldsymbol\Omega\boldsymbol\Gamma)^\top +
\\\nonumber\mathbf U_{ij}\frac{\partial \lambda_i}{\partial\xi_j}+
\mu_i\mathbf E_i+\lambda_i\mathbf K_i+ \delta\mathbf M=F_4,\\
\label{lita6g}
\Gamma_{11}=-\Gamma_{22},\quad
\overline{\Gamma}_{12}=f_4, \quad\delta\mathbf M=\delta M \text{~diag~} (1,0).
\end{gather}
\end{subequations}
The following proposition guarantees the well-posedness of the
truncated problem \eqref{lita6}.

\begin{proposition}\label{lita7} Under the assumptions of Theorem
\ref{nisa210},  there are $\varepsilon_0>0$ and  $r_0$ with
the following properties. For every
$$
(\alpha, k)\in \Sigma_{\varrho}, \quad\|\boldsymbol\varphi-
\boldsymbol\varphi(\alpha)\|_{\sigma, d}\leq r_0, \quad
\boldsymbol\varphi(\alpha)= (0,0,0,\boldsymbol\alpha,1,0,0), \quad
|\varepsilon|\leq \varepsilon_0, \quad ,
$$
$$
0\leq \sigma_0<\sigma_1<\sigma,\quad \sigma_1> 1/4
$$
$$\mathbf f\in \mathbb C^{4},\quad \mathbf F\in \mathcal F_{\sigma_1,0}.
$$
Then problem \eqref{lita6} has a unique solution $$ (\psi_0,
\boldsymbol\lambda, \boldsymbol\chi, \boldsymbol\Gamma,
 \delta\boldsymbol \beta, q,p, \delta\mathbf M)\in \mathcal A_{\sigma_0, 0}
 \times \mathcal A_{\sigma_0,0}^{2}\times \mathcal A_{\sigma,d}^{n-1}\times
 \mathcal A_{\sigma_0,0}^4\times \mathbb C^{n-1}\times \mathbb C^{3}.
$$
This solution admits the estimate
\begin{equation}\label{lita8}
\|(\psi_0, \boldsymbol\lambda, \boldsymbol\chi,
\boldsymbol\Gamma)\|_{\sigma_0,0}+ |(\delta\boldsymbol \beta, \delta
q, \delta p, \delta\mathbf M)|\leq
{c}{(\sigma_1-\sigma_0)^{-8n-12}} (\|\mathbf
F\|_{\sigma_1,0}+|\mathbf f|)
\end{equation}
where  the constant $c$ is independent of  $\varepsilon_0$, $r_0$, and
$\sigma$.
\end{proposition}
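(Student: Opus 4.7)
The key structural observation is that the system \eqref{lita6} is block-triangular: the unknowns $(\psi_0,\boldsymbol\lambda,\boldsymbol\chi,\delta\boldsymbol\beta,q,p)$ are determined by equations \eqref{lita6a}--\eqref{lita6c},\eqref{lita6f} alone, while $(\boldsymbol\Gamma,\delta\mathbf M)$ only appear in \eqref{lita6d},\eqref{lita6g} and enter those equations through terms with constant-coefficient principal part. My plan is to solve the two blocks successively on a pair of nested strips and combine the resulting estimates.

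First, I would observe that the first block is literally the system \eqref{nisa23} treated in Proposition \ref{nisa24} (with the given data $F_1,F_2,F_3,f_1,f_3,f_5$, and the assumption $\sigma_1>1/4$ used to invoke that result). Choosing an intermediate width $\sigma_{1/2}=(\sigma_0+\sigma_1)/2$, Proposition \ref{nisa24} provides the unique analytic solution $(\psi_0,\boldsymbol\lambda,\boldsymbol\chi,\delta\boldsymbol\beta,q,p)$ on $\Re_{\sigma_{1/2}\varrho}$ together with the estimate
\begin{equation*}
\|\psi_0\|_{\sigma_{1/2},0}+\|\boldsymbol\lambda\|_{\sigma_{1/2},0}+\|\boldsymbol\chi\|_{\sigma_{1/2},0}+|\delta\boldsymbol\beta|+|q|+|p|\le c(\sigma_1-\sigma_0)^{-6n-8}\bigl(\|\mathbf F\|_{\sigma_1,0}+|\mathbf f|\bigr).
\end{equation*}
Setting $\boldsymbol\mu=\delta\boldsymbol\beta+\nabla\psi_0$, a one-step Cauchy estimate on the annulus between strips of width $\sigma_{1/2}$ and $\sigma_{3/4}=(\sigma_{1/2}+\sigma_1)/2$ gives
$\|\nabla\boldsymbol\lambda\|_{\sigma_{3/4},0}+\|\boldsymbol\mu\|_{\sigma_{3/4},0}\le c(\sigma_1-\sigma_0)^{-6n-9}(\|\mathbf F\|_{\sigma_1,0}+|\mathbf f|)$, losing only one additional power of $(\sigma_1-\sigma_0)^{-1}$.

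Second, I would move $\boldsymbol\lambda,\boldsymbol\mu$ to the right-hand side of \eqref{lita6d} and define
\begin{equation*}
\mathbf G \;=\; F_4-\mathbf U_{ij}\tfrac{\partial\lambda_i}{\partial\xi_j}-\mu_i\mathbf E_i-\lambda_i\mathbf K_i,
\end{equation*}
which by the coefficient bounds \eqref{nisa7x} and the previous paragraph satisfies $\|\mathbf G\|_{\sigma_{3/4},0}\le c(\sigma_1-\sigma_0)^{-6n-9}(\|\mathbf F\|_{\sigma_1,0}+|\mathbf f|)$. The remaining problem \eqref{lita6d},\eqref{lita6g} is then exactly the constant-coefficient matrix equation treated in Lemma \ref{nisa15}, with right-hand side $(\mathbf G, f_4)$ and unknowns $(\boldsymbol\Gamma,\delta\mathbf M)$. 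Invoking that lemma on the strips $\sigma_0<\sigma_{3/4}$ yields a unique analytic solution satisfying
\begin{equation*}
\|\boldsymbol\Gamma\|_{\sigma_0,0}+|\delta\mathbf M|\le c(\sigma_1-\sigma_0)^{-3n-2}\bigl(\|\mathbf G\|_{\sigma_{3/4},0}+|f_4|\bigr),
\end{equation*}
and combining with the bound on $\mathbf G$ gives a composite exponent comfortably dominated by $(\sigma_1-\sigma_0)^{-8n-12}$ for the full tuple $(\psi_0,\boldsymbol\lambda,\boldsymbol\chi,\boldsymbol\Gamma,\delta\boldsymbol\beta,q,p,\delta\mathbf M)$.

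Uniqueness is inherited block by block: if $(\psi_0,\dots,\delta\mathbf M)$ solves \eqref{lita6} with zero data, the first block forces $\psi_0=\boldsymbol\lambda=\boldsymbol\chi=0$ and $\delta\boldsymbol\beta=q=p=0$ by the uniqueness in Proposition \ref{nisa24}, after which Lemma \ref{nisa15} applied with $\mathbf G=0,f_4=0$ gives $\boldsymbol\Gamma=0,\delta\mathbf M=0$. The main technical point, and the only nontrivial obstacle, is the loss of analyticity width used to pass from $\boldsymbol\lambda\in\mathcal A_{\sigma_{1/2},0}$ to $\nabla\boldsymbol\lambda\in\mathcal A_{\sigma_{3/4},0}$ in the right-hand side of the $\boldsymbol\Gamma$-equation; this is what forces the three-level splitting $\sigma_0<\sigma_{3/4}<\sigma_{1/2}<\sigma_1$ and absorbs the Cauchy factor into the final $(\sigma_1-\sigma_0)^{-8n-12}$ bound. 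All other steps are direct applications of the constant-coefficient solvability lemmas established earlier.
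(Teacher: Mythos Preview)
Your approach is essentially the same as the paper's: exploit the block-triangular structure, solve the $(\psi_0,\boldsymbol\lambda,\boldsymbol\chi,\delta\boldsymbol\beta,q,p)$ block via Proposition~\ref{nisa24}, shift the $\boldsymbol\lambda,\boldsymbol\mu$-dependent terms to the right of the $\boldsymbol\Gamma$-equation, and then apply Lemma~\ref{nisa15}. The paper does exactly this, using intermediate strips at $\tfrac12\sigma_0+\tfrac12\sigma_1$ and $\tfrac23\sigma_0+\tfrac13\sigma_1$.

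There is, however, a notational slip that as written makes a step impossible. You define $\sigma_{3/4}=(\sigma_{1/2}+\sigma_1)/2$, which satisfies $\sigma_{3/4}>\sigma_{1/2}$, and then claim a Cauchy estimate passes from $\boldsymbol\lambda\in\mathcal A_{\sigma_{1/2},0}$ to $\nabla\boldsymbol\lambda\in\mathcal A_{\sigma_{3/4},0}$. Cauchy estimates only yield derivative bounds on a \emph{smaller} strip, so this is backwards. Your own closing sentence shows you intend the ordering $\sigma_0<\sigma_{3/4}<\sigma_{1/2}<\sigma_1$; the definition should therefore be $\sigma_{3/4}=(\sigma_0+\sigma_{1/2})/2$ (or any value strictly between $\sigma_0$ and $\sigma_{1/2}$). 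With that correction the argument goes through and matches the paper's proof.
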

\begin{proof}
We begin with the observation that equations \eqref{lita6a}-\eqref{lita6f}
are independent of $\Gamma$. Moreover, system \eqref{lita6a}-\eqref{lita6f}
coincides with system \eqref{nisa23a}-\eqref{nisa23f}.
Applying Proposition \ref{nisa24} we conclude that this system has a unique analytic solution.
This solution admits the estimate
\begin{multline}\label{nisa55}
\|\psi_0\|_{\sigma_0,0}+\|\boldsymbol\lambda\|_{\sigma_0,0}  +
 \|\boldsymbol\chi\|_{\sigma_0,0}+| p|+| q|\\+
 |\delta\boldsymbol\beta|\leq c(\sigma_1-\sigma_0)^{-6n-8}
(\|\mathbf F\|_{\sigma_1,0}+|\mathbf f|).
\end{multline}
Hence it suffices to analyze equation \eqref{lita6d}.
Rewrite it in the form
\begin{equation}\label{nisa56}
\boldsymbol\partial(\mathbf J\boldsymbol\Gamma)
+\boldsymbol\Omega\boldsymbol\Gamma+
(\boldsymbol\Omega\boldsymbol\Gamma)^\top +
\delta\mathbf M= F_4-\mathbf U_{ij}\frac{\partial \lambda_i}{\partial\xi_j}-
\mu_i\mathbf E_i-\lambda_i\mathbf K_i
\end{equation}
Applying inequality \eqref{nisa55} we obtain
\begin{gather}\nonumber
    \|\mathbf U_{ij}\frac{\partial \lambda_i}{\partial\xi_j}-
\mu_i\mathbf E_i-\lambda_i\mathbf K_i\|_{2/3\sigma_0+1/3\sigma_1,0}\leq
c(\sigma_1-\sigma_0)^{-1}(\|\boldsymbol\lambda\|_{1/2\sigma_0+1/2\sigma_1, 0}\\\label{nisa57}+
c\|\psi_0\|_{1/2\sigma_0+1/2\sigma_1, 0} +|\delta\boldsymbol\beta|)\leq
c(\sigma_1-\sigma_0)^{-6n-9}
(\|\mathbf F\|_{\sigma_1,0}+|\mathbf f|).
\end{gather}
Applying Lemma \ref{nisa15} we conclude that equation \eqref{nisa56} has a unique analytic  solution
which satisfies condition \eqref{lita6g} and admits the estimate
\begin{gather*}
\|\boldsymbol\Gamma\|_{\sigma_0,0}+ | \delta\mathbf M|\leq
{c}{(\sigma_1-\sigma_0)^{-2n-3}}\,\, (\,\|\mathbf
F_4-\mathbf U_{ij}\frac{\partial \lambda_i}{\partial\xi_j}-
\mu_i\mathbf E_i-\lambda_i\mathbf K_i\|_{2/3\sigma_0+1/3\sigma_1,0}+|f_4|\,\big)
\end{gather*}
It follows  from this and \eqref{nisa57} that
\begin{gather*}
\|\boldsymbol\Gamma\|_{\sigma_0,0}+ | \delta\mathbf M|\leq
{c}{(\sigma_1-\sigma_0)^{-8n-12}} (\|\mathbf F\|_{\sigma_1,0}+|\mathbf f|).
\end{gather*}
Combining this result with \eqref{nisa55} we obtain \eqref{lita8} and the proposition follows.

\end{proof}

\paragraph{Proof of Theorem \ref{nisa210}} We are now in a position to complete the proof of Theorem
\ref{nisa210}. The proof is based on the Proposition \ref{lita7} and the contraction mapping principle. By technical reasons it is convenient to introduce the following denotations.
Introduce the vectors
$$
 \mathfrak w=(\psi_0,
\boldsymbol\lambda, \boldsymbol\chi, \boldsymbol\Gamma,
 \delta\boldsymbol \beta, q,p, \delta M), \quad
\mathfrak f=(F_1, F_2, F_3, F_4, f_1, f_3, f_4, f_5).
$$
Denote by $\mathcal X_\sigma$ the closed subspace of the Banach space
$$\mathcal A_{\sigma_0, 0}
 \times \mathcal A_{\sigma_0,0}^{2}\times \mathcal A_{\sigma,d}^{n-1}\times
 \mathcal A_{\sigma_0,0}^4\times \mathbb C^{n-1}\times \mathbb C^{3},$$
 which consists of all vector functions $\mathfrak w$ such that $\text{~tr~}\boldsymbol\Gamma=0$.
Denote by $\mathcal Y_\sigma$ the closed subspace of the Banach space
$$\mathcal A_{\sigma_0, 0}
 \times \mathcal A_{\sigma_0,0}^{2}\times \mathcal A_{\sigma,d}^{n-1}\times
 \mathcal A_{\sigma_0,0}^4\times \mathbb C\times \mathbb C^{n-1} \times\mathbb C^{n-1}\times \mathbb C$$
which consists of all vectors $\mathfrak f$ such that the matrix $F_4$ is symmetric.
Rewrite main system \eqref{nisa4} in the form.
\begin{subequations}\label{nisa58}
\begin{gather}\label{nisa58a}
 \boldsymbol\partial \psi_0+ q=F_1+\Xi_{3,1}(p,\delta M),\\
    \label{nias58b}
\mathbf J\boldsymbol\partial\boldsymbol\lambda +\boldsymbol\Omega \boldsymbol\lambda+\mathbf
T\boldsymbol\mu +p\mathbf e_1=
F_2+\Xi_{3,2}(p,\delta M),\quad
    \boldsymbol\mu =\nabla \psi_0+\delta\boldsymbol\beta\\
\label{58c}
 -\boldsymbol\partial \boldsymbol\chi+\mathbf S\boldsymbol\mu+
 \mathbf T^\top \boldsymbol\lambda =F_3,
\\\label{nisa58d}
\boldsymbol\partial(\mathbf J\boldsymbol\Gamma)
+\boldsymbol\Omega\boldsymbol\Gamma+
(\boldsymbol\Omega\boldsymbol\Gamma)^\top +\mathbf U_{ij}
\frac{\partial \lambda_i}{\partial\xi_j}+ \mu_i\mathbf
E_i+\lambda_i\mathbf K_i+\delta\mathbf M =  \\\nonumber F_4+\Xi_{3,4}(\delta M),\quad \Gamma_{11}=-\Gamma_{22},
\end{gather}
\begin{gather}
\label{nisa58e}
 \overline{\psi_0}=f_1+\Xi_{3,5}(\boldsymbol\beta, \boldsymbol\lambda)
\\
\label{nisa58f} \overline{\boldsymbol\chi}
=f_3+\Xi_{3,6}(\boldsymbol\chi),
\\\label{nisa58g}
\overline{\Gamma_{12}}=f_4+\Xi_{3,7}(\boldsymbol\Gamma)
\\\label{nisa58h}
 \overline{\lambda_1}=f_5+\Xi_{3,8}(\boldsymbol\lambda, \boldsymbol\chi),
\end{gather}
\end{subequations}
where
\begin{gather}\nonumber
\Xi_{3,1}(p,\delta M)=-  p ( w_1-\alpha)-
    \frac{1}{2} \delta M (w_1-\alpha)^2,\\
    \nonumber
\Xi_{3,2}(p,\delta M)=-p(\mathbf W^\top-\mathbf I)\mathbf e_1-
    \delta M(w_1-\alpha)\mathbf W^\top\mathbf e_1,
    \\
\nonumber
\Xi_{3,3}=0, \quad \Xi_{3,4}(\delta M)=\delta\mathbf M-\mathbf W^\top \delta\mathbf  M\mathbf W,
\\ \nonumber
\Xi_{3,5}(\boldsymbol\beta, \boldsymbol\lambda)=\boldsymbol\delta\beta\cdot \overline{\mathbf u}
-\overline{w_2\mathbf W\boldsymbol\lambda\cdot \mathbf e_1}
\\\nonumber
\Xi_{3,6}(\boldsymbol\chi)=\overline{(\mathbf I-\mathbf V^{-\top})\boldsymbol\chi},
\quad
\Xi_{3,7}(\boldsymbol\Gamma)=\overline{\Big((\mathbf I-\mathbf W)\boldsymbol\Gamma\Big)_{12}}
\\\nonumber
\Xi_{3,8}(\boldsymbol\lambda, \boldsymbol\chi)=
\overline{(\mathbf I-\mathbf W)\boldsymbol\lambda\cdot \mathbf e_1}-\overline{\boldsymbol\chi\cdot
\nabla w_1}.
\end{gather}
It is necessary to prove that, under the assumptions of Theorem \ref{nisa210}, the system of equations
\eqref{nisa58} for  all $\mathfrak f\in \mathcal Y_{\sigma_1}$ has a unique solution $\mathfrak w\in \mathcal X_{\sigma_0}$ satisfying the inequality
\begin{equation}\label{nisa59}
    \|\mathfrak w\|_{\mathcal X_{\sigma_0}}\leq c(\sigma_1-\sigma_0)^{-8n-12}\,\,\|\mathfrak f\|_{\mathcal Y_{\sigma_1}}
\end{equation}
Recall that
\begin{equation}\label{nisa60x}
    0\leq \sigma_0<\sigma_1\leq \sigma, \quad 1/4\leq \sigma_1.
\end{equation}
The proof is based on the following auxiliary lemma.
\begin{lemma}\label{nisa60} Under the assumptions of Theorem \ref{nisa210} the operator
$\Xi_3=\big(\,\Xi_{3, i}\,\big)_{1\leq i\leq 8} $ admits  the estimate
\begin{equation}\label{nisa61}
    \|\Xi_3(\mathfrak w)\|_{\mathcal Y_{\sigma}}\leq cr_0 \|\mathfrak w\|_{\mathcal X_0}.
\end{equation}
\end{lemma}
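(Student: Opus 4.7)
The plan is to estimate each of the eight components $\Xi_{3,i}$ separately; every one of them has the structure (small coefficient) $\times$ (component of $\mathfrak w$), so the bound follows from the Banach-algebra property of $\mathcal A_{\sigma,0}$ combined with the smallness supplied by the hypothesis \eqref{nisa6xx}.

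First I would collect the coefficient bounds. Hypothesis \eqref{nisa6xx} gives directly
\begin{equation*}
\|w_1-\alpha\|_{\sigma,0}\,+\,\|\mathbf W-\mathbf I\|_{\sigma,0}\,+\,\|\mathbf u'\|_{\sigma,0}\,\leq\, c r_0.
\end{equation*}
The relation $\mathbf V=(\mathbf I+\mathbf u')^{-\top}$ yields $\mathbf V^{-\top}=\mathbf I+\mathbf u'$, so $\|\mathbf I-\mathbf V^{-\top}\|_{\sigma,0}=\|\mathbf u'\|_{\sigma,0}\leq cr_0$. The identity $W_{22}=W_{11}^{-1}(1+W_{12}W_{21})$ together with $\|\mathbf W-\mathbf I\|_{\sigma,0}\leq cr_0$ gives $\|w_2\|_{\sigma,0}\leq cr_0$. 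The orthogonality condition built into $\mathcal E_{\sigma,d}$ forces $\overline{\mathbf u}=0$. Finally, since $w_1-\alpha$ is analytic on $\Re_\sigma$, Cauchy's estimate yields $\sup_{\mathbb T^{n-1}}|\nabla w_1|\leq c\sigma^{-1}\|w_1-\alpha\|_{\sigma,0}\leq c_\sigma r_0$; the $\sigma$-dependence is harmless because $\sigma$ is fixed throughout Theorem \ref{nisa210}.

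Next I would handle the three function-valued components. Using the Banach-algebra property,
\begin{equation*}
\|\Xi_{3,1}\|_{\sigma,0}\leq |p|\|w_1-\alpha\|_{\sigma,0}+\tfrac12|\delta M|\|w_1-\alpha\|_{\sigma,0}^{2}\leq cr_0(|p|+|\delta M|),
\end{equation*}
and similarly $\|\Xi_{3,2}\|_{\sigma,0}\leq cr_0(|p|+|\delta M|)$. For $\Xi_{3,4}$ the algebraic identity
\begin{equation*}
\delta\mathbf M-\mathbf W^\top\delta\mathbf M\mathbf W=(\mathbf I-\mathbf W^\top)\delta\mathbf M\mathbf W+\delta\mathbf M(\mathbf I-\mathbf W)
\end{equation*}
exposes the small factor and gives $\|\Xi_{3,4}\|_{\sigma,0}\leq cr_0|\delta M|$. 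The four scalar mean-value components $\Xi_{3,5},\dots,\Xi_{3,8}$ are estimated via $|\overline{fg}|\leq\|f\|_{0,0}\|g\|_{0,0}$: each integrand is a product of an $O(r_0)$ coefficient ($\overline{\mathbf u}=0$, $w_2$, $\mathbf I-\mathbf V^{-\top}$, $\mathbf I-\mathbf W$, or $\nabla w_1$) with a component of $\mathfrak w$. In particular, $|\Xi_{3,8}|\leq\|\mathbf I-\mathbf W\|_{0,0}\|\boldsymbol\lambda\|_{0,0}+\sup|\nabla w_1|\,\|\boldsymbol\chi\|_{0,0}\leq cr_0(\|\boldsymbol\lambda\|_{0,0}+\|\boldsymbol\chi\|_{0,0})$.

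Summing the eight individual bounds yields $\|\Xi_3(\mathfrak w)\|_{\mathcal Y_\sigma}\leq cr_0\|\mathfrak w\|_{\mathcal X_0}$. This is essentially a bookkeeping computation and no step constitutes a substantive obstacle; the only mildly delicate point is the appearance of the derivative $\nabla w_1$ in $\Xi_{3,8}$, which forces one application of Cauchy's estimate and thus a constant depending on the (fixed) parameter $\sigma$.
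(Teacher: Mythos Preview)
Your argument is correct and follows the same component-by-component strategy as the paper: each $\Xi_{3,i}$ factors as a small coefficient times a component of $\mathfrak w$, and one collects the bounds. One slip to fix: the inequality $\|w_2\|_{\sigma,0}\leq cr_0$ has nothing to do with the determinant identity for $W_{22}$---$w_2$ is the second component of the vector $\mathbf w$, not an entry of the matrix $\mathbf W$; the bound comes instead from the full hypothesis $\|\boldsymbol\varphi-\boldsymbol\varphi_0(\alpha)\|_{\sigma,d}\leq r_0$ underlying Theorem~\ref{nisa210} (the paper invokes exactly this in \eqref{nisa63}). A minor technical difference worth noting: for the scalar mean-value components the paper uses the $L^2$ Cauchy--Schwarz inequality together with the embedding $\mathcal A_{\sigma,0}\hookrightarrow H_1$ to handle $|\nabla w_1|_0$, rather than your sup-norm bound plus Cauchy's estimate; both devices work and both carry an implicit $\sigma$-dependence in the constant.
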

\begin{proof}
Obviously we have
\begin{multline*}
    \|\Xi_{3, 1}(p, \delta M)\|_{\mathcal Y_\sigma}+ \|\Xi_{3, 2}(p, \delta M)\|_{\mathcal Y_\sigma} +\|\Xi_{3, 4}(\delta M)\|_{\mathcal Y_\sigma}\leq\\
    c(|p|+|\delta M|)(\|w_1-\alpha\|_{\sigma,0}+\|\mathbf W-\mathbf I\|_{\sigma,0})
\end{multline*}
It follows from the estimate
$$
(\alpha, k)\in \Sigma_{\varrho}, \quad\|\boldsymbol\varphi-
\boldsymbol\varphi(\alpha)\|_{\sigma, d}\leq r, \quad
\boldsymbol\varphi(\alpha)= (0,0,0,\boldsymbol\alpha,1,0,0),
$$
in condition of Theorem \ref{nisa210} and the relations $\boldsymbol\varphi=
(\boldsymbol\beta, \varphi_0, \mathbf u, \mathbf w, W_{11}, W_{12}, W_{21})$,
 $\text{~det~}\mathbf W=1$ that
 \begin{equation}\label{nisa63}
\|\mathbf u\|_{\sigma,0}+\|w_2\|_{\sigma,0}+ \|w_1-\alpha\|_{\sigma,0}+\|\mathbf W-\mathbf I\|_{\sigma,0}\leq c r\leq cr_0.
 \end{equation}
 Thus we get
 \begin{equation}\label{nisa64}
    \|\Xi_{3, 1}(p, \delta M)\|_{\mathcal Y_\sigma}+ \|\Xi_{3, 2}(p, \delta M)\|_{\mathcal Y_\sigma} +
    \|\Xi_{3, 4}(\delta M)\|_{\mathcal Y_\sigma}\leq  cr_0(|p|+|\delta M|)
 \end{equation}
 Next, the Cauchy inequality  implies
 \begin{multline*}
    |\Xi_{3, 5}|+ |\Xi_{3, 6}| +|\Xi_{3, 7}| +
    |\Xi_{3, 8}| \leq
    c(|\delta\boldsymbol\beta|+|\boldsymbol \lambda|_0+|\boldsymbol \chi|_0+|\boldsymbol \Gamma|_0
    )\times\\(|\mathbf u|_{0}+|\mathbf W-\mathbf I|_0 +|w_2\mathbf W|_0+|\nabla w_1|_{0}+
    |\mathbf I -\mathbf V^{-1}|_0)
\end{multline*}
Recall that $I-\mathbf V^{-\top}\equiv -{\mathbf u'}$ and the embedding
$$
\mathcal A_{\sigma, 0}\hookrightarrow H_1\hookrightarrow H_0, \quad \mathcal A_{0,0}\hookrightarrow
H_0
$$
is bounded. From this and \eqref{nisa63} we obtain
\begin{multline*}
  |\mathbf u'|_{0}+|\mathbf W-\mathbf I|_0 +|w_2\mathbf W|_0+|\nabla w_1|_{0}+
    |\mathbf I -\mathbf V^{-\top}|_0\\\leq
  |\mathbf u|_{1}+|\mathbf W-\mathbf I|_0 +|w_2\mathbf W|_0+| w_1^*|_{1}
 \\ \leq  \|\mathbf u\|_{\sigma,0}+\|\mathbf W-\mathbf I\|_{\sigma,0} +\|w_2\mathbf W\|_{\sigma,0}
  +\|w_1^* \|_{\sigma,0}\leq c r_0
\end{multline*}
and
\begin{equation*}
|\boldsymbol \lambda|_0+|\boldsymbol \chi|_0+|\boldsymbol \Gamma|_0
\leq c(\|\boldsymbol \lambda\|_{0,0}+\|\boldsymbol \chi\|_{0,0}+\|\boldsymbol \Gamma\|_{0,0}).
\end{equation*}
Combining the obtained results we arrive at the estimate
\begin{equation}\label{nisa65}
    |\Xi_{3, 5}|+ |\Xi_{3, 6}| +|\Xi_{3, 7}| +
    |\Xi_{3, 8}| \leq cr_0(|\boldsymbol\beta|+\|\boldsymbol \lambda\|_{0,0}+
    \|\boldsymbol \chi\|_{0,0}+\|\boldsymbol \Gamma\|_{0,0}).
\end{equation}
It remains to note that the desired estimate \eqref{nisa61} obviously follows from
\eqref{nisa64} and \eqref{nisa65}.
\end{proof}
Let us turn to the proof of the Theorem \ref{nisa210}. Denote by $\Xi_4$
the linear operator which assigns to every $\mathfrak f\in Y_{\sigma_1}$ the solution
$\mathfrak w$ of problem \eqref{lita6}. It follows from the Proposition \eqref{lita7}
that for small $\varepsilon_0$ and $r_0$, the operator $\Xi_4: \mathcal Y_{\sigma_1}
\to \mathcal X_{\sigma_0}$ is bounded and
\begin{equation}\label{nisa66}
    \|\Xi_4 \mathfrak f\|_{\sigma_0}\leq c (\sigma_1-\sigma_0)^{-6n-8}
    \|\mathfrak f\|_{\mathcal Y_{\sigma_1}}.
\end{equation}
In particular, we have
\begin{equation}\label{nisa67}
    \|\Xi_4 \mathfrak f\|_{\sigma_0}\leq c \|\mathfrak f\|_{\mathcal Y_{\sigma_1}}.
\end{equation}
Now we can rewrite system \eqref{nisa58} in the form of the operator equation
\begin{equation}\label{nisa68}
    \mathfrak w\,=\,\Xi_4\,( \,\Xi_3\, \mathfrak w+ \mathfrak f\,).
\end{equation}
By virtue of \eqref{nisa61} and \eqref{nisa67} the operator $\Xi_4\Xi_3: \mathcal X_0\to \mathcal X_0$
is bounded and its norm does not exceed $cr_0$. Choosing $r_0$ sufficiently small
and applying the contraction mapping principle we conclude that operator equation \eqref{nisa68}
has a unique solution in the space $\mathcal X_0$.  In view of \eqref{nisa67},
this solution admits the estimate
\begin{equation*}
    \|\mathfrak w\|_{\mathcal X_0}\,\leq \, c\, \|\mathfrak f\|_{Y_{\sigma_1}}.
\end{equation*}
Let us prove that this solution is analytic.
Notice that estimate  \eqref{nisa61} implies
\begin{equation*}
 \|\Xi_3\mathfrak w\|_{\mathcal Y_{\sigma_1}}\leq c
     \|\mathfrak w\|_{\mathcal X_0}\leq c \|\mathfrak f\|_{Y_{\sigma_1}}
\end{equation*}
Hence
$$
\|\Xi_3\mathfrak w+\mathfrak f\|_{\mathcal Y_{\sigma_1}}\leq c
     \|\mathfrak w\|_{\mathcal X_0}+\|\mathfrak f\|_{\mathcal Y_{\sigma_1}}
     \leq c \|\mathfrak f\|_{Y_{\sigma_1}}.
$$
Combining this result with \eqref{nisa66} we obtain
desired estimate \eqref{nisa59}. This completes the proof of Theorem \ref{nisa210}.

\subsection{Proof of Theorem \ref{figa5}}\label{proothfiga5}
We split the proof into two parts. First we prove the existence and uniqueness of solutions to problem
\eqref{figa4},
\begin{subequations}\label{krit1}
\begin{gather}\label{krit1a}\boldsymbol\partial\, \boldsymbol\mu\,=\,
-p\nabla w_1 -  \nabla g_1,\\
\label{krit1b}
\mathbf J\boldsymbol\partial\, \boldsymbol\lambda\,+\, \boldsymbol\Omega\, \boldsymbol\lambda\,+\,
\mathbf T\, \boldsymbol\mu+p\mathbf W^\top \mathbf e_1=\mathbf g_2\\
\label{krit1c}
-\boldsymbol\partial\, \boldsymbol\lambda\,+\,\mathbf S\boldsymbol\mu\, +\, \mathbf T^\top
\boldsymbol\lambda=0
\end{gather}
\begin{gather}\label{krit1d}
\overline{\{W\boldsymbol\lambda\}\cdot \mathbf e_1}\,\,+\,\,\overline{\{\boldsymbol\chi\cdot \nabla w_1\}}
=\gamma, \\\label{krit1e}
\overline{\{\mathbf V^{-\top}\boldsymbol\chi\}}\,=\, 0.
\end{gather}
\end{subequations}
in the space of analytic function and establish estimate \eqref{figa6}. Next, we estimate the obtained
solution in the Sobolev space $H_s$. We begin with the observation that problem \eqref{krit1}
is the very particular case of general problem \eqref{nisa4}. Notice  that, in view of conditions
\eqref{kira5extra} and \eqref{kira16extra} of Theorem \ref{figa5},
the matrices $\mathbf W$, $\mathbf V$, $\mathbf S$, $\mathbf T$ and the function $w_1$
 meet all requirements of Theorem \ref{nisa210} with $\sigma$ replaced by $\sigma/2$ and $r_0=c\varepsilon_0$.
Next set

\begin{equation}\label{krit3}
\mathbf U_{ij}=0, \quad
\mathbf E_i=0, \quad \mathbf K_i=0,
\end{equation}
$$
\sigma_0=\sigma/4, \quad \sigma_1=\sigma/2\geq 1/4
$$
Obviously these quantities  satisfy all conditions of
Theorem \ref{nisa210}. Now
introduce the temporary notation
\begin{equation}\label{krit5}\begin{split}
    F_1=-g_1^*, \quad F_2=\mathbf g_2, ,\quad  F3=0, \quad F_4=0,\\
\quad f_1=0, \quad f_3=0, \quad f_4=0,\quad  f_5=\gamma,
\end{split}\end{equation}
Since $r_0\leq c\varepsilon_0$, it follows from Theorem \ref{nisa210}
that for a suitable choice of $\varepsilon_0$ and for  all $|\varepsilon|\leq \varepsilon_0$,
the equations \eqref{nisa4} with the righthand sides \eqref{krit5} have a unique analytic solution. This solution
satisfies inequalities \eqref{lita213}. In particular, we have
\begin{equation}\label{krit4}
|(\psi_0, \boldsymbol\lambda, \boldsymbol\chi)\|_{3/8\sigma,0}+
 |(\delta\boldsymbol \beta,  p)| \leq
{c}(
\| g_1^*\|_{\sigma/2,0}   + \|\mathbf g_2^*\|_{\sigma/2,0} +|\gamma|).
\end{equation}
By virtue of condition \eqref{krit3} and the equalities $f_4=0$, $F_4=0$, the matrices $\boldsymbol\Gamma$
and $\delta \mathbf M$
satisfy the homogeneous equation
$$
\boldsymbol\partial(\mathbf J\boldsymbol\Gamma)
+\boldsymbol\Omega\boldsymbol\Gamma+
(\boldsymbol\Omega\boldsymbol\Gamma)^\top+\delta\mathbf M = \delta\mathbf M-
\mathbf W^\top \delta\mathbf  M\mathbf W,
$$
$$
\overline{\Gamma_{12}}=\overline{\Big((\mathbf I-\mathbf W)\boldsymbol\Gamma\Big)_{12}}
$$
Applying  Lemma \ref{lita21lemma} and the contraction mapping principle we conclude that
$\boldsymbol\Gamma=\delta\mathbf M=0$. Since $\overline{F}_1=0$, we also have $q=0$.
Hence the functions $\psi_0$, $\boldsymbol \lambda$ and $\boldsymbol \chi$ satisfy the equations
\begin{subequations}\label{krit6}
\begin{gather}\label{krit6a}\boldsymbol\partial\, \psi_0\,=\,
-p w_1^* -   g_1^*,\\
\label{krit6b}
\mathbf J\boldsymbol\partial\, \boldsymbol\lambda\,+\, \boldsymbol\Omega\, \boldsymbol\lambda\,+\,
\mathbf T\, \boldsymbol\mu+p\mathbf W^\top \mathbf e_1=\mathbf g_2\\\nonumber
\boldsymbol\mu=\delta\boldsymbol\beta+\nabla\psi_0,
\\
\label{krit6c}
-\boldsymbol\partial\, \boldsymbol\lambda\,+\,\mathbf S\boldsymbol\mu\, +\, \mathbf T^\top
\boldsymbol\lambda=0
\end{gather}
\begin{gather}\label{krit6d}
\overline{\{W\boldsymbol\lambda\}\cdot \mathbf e_1}\,\,+\,\,\overline{\{\boldsymbol\chi\cdot \nabla w_1\}}
=\gamma, \\\label{krit6e}
\overline{\{\mathbf V^{-\top}\boldsymbol\chi\}}\,=\, 0.
\end{gather}
\end{subequations}
It follows that $\boldsymbol\mu, \boldsymbol\lambda$, $\boldsymbol\chi$ and $p$ satisfy equations
\eqref{krit1}. Inequalities \eqref{krit4} and  the obvious inequality
$$
\|\boldsymbol\mu\|_{\sigma/4,0}\leq c(|\boldsymbol\beta|+\|\psi_0\|_{3/8\sigma,0}
$$
implies the estimate
\begin{equation}\label{krit7}
    |(\boldsymbol\mu, \boldsymbol\lambda, \boldsymbol\chi)\|_{\sigma/4,0}+
 | p| \leq
{c}(
\| g_1^*\|_{\sigma/2,0}   + \|\mathbf g_2^*\|_{\sigma/2,0} +|\gamma|).
\end{equation}
which  yields \eqref{figa6}. It remains to estimate the solution to problem
\eqref{krit1} in the Sobolev spaces. Now we change the denotations and set
\begin{equation}\label{krit8}\begin{split}
    F_1=-pw_1^*-g_1*, \quad F_2=p(\mathbf I-\mathbf W^\top) \mathbf e_1+\boldsymbol g_2, \quad F_3=0,\\
    f_1=0, \quad f_3=\overline{(\mathbf I-\mathbf V^{-\top}) \boldsymbol \chi}\quad
    f_5=\overline{(\mathbf I-\mathbf W)\boldsymbol\lambda\mathbf e_1}-
    \overline{\boldsymbol\chi\cdot \nabla w_1}
\end{split}\end{equation}
Now  we estimate
these quantities in the Sobolev space  $H_s$.
Obviously we have
\begin{equation}\label{krit9}\begin{split}
|F_1|_{s}\leq |p| |w_1^*|_{s}+|g_1^*|_{s}\leq c \varepsilon_0 |p|+|g_1^*|_{s},\\
|F_2|_{s}\leq |p| |\mathbf I-\mathbf W|_{s}+|\mathbf g_2|_{s}\leq c \varepsilon_0 |p|+|\mathbf g_2|_{s}.
\end{split}\end{equation}
We also have
\begin{equation}\label{krit9overline}
    |\overline{F}_2|\leq c \varepsilon_0|p|+|\overline{\mathbf g_2}|
\end{equation}
The Cauchy inequality implies
\begin{equation*}\begin{split}
|f_3|\leq |\mathbf I-\mathbf V^{-\top}|_{-s}\, |\boldsymbol\chi|_{s}\leq \|\mathbf I-\mathbf V^{-\top}\|_{\sigma/2,0}
\, |\boldsymbol\chi|_{s}\leq c\varepsilon_0|\boldsymbol\chi|_{s}.
\end{split}\end{equation*}
\begin{equation*}\begin{split}
|f_5|\leq |\mathbf I-\mathbf W|_{-s}\, |\boldsymbol\lambda|_{s}+|\nabla w_1|_{-s}|\boldsymbol\chi|_{s}\leq\\
c\|\mathbf I-\mathbf W\|_{\sigma/2,0} |\boldsymbol\lambda|_{s}+\|w_1^*\|_{\sigma/2,0}
|\boldsymbol\chi|_{s}\leq c\varepsilon_0(|\boldsymbol\lambda|_{s}+|\boldsymbol\chi|_{s})
\end{split}\end{equation*}
Combining the obtained results we arrive at the inequality
\begin{equation}\label{krit10}\begin{split}
|f_3|+|f_5|\leq c\varepsilon_0(|\overline{\boldsymbol\lambda}|+|\overline{\boldsymbol\chi}|)
+c\varepsilon_0(|\boldsymbol\lambda^*|_{s}+|\boldsymbol\chi^*|_{s})
\end{split}\end{equation}
With this notation equations \eqref{krit6} can be written in the form
\begin{subequations}\label{krit11}
\begin{gather}\label{krit11a}
 \boldsymbol\partial \psi_0+ q=F_1,\\
    \label{krit11b}
\mathbf J\boldsymbol\partial\boldsymbol\lambda +\boldsymbol\Omega \boldsymbol\lambda+\mathbf
T\boldsymbol\mu
    + p \,\,\mathbf e_1=F_2,\quad \boldsymbol\mu=\delta\boldsymbol\beta+\nabla\psi_0\\
\label{krit11c}
 -\boldsymbol\partial \boldsymbol\chi+
 \mathbf S\boldsymbol\mu+\mathbf T^\top \boldsymbol\lambda =F_3,
\end{gather}
\begin{gather}\label{krit11f}
\overline{ \psi_0}=f_1, \quad \overline{\boldsymbol\chi}=f_3, \quad \overline{\lambda_1}=f_5.
\end{gather}
\end{subequations}
Hence the  these equations coincide with equations \eqref{nisa23} at least formally. It follows from Lemma
\ref{lita15lemma} that a solution to system \eqref{krit1} admits the representation \eqref{nisa29um}.
recalling identities  $\boldsymbol\beta=\overline{\boldsymbol\mu}$ and
$\nabla \psi_0=\boldsymbol\mu^*$ we can rewrite this representation in the form
\begin{subequations} \label{krit29um}
\begin{gather}\label{krit29i}
\overline{\boldsymbol\mu}=\mathcal M(\boldsymbol\mu^*,\boldsymbol\lambda^*)+\mathfrak F_\beta,\\
\label{krit29j}
\overline{\boldsymbol\lambda}=\mathcal L(\boldsymbol\mu^*,\boldsymbol\lambda^*)+\mathfrak F_\lambda,\\
\label{krit29k}
p=\mathcal P(\boldsymbol\mu^*,\boldsymbol\lambda^*)+\mathfrak F_p.
\end{gather}
\end{subequations}
It follows from estimates \eqref{nisa36} and \eqref{nisa37} in Lemma \ref{lita15lemma}
that the constant vectors $\mathfrak F_\beta$, $\mathfrak F_\lambda$, $ \mathfrak F_p$
and the linear functionals $\mathcal M$, $ \mathcal L$, $\mathcal P$
admit the estimates
\begin{equation}\label{krit12}
    |\mathfrak F_\beta|+ |\mathfrak F_\lambda|+ |\mathfrak F_p|\leq c (|F_2|_{s}+|F_3|_{s}+|f_5|),
\end{equation}
\begin{equation}\label{krit13}
    |\mathcal M|+ |\mathcal L|+ |\mathcal P|\leq c\varepsilon_0 (|\boldsymbol\mu^*|_{s}+
    |\boldsymbol\lambda^*|_{s}).
\end{equation}
Moreover, estimate \eqref{nisa36overline} yields the inequality
\begin{equation}\label{krit12overline}
    |\overline{\mathfrak F_\beta}|\leq c|\overline{F}_2|.
\end{equation}
Combining estimates \eqref{krit9}, \eqref{krit10} and \eqref{krit12}, \eqref{krit13}
we  arrive at the inequality
\begin{equation*}\begin{split}
|\mathfrak F_\beta| +|\mathfrak F_\lambda|+|\mathfrak F_p|+|\mathcal M|+|\mathcal L| +|\mathcal P|
\leq
    c\varepsilon_0(|\overline{\boldsymbol\lambda}|+|\overline{\boldsymbol\chi}|+|p|)+\\
+c\varepsilon_0(|\boldsymbol\lambda^*|_{s}+|\boldsymbol\chi^*|_{s})+
c(|g_1^*|_{s}+|\mathbf g_2|_{s}).
\end{split}\end{equation*}
>From this and \eqref{krit29um} we obtain
\begin{equation*}\begin{split}
|\overline{\boldsymbol\lambda}|+|\overline{\boldsymbol\chi}|+|\overline{\boldsymbol\mu}|+|p|\leq
    c\varepsilon_0(|\overline{\boldsymbol\lambda}|+|\overline{\boldsymbol\chi}|+|p|)+\\
+c\varepsilon_0(|\boldsymbol\lambda^*|_{s}+|\boldsymbol\chi^*|_{s})+
c(|g_1^*|_{s}+|\mathbf g_2|_{s}).
\end{split}\end{equation*}

Choosing $\varepsilon_0$ sufficiently small we finally obtain
\begin{equation}\label{krit14}\begin{split}
|\overline{\boldsymbol\lambda}|+|\overline{\boldsymbol\chi}|+|\overline{\boldsymbol\mu}|+|p|\leq
c\varepsilon_0(|\boldsymbol\lambda^*|_{s}+|\boldsymbol\chi^*|_{s})+
c(|g_1^*|_{s}+|\mathbf g_2|_{s}).
\end{split}\end{equation}
Next, equations \eqref{krit1a}-\eqref{krit1c} imply the equalities
\begin{gather*}
\boldsymbol\partial\, \boldsymbol\mu^*\,=\,
-p\nabla w_1 -  \nabla g_1,\\
\boldsymbol\partial\, \boldsymbol\lambda^*\,+\, \boldsymbol\Omega\, \boldsymbol\lambda^*=\mathbf g_2-
\mathbf T\, \boldsymbol\mu^*-(\mathbf T\, \overline{\boldsymbol\mu}+p\mathbf W^\top \mathbf e_1)
\\
\boldsymbol\partial\, \boldsymbol\chi^*\,=(\mathbf S\boldsymbol\mu^*\, +\, \mathbf T^\top
\boldsymbol\lambda^*)+(\mathbf S\overline{\boldsymbol\mu}\, +\, \mathbf T^\top
\overline{\boldsymbol\lambda })
\end{gather*}
Applying Lemma \ref{nisa6}  to the first equation and noting that the embedding of every Sobolev space into $\mathcal A_{\sigma/2,0}$
is bounded we obtain
\begin{equation*}
    |\boldsymbol\mu^*|_{s+3n+4} \leq c(|p| +|g_1^*|_{s+4n+6})
\end{equation*}
Next, applying Lemma \ref{lita21lemma} to the second equation we obtain
\begin{equation*}
  |\boldsymbol\lambda^*|_{s+n+1}  \leq c |\mathbf g_2|_{s+3n+4}
  + c|\boldsymbol\mu^*|_{s+3n+4}+ c(|p|+|\overline{\boldsymbol\mu}).
\end{equation*}
Applying Lemma \ref{nisa6}  to the third equation we obtain
\begin{equation*}
 |\boldsymbol\chi^*|_{s}  \leq
   c(|\boldsymbol\mu^*|_{s+n+1}+|\boldsymbol\lambda^*|_{s+n+1})+ c(|p|+|\overline{\boldsymbol\mu}
  +|\overline{\boldsymbol\lambda}|)
\end{equation*}
Combining the obtained results we arrive at the inequality
\begin{equation}\label{krit15}
   |\boldsymbol\mu^*|_{s} + |\boldsymbol\lambda^*|_{s} + |\boldsymbol\chi^*|_{s}\leq c
   (|g_1^*|_{s+4n+6}+ |\mathbf g_2|_{s+4n+6})+c(|p|+|\overline{\boldsymbol\mu}
  +|\overline{\boldsymbol\lambda}|).
\end{equation}
Combining \eqref{krit14} and \eqref{krit15}, and choosing $\varepsilon_0$ sufficiently small  we obtain
the desired estimate
\begin{equation}\label{krit17}
    |\boldsymbol\mu|_{s} + |\boldsymbol\lambda|_{s} + |\boldsymbol\chi|_{s}+|p|\leq c
   (|g_1^*|_{s+4n+6}+ |\mathbf g_2|_{s+4n+6}),
\end{equation}
which gives estimate \eqref{figa7} in Theorem \ref{figa5}.
Next, estimates \eqref{krit9overline} and \eqref{krit12overline} yields
\begin{equation}\label{krit14overline}
   |\overline{\mathfrak F_\beta}|\leq   c \varepsilon_0|p|+|\overline{\mathbf g_2}|
\end{equation}
which along with \eqref{krit29i} and \eqref{krit13} implies
\begin{equation*}
    |\overline{\boldsymbol\mu}|\leq |\overline{\mathbf g_2}| +c\varepsilon_0(
    |p|+|\boldsymbol\mu^*|_{s}+
    |\boldsymbol\lambda^*|_{s})
\end{equation*}
Combining this result withy \eqref{krit17}, we finally obtain
\begin{equation*}
   |\overline{\boldsymbol\mu}|\leq |\overline{\mathbf g_2}|+c\varepsilon_0
   (|g_1^*|_{s+4n+6}+ |\mathbf g_2|_{s+4n+6}),
\end{equation*}
which gives estimate \eqref{figa7overline} in Theorem \ref{figa5}.
This completes the proof of Theorem \ref{figa5}.

\vspace{5cm}


\begin{thebibliography}{99}
\bibitem{Arnol} Arnold, V.I., Proof of a
Theorem of A. N. Kolmogorov on the Invariance of Quasi-Periodic
Motions Under Small Perturbations of the Hamiltonian, {\it Usp. Mat.
Nauk}, 1963, vol. 18, pp. 13-40.

\bibitem{Arnold-Kozlov-Neisch} Arnold, V.I., Kozlov V.V., A.I. Neischtadt, Mathematical Aspects of Classical and Celestial Mechanics, Springer Science \& Business Media, 2007


\bibitem{Bol-Tresh} Bolotin S.V., Treschev D.V., Remarks on the Definition of Hyperbolic tori of Hamiltonian Systems, {\it Regular and Chaotic Dynamics}, 2000, vol.5, pp. 401-412.

\bibitem{Broer} Broer, H.W., Huitema, G.B., and Sevryuk, M.B., Quasi-Periodic
Motions in Families of Dynamical Systems, Order Amidst Chaos, {\it
Lecture Notes in Math.}, vol. 1645, Springer-Verlag, Berlin, 1996.
\bibitem{Cheng96} Cheng, C.-Q., Birkhoff–Kolmogorov–Arnold–Moser Tori in Convex
Hamiltonian Systems, {\it Commun. Math. Phys.}, 1996, vol. 177, pp.
529–559.
\bibitem{Cheng99} Cheng, C.-Q., Lower Dimensional Invariant Tori in the Regions of
Instability for Nearly Integrable Hamiltonian Systems, {\it Commun.
Math. Phys.}, 1999, vol. 203, pp. 385–419.
\bibitem{Cheng05} Cheng, C.-Q., Minimal Invariant Tori in the Resonant Regions for Nearly Integrable
Hamiltonian Systems, {\it Transactions AMS}, 2005, vol. 357, no. 12, pp.
5067–5095.

\bibitem{Corsi} Corsi L., Feola R., Gentile G., Low-Dimensional Invariant Tori for
for Perturbations of a Class of Non-convex Hamiltonian Functions, {\it Journal of Statistical
Physics}, 2013, vol.150, pp. 156-180.
\bibitem{Eliasson} Eliasson, L.H., Perturbations of Stable Invariant Tori for
Hamiltonian Systems, {\it Ann. Scuola Norm. Sup. Pisa}, 1988, vol.
15, pp. 115-147.


\bibitem{Graff} Graff, S.M., On the Continuation of Hyperbolic Invariant Tori for
Hamiltonian Systems, {\it J. Diff. Eqns.}, 1974, vol. 15, pp. 1-69.

\bibitem{Kolmogorov} Kolmogorov, A.N., On the Conservation of Conditionally Periodic Motions
for a Small Change in Hamilton’s Function, {\it Dokl. Akad. Nauk
USSR}, 1954, vol. 98, pp. 527-530. English translation in: Lectures
Notes in Physics 93, Springer, 1979.
\bibitem{Kuksin}  Kuksin, S.B., Hamiltonian Perturbations of Infinite-Dimensional
Linear Systems with Imaginary Spectrum, {\it Funktsionalnyi Analiz i
ego Prilozheniya}, 1987, vol. 21. no. 3, pp. 22-37.

\bibitem{Li} Li, Y. and Yi, Y., Persistence of Lower Dimensional Tori of General
Types in Hamiltonian Systems, {\it Trans. Amer. Math. Soc.}, 2005,
vol. 357, pp. 1565–1600.



\bibitem{Melnikov} Melnikov, V.K., On Some Cases of Conservation of Conditionally
Periodic Motions Under a Small Change of the Hamilton Function, {\it
Soviet Math. Doklady},  1965, vol. 6, pp. 1592–1596.

  \bibitem{Nirenberg} Nirenberg, L., {\it Topics in Nonlinear Functional
Analysis}, AMS, 2001.

  \bibitem{Percival1979} Percival, I.C., A Variational Principle for Invariant Tori of
Fixed Frequency, {\it J. Phys. A: Math. Gen.}, 1979, vol. 12, no. 3,
pp. 57--60.
  \bibitem{Plotnikov} Plotnikov, P.I., Morse Theory for Conditionally-Periodic Solutions to Hamiltonian Systems, {\it Siberian Math. J.}, 1994, vol.
35, pp. 590--604.

  \bibitem{Puank} Poincare, H., {\it New Methods of Celestial Mechanics}, History of Modern Physics and   Astronomy 13, Amer. Inst. Phys. 1993.
\bibitem{Moser} Moser, J., Convergent Series Expansions for Quasi-periodic Motions, {\it Math.
Ann.}, 1967, vol. 169, no. 1, pp. 136-176.


\bibitem{Poschel} Poschel, J.,  On Elliptic Lower Dimensional Tori in Hamiltonian
Systems, {\it Math. Z.}, 1989, vol. 202, pp. 559-608.

\bibitem{Treschev} Treschev, D.V., The Mechanism of Destruction of Resonance Tori of Hamiltonian Systems, {\it
Math. USSR Sb.}, 1991, vol. 68, pp. 181-203.

\bibitem{You} You, J.G., Perturbations of Lower Dimensional Tori for Hamiltonian
Systems, {\it J. Differential Equations}, 1999, vol. 152, pp. 1-29.


\bibitem{Zehnder} Zehnder, E., Generalized Implicit Function Theorems with
Applications to Some Small Divisor Problems, {\it Comm. Pure Appl.
Math.}, 1975, vol. 28, no. 1, pp. 91--140; 1976, vol. 29, no. 1, pp.
49--111.


\bibitem{Zender} Zehnder E.,  An implicit function theorem for small divisor problems,
{\it Bull. Amer. Math. Soc.}, 1974, vol. 80, pp. 174-179
\end{thebibliography}
\end{document}